\documentclass[11pt]{article}%
\usepackage{amsfonts}
\usepackage{amsthm,enumerate}
\usepackage{subfigure}
\usepackage{ifthen,latexsym,syntonly}
\usepackage{natbib}
\usepackage{amsmath}
\usepackage{amssymb}
\usepackage{graphicx}
\usepackage{color}
\usepackage{appendix}
\usepackage[bottom]{footmisc}
\usepackage{hyperref}
\hypersetup{bookmarks=false,colorlinks,linkcolor=black,anchorcolor=black,citecolor=black}
\setcounter{MaxMatrixCols}{30}
\providecommand{\U}[1]{\protect\rule{.1in}{.1in}}
\textwidth=6.6in
\textheight=8.9in
\headheight=0.0in
\oddsidemargin=0.0in
\headsep=0.0in
\topmargin=0.0in
\newtheorem{theorem}{Theorem}
\newtheorem{corollary}{Corollary}

\newtheorem{lemma}{Lemma}
\newtheorem{proposition}{Proposition}

\theoremstyle{definition}
\newtheorem{definition}{Definition}

\allowdisplaybreaks
\begin{document}

\providecommand{\keywords}[1]
{
  \small	
  \textbf{\textit{Keywords---}} #1
}

\title{Orthounimodal Distributionally Robust Optimization: Representation, Computation and Multivariate Extreme Event Applications}
\date{}
\author{Henry Lam \qquad Zhenyuan Liu \qquad  \\Department of Industrial
Engineering and Operations Research, Columbia University}

\author{Henry Lam\thanks{Department of Industrial
Engineering and Operations Research, Columbia University.}
\and Zhenyuan Liu\footnotemark[1]
\and Xinyu Zhang\footnotemark[1]
}

\maketitle


\begin{abstract}
This paper studies a basic notion of distributional shape known as orthounimodality (OU) and its use in shape-constrained distributionally robust optimization (DRO). As a key motivation, we argue how such type of DRO is well-suited to tackle multivariate extreme event estimation by giving statistically valid confidence bounds on target extremal probabilities. In particular, we explain how DRO can be used as a nonparametric alternative to conventional extreme value theory that extrapolates tails based on theoretical limiting distributions, which could face challenges in bias-variance control and other technical complications. We also explain how OU resolves the challenges in interpretability and robustness faced by existing distributional shape notions used in the DRO literature. Methodologically, we characterize the extreme points of the OU distribution class in terms of what we call OU sets and build a corresponding Choquet representation, which subsequently allows us to reduce OU-DRO into moment problems over infinite-dimensional random variables. We then develop, in the bivariate setting, a geometric approach to reduce such moment problems into finite dimension via a specially constructed variational problem designed to eliminate suboptimal solutions. Numerical results illustrate how our approach gives rise to valid and competitive confidence bounds for extremal probabilities.
\end{abstract}

\keywords{multivariate extreme event analysis, orthounimodality, distributionally robust optimization, nonparametric, shape constraint}

\section{Introduction}

Distributionally robust optimization (DRO) is a methodology to tackle optimization under uncertainty that has gathered substantial attention in recent years. This methodology advocates a robust perspective that, when facing uncertain or unknown parameters in decision-making, the modeler looks for a decision that optimizes over the worst-case scenario (\cite{delage2010distributionally,goh2010distributionally,kuhn2019wasserstein}). More precisely, DRO can be considered as a special case of classical robust optimization (RO) (\cite{Bertsimas2011theory,ben2009robust}), in which the uncertain parameter is the underlying probability distribution in a stochastic problem, and the worst case is over constraints constituting a so-called uncertainty set or ambiguity set that, roughly speaking, contains the true distribution with high confidence. In this paper, we will use the term DRO broadly to refer to worst-case optimization over a class of distributions defined via the uncertainty set.

Our focus of study is a particular uncertainty set represented by a shape constraint on a multivariate probability distribution known as \emph{orthounimodality (OU)}. On a high level, this geometric property means that each marginal density of the underlying distribution is monotonically non-increasing away from its mode. OU is arguably the most basic multivariate shape constraint, and has appeared in works such as \cite{devroye1997random,biau2003risk,sager1982nonparametric,polonik1998silhouette,gao2007entropy}. Yet, a systematic investigation of its geometric properties, and the associated optimization strategies for the corresponding DRO, appears open. More specifically, we will build the mixture, or more precisely the so-called Choquet representation that allows us to represent an OU distribution as a mixture of more ``elementary" distributions which act as the extreme points in convex analysis. This subsequently allows us to reformulate the associated DRO in terms of decision variables that correspond to the mixing distribution (without the OU constraint). However, as we will demonstrate, the Choquet representation of OU differs from the range of distributional shapes discussed in the literature (both in probability theory, e.g., \cite{dharmadhikari1988unimodality}, and in DRO, e.g., \cite{van2016generalized,li2019ambiguous}), which substantially increases the complexity of the reformulated DRO to contain function-valued random variables and in turn necessitates new variational arguments to reduce the problem to a tractable form.


Our interest in OU is motivated from \emph{multivariate extreme event analysis} (\cite{resnick2013extreme}). This discipline studies the estimation of tail probabilities or other risk quantities from data, and is evidently at the core of risk analytics and management. A beginning well-known challenge in this task is that, by its own definition, there are little data that inform the tail of a distribution. To this end, multivariate extreme event analysis has proposed a range of statistical methods to extrapolate data to their tail, under some principled modeling assumptions. However, the challenges of these methods are also well-documented, and exacerbated especially in multivariate settings. \emph{A key motivation of our paper is to advocate DRO as a well-grounded alternative for multivariate extreme event analysis}. We show in particular how OU, in contrast to other more well-studied distributional shapes, constitutes the most natural uncertainty set for this purpose. This is argued in terms of both interpretability on the multivariate tail and robustness against the misspecification of the distributional mode, two main challenges critically faced by other established distributional shapes as we will illustrate. From a broader perspective, the power of DRO in tackling uncertainty under partial distributional information has been actively investigated (e.g., \cite{Wiesemann2014,hanasusanto2015distributionally,doan2015robustness,ghaoui2003worst}) -- Our current work follows this perspective, but also distinguishes from it by taking a specialized step to justify the choice of uncertainty sets, develop both the probability and optimization theories, in an important application that has traditionally been handled using parametric statistical models. While our Choquet theory applies to arbitrary-dimensional problems, we will focus our tractable reduction and optimization method on the bivariate setting which, as we will discuss momentarily, already forms a challenging case for traditional statistical approaches, and also requires intricate geometric arguments to handle its DRO. We hope that these developments would open the door to higher-dimensional extremal estimation problems in the future.

In the following, we first discuss the challenges in multivariate extreme event analysis  that motivates the use of our OU-DRO (Section \ref{sec:motivation}). We then introduce in detail the main geometric properties of OU that pertain to optimization (Section \ref{sec:OU}). After that, we present our main DRO formulation (Section \ref{sec:OU-DRO problem}), and our reformulation approaches and optimization methods (Section \ref{sec:opt theory}). Next we motivate and discuss the generalization of OU-DRO to situations where the considered monotonicity only holds for part of all dimensions (Section \ref{sec:POU-DRO problem}). We present some numerical examples (Section \ref{sec:numerics}). Supplemental details and all proofs are delegated to the Appendix.


\section{Motivation}\label{sec:motivation}
As discussed in the Introduction, our study is motivated from multivariate extreme event analysis. In this section, we first discuss the challenges in conventional methods in this discipline, starting with the univariate case (Section \ref{sec:challenges EEA}) which sets the stage to transit to multivariate (Section \ref{sec:challenges EEA multivariate}). We then propose DRO as an alternative approach, discuss the literature, and also present the challenges in using existing DRO formulations (Section \ref{sec:challenges DRO}). Motivated from these challenges, we finally propose OU-DRO as our solution approach (Section \ref{sec:OU}).

\subsection{Challenges in Conventional Extreme Event Analysis: Univariate Case}\label{sec:challenges EEA}
Extreme event analysis refers to the estimation of tail probabilities, quantiles or other measures, and is a core common task in analyzing and managing risks. For instance, in the maritime industry, estimating the extremes of the metocean climate is essential to design oil rigs and other marine structures (\cite{zachary1998multivariate}). In finance, prediction of tail risk measures such as value-at-risk is used to manage portfolio losses (\cite{longin2000value,mcneil1999extreme}). In insurance, product pricing is stress-tested by modeling large claims and estimating ruin probabilities  (\cite{mcneil1997estimating,beirlant1992modeling}). In transportation, safety analysis is built on the estimation of crashes and other defined conflicts that are rare events by nature (\cite{jonasson2014internal,songchitruksa2006extreme}).

A recurrent challenge in extreme event analysis is that, by its very definition, there are few data available to fit the tail distribution. A dominant approach in the statistics literature is to use extreme value theory, which suggests the use of parametric models to extrapolate data based on principled asymptotics. Below we will discuss these models and their documented challenges, starting from the univariate case and then transiting to the multivariate case, which is more subtle and constitutes our focus.

Extreme value theory in the univariate setting hinges on two important asymptotic theorems that characterize the parametric distributions one should use in fitting tails, leading to the so-called \emph{annual-maxima} method (\cite{gumbel1958statistics}) and \emph{peak-over-threshold} method (\cite{smith1984threshold}) respectively. To set the stage, let us denote $\{X_1,\ldots,X_n\}$ as i.i.d. data or random variables in $\mathbb R$. The first theorem, known as the Fisher–Tippett–Gnedenko theorem (\cite{fisher1928limiting}, \cite{gnedenko1943distribution}), states that under some technical conditions (see, e.g., \cite{embrechts2013modelling} Section 3.3 and 3.4), the maximum of i.i.d. random variables, namely $\max\{X_1,\ldots,X_n\}$, converges to the generalized extreme value (GEV) distribution%
\begin{equation}
G_{\xi,\mu,\sigma}^{GEV}(x)=\left\{
\begin{array}{lr}%
\exp\left[  -\left(  1+\xi\frac{x-\mu}{\sigma}\right)  ^{-1/\xi}\right] & \text{if }\xi\neq0\\
\exp\left[  -\exp\left(  -\frac{x-\mu}{\sigma}\right)  \right] & \text{if }\xi=0
\end{array}
\right.  ,
\label{GEVdistribution}%
\end{equation}
under suitable normalization, where $\mu$ is a location parameter and $\sigma>0$ is a scale parameter. Depending on the value of $\xi$, this distribution is categorized into three regimes known as Gumbel ($\xi=0$), Fr\'{e}chet ($\xi>0$) and Weibull ($\xi<0$), each of which classifies a random variable according to its so-called maximum domain of attraction. This theorem suggests the fitting of data into $G_{\xi,\mu,\sigma}^{GEV}$, provided that the data are first batched into blocks in which the maximum is taken from each block.
The second theorem, known as the Pickands–Balkema–de Haan theorem (\cite{pickands1975statistical}, \cite{balkema1974residual}), states that, under the same technical conditions as Fisher–Tippett–Gnedenko, the excess of a considered random variable $X$ over a high threshold $u$, defined as $X-u$ given $X>u$, converges to the generalized Pareto (GP) distribution%
\[
G_{\xi,\sigma}^{GP}(x)=1-\left(  1+\xi\frac{x}{\sigma}\right)  ^{-\frac{1}{\xi}}.
\]
This theorem, while theoretically equivalent to Fisher–Tippett–Gnedenko as hinted by the same needed technical conditions, suggests an alternate approach to fit the tail. More concretely, we choose a high threshold $u$ and fit the excess of data above $u$ into $G_{\xi,\sigma}^{GP}$.
In both methods, the asymptotically justified distributions $G_{\xi,\mu,\sigma}^{GEV}(x)$ and $G_{\xi,\sigma}^{GP}(x)$ are parametrized by a small number of parameters, which can be estimated by maximum likelihood estimation (MLE) and other parametric methods (\cite{embrechts2013modelling} Chapter 6). These methods have been used for decades by hydrologists, insurers, financial managers and  modelers in various other industries (see, e.g., \cite{smith1986extreme}, \cite{rootzen1997extreme}, \cite{danielsson1997tail} and \cite{solari2012unified}).


While powerful, the two methods in extreme value theory are known to face a bias-variance tradeoff that is not always easy to handle. The bias comes from the use of a parametric distribution that is valid only asymptotically, but in finite sample (in the case of annual-maxima) or finite threshold (in the case of peak-over-threshold) it incurs a model misspecification error. The variance refers to the estimation variability of the parameters that arises from a limited data size. More precisely, in the case of annual-maxima, given a total sample size, there is a tradeoff between the number of blocks and the sample size per block and, if we choose a large sample size per block to lower the model bias, we must necessarily use fewer blocks that increases the variance of parameter fitting in the GEV. In the case of peak-over-threshold, if we choose a high excess threshold to lower the model bias, then the amount of data above the threshold must necessarily decrease, leading to again a higher variance for fitting the GP. This bias-variance leads to two issues. First is that the optimal, or even a good choice of block size or threshold value relies on intricate second-order distributional properties of the data (\cite{smith1987estimating,bladt2020threshold}). Second is that, when data is limited, there simply may not exist a good choice of block size or threshold value to control the bias and variance simultaneously. In the literature, there exists a variety of visualization and diagnostic tools that, though ad hoc in nature, demonstrably provide good guidance in tuning these prior parameters for model fitting (\cite{embrechts2013modelling} Chapter 6; \cite{mcneil2015quantitative} Chapter 5).


\subsection{Challenges in Conventional Extreme Event Analysis: Multivariate Case}\label{sec:challenges EEA multivariate}
We have seen the intricacy in univariate extreme event estimation. In the multivariate case, these challenges evidently continue to hold. More importantly, new difficulties arise.

To explain in more detail, the two approaches in univariate extreme value theory both have multivariate analogs. For annual-maxima, under technical conditions (\cite{resnick2013extreme} Section 5.4), the component-wise maximum of i.i.d. random vectors converges to the multivariate extreme value distribution given by%
\begin{equation}
G(x)=\exp(-l(-\log G_{1}(x_{1}),-\log G_{2}(x_{2}),\ldots,-\log G_{d}(x_{d}))), \label{multivariate_extreme_value_distribution}%
\end{equation}
under suitable normalization, where $G_{i}$'s are the marginal distributions and belong to the GEV family (\ref{GEVdistribution}), and $l(\cdot)$ is called the stable tail dependence function.
This latter function $l$, which is defined on $[0,\infty]^d$, summarizes the dependence structure among the $d$ components of $G(x)$ and needs to satisfy the following necessary conditions (\cite{beirlant2006statistics} Section 8.2.2):
\begin{align*}
& \text{(C1) } l(s\cdot)  =sl(\cdot)\text{ for }0<s<\infty;\\
& \text{(C2) } l(e_{j})  =1\text{ for }j=1,\ldots,d\text{, where }e_{j}\text{\ is the }j\text{th unit vector in }\mathbb{R}^{d};\\
& \text{(C3) } \max\{v_{1},\ldots,v_{d}\}  \leq l(v)\leq v_{1}+\cdots v_{d}\text{ for }v\in\lbrack0,\infty)^{d};\\
& \text{(C4) } l\text{ is a convex function}.
\end{align*}
The above result suggests the multivariate annual-maxima method (\cite{gumbel1964analysis}) where we divide data into blocks like in the univariate case, then fit the component-wise maxima of each block into $G(x)$. On the other hand, the multivariate peak-over-threshold method (\cite{beirlant2006statistics}) relies on the concept of copula which captures the dependency among marginals (e.g., \cite{nelsen2007introduction}), more precisely the convergence
\begin{equation}
\lim_{t\rightarrow\infty}C_{F}^{t}(v_{1}^{1/t},\ldots,v_{d}^{1/t})=C_{G}(v),\ v=(v_1,\ldots,v_d)\in\lbrack0,1]^{d}, \label{copula_convergence}
\end{equation}
where $C_{F}$ and $C_{G}$ are the copulas of the sample distribution $F$ and the multivariate extreme value distribution $G$ respectively. This yields the approximation
\begin{equation}
C_{F}(v)\approx C_{G}(v)=\exp(-l(-\log(v_{1}),\ldots,-\log(v_{d}))).\label{copula_approximation}
\end{equation}
for high enough values of $v$, where the equality follows a provable equivalence with the representation (\ref{multivariate_extreme_value_distribution}).
This suggests that we choose a high threshold level $u\in \mathbb{R}^d$ such that the approximation (\ref{copula_approximation}) is reliable and, like in the univariate case, employ the data which exceeds $u$ to fit (\ref{copula_approximation}).


The multivariate methods inherit the bias-variance tradeoff as the univariate case regarding the block size or threshold value (\cite{ledford1996statistics}). Moreover, there are two new difficulties that make multivariate case even more challenging. One is the opaqueness on the technical conditions of the underlying theorems, which are all formulated in terms of the asymptotic behaviors of the distribution in the tail region which is exactly the problem target (\cite{resnick2013extreme}). The second difficulty is the lack of information on the function $l$. Since $l$ does not admit a finite-dimensional parametrization, an approach is to restrict it to a parametric subfamily (e.g., logistic model and its variations in \cite{gumbel1960bivariate,tawn1988bivariate,joe1992bivariate}) so that parametric inference tools like MLE can be used, but this encounters the risk of model misspecification. An alternate approach is to use nonparametric methods (e.g., \cite{pickands1981multivariate,caperaa1997nonparametric,de1989intrinsic,hall2000distribution,drees1998best,caperaa2000estimation}). However, the construction of nonparametric estimators of $l$ that satisfy all the necessary properties (C1)-(C4) remains open (\cite{beirlant2006statistics}). Although in the bivariate case there are some ad hoc modifications to the nonparametric methods listed above, e.g., taking the convex minorant for the non-convex estimator in \cite{hall2000distribution}, these modifications appear difficult to generalize to higher-dimensional cases, and moreover even the performances of the bivariate estimators are unclear (\cite{beirlant2006statistics}).

\subsection{DRO for Multivariate Extreme Event Analysis}\label{sec:challenges DRO}
Motivated by the challenges in conventional statistical methods for tackling multivariate extremes, we consider DRO as a well-grounded alternative. As described before, all existing approaches extrapolate tail by using models justified from asymptotic theory. In finite sample, these methods can face difficult error tradeoff. Moreover, in the multivariate case, these asymptotic models are beyond parametric as they involve the stable tail dependence function that does not admit a finite-dimensional parametrization. Our key idea is to replace these models with geometric tail property that is placed as constraints in a worst-case optimization. More precisely, suppose we focus on the estimation of a tail probability $P((X_1,\ldots,X_d)\in S)$ where $S$ is an extreme set. We consider optimization roughly speaking in the form
\begin{equation}
\begin{array}{ll}
\max_F&P_F((X_1,\ldots,X_d)\in S)\\
\text{subject to}&\text{geometric property of the distribution $F$ for $X\ge u$}\\
&\text{auxiliary constraints on $F$}
\end{array}\label{basic}
\end{equation}
where $X=(X_1,\ldots,X_d)$, $u=(u_1,\ldots,u_d)$, and the inequality is defined component-wise. The decision variable in \eqref{basic} is the unknown distribution $F$. At least some of the thresholds $u_i$ is large, so that the geometric property corresponds to the (positive) tail of the distribution. Problem \eqref{basic} can be viewed as a DRO with an uncertainty set on the unknown distribution $F$ that is characterized by the geometric conditions and auxiliary constraints.

We hold off the discussion of a suitable geometric property for the tail and why we need the auxiliary constraints at the moment, and first explain conceptually how to use \eqref{basic} and why this bypasses the challenges of the conventional methods. This geometric property acts as general, nonparametric replacement of the parametric (or semiparametric) models in extreme value theory, which as we have seen faces several challenges in usage, and in particular relies on a good choice of exceedance threshold $u$. When we use the geometric property, we will not hinge on the asymptotic theory that relies on a high enough $u$, thus bypassing the bias-variance tradeoff faced by the peak-over-threshold method. However, the catch is that there can be many distributions that satisfy such general geometric conditions, and consequently we take the worst-case value of the target performance measure to construct a bound.

For the last point above, we note the following trivial guarantee:
\begin{lemma}
Suppose that
\begin{equation}
P\left(\begin{array}{c}
\text{geometric property of the distribution $F$ holds for $X\ge u$},\\
\text{auxiliary constraints on $F$ holds}
\end{array}\right)\geq1-\alpha\label{confidence condition}
\end{equation}
for some confidence level $1-\alpha$. Then the optimal value of \eqref{basic}, called $Z^*$, satisfies
$$P(Z^*\geq Z)\geq1-\alpha$$
where $Z$ is the true value of $P((X_1,\ldots,X_d)\in S)$.\label{lemma:basic}
\end{lemma}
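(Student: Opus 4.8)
The plan is to exploit the defining optimality of $Z^*$ together with a one-line event-inclusion argument. First I would make explicit what is random and what is not. The constraints appearing in \eqref{basic} — both the stated geometric property and the auxiliary constraints — are in general built from the observed data (this is precisely why \eqref{confidence condition} is a probabilistic statement), so the feasible region of \eqref{basic}, and hence its optimal value $Z^*$, is a function of the sample and thus a random quantity. By contrast, $Z = P((X_1,\ldots,X_d)\in S)$ is the deterministic tail probability under the true data-generating distribution, which I will call $F_0$ (distinct from the decision variable $F$ in \eqref{basic}).

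Next, let $E$ denote the event inside the probability in \eqref{confidence condition}, i.e.\ the event that $F_0$ satisfies the geometric property on $\{X\ge u\}$ and satisfies the auxiliary constraints. By construction, $E$ is exactly the event ``$F_0$ is feasible for the maximization \eqref{basic}''. On $E$, therefore, $F_0$ lies in the feasible region, and since $Z^*$ is the supremum of $P_F((X_1,\ldots,X_d)\in S)$ over that region while the objective evaluated at $F=F_0$ equals exactly $Z$, we get $Z^*\ge Z$ on $E$. In other words, $E\subseteq\{Z^*\ge Z\}$.

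Finally, monotonicity of probability yields $P(Z^*\ge Z)\ge P(E)\ge 1-\alpha$, which is the assertion.

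There is no genuine obstacle here beyond bookkeeping; the only point that deserves care is to spell out that the event in \eqref{confidence condition} coincides with feasibility of the underlying distribution $F_0$ for \eqref{basic}, so that the inequality $Z^*\ge Z$ follows immediately from optimality on that event. I would also add a short remark that the bound is one-sided and conservative by design — it is a valid upper confidence bound on $Z$ — and that no tightness is claimed at this stage; the later sections are where the geometric property is chosen (as orthounimodality) and the auxiliary constraints are calibrated so that the worst-case value $Z^*$ is not excessively loose.
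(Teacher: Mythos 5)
Your argument is correct and matches the paper's proof exactly: both establish that the event in \eqref{confidence condition} (feasibility of the true distribution $F_0$) is contained in $\{Z^*\geq Z\}$, whence the probability bound follows by monotonicity. The paper states this in one line; you simply spell out the same reasoning more fully.
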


In Lemma \ref{lemma:basic}, the conditions inside the probability in \eqref{confidence condition} are calibrated from data, and the probability is with respect to the randomness from data. Lemma \ref{lemma:basic} concludes that if the uncertainty set contains the true distribution with high confidence, then the optimal value of the associated DRO would be an upper bound for the true target value with at least the same confidence level. The guarantee in Lemma \ref{lemma:basic} is well-established in data-driven DRO (\cite{delage2010distributionally,ben2013robust,esfahani2018data,bertsimas2018robust}). Moreover, a corresponding lower bound guarantee holds analogously and we have skipped to avoid repetition.

Thus, DRO provides confidence bounds on target tail performance measure as long as the uncertainty set is a valid confidence region on the unknown true distribution. The question then is what constitutes a good choice of uncertainty set, which we discuss next.

\subsection{Choices of Uncertainty Set}
In data-driven DRO, uncertainty sets can be generally categorized into two major types. The first type is a neighborhood ball surrounding a baseline distribution, where the ball size is measured via a statistical distance. Common choices of distance include the class of $\phi$-divergence (\cite{glasserman2014robust,gupta2019near,bayraksan2015data,Iyengar2005,hu2013kullback,duchi2016statistics,gotoh2018robust,Lam2016robust,Lam2018sensitivity,ghosh2019robust}) which also covers in particular the Renyi divergence (\cite{atar2015robust,dey2010entropy}) and total variation distance (\cite{jiang2018risk}), and the Wasserstein metric (\cite{esfahani2018data,blanchet2021sample,gao2016distributionally,xie2019tractable,shafieezadeh2019regularization,chen2018robust}). The ball sizes using these distances are calibrated from either density and entropy estimation (\cite{jiang2016data}), using goodness-of-fit statistics (\cite{ben2013robust,bertsimas2018robust}), or employing or developing nonparametric empirical likelihood theory (\cite{LAM2017301,duchi2016statistics,lam2019recovering,Blanchet2019,blanchet2019confidence}). However, these approaches do not apply naturally to tail estimation. The first approach requires substantial amount of data due to the need of using kernel estimation, while the second approach could be conservative, both imposing challenges in the tail region. The third approach, on the other hand, builds on a statistical theory that ties to the objective function in the DRO, and its validity in tail estimation is not established.

The second major type of uncertainty sets constitutes partial information on the distribution, including moments and support (\cite{delage2010distributionally,bertsimas2005optimal,Wiesemann2014,goh2010distributionally,ghaoui2003worst}), marginal constraints (\cite{doan2015robustness,dhara2021worst}), and shape constraints (\cite{van2016generalized,li2019ambiguous,lam2017tail,mottet2017optimization,chen2021discrete}). The first two subtypes require calibration, i.e., setting bounds on the moments, supports or the marginal distributions. The last subtype does not require calibration, and thus can be used even when no data is available. At the same time, it reflects the geometric belief on the distribution and, when correctly imposed, it advantageously alleviates conservativeness.

Thanks to the power of reducing estimation conservativeness with few data, shape constraints appear suitable to be the primary choice in constructing uncertainty sets for extremal estimation. This observation is in line with some documented motivation in the literature (e.g., \cite{li2019ambiguous}). Before we argue the specific shape constraint to be used, we also mention several works in specializing DRO in extremal estimation. The most relevant is \cite{lam2017tail} that considers convex tail extrapolation, focusing on the univariate setting. It investigates the light-versus-heavy tail properties in the worst-case distributions and the associated computation procedures. \cite{blanchet2020distributionally} considers robustification of GEV using Renyi divergence ball and studies the domain-of-attraction properties of the worst-case distributions, which aims to alleviate the reliance on the validity of asymptotics in justifying the GEV model. \cite{engelke2017robust} derives robust asymptotic bounds on exceedance probabilities subject to $\chi^2$-ball and first moment, and  \cite{birghila2021distributionally} studies bounds on both probabilities and tail indices based on the Wasserstein distance and $f$-divergence around a heavy-tailed distribution. Moreover, as we have seen, copula or dependence structure of random vectors plays an important role in multivariate extreme event analysis. Motivated by this, there are works on robust bounds for extremal performance measures when the marginal distributions are given but dependence structure is unknown. These measures include tail probabilities for functions of random vectors that can be interpreted as financial risks (\cite{embrechts2006bounds1,embrechts2006bounds2,puccetti2013sharp}), expected values for convex functions of sums (\cite{wang2011complete}) and conditional value-at-risk (\cite{dhara2021worst}).

\subsection{Challenges of Existing Shape Constraints}\label{sec:challenges existing MU}
A commonly used shape condition in the multivariate setting is unimodality (\cite{dharmadhikari1988unimodality}). In the univariate case, a unimodal probability density can be readily intuited as having a unique mode (or connected set of modes) with monotonically decreasing density when moving away from the mode. In the multivariate case, defining unimodality becomes more subtle as the notion of monotonicity is primarily one-dimensional and different definitions can be drawn depending on how one defines monotonicity. The three most widely used multivariate unimodality notions are \emph{star unimodality}, \emph{block unimodality} and  \emph{$\alpha$-unimodality} (\cite{dharmadhikari1988unimodality} Sections  2.2 and 3.2). In the following, we will introduce these notions which would then help understand their limitations and our motivations for proposing our OU notion.

First we discuss star unimodality. Given a mode $x_{0}:=(x_{10},x_{20},\ldots,x_{d0}) \in \mathbb{R}^{d}$, a probability distribution with density on $\mathbb{R}^d$ is called star unimodal if this  density is non-increasing along any ray pointing away from the mode $x_{0}$. That is,

\begin{definition}[Star unimodal density]
A probability distribution with density (with respect to the Lebesgue measure) is \emph{star unimodal} about mode $x_{0}$ if the density is non-increasing along any ray pointing away from $x_0$ (i.e., $tx+x_0,t>0$ for any nonzero vector $x\in\mathbb R^d$).\label{star def}
\end{definition}

Star unimodal distribution can also be defined using a \emph{mixture representation} which does not require the existence of the density. This representation requires us to define star-shaped sets, detailed as follows.

\begin{definition}[Mixture representation of star unimodal distribution]We have:

\begin{description}
\item[(1)] A set $K$ is said to be star-shaped about $x_{0}$ if for every $x\in K$, the line segment joining $x$ to $x_{0}$ is completely contained in $K$.

\item[(2)] A probability distribution on $\mathbb{R}^d$ is called star unimodal about $x_{0}$ if it belongs to the closed convex hull of the set of all uniform distributions on sets that are star-shaped about $x_{0}$.
\end{description}\label{star mix}
\end{definition}

Definition \ref{star mix} is equivalent to Definition \ref{star def} when the distribution has a density (\cite{dharmadhikari1988unimodality}, the criterion in Section 2.2 or Theorem 3.6).

In parallel to star unimodality, a block unimodal distribution is defined as a mixture of uniform distributions on rectangles instead of star-shaped sets. That is,

\begin{definition}[Mixture representation of block unimodal distribution]
A probability distribution on $\mathbb{R}^d$ is called \emph{block unimodal} about $x_{0}$ if it belongs to the closed convex hull of the set of all uniform distributions on rectangles that contain $x_{0}$ and have edges parallel to the coordinate axes.\label{block mix}
\end{definition}

It is easy to see that block unimodality satisfies that the density is non-increasing along any ray pointing away from the mode. By either this observation or combining the fact that rectangle is star-shaped and the mixture representation, we see that block unimodal distributions form a subclass of star unimodal distributions.

On the other hand, $\alpha$-unimodality can be viewed as a generalization of the star unimodality notion, by allowing the density to increase on a ray pointing away from the mode but at a controlled rate. More concretely,
\begin{definition}[$\alpha$-unimodal density]
A probability distribution with density on $\mathbb{R}^d$ is \emph{$\alpha$-unimodal} about $x_0$ if the density $f$ is such that $t^{d-\alpha}f(tx+x_0)$ is non-increasing in $t\in(0,\infty)$ for any nonzero vector $x\in \mathbb{R}^d$. \label{alpha def}
\end{definition}



We argue that all of the star unimodality, block unimodality and $\alpha$-unimodality notions encounter issues in extreme event estimation using DRO, when we place them as the ``geometric property" in problem \eqref{basic}.
To facilitate discussion, let us focus on $u>x_0$ in \eqref{basic}, i.e., in the positive tail region, or in other words it suffices to define unimodality about $x_0$ on $\mathcal{D}_{0}=\{x\in\mathbb{R}^d:x\ge x_0\}$ by requiring the rays and sets in the definitions above to be contained in $\mathcal{D}_{0}$. Below, we describe the issues of the existing unimodality under this regime.


Star unimodality is highly sensitive to its mode. That is, when the mode is misspecified, the intended geometric property can become incorrect, even when star unimodality itself holds. To put it in another way, the two conditions ``$F$ is star unimodal about mode $x_0$ for $X\ge x_0$" and ``$F$ is star unimodal about mode $x_0'$ for $X\ge x_0'$", for two different $x_0$, $x_0'$ (where each of them is component-wise smaller than $u$), can result in very different uncertainty sets. In fact, even when $x_0$ and $x_0'$ differ only slightly, the difference in the uncertainty sets could be huge. For instance, in Figure \ref{fig:star}(a) the shaded area represents the region $\{X\ge u\}$ in which star unimodality is used to capture the tail geometry. With different modes $x_0$ and $x_0'$, the sets of ray directions on which the density is monotonically non-increasing are different. Moreover, when the shaded area is far away from the mode, a small misspecification of its location can cause a huge difference in the set of directions. Since not all problems have clearly defined modes to begin with, and estimation of the mode, even though statistically possible, causes sensitive impacts on the tail geometry, star unimodality can be difficult to apply in practice.
\begin{figure}[ptbh]
\begin{center}
\subfigure[Star Unimodality]
{\includegraphics[width=.45\textwidth]{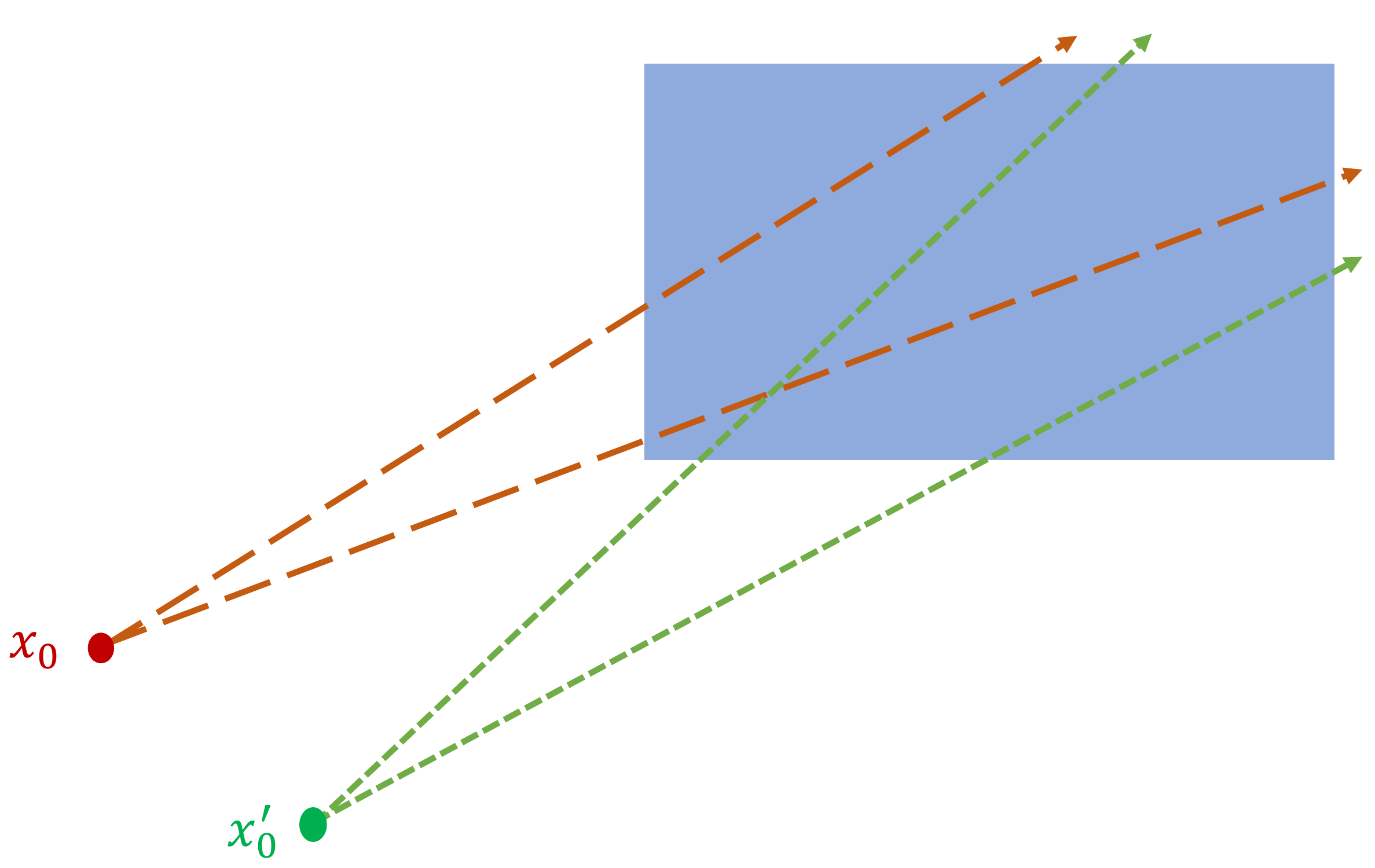}}
\subfigure[Orthounimodality]
{\includegraphics[width=.45\textwidth]{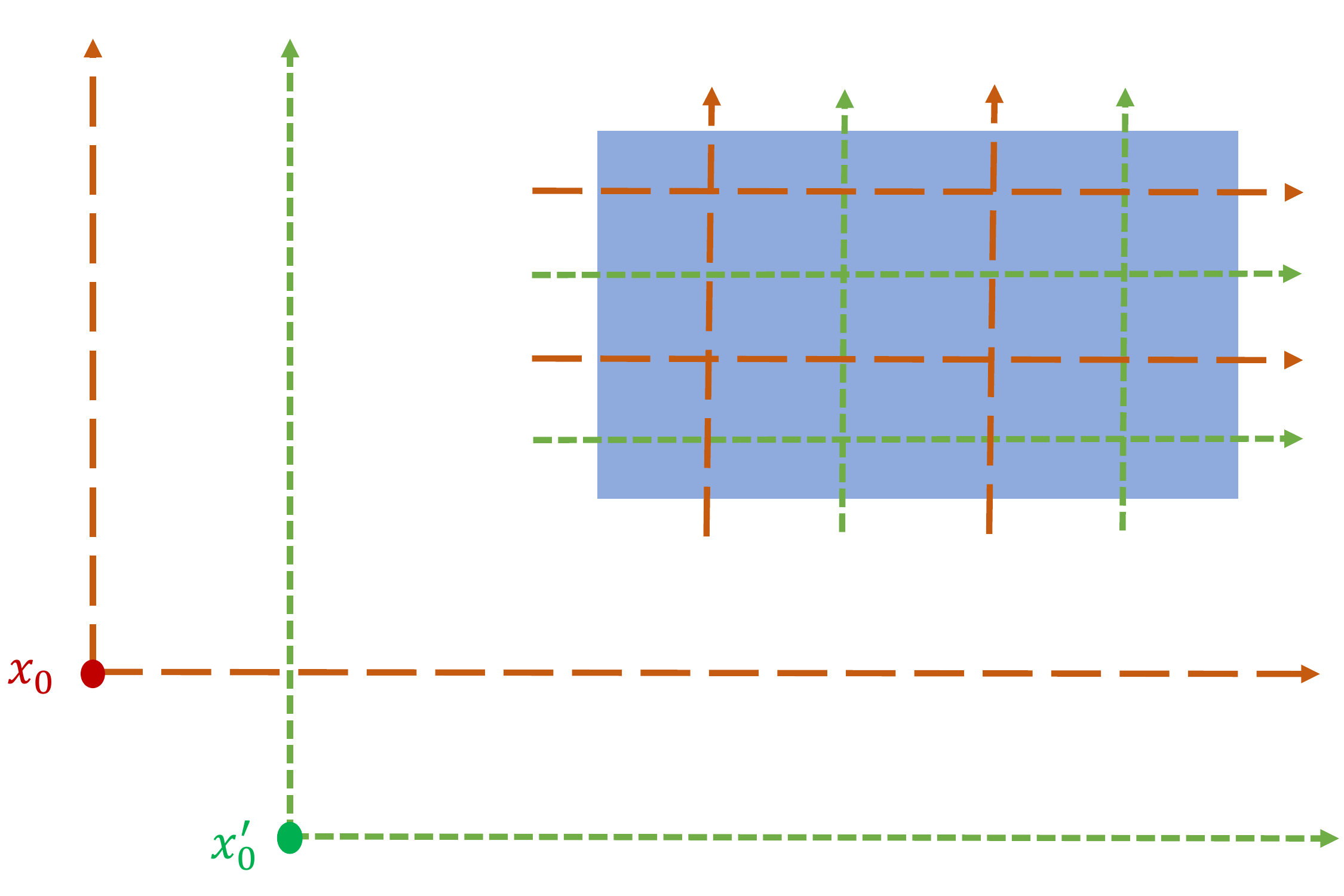}}
\caption{Star unimodality versus orthounimodality. Red (green) dashed lines indicate the rays along which the density of the distribution about mode $x_0$ ($x_0^{\prime}$) is non-increasing.}
\label{fig:star}
\end{center}
\end{figure}

Next we discuss the challenges of block unimodality. While having a clear mixture representation, block unimodal distribution owns a ``differencing" property that resembles the requirement of a multivariate cumulative distribution function. In the bivariate case for instance, a distribution $F(x,y)$ with a continuous density $f(x,y)$ that is block unimodal about the mode $(x_{0},y_{0})$ must satisfy that%
\[
f(x_{1},y_{1})-f(x_{1},y_{2})-f(x_{2},y_{1})+f(x_{2},y_{2})
\]
is nonnegative for any $x_{0}<x_{1}<x_{2}$ and $y_{0}<y_{1}<y_{2}$.
This requirement is unintuitive and difficult to check in general. In fact, it is difficult to reason why a distribution should behave this way. Because of this, block unimodality is also difficult to apply, and its differencing requirement flags that it could be too stringent as a geometric property to be used.

Lastly, $\alpha$-unimodality runs into several challenges similar to star unimodality and block unimodality. Similar to star unimodality, misspecification of the mode can cause sensitive impact to the implied geometric property in the tail region. Similar to block unimodality, it is unclear why a distribution should behave in the way that the notion is specified, namely that the density changes in the precisely controlled way in Definition \ref{alpha def}. Moreover, even if such a property is true, the specification of $\alpha$ can be a challenge. As an example, we consider a distribution $F$ with density
\begin{equation}
f(x_1,x_2)=C\exp\left(  -  \max\left(  \arctan\left(  \frac{x_2}{x_1}\right),\arctan\left(  \frac{x_1}{x_2}\right)  \right)    (x_1+x_2)\right)  ,x_1\geq0,x_2\geq0,\label{star_density}
\end{equation}
where $C$ is a normalizing constant to make $f$ a probability density. For this density, it is not easy to tell if it could be $\alpha$-unimodal for some mode $x_0$ at a first glance. In fact, when $\alpha<2$, $f$ cannot be $\alpha$-unimodal for any $x_0$ since $\lim_{t\rightarrow0}t^{2-\alpha}f(tx+x_{0})=0$ would contradict the condition that $t^{2-\alpha}f(tx+x_{0})$ is non-increasing in $t\in(0,\infty)$. When $\alpha\ge2$, $f$ can be $\alpha$-unimodal ($\alpha$-unimodality reduces to star unimodality when $\alpha=2$) but the choice of mode is very subtle. We can show by routine calculus that $f$ is $\alpha$-unimodal about $x_0\in \mathbb{R}_{+}^{2}$ if $x_0$ belongs to the diagonal while $f$ is not $\alpha$-unimodal about $x_0\in \mathbb{R}_{+}^{2}$ if $|x_{20}-x_{10}|>4(\alpha-2)/(4-\pi)$. This example illustrates the main drawbacks discussed above, that it is hard to judge whether, or reason why, a distribution is $\alpha$-unimodal, and the specification of $\alpha$ and the mode is not an easy task.

\section{Resolution via Orthounimodality}\label{sec:OU}

Due the limitations of the existing multivariate unimodality notions discussed in Section \ref{sec:challenges existing MU}, we propose another multivariate unimodality notion called orthounimodality (OU) which, as we will argue, is natural for extreme event analysis and resolves the statistical challenges faced by the existing unimodality notions.


First, we define OU for probability densities. Here for simplicity, we only define OU in the positive region of the mode since we only focus on the geometric property in the positive tail part of $u$ in Problem \eqref{basic} (we leave the discussion on more general OU distributions in Appendix \ref{sec:general OU}). Given a mode $x_0\in\mathbb{R}^d$ and its positive region $\mathcal{D}_{0}=\{x\in\mathbb{R}^d:x\ge x_0\}$, a probability $f$ on $\mathcal{D}_{0}$ is called OU if $f(x)$ is non-increasing in any component of $x$ on $\mathcal{D}_{0}$, i.e.,
\begin{definition}[Orthounimodal density]
A probability distribution on $\mathcal{D}_{0}$ with density $f$ (with respect to the Lebesgue measure) is OU about mode $x_0$ if $f(x^{\prime})\ge f(x)$ for $x\ge x^{\prime}\ge x_0$.
\label{OU def}
\end{definition}

The definition of OU above is very intuitive in that any point that is more ``extreme" than a point in the tail should appear even more rarely, where the extremeness is measured simply by the marginal positions of the points. This avoids the additional ``differencing" property possessed by block unimodality, and the intricate density change property possessed by $\alpha$-unimodality, both of which are hard to interpret. Moreover, compared to star unimodality and $\alpha$-unimodality, OU is insensitive to the misspecification of the mode, in the sense that the OU property imposed on a tail region remains correct regardless of the exact position of the mode. To see this, in Figure \ref{fig:star}(b), the shaded area represents the tail region $\{X\ge u\}$ where OU is imposed. Although the modes $x_0$ and $x_0^{\prime}$ are different, their requirements on the tail region are the same: the density is non-increasing along any ray parallel to the axes. Therefore, the geometric requirement on the tail region is independent of the choice of the mode as long as the mode is less than or equal to $u$ component-wise. Because of the interpretability and the robustness to mode misspecification presented above, OU appears suitable as the multivariate unimodality for use in extreme value analysis.


Next, like star unimodal and block unimodal distributions, an OU distribution can also be defined as a mixture of uniform distributions on what we call OU sets.

\begin{definition}[Mixture representation of orthounimodal distribution]
We have:
\begin{description}
\item[(1)] A set $K\subset\mathcal{D}_{0}$ is said to be OU about $x_{0}$ if for every $x\in K$, we have $x^{\prime}\in K$ if $x\ge x^{\prime}\ge x_0$.

\item[(2)] A distribution on $\mathcal{D}_{0}$ is called OU about $x_{0}$ if it belongs to the closed convex hull of the set of all uniform distributions on subsets of $\mathcal{D}_{0}$ that are OU about $x_{0}$.
\end{description}
\label{OU mix}
\end{definition}

We establish the equivalence of Definitions \ref{OU def} and \ref{OU mix} as well as a \emph{Choquet representation theorem} for OU. In convex analysis, Choquet theory establishes that any point in a compact convex set $C$ can be written as the mixture of the extreme points of $C$ (\cite{phelps2001lectures} Section 3). In the field of unimodal distributions, Choquet representation means any distribution in a certain class of unimodal distributions can be written as the mixture of the extreme points of this class of distributions. For example, Choquet representation has been established for the three unimodal distributions presented in Section \ref{sec:challenges existing MU} (\cite{dharmadhikari1988unimodality} Theorem 2.2 and Theorem 3.5). For $\alpha$-unimodal distribution $P$ about the origin (including star unimodality when $\alpha=d$), it can be written as
\begin{equation}
P=\int_{\mathbb{R}^{d}}W_{\alpha\text{-} uni}(z)dQ(z),\label{Choquet_alpha_unimodality}%
\end{equation}
where $W_{\alpha\text{-}uni}(z)$ is the distribution of $U^{1/\alpha}z$, $U$ is the uniform distribution on $(0,1)$, and $Q$ is a probability measure on $\mathbb{R}^{d}$ uniquely determined by $P$. For block unimodal distribution $P$ about the origin, it can be be written as
\begin{equation}
P=\int_{\mathbb{R}^{d}}W_{rect}(z)dQ(z),\label{Choquet_block_unimodality}
\end{equation}
where $W_{rect}(z)$ is the uniform distribution on the rectangle with edges parallel to the axes and opposite vertices $0$ and $z$, and $Q$ is again a probability measure on $\mathbb{R}^{d}$ uniquely determined by $P$. Here, $W_{\alpha\text{-}uni}(z),z\in\mathbb{R}^{d}$ and $W_{rect}(z),z\in\mathbb{R}^{d}$ are exactly the extreme points in the class of $\alpha$-unimodal distributions and block unimodal distributions respectively. The Choquet representation that we will prove in the following Theorem \ref{Choquet_OU} is a natural analog of (\ref{Choquet_alpha_unimodality}) and (\ref{Choquet_block_unimodality}) for OU distributions. We note that Choquet representation is not the same as the mixture representations in Definitions \ref{star mix},  \ref{block mix} and \ref{OU mix} since these definitions do not tell us if the unimodal distributions can be generated by the mixture of only the extreme points. To derive our results for OU distributions, we begin with some notations. For a set $K$, we define $W_{K}$ as the uniform distribution on $K$ and define $\lambda(K)$ as its Lebesgue measure. Besides, we denote the interior, closure and boundary of $K$ by $K^{\circ},\bar{K}$ and $\partial K$ respectively. We show that the OU set has the following properties.

\begin{lemma}
\label{property_of_OU_set}Suppose that $K\subset\mathcal{D}_0$ is an OU set about $x_{0}$. Then $K$ is Lebesgue measurable. Besides, $\bar{K}$ and $\bar{K^{\circ}}$ (closure of $K^{\circ}$) are also OU sets about $x_{0}$ and $K^{\circ},K,\bar{K}$ have the same Lebesgure measure, i.e., $\lambda(K^{\circ})=\lambda(K)=\lambda(\bar{K})$.
\end{lemma}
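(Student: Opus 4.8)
The plan is to translate coordinates so that $x_0=0$; an OU set then becomes a \emph{downward-closed} subset $K$ of the nonnegative orthant $\mathbb{R}_{\ge 0}^d$, i.e.\ $x\in K$ and $0\le x'\le x$ (componentwise) force $x'\in K$. Three elementary remarks are used repeatedly: $K^{\circ}\subseteq K\subseteq\bar K\subseteq\mathbb{R}_{\ge 0}^d$ (the orthant is closed); every interior point of $K$ lies in the \emph{open} orthant $\mathbb{R}_{>0}^d$ (since $K^{\circ}$ is an open subset of $\mathbb{R}_{\ge 0}^d$, hence of its interior); and $\partial\mathbb{R}_{\ge 0}^d$, a finite union of coordinate hyperplanes intersected with the orthant, is Lebesgue-null. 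I would first dispose of ``$\bar K$ is OU'': given $x\in\bar K$ and $0\le x'\le x$, take $x_n\in K$ with $x_n\to x$ and set $x'_n:=\min(x',x_n)$ componentwise; then $0\le x'_n\le x_n$ gives $x'_n\in K$ by downward-closedness, and $x'_n\to\min(x',x)=x'$ gives $x'\in\bar K$. Since $\bar K\subseteq\mathbb{R}_{\ge0}^d$ automatically, $\bar K$ is an OU set about $x_0$.

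The step I expect to be the main obstacle is ``$\bar{K^{\circ}}$ is OU'', the subtlety being that $K^{\circ}$ is confined to the open orthant while a down-point $x'$ of a point $x\in K^{\circ}$ may lie on $\partial\mathbb{R}_{\ge0}^d$ (the case $K^{\circ}=\emptyset$ being vacuous). I would first prove a relaxed monotonicity property of $K^{\circ}$: if $x\in K^{\circ}$ and $0<x'\le x$, then $x'\in K^{\circ}$. Indeed, picking $B(x,r)\subseteq K$ and then $\rho\in(0,r)$ small enough that $x'-\rho\mathbf 1\ge 0$, every $z\in B(x',\rho)$ satisfies $z\ge 0$ and $z\le z+(x-x')\in B(x,r)\subseteq K$, hence $z\in K$ by downward-closedness; so $B(x',\rho)\subseteq K$. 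With this in hand, for arbitrary $x\in K^{\circ}$ (so $x>0$) and $0\le x'\le x$, the points $x'_{\varepsilon}:=(1-\varepsilon)x'+\varepsilon x$ satisfy $0<x'_{\varepsilon}\le x$, hence $x'_{\varepsilon}\in K^{\circ}$, and letting $\varepsilon\downarrow 0$ gives $x'\in\bar{K^{\circ}}$. Finally, for $w\in\bar{K^{\circ}}$ and $0\le w'\le w$, picking $w_n\in K^{\circ}$ with $w_n\to w$ and applying the previous reduction to $\min(w',w_n)\le w_n\in K^{\circ}$ produces points of $\bar{K^{\circ}}$ converging to $\min(w',w)=w'$; closedness of $\bar{K^{\circ}}$ gives $w'\in\bar{K^{\circ}}$.

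For the Lebesgue-measure identities I would exploit the contraction $\phi_t(x):=tx$, $0<t<1$. The key geometric claim is $\phi_t\!\left(\bar K\cap\mathbb{R}_{>0}^d\right)\subseteq K^{\circ}$: given $y\in\bar K$ with $y>0$, one picks $y'\in K$ within $\tfrac{1-t}{2}\min_i y_i$ of $y$ in the sup-norm and checks that a sufficiently small sup-norm ball about $ty$ is sandwiched, $0\le z\le y'\in K$, hence lies in $K$; thus $ty\in K^{\circ}$. Since $\bar K\setminus\mathbb{R}_{>0}^d\subseteq\partial\mathbb{R}_{\ge0}^d$ is null, $\lambda(K^{\circ})\ge\lambda\!\left(\phi_t(\bar K\cap\mathbb{R}_{>0}^d)\right)=t^{d}\lambda(\bar K)$, and letting $t\uparrow 1$ gives $\lambda(\bar K)\le\lambda(K^{\circ})$; with the trivial reverse inequality this forces $\lambda(K^{\circ})=\lambda(\bar K)$ (equivalently $\partial K$ is null). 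Measurability of $K$ then follows: when $K$ is bounded these two equal measures are finite and sandwich $K$ between the open set $K^{\circ}$ and the closed set $\bar K$, so $K$ is Lebesgue measurable with the same measure; the general case reduces to this one by writing $K=\bigcup_{N}\bigl(K\cap[0,N\mathbf 1]\bigr)$, each piece being an OU set (a box is OU, and an intersection of OU sets is OU) and bounded. Finally $\lambda(K^{\circ})\le\lambda(K)\le\lambda(\bar K)=\lambda(K^{\circ})$ yields $\lambda(K^{\circ})=\lambda(K)=\lambda(\bar K)$.
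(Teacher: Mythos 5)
Your proof is correct, and the structure is close to the paper's in the topological part but diverges in the measure-theoretic part in a way worth noting. For showing $\bar K$ is OU you use the same $\min(x',x_n)$ trick as the paper; for $\bar{K^\circ}$ the paper asserts without justification that $x\in K^\circ$ and $x_0<x''\le x$ force $x''\in K^\circ$, whereas you actually prove this via the small-ball sandwich argument, so your proof is more self-contained there. The more substantive divergence is the step $\lambda(K^\circ)=\lambda(\bar K)$ (equivalently $\lambda(\partial K)=0$): the paper simply cites \cite{lavrivc1993continuity}, while you give an elementary and complete proof by showing $\phi_t(\bar K\cap\mathbb{R}_{>0}^d)\subseteq K^\circ$ for each $t\in(0,1)$ and using the $t^d$ scaling of Lebesgue measure together with the nullity of the coordinate hyperplanes, then handling the unbounded case by decomposing $K$ into the bounded OU pieces $K\cap[0,N\mathbf 1]$. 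Your route buys self-containedness and makes the geometric reason for $\lambda(\partial K)=0$ transparent (downward-closed sets ``absorb'' their closures under any strict contraction toward the mode); the paper's route is shorter at the cost of an external reference. One small remark: the contraction step already gives $\lambda(K^\circ)=\lambda(\bar K)$ even when both are infinite, and your bounded-piece decomposition is exactly what upgrades this equality to actual measurability of $K$ via completeness of Lebesgue measure; it is good that you kept these two steps separate rather than trying to deduce $\lambda(\partial K)=0$ directly in the infinite-measure case.
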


The following theorem justifies the equivalence of Definitions \ref{OU mix} and \ref{OU def} and also establishes the Choquet representation for OU distributions in the presence of density.

\begin{theorem}
\label{Choquet_OU}Suppose a distribution $P$ on $\mathcal{D}_{0}$ is absolutely continuous with respect to Lebesgue measure. Then $P$ is OU about $x_{0}$ if and only if there is a density $f(x)$ of $P$ such that for every $s>0$, the set%
\[
C_{s}=\{x\in\mathcal{D}_{0}:f(x)\geq s\}
\]
is OU about $x_{0}$, or equivalently, if and only if
\[
f(x^{\prime})\geq f(x)\text{ for }x\ge x^{\prime}\ge x_0.
\]
Besides, $P$ has the following Choquet representation:%
\begin{equation}
P(B)=\int_{0}^{\infty}W_{\bar{C}_{s}}(B)g(s)ds \label{equ_Choquet_OU}%
\end{equation}
for any Lebesgue measurable set $B$, where $\bar{C}_{s}$ is the closure of $C_{s}$ and $g(s)=\lambda(\bar{C}_{s})$ is a probability density on $(0,\infty)$.
\end{theorem}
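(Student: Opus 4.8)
The statement bundles three claims: equivalence of the two density-level descriptions of OU, equivalence of either with Definition~\ref{OU mix}, and the Choquet formula~\eqref{equ_Choquet_OU}. The plan is to close the cycle
\[
\text{($f$ non-increasing in each coordinate)}\ \Longrightarrow\ \eqref{equ_Choquet_OU}\ \Longrightarrow\ \text{Definition~\ref{OU mix}}\ \Longrightarrow\ \text{($f$ non-increasing in each coordinate)} .
\]
The cheapest link is the purely pointwise equivalence, for a \emph{fixed} density $f$, between ``$C_s$ is OU about $x_0$ for every $s>0$'' and ``$f(x')\ge f(x)$ whenever $x\ge x'\ge x_0$'': from the latter, $x\in C_s$ and $x_0\le x'\le x$ force $f(x')\ge f(x)\ge s$, so $x'\in C_s$; conversely, if every $C_s$ were OU while $f(x')<f(x)$ for some $x\ge x'\ge x_0$, then any $s\in(f(x'),f(x)]$ would give $x\in C_s$ but $x'\notin C_s$, a contradiction. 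Hence the two ``if and only if'' statements in the theorem coincide and it suffices to prove the displayed cycle.

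\textbf{From the density condition to the Choquet formula (and to Definition~\ref{OU mix}).} I would fix a version $f$ with $f(x')\ge f(x)$ for $x\ge x'\ge x_0$ and use the layer-cake identity $f(x)=\int_0^\infty\mathbf{1}\{x\in C_s\}\,ds$. For measurable $B$, Tonelli gives $P(B)=\int_B f\,d\lambda=\int_0^\infty\lambda(B\cap C_s)\,ds$. By Lemma~\ref{property_of_OU_set}, each $\bar C_s$ is again OU about $x_0$ with $\lambda(\bar C_s)=\lambda(C_s)$, so $\bar C_s\setminus C_s$ is Lebesgue-null and $\lambda(B\cap C_s)=\lambda(B\cap\bar C_s)=\lambda(\bar C_s)\,W_{\bar C_s}(B)$ whenever $\lambda(\bar C_s)>0$, both sides being $0$ otherwise. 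Setting $g(s):=\lambda(\bar C_s)$ yields~\eqref{equ_Choquet_OU}; a second application of the layer-cake identity gives $\int_0^\infty g(s)\,ds=\int_0^\infty\lambda(C_s)\,ds=\int f\,d\lambda=1$, and $g$ is non-increasing hence measurable, so $g$ is a probability density on $(0,\infty)$ and $W_{\bar C_s}$ is well defined wherever $g(s)>0$. For the next arrow, \eqref{equ_Choquet_OU} realizes $P$ as a mixture of the uniform distributions $W_{\bar C_s}$, each on an OU set; since every bounded continuous $\phi$ obeys $\int\phi\,dP=\int_0^\infty\big(\int\phi\,dW_{\bar C_s}\big)g(s)\,ds\le\sup_K\int\phi\,dW_K$ (supremum over OU sets $K\subset\mathcal{D}_0$ of positive measure), and the same with $-\phi$, a Hahn--Banach separation places $P$ in the closed convex hull of $\{W_K\}$, i.e.\ $P$ satisfies Definition~\ref{OU mix}.

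\textbf{From Definition~\ref{OU mix} back to the density condition.} This is the substantive direction. Suppose $P$ is absolutely continuous with density $f$ and lies in the closed convex hull of the $W_K$'s; the task is to extract a version of $f$ that is non-increasing in each coordinate. The structural fact I would exploit is that OU subsets of $\mathcal{D}_0$ are exactly the downward-closed ones. Fix $v\ge 0$ and a bounded open box $A\subset\mathcal{D}_0$, so $A+v\subset\mathcal{D}_0$ automatically. For any OU set $K$ of positive measure, the translation $x\mapsto x-v$ injects $K\cap(A+v)$ into $K\cap A$: if $x\in K\cap(A+v)$ then $x-v\in A\subset\mathcal{D}_0$, so $x_0\le x-v\le x$ and downward-closedness gives $x-v\in K$. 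Hence $W_K(A+v)\le W_K(A)$, an inequality stable under finite convex combinations; and since $P$ is absolutely continuous every box is a $P$-continuity set, so passing to the weak limit gives $P(A+v)\le P(A)$. Letting $A$ shrink to a point and invoking the Lebesgue differentiation theorem, $f(x+v)\le f(x)$ for a.e.\ $x$, for each fixed $v\ge 0$. A mollification argument then delivers the pointwise version: convolving $f$ with a non-negative smooth kernel gives smooth, everywhere coordinate-wise non-increasing functions converging to $f$ in $L^1_{\mathrm{loc}}$, and their pointwise $\limsup$ as the bandwidth tends to $0$ is a version of $f$ non-increasing in each coordinate. This closes the cycle.

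\textbf{Where the difficulty lies.} The only genuinely delicate step is this last implication: because Definition~\ref{OU mix} is phrased through a \emph{closure}, a monotone density cannot simply be read off a finite mixture, and one is forced to run the translation-injection estimate against \emph{every} uniform distribution on an OU set, transfer it to the weak limit via continuity sets, and then upgrade an a.e.\ monotonicity to a genuine monotone representative of $f$. Everything else is bookkeeping once Lemma~\ref{property_of_OU_set} is available; the only mild care needed is the Lebesgue-null discrepancy between $C_s$ and $\bar C_s$ and the well-definedness of $W_{\bar C_s}$ on $\{s:g(s)>0\}$.
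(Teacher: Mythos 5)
Your argument follows the same architecture as the paper's proof, and the core ideas are correct, but two of your intermediate steps differ from the paper's and you omit one claim the paper addresses. For the forward direction, both you and the paper use the layer-cake identity together with Lemma~\ref{property_of_OU_set} (that $\lambda(\partial C_s)=0$) to obtain~\eqref{equ_Choquet_OU}; where the paper then deduces membership in the closed convex hull by approximating the mixing density $g$ with finitely supported probability measures (so $P$ is a weak limit of finite convex combinations of $W_{\bar C_s}$), you invoke Hahn--Banach separation against $C_b$-test functionals. Your route works, but it requires standing in the right locally convex topology and is heavier machinery than the paper's direct approximation. For the reverse direction, the translation-injection inequality $W_K(A+v)\le W_K(A)$ is the same as the paper's $Q(N_\delta(x'))\ge Q(N_\delta(x))$; the difference is downstream: you derive $f(x+v)\le f(x)$ a.e.\ for each fixed $v$ and then mollify and take a pointwise $\limsup$ to obtain a genuine monotone representative, whereas the paper defines the representative once and for all as $f(x)=\limsup_{\delta\downarrow 0}P(N_\delta(x))/\lambda(N_\delta(x))$ on $\mathcal{D}_0^\circ$, reads off monotonicity directly from the box inequality, and identifies it with a density by Lebesgue differentiation. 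Your mollification detour is valid but sidesteps an ``uncountably many null sets $N_v$'' issue that the paper's construction avoids entirely, so the paper's version is cleaner. Finally, note that the theorem asserts~\eqref{equ_Choquet_OU} is a \emph{Choquet} representation, which requires knowing the $W_{\bar C_s}$ are extreme points of the OU class; the paper closes this by invoking Lemma~\ref{extreme_point_OU}, and your proposal does not address it.
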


In the proof of Theorem \ref{Choquet_OU}, we need the following lemma about the extreme point in the class of OU distributions on $\mathcal{D}_0$ to ensure (\ref{equ_Choquet_OU}) is ineed a Choquet representation.

\begin{lemma}
If $K\subset\mathcal{D}_0$ is an OU set about $x_0$ with $\lambda(K)>0$, then $W_{K}$ is an extreme point in the class of OU distributions on $\mathcal{D}_0$.
\label{extreme_point_OU}
\end{lemma}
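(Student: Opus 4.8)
The plan is to show directly that $W_K$ cannot be written as a nontrivial convex combination of two distinct members of the OU class. So suppose $W_K=\lambda P_1+(1-\lambda)P_2$ with $\lambda\in(0,1)$ and $P_1,P_2$ OU distributions on $\mathcal{D}_0$; the goal is to conclude $P_1=P_2=W_K$. First I would extract the elementary consequences of this identity. Since $\lambda P_i\le W_K$ as measures and $W_K$ has Lebesgue density $c\mathbf{1}_K$ with $c=1/\lambda(K)$ (which is where $\lambda(K)>0$ enters, and where Lemma~\ref{property_of_OU_set} supplies measurability of $K$), each $P_i$ is absolutely continuous with some density $f_i$, with $f_i=0$ a.e.\ on $K^c$ and $\lambda f_1+(1-\lambda)f_2=c\mathbf{1}_K$ a.e. Because $P_i$ has a density and lies in the OU class, I would invoke the density characterization in Theorem~\ref{Choquet_OU} (the direction ``an OU distribution with a density admits a componentwise non-increasing density version on $\mathcal{D}_0$'', which is the part proved without the present lemma; alternatively this follows straight from Definition~\ref{OU mix} by writing $f_i(x)=\int \mathbf{1}_M(x)/\lambda(M)\,d\mu(M)$ and using $\mathbf{1}_M(x')\ge\mathbf{1}_M(x)$ for OU sets $M$ and $x\ge x'\ge x_0$) to fix such monotone versions of $f_1$ and $f_2$. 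Then $h:=\lambda f_1+(1-\lambda)f_2$, computed pointwise, is componentwise non-increasing on $\mathcal{D}_0$ and equals $c\mathbf{1}_K$ a.e.

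Next I would establish two facts about $K$. (i) If $x\in K^\circ$ and $x_0<z\le x$ componentwise, then $z\in K^\circ$; in particular $K^\circ$ is closed under componentwise minimum. The reason is that for $w$ in a small enough box around $z$ one has $w\ge x_0$ and $w\le x+\varepsilon\mathbf{1}$ for a small $\varepsilon\ge 0$ with $x+\varepsilon\mathbf{1}\in K^\circ$, hence $w\in K$ because $K$ is OU. (ii) $h\equiv c$ on $K^\circ$: for $x\in K^\circ$ choose a box $x+(-\delta,\delta)^d\subseteq K^\circ\subseteq K$; since $h=c$ a.e.\ on this box, it takes the value $c$ at some point componentwise above $x$ and at some point componentwise below $x$, and monotonicity of $h$ then forces $h(x)=c$. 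I would also record $\lambda(K^\circ)=\lambda(K)$ from Lemma~\ref{property_of_OU_set}.

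The conclusion then follows quickly. For $x,y\in K^\circ$, let $z=x\wedge y\in K^\circ$ by (i). Using (ii) at $z$ and at $x$, $\lambda f_1(z)+(1-\lambda)f_2(z)=c=\lambda f_1(x)+(1-\lambda)f_2(x)$, while monotonicity gives $f_1(z)\ge f_1(x)$ and $f_2(z)\ge f_2(x)$; subtracting, two nonnegative quantities sum to zero, so $f_1(x)=f_1(z)$, and likewise $f_1(y)=f_1(z)$. Hence $f_1$ is constant, say $\equiv a$, on $K^\circ$; since $f_1=0$ a.e.\ off $K$ and $\lambda(K\setminus K^\circ)=0$, the normalization $\int f_1=1$ forces $a\lambda(K)=1$, i.e.\ $f_1=c\mathbf{1}_K$ a.e., so $P_1=W_K$. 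Plugging this back into $h\equiv c$ on $K^\circ$ gives $f_2\equiv c$ on $K^\circ$ as well, so $P_2=W_K$, and therefore $W_K$ is an extreme point of the OU class.

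I expect the main obstacle to be the ``a.e.\ versus everywhere'' bookkeeping near $\partial K$: fact (ii), asserting that the monotone representative of $c\mathbf{1}_K$ really equals $c$ at every interior point of $K$, must be argued with care, and one must ensure the ``constant on $K^\circ$'' conclusion transfers to an a.e.\ statement on $K$ --- this is exactly where $\lambda(K^\circ)=\lambda(K)$ is used. A second, closely related point is fact (i): that $K^\circ$ is stable under componentwise minimum for an \emph{arbitrary} OU set, not merely a ``staircase''-type region in the plane, which must be deduced from the defining OU property of $K$ in the form above rather than from geometric intuition. Finally, a minor but genuine subtlety is to confirm that the density characterization of Theorem~\ref{Choquet_OU} used here is established independently of this lemma (or to bypass it via the mixture definition, as indicated above), so that no circularity is introduced.
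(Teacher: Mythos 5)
Your argument is correct and coincides with the paper's proof in all essentials: both fix componentwise non-increasing density versions of $P_1$ and $P_2$ via the density characterization established independently in Theorem~\ref{Choquet_OU}, use monotonicity to force those densities to be constant on the interior of $K$ (with $\lambda(K^\circ)=\lambda(K)$ from Lemma~\ref{property_of_OU_set} supplying the needed a.e.\ transfer), and conclude $P_1=P_2=W_K$. The only cosmetic difference is at the end --- you invoke the normalization $\int f_1=1$ directly, while the paper compares the three uniform distributions on a small open set near $x_0$ --- and your explicit facts (i) and (ii) simply spell out bookkeeping the paper's proof leaves implicit.
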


Note that the Choquet representation for OU obviously implies its mixture representation. Comparing the mixture representation of OU with those of star and block unimodality, we also see that the class of OU distributions in fact lies in between star unimodality and block unimodality as stated in the following proposition. In other words, the additional ``differencing" property of block unimodality that is hard to interpret can be viewed as overly stringent and unnecessary if we use the OU notion.


\begin{proposition}\label{inclusion_unimodalities}
For any $x_{0}\in \mathbb{R}^{d}$, the following is true:%
\begin{align*}
&  \{\text{distributions on }\mathcal{D}_0\text{ that is block unimodal about }x_{0}\}\\
\subsetneq & \{\text{distributions on }\mathcal{D}_0\text{ that is OU about }x_{0}\}\\
\subsetneq & \{\text{distributions on }\mathcal{D}_0\text{ that is star unimodal about }x_{0}\}.
\end{align*}

\end{proposition}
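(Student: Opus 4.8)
The plan is to derive the two inclusions directly from the mixture representations in Definitions \ref{star mix}, \ref{block mix} and \ref{OU mix}, and then to establish strictness (for $d\ge 2$; when $d=1$ the three classes all coincide with univariate unimodality) by producing explicit separating distributions. For the first inclusion, I would first observe that a rectangle with edges parallel to the axes that contains $x_0$ and is contained in $\mathcal{D}_0$ must be of the form $[x_0,b]:=\{x:x_0\le x\le b\}$ for some $b\ge x_0$, and such a box is an OU set about $x_0$ (if $x_0\le x'\le x\le b$ then $x'\in[x_0,b]$). Hence every uniform distribution occurring in the mixture representation of block unimodality is a uniform distribution on an OU set; since the OU class is by definition the closed convex hull of the latter uniforms and the closed convex hull is monotone under inclusion, the block unimodal class is contained in the OU class. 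For the second inclusion, I would show that every OU set $K\subseteq\mathcal{D}_0$ is star-shaped about $x_0$: for $x\in K$ and $t\in[0,1]$ the point $x_0+t(x-x_0)$ satisfies $x_0\le x_0+t(x-x_0)\le x$ because $x-x_0\ge 0$, hence lies in $K$ by OU-ness, so the segment from $x$ to $x_0$ is in $K$. Thus every uniform appearing in the mixture representation of the OU class is a uniform on a star-shaped set, and the same monotonicity of the closed convex hull gives the OU class $\subseteq$ the star unimodal class.

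For block unimodality $\subsetneq$ OU, I would invoke the ``differencing'' necessary condition recalled in Section \ref{sec:challenges existing MU}. Take, for $x_0=0$ and $d=2$, the distribution on $\mathbb{R}_+^2$ with continuous density $f(x,y)=c\sqrt{(1-x-y)^{+}}$, where $c$ is a normalizing constant. This $f$ is non-increasing in each coordinate, so by Theorem \ref{Choquet_OU} the distribution is OU about $0$; however, by strict concavity of the square root one can pick $0<x_1<x_2$ and $0<y_1<y_2$ with all four corner points inside the triangle $\{x,y\ge 0,\ x+y\le 1\}$ and with $f(x_1,y_1)-f(x_1,y_2)-f(x_2,y_1)+f(x_2,y_2)<0$, which violates the stated necessary condition, so the distribution is not block unimodal. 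To handle an arbitrary $x_0\in\mathbb{R}^d$ with $d\ge 2$, I would replace this by the product of a translate of the above bivariate density (in coordinates $1,2$, with mode shifted to $(x_{01},x_{02})$) and one-dimensional strictly decreasing densities in coordinates $3,\dots,d$; this product is OU about $x_0$, and since marginalizing a block unimodal distribution onto a coordinate subset again yields a block unimodal distribution (marginals of boxes are boxes), failure of block unimodality of the bivariate marginal propagates to the product.

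For OU $\subsetneq$ star unimodality, I would use the density \eqref{star_density} already exhibited in the paper: the origin lies on the diagonal, so by the calculus assertion made there it is $\alpha$-unimodal with $\alpha=2$, i.e.\ star unimodal, about $x_0=0$; on the other hand a short computation near the origin gives, for small $\epsilon>0$, $f(\epsilon,2\epsilon)<f(2\epsilon,2\epsilon)$ even though $(\epsilon,2\epsilon)\le(2\epsilon,2\epsilon)$, so by Theorem \ref{Choquet_OU} the distribution is not OU about $0$. An arbitrary $x_0\in\mathbb{R}^d$ with $d\ge 2$ is treated as before: translate \eqref{star_density} so that its support is $\mathcal{D}_0$, and for $d\ge 3$ take the product with one-dimensional strictly decreasing densities; this product is star unimodal about $x_0$ because a Cartesian product of sets star-shaped about their respective modes is star-shaped about the joint mode, while its marginal in coordinates $1,2$ is the translated \eqref{star_density}, which is not OU, and marginals of OU distributions are OU (projections of OU sets are OU).

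The two inclusions and the two separating constructions are each short once set up. I expect the only genuinely delicate points to be (i) the verification that \eqref{star_density} is star unimodal about the diagonal modes — which the paper itself relegates to ``routine calculus'' and which I would carry out by checking monotonicity of $t\mapsto f(x_0+tv)$ along each direction $v$ — and (ii) the minor measure-theoretic step of deducing non-OU from a strict pointwise violation of coordinatewise monotonicity of a density, which is clean here because all densities involved are continuous so that Theorem \ref{Choquet_OU} applies to the density itself. Thus the main obstacle is less a single hard argument than organizing the general-$x_0$, general-$d$ reductions cleanly via the product and marginalization observations.
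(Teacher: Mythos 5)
Your proof is correct and follows the same overall structure as the paper's: derive both inclusions from the mixture representations by observing that axis-aligned closed rectangles in $\mathcal{D}_0$ containing $x_0$ are OU sets and that OU sets are star-shaped about $x_0$, then establish strictness by exhibiting separating distributions in the bivariate case with $x_0=0$. For OU $\subsetneq$ star unimodality you use exactly the paper's density \eqref{star_density} and the same device of a pointwise violation of coordinatewise monotonicity (the paper evaluates at $(1,2)$ versus $(2,2)$; you evaluate near the origin, but it is the identical argument). The one genuine departure is your separating example for block $\subsetneq$ OU: the paper takes a piecewise-constant density on $[0,2]^2$, constructed as an explicit convex combination of two uniforms on nested OU sets, and checks the differencing inequality at a specific quadruple chosen among continuity points; you instead take the continuous density $c\sqrt{(1-x-y)^{+}}$ and obtain the violation from strict concavity of $t\mapsto\sqrt{t}$. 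Your example is a bit cleaner since continuity everywhere lets you invoke the differencing condition without singling out continuity points, at the modest cost of an integral normalizing constant rather than elementary fractions. You also spell out two reductions that the paper dismisses in one sentence as ``only the complication of notations'': passing to general $x_0$ by translation, and passing to general $d\ge 2$ by taking products with non-increasing univariate densities and using closure of the block-unimodal, OU, and star-unimodal classes under coordinate marginalization. Finally, you correctly flag that the strict inclusions fail for $d=1$ (where all three notions reduce to univariate unimodality on a half-line), a hypothesis the proposition implicitly carries but does not state.
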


As a notion of multivariate unimodality, Definition \ref{OU def} of OU has been studied in a range of fields with different names such as ``orthounimodal" and ``block decreasing". For example, \cite{devroye1997random} proposes several general algorithms for random vector generation based on the accept-reject algorithm when the sample density is OU. \cite{sager1982nonparametric} proves the existence and consistency of the nonparametric MLE for the probability density under the OU constraint. \cite{polonik1998silhouette} derives a new graphical representation of the nonparametric MLE for the probability density under the OU constraint and proves the equivalence of MLE and a density estimator called ``silhouette". \cite{biau2003risk} studies the minimax lower bound with the $L_1$ distance for estimating an OU density and proposes Birg\'{e}'s multivariate histogram estimate which is minimax optimal. \cite{gao2007entropy} derives upper and lower bounds for the metric entropy and bracketing entropy of the set of bounded OU functions on $[0,1]^d$ under $L_p$ norms. However, none of these works propose the more general Definition \ref{OU mix} that does not require the existence of density. To our best knowledge, we are the first to propose the mixture representation in Definition \ref{OU mix}, prove the equivalence between Definitions \ref{OU def} and \ref{OU mix} in the presence of probability densities (Theorem \ref{Choquet_OU}), and study the relation among star unimodality, block unimodality and OU in general, i.e., without requiring the existence of probability densities (Proposition \ref{inclusion_unimodalities}), as well as the extensions in Appendix \ref{sec:general OU}. Moreover, by the same token, we are also the first to study DRO with the OU constraint.

Finally, before we go to the optimization details in the next section, let us discuss the difficulties and our main contributions on solving DRO with OU constraint on a high level. First of all, there is no off-the-shelf method to solve (an infinite-dimensional) DRO problem with geometric shape constraints. In the literature, e.g., \cite{van2016generalized}, DRO problems with $\alpha$-unimodality are successfully reduced to finite-dimensional semidefinite programs by means of the Choquet representation in (\ref{Choquet_alpha_unimodality}). However, there is a significant difference between the Choquet representation of $\alpha$-unimodal distributions in (\ref{Choquet_alpha_unimodality}) and OU distributions in (\ref{equ_Choquet_OU}), which imposes additional difficulties on the reduction of the OU-DRO problem via the Choquet representation. In (\ref{Choquet_alpha_unimodality}), the uniform distributions $W_{\alpha\text{-}uni}(z)$ are parametrized by $z$ in an obvious way. On the contrary, although the distributions $W_{\bar{C}_{s}}$ in (\ref{equ_Choquet_OU}) is formally parametrized by $s$ (or $\bar{C}_{s}$), we only know $\bar{C}_{s}$ is OU but do not known what $\bar{C}_{s}$ looks like. Moreover, $\bar{C}_{s}$ even depends on the underlying density $f$. So the Choquet representation of OU distributions gives us less information and is not as ready to use as (\ref{Choquet_alpha_unimodality}). This difficulty essentially results from the complication of the extreme points of OU distributions. As Lemma \ref{extreme_point_OU} shows, all the uniform distributions on the OU sets with positive Lebesgue measure are extreme points in the class of OU distributions. However, there is no obvious way to parametrize OU sets, and hence parametrize the extreme points of OU distributions.

In view of the above challenge, our main contribution in terms of optimization methodology is that, in the bivariate case, we succeed in transforming the challenging OU-DRO problem into a finite-dimensional moment problem which can then be solved by methods such as generalized linear programming (GLP). Roughly speaking, we first use Choquet representation (\ref{equ_Choquet_OU}) to rewrite the DRO problem as an optimization problem whose feasible solutions are distributions on OU sets. This optimization problem is still difficult since the space of OU sets is too large (infinite-dimensional). Fortunately, each (closed) OU set in $\mathcal{D}_0$ is uniquely characterized by a left-continuous non-increasing function, which allows us to eliminate many suboptimal OU sets by analyzing a variational problem in the space of these functions. The remaining feasible OU sets then have a finite-dimensional parametrization, which gives us an equivalent finite-dimensional moment problem. Although such reduction can only be done in the bivariate case, note that multivariate extreme event analysis already faces all the drawbacks discussed in Section \ref{sec:challenges EEA multivariate} in the bivariate case.

\section{Orthounimodal Distributionally Robust Optimization}\label{sec:OU-DRO problem}
In this and the next section, we will study the shape-constrained DRO problem (\ref{basic}) where the geometric property in the tail part $\{X\ge u\}$ is characterized by OU. We call this problem orthounimodal distributionally robust optimization (OU-DRO). In the following, we first present the detailed formulation of OU-DRO and the associated rationale (Section \ref{sec:formulation}). Then we analyze its reformulation to a program with decision variables that are distributions on OU sets (Section \ref{sec:general reduction}). Section \ref{sec:opt theory} will continue to present the methodology in further reducing the reformulation to a finite-dimensional moment problem in the bivariate case.

\subsection{Formulation}\label{sec:formulation}
First, as explained in Section \ref{sec:OU}, OU retains the same requirement on $\{X\ge u\}$ regardless of the mode $x_0$ as long as it is less than or equal to $u$. For simplicity, we just choose $x_0=u$ as the mode and the tail part is exactly $\mathcal{D}_0=\{x\ge x_0\}$ (hereafter we will use $x_0$ instead of $u$ when discussing the tail part).
Our OU-DRO problem is formulated as%
\begin{align}
\max\text{ }  &  P((X_{1},\ldots,X_{d})\in S)\nonumber\\
\text{subject to }  &  l_{\bar{F}}\leq\bar{F}(x_{0})\leq u_{\bar{F}}\nonumber\\
&  l_{X_{i}}\leq f_{X_{i}}(x_{i0})\leq u_{X_{i}},i=1,\ldots,d\label{DRO_problem_formulation1}\\
&  a_{i}\bar{F}(x_{0})\leq P(\underline{x}_{ji}\leq X_{j}\leq\bar{x}_{ji},j=1,\ldots,d)\leq b_{i}\bar{F}(x_{0}),i=1,\ldots,n\nonumber\\
&  f(x^{\prime})\ge f(x) \text{ for } x\ge x^{\prime}\ge x_0\nonumber
\end{align}
The decision variable of the problem is the unknown distribution $F$ of the random vector $X$. In the following, we will explain the notations and logic of the formulation (\ref{DRO_problem_formulation1}) and justify its statistical validity.

For the objective probability, as we focus on the extreme event analysis, we assume $S$ is a subset of the tail part $\mathcal{D}_0$. Further, we assume $S$ has the following representation:
\begin{equation}
S=\{(x_{1},\ldots,x_{d})\in\mathcal{D}_{0}:x_{d}\geq g(x_{1},\ldots,x_{d-1})\},\label{form_S}
\end{equation}
for some known function $g:[x_{10},\infty)\times\cdots\times\lbrack x_{(d-1)0},\infty)\mapsto (-\infty, \infty]$. Without loss of generality, we can assume the range of $g$ is $[x_{d0},\infty]$; otherwise we can replace $g$ with $\max(g,x_{d0})$, which will not change the set $S$.

Next we discuss the constraints. The last one is the OU property. The others are auxiliary constraints on $F$ used to reduce conservativeness. Overall, we have two types of auxiliary constraints: density constraints and moment constraints, both of which are used to control the magnitude of the distribution in the tail. Due to the requirement that OU density $f(x)$ is non-increasing in each component of $x$ on $\mathcal{D}_{0}$, we can control the behavior of each component of $X$ in the tail by restricting the marginal densities at $x_{i0},i=1,\ldots,d$. This leads to the following density constraints in (\ref{DRO_problem_formulation1}):
\begin{equation*}
l_{X_{i}}\leq f_{X_{i}}(x_{i0})\leq u_{X_{i}},i=1,\ldots,d,
\end{equation*}
where $f_{X_{i}}(x_{i})$'s are the truncated marginal densities defined by%
\[
f_{X_{i}}(x_{i})=\int_{x_{10}}^{\infty}\cdots\int_{x_{(i-1)0}}^{\infty}\int_{x_{(i+1)0}}^{\infty}\cdots\int_{x_{d0}}^{\infty}f(x_{1},\ldots,x_{d})dx_{1}\cdots dx_{i-1}dx_{i+1}\cdots dx_{d},
\]
and $0\leq$ $l_{X_{i}}\leq u_{X_{i}}$ are constants. On the other hand, the moment constraints are used to control the magnitudes of some tail probabilities which play an important role in making the DRO problem nontrivial and reducing conservativeness (see Proposition \ref{triviality_without_moment_constraints} below). In (\ref{DRO_problem_formulation1}), we use the following moment constraints:
\begin{align}
&  l_{\bar{F}}\leq\bar{F}(x_{0})\leq u_{\bar{F}},\label{tail_probability_constraint}\\
&  a_{i}\bar{F}(x_{0})\leq P(\underline{x}_{ji}\leq X_{j}\leq\bar{x}_{ji},j=1,\ldots,d)\leq b_{i}\bar{F}(x_{0}),i=1,\ldots,n,\label{moment_constraints}
\end{align}
where $\bar{F}$ is the tail distribution function defined by $\bar{F}(x)=P(X\ge x)$ and $0\leq l_{\bar{F}} \leq u_{\bar{F}},0\leq a_{i}\leq b_{i}\leq1,x_{j0}\leq\underline{x}_{ji}\leq\bar{x}_{ji}$ are constants ($\bar{x}_{ji}$ can be infinity). Constraint (\ref{tail_probability_constraint}) controls the probability of the entire tail part and constraint (\ref{moment_constraints}) provides additional information about how the probability mass is distributed in this part. We note that constraint (\ref{moment_constraints}) in fact regards conditional probabilities since it can be written as
\[
a_{i}\leq P(\underline{x}_{ji}\leq X_{j}\leq\bar{x}_{ji},j=1,\ldots,d|X\ge x_0)\leq b_{i},i=1,\ldots,n.
\]
We can also consider constraints for unconditional probabilities as follows:
\begin{equation}
a_{i}\leq P(\underline{x}_{ji}\leq X_{j}\leq\bar{x}_{ji},j=1,\ldots,d)\leq b_{i},i=1,\ldots,n.\label{uncon_moment_constraints}
\end{equation}
However, for ease of illustration, we only consider the form (\ref{moment_constraints}) in most of our subsequent discussion, and will explain how to deal with the constraint (\ref{uncon_moment_constraints}) at the end of Section \ref{sec:opt theory}.

Note that formulation (\ref{DRO_problem_formulation1}) only depends on the values in $\mathcal{D}_{0}$, so we can restrict our attention to truncated distributions on $\mathcal{D}_{0}$. By Lemma \ref{lemma:basic}, the OU-DRO problem (\ref{DRO_problem_formulation1}) provides a statistically valid upper bound on the true probability as long as the constraints in (\ref{DRO_problem_formulation1}) are statistically valid (joint) confidence intervals. We summarize this as:
\begin{corollary}
Suppose that
\[
P(\text{constraints in (\ref{DRO_problem_formulation1}) holds for the true distribution})\geq1-\alpha
\]
for some confidence level $1-\alpha$. Then the optimal value of (\ref{DRO_problem_formulation1}), called $Z^{\ast}$, satisfies%
\[
P(Z^{\ast}\geq Z)\geq1-\alpha,
\]
where $Z$ is the true rare event probability $P((X_{1},\ldots,X_{d})\in S)$. If the condition holds in the asymptotic sense, i.e.,
\begin{equation}
\liminf P(\text{constraints in (\ref{DRO_problem_formulation1}) holds for the true distribution})\geq1-\alpha,\label{constraint requirement}
\end{equation}
then we have
\begin{equation}
\liminf P(Z^{\ast}\geq Z)\geq1-\alpha\label{guarantee specific}
\end{equation}
where $\liminf$ refers to the limit as the data size grows to $\infty$.
\label{OU-DRO_validity}
\end{corollary}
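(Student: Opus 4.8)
The plan is to deduce this directly from Lemma \ref{lemma:basic} together with the observation, noted just before the statement, that feasibility and the objective in (\ref{DRO_problem_formulation1}) depend only on the restriction of $F$ to the tail region $\mathcal{D}_0$. First I would fix the data and let $F$ denote the (random) true distribution, writing $F^\dagger$ for its restriction to $\mathcal{D}_0$, viewed as a sub-probability measure with density on $\mathcal{D}_0$. On the event $E$ that all constraints in (\ref{DRO_problem_formulation1}) -- the marginal density bounds, the tail-probability bound (\ref{tail_probability_constraint}), the moment bounds (\ref{moment_constraints}), and the OU shape condition -- hold for the true distribution, the measure $F^\dagger$ is by definition a feasible point of (\ref{DRO_problem_formulation1}).

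Next I would compare objective values. Because $S \subseteq \mathcal{D}_0$ by assumption (see (\ref{form_S})), the probability $P((X_1,\ldots,X_d)\in S)$ evaluated under $F$ coincides with the same quantity evaluated under $F^\dagger$, so the objective of (\ref{DRO_problem_formulation1}) at the feasible point $F^\dagger$ equals the true value $Z$. Since (\ref{DRO_problem_formulation1}) is a maximization, this yields $Z^{\ast} \geq Z$ on $E$, i.e., $E \subseteq \{Z^{\ast} \geq Z\}$. Monotonicity of probability then gives $P(Z^{\ast} \geq Z) \geq P(E) \geq 1-\alpha$ under the finite-sample hypothesis, which is exactly the instance of Lemma \ref{lemma:basic} specialized to the shape constraint OU and the auxiliary constraints of (\ref{DRO_problem_formulation1}). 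For the asymptotic statement the same containment $E \subseteq \{Z^{\ast} \geq Z\}$ holds at every data size, so taking $\liminf$ on both sides of $P(Z^{\ast} \geq Z) \geq P(E)$ and invoking (\ref{constraint requirement}) gives $\liminf P(Z^{\ast} \geq Z) \geq \liminf P(E) \geq 1-\alpha$, which is (\ref{guarantee specific}).

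There is no substantive obstacle here; the only point requiring a line of care is the truncation reduction -- verifying that passing from $F$ to $F^\dagger$ neither changes the objective (which uses $S \subseteq \mathcal{D}_0$) nor changes whether the constraints hold (immediate, since every constraint in (\ref{DRO_problem_formulation1}) is a statement about $f$, $\bar{F}$, and probabilities of subsets of $\mathcal{D}_0$). One should also note in passing that the hypothesis implicitly presumes the truncated true distribution admits a density, as required for the OU and marginal-density constraints to be meaningful; this is subsumed in the phrase ``the constraints hold for the true distribution,'' so nothing further is needed.
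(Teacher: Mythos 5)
Your argument is correct and matches the paper's, which simply invokes Lemma \ref{lemma:basic}: you have just unpacked that lemma's containment $E \subseteq \{Z^{\ast}\geq Z\}$ and the monotonicity of $P$, and then taken $\liminf$ for the asymptotic version. The extra care you take about truncation to $\mathcal{D}_0$ is a correct and sensible observation but does not change the substance.
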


All the parameters in formulation \eqref{DRO_problem_formulation1} can be readily calibrated using data so that \eqref{constraint requirement} holds and hence also the guarantee \eqref{guarantee specific}. This requires constructing confidence regions for expectation-type quantities and densities that is quite standard in statistics, and hence we delegate this discussion to Appendix \ref{sec:calibration}.



\subsection{Reduction of the Problem}\label{sec:general reduction}
The OU-DRO problem (\ref{DRO_problem_formulation1}) is an infinite-dimensional optimization program, and to proceed we need to reduce it to a tractable form. To begin with, we show that the lower bound density constraints $l_{X_{i}}\leq f_{X_{i}}(x_{i0})$ are redundant. In fact, for a density $f$ which only violates these lower bound density constraints, we can increase its values on $\partial \mathcal{D}_{0}$, i.e., $\cup_{i=1}^{d}\{(x_{1},\ldots,x_{d}):x_{i}=x_{i0},x_{j}\geq x_{j0},j\neq i\}$. Note that such modification will only increase $f_{X_{i}}(x_{i0})$'s but not affect the distribution and the feasibility of the OU constraint so it can change $f$ into a feasible solution. Thus we can remove these constraints and get the following equivalent formulation:%
\begin{align}
\max\text{ }  &  P((X_{1},\ldots,X_{d})\in S)\nonumber\\
\text{subject to }  &  l_{\bar{F}}\leq\bar{F}(x_{0})\leq u_{\bar{F}}\nonumber\\
&  f_{X_{i}}(x_{i0})\leq u_{X_{i}},i=1,\ldots,d\label{DRO_problem_formulation2}\\
&  a_{i}\bar{F}(x_{0})\leq P(\underline{x}_{ji}\leq X_{j}\leq\bar{x}_{ji},j=1,\ldots,d)\leq b_{i}\bar{F}(x_{0}),i=1,\ldots,n\nonumber\\
&  f(x^{\prime})\ge f(x) \text{ for } x\ge x^{\prime}\ge x_0\nonumber
\end{align}
As a byproduct, formulation (\ref{DRO_problem_formulation2}) reveals that, for many choices of the target rare-event set $S$, the moment constraint (\ref{moment_constraints}) is essential to make the OU-DRO problem nontrivial. This is stated by the following proposition.

\begin{proposition}
\label{triviality_without_moment_constraints} Consider the problem (\ref{DRO_problem_formulation2}) without the moment constraint (\ref{moment_constraints}). Suppose $0<u_{\bar{F}}\leq1$ and $u_{X_{i}}>0$. Suppose the rare-event set $S$ satisfies
\[
S=\{(x_{1},\ldots,x_{d})\in\mathcal{D}_{0}:x_{d}\geq g(x_{1},\ldots,x_{d-1})\},
\]
where $g:[x_{10},\infty)\times\cdots\times\lbrack x_{(d-1)0},\infty)\mapsto [x_{d0}, \infty)$ is bounded on compact sets.
Then the optimal value of this problem is $u_{\bar{F}}$.
\end{proposition}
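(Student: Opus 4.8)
The plan is to prove the two matching bounds. The inequality ``optimal value $\le u_{\bar F}$'' is immediate: since $S\subseteq\mathcal D_0$, every feasible distribution satisfies $P((X_{1},\ldots,X_{d})\in S)\le P(X\ge x_{0})=\bar F(x_{0})\le u_{\bar F}$. The content of the proposition is the reverse inequality, which I would obtain by exhibiting an explicit family of feasible distributions whose objective values converge to $u_{\bar F}$ (the supremum need not be attained, but that does not affect the value).

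For the construction I would use uniform distributions on large axis-parallel boxes anchored at $x_{0}$. First fix side lengths $L_{1},\ldots,L_{d-1}$ with $L_{i}\ge u_{\bar F}/u_{X_{i}}$, put $K:=\prod_{i=1}^{d-1}[x_{i0},x_{i0}+L_{i}]$, and let $G:=\sup_{K}g$, which is finite precisely because $g$ is bounded on compact sets. Then for each $L_{d}\ge\max\{u_{\bar F}/u_{X_{d}},\,G-x_{d0}\}$ set $R_{L_{d}}:=K\times[x_{d0},x_{d0}+L_{d}]$ and let $F_{L_{d}}$ be the truncated distribution on $\mathcal D_{0}$ with density equal to the constant $c:=u_{\bar F}/\prod_{i=1}^{d}L_{i}$ on $R_{L_{d}}$ and $0$ elsewhere.

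Next I would verify that each $F_{L_{d}}$ is feasible for (\ref{DRO_problem_formulation2}) without (\ref{moment_constraints}). The box $R_{L_{d}}$ is an order interval with lower vertex $x_{0}$, hence an OU set, so its uniform density is non-increasing in every coordinate on $\mathcal D_{0}$. Its total mass equals $\bar F(x_{0})=u_{\bar F}\in[l_{\bar F},u_{\bar F}]$, so the tail-probability constraint holds. The truncated marginal density at $x_{i0}$ equals $c\prod_{j\ne i}L_{j}=u_{\bar F}/L_{i}\le u_{X_{i}}$ by the choice of $L_{i}$. Finally, for the objective, for each $(x_{1},\ldots,x_{d-1})\in K$ the slice $\{x_{d}\in[x_{d0},x_{d0}+L_{d}]:(x_{1},\ldots,x_{d})\in S\}$ is $[g(x_{1},\ldots,x_{d-1}),x_{d0}+L_{d}]$ (using that $g$ takes values in $[x_{d0},\infty)$ and $g\le G\le x_{d0}+L_{d}$), whose length is at least $L_{d}-(G-x_{d0})$; integrating over $K$ gives $P((X_{1},\ldots,X_{d})\in S)\ge c\,\lambda(K)\,(L_{d}-(G-x_{d0}))=u_{\bar F}\bigl(1-(G-x_{d0})/L_{d}\bigr)$, which tends to $u_{\bar F}$ as $L_{d}\to\infty$. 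Combined with the upper bound this yields optimal value $=u_{\bar F}$.

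All the estimates are routine; the only subtlety, and the single place the hypothesis on $g$ enters, is the order in which parameters are chosen: the first $d-1$ side lengths must be fixed first, large enough to meet the marginal density constraints, so that $G=\sup_{K}g$ is a genuine finite constant, and only afterwards is $L_{d}$ sent to infinity. Without local boundedness of $g$, the set $S$ could omit a non-vanishing fraction of every such box and the value $u_{\bar F}$ would not be approached.
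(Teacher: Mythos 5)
Your proof is correct and takes essentially the same route as the paper's: both exhibit uniform distributions on axis-aligned boxes anchored at $x_0$ with the first $d-1$ side lengths fixed (chosen large enough to satisfy the marginal density constraints) and the last side length sent to infinity, with the local boundedness of $g$ used exactly to guarantee that the fixed cross-section eventually lies entirely below the threshold surface. The only cosmetic difference is that you allow distinct side lengths $L_{1},\ldots,L_{d-1}$ per coordinate whereas the paper uses a common length $m$; the feasibility checks and the limiting argument are the same.
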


Since the choice of the density of a distribution is not unique, we can choose a good one to ease our analysis for further reduction of the problem (\ref{DRO_problem_formulation2}). For our convenience, we add the following constraint to the problem (\ref{DRO_problem_formulation2}):
\begin{equation}
f(x)=\limsup_{y\downarrow x,y\in\mathcal{D}_{0}^{\circ}}f(y)\text{ for }x\in\partial\mathcal{D}_{0}.
\label{regularity_condition_of_density}%
\end{equation}
For a feasible density $f$ of problem (\ref{DRO_problem_formulation2}) that only violates (\ref{regularity_condition_of_density}), we can reset its values on $\partial\mathcal{D}_{0}$ according to (\ref{regularity_condition_of_density}). Such changes will not affect the distribution and the feasibility of the OU constraint so it will not affect the optimal value of the problem (\ref{DRO_problem_formulation2}). Moreover, let us focus on the subproblem with the equality constraint $\bar{F}(x_{0})=c$ instead of $l_{\bar{F}}\leq\bar{F}(x_{0})\leq u_{\bar{F}}$, where $c\in[l_{\bar{F}},u_{\bar{F}}]$ is a fixed positive number (which can be chosen at the end by solving the DRO repeatedly at different $c$ and applying a simple one-dimensional line search). Thus, the OU-DRO problem becomes
\begin{align}
\max\text{ }  &  P((X_{1},\ldots,X_{d})\in S)\nonumber\\
\text{subject to }  &  \bar{F}(x_{0})=c\nonumber\\
&  f_{X_{i}}(x_{i0})\leq u_{X_{i}},i=1,\ldots,d\label{DRO_subproblem_formulation1}\\
&  a_{i}\bar{F}(x_{0})\leq P(\underline{x}_{ji}\leq X_{j}\leq\bar{x}_{ji},j=1,\ldots,d)\leq b_{i}\bar{F}(x_{0}),i=1,\ldots,n\nonumber\\
&  f(x^{\prime})\ge f(x) \text{ for } x\ge x^{\prime}\ge x_0\nonumber\\
&  f(x)=\limsup_{y\downarrow x,y\in\mathcal{D}_{0}^{\circ}}f(y)\text{ for }x\in\partial\mathcal{D}_{0}.\nonumber
\end{align}

Next, Choquet representation (\ref{equ_Choquet_OU}) helps us rewrite problem (\ref{DRO_subproblem_formulation1}) by means of the sets $\bar{C}_{s}$ and the probability density $g(s)$. We introduce some needed notations. For any OU set $K$ about $x_{0}$, we define $K_{i}$ as the slice of $K$ on the plane $x_{i}=x_{i0}$, i.e., $K_{i}=\{(x_{1},\ldots,x_{i-1},x_{i+1},\ldots,x_{d}):(x_{1},\ldots,x_{i-1},x_{i0},x_{i+1},\ldots,x_{d})\in K\}$. For clarity, we write $\lambda_{d}(\cdot)$ and $\lambda_{d-1}(\cdot)$ as the Lebesgue measure on $\mathbb{R}^{d}$ and $\mathbb{R}^{d-1}$ respectively. We define $\lambda_{d-1}(K_{i})/\lambda_{d}(K)=0,i=1,\ldots,d$ if $K=\emptyset$ and define $\lambda_{d}(K^{\prime}\cap K)/\lambda_{d}(K)=0$ for any measurable set $K^{\prime}\subset\mathbb{R}^{d}$ if $\lambda_{d}(K)=0$. By Choquet representation (\ref{equ_Choquet_OU}), we reformulate the problem (\ref{DRO_subproblem_formulation1}) as follows:

\begin{lemma}
\label{conversion_to_OU_set_optimization} Problem (\ref{DRO_subproblem_formulation1}) can be rewritten as%
\begin{align}
\max\text{ }  &  c\int_{0}^{\infty}\frac{\lambda_{d}(S\cap\bar{C}_{s})}{\lambda_{d}(\bar{C}_{s})}g(s)ds\nonumber\\
\text{subject to }  &  \int_{0}^{\infty}\frac{\lambda_{d-1}(\bar{C}_{s,i})}{\lambda_{d}(\bar{C}_{s})}g(s)ds\leq\frac{u_{X_{i}}}{c},i=1,\ldots,d\label{DRO_OU_optimization_MD}\\
&  a_{i}\leq\int_{0}^{\infty}\frac{\lambda_{d}(\{x\in\mathcal{D}_{0}:\underline{x}_{ji}\leq x_{j}\leq\bar{x}_{ji},j=1,\ldots,d\}\cap\bar{C}_{s})}{\lambda_{d}(\bar{C}_{s})}g(s)ds\leq b_{i},i=1,\ldots,n,\nonumber
\end{align}
where $C_{s}=\{x\in\mathcal{D}_{0}:f(x)\geq s\}$, $\bar{C}_{s}$ is the closure of $C_{s}$, $\bar{C}_{s,i}$ is the slice of $\bar{C}_{s}$ on the plane $x_{i}=x_{i0}$, $g(s)=\lambda_{d}(\bar{C}_{s})/c$ is a probability density on $(0,\infty)$ and $f(x)$ is a density on $\mathcal{D}_{0}$ with total mass $c$ and satisfies the last two constraints in the problem (\ref{DRO_subproblem_formulation1}).
\end{lemma}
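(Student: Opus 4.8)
The plan is to show that for each density $f$ feasible for \eqref{DRO_subproblem_formulation1} --- i.e.\ each OU density on $\mathcal D_0$ with total mass $c$ obeying the boundary regularity \eqref{regularity_condition_of_density} --- the objective and every constraint of \eqref{DRO_subproblem_formulation1} equals its counterpart in \eqref{DRO_OU_optimization_MD}. Since the decision variable of \eqref{DRO_OU_optimization_MD} is still this $f$, with $C_s=\{x\in\mathcal D_0:f(x)\ge s\}$, its closure $\bar C_s$, and $g(s)=\lambda_d(\bar C_s)/c$ all derived from it, this pointwise-in-$f$ equivalence is precisely the claim. The only tools I need are the layer-cake identity $f(x)=\int_0^\infty\mathbf{1}\{x\in C_s\}\,ds$ together with Fubini--Tonelli, Lemma \ref{property_of_OU_set} (to replace $C_s$ by $\bar C_s$ without changing Lebesgue measure), and the Choquet viewpoint of Theorem \ref{Choquet_OU} (to read the outputs as mixtures of the uniform laws $W_{\bar C_s}$). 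As $\bar C_s$ decreases in $s$, all the $s$-integrands below are monotone, hence measurable.

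I would first handle the objective, the constraint $\bar F(x_0)=c$, and the moment constraints together. Writing $\mu$ for the restriction of $F$ to $\mathcal D_0$, layer-cake and Tonelli give, for any measurable $B$,
\[
\mu(B)=\int_B f\,d\lambda_d=\int_0^\infty\lambda_d(B\cap C_s)\,ds=\int_0^\infty\lambda_d(B\cap\bar C_s)\,ds=\int_0^\infty W_{\bar C_s}(B)\,\lambda_d(\bar C_s)\,ds,
\]
using $\lambda_d(\bar C_s\setminus C_s)=0$ (Lemma \ref{property_of_OU_set}) in the third equality and the stated convention for $W_{\bar C_s}(B)$ where $\lambda_d(\bar C_s)=0$. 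Taking $B=\mathcal D_0$ gives $\int_0^\infty\lambda_d(\bar C_s)\,ds=\bar F(x_0)$, so the constraint $\bar F(x_0)=c$ is exactly the requirement that $g(s)=\lambda_d(\bar C_s)/c$ be a probability density on $(0,\infty)$, and then $\mu(B)=c\int_0^\infty W_{\bar C_s}(B)g(s)\,ds$. Choosing $B=S$ reproduces the objective of \eqref{DRO_OU_optimization_MD}, and choosing $B=\{x\in\mathcal D_0:\underline x_{ji}\le x_j\le\bar x_{ji},\,j=1,\dots,d\}$ and dividing \eqref{moment_constraints} by $\bar F(x_0)=c$ reproduces its second block of constraints.

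The density constraints $f_{X_i}(x_{i0})\le u_{X_i}$ are the crux. Layer-cake applied to $x_{-i}\mapsto f(x_1,\dots,x_{i-1},x_{i0},x_{i+1},\dots,x_d)$ and Tonelli give $f_{X_i}(x_{i0})=\int_0^\infty\lambda_{d-1}(C_{s,i})\,ds$, where $C_{s,i}$ is the slice of $C_s$ on $\{x_i=x_{i0}\}$, and the task is to show $\lambda_{d-1}(C_{s,i})=\lambda_{d-1}(\bar C_{s,i})$ for a.e.\ $s$ --- once this holds, $f_{X_i}(x_{i0})=\int_0^\infty\lambda_{d-1}(\bar C_{s,i})\,ds=c\int_0^\infty\frac{\lambda_{d-1}(\bar C_{s,i})}{\lambda_d(\bar C_s)}g(s)\,ds$ and the constraint becomes the first block of \eqref{DRO_OU_optimization_MD}. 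I would argue this in two regimes. For $s$ at or below the essential supremum of $f$: monotonicity of $f$ in each coordinate makes its restriction to $\{x_i=x_{i0}\}$ OU in the remaining $d-1$ coordinates, so $C_{s,i}$ is itself a $(d-1)$-dimensional OU set about $x_0^{(i)}$; the same monotonicity gives $\bar C_{s,i}\subseteq\overline{C_{s,i}}$ (if $(w,x_{i0})\in\bar C_s$ is a limit of $(w^{(n)},t_n)\in C_s$, lowering the $i$-th coordinate from $t_n\ge x_{i0}$ to $x_{i0}$ preserves $f\ge s$, so $w^{(n)}\in C_{s,i}$ and $w^{(n)}\to w$), hence Lemma \ref{property_of_OU_set} used \emph{in dimension} $d-1$ yields $\lambda_{d-1}(\bar C_{s,i})\le\lambda_{d-1}(\overline{C_{s,i}})=\lambda_{d-1}(C_{s,i})$, the reverse inequality being trivial. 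For $s$ above the essential supremum of $f$ --- exactly the levels with $\lambda_d(C_s)=0$ --- I would invoke \eqref{regularity_condition_of_density} to show $C_s=\emptyset$: an interior point of $C_s$ would force $\lambda_d(C_s)>0$ through the box $[x_0,x^*]$, while a boundary point of $C_s$ would, via \eqref{regularity_condition_of_density}, be approachable by interior points with $f>s'$ for every $s'<s$, forcing $\lambda_d(C_{s'})>0$ for all such $s'$ and contradicting $s$ being above the essential supremum; thus $\lambda_{d-1}(\bar C_{s,i})=0$ there, matching $\lambda_{d-1}(C_{s,i})=0$. (Without \eqref{regularity_condition_of_density} a boundary ``spike'' of $f$ would make $f_{X_i}(x_{i0})$ strictly exceed $\int_0^\infty\lambda_{d-1}(\bar C_{s,i})\,ds$, so the reformulation would genuinely fail.)

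Collecting the four translations, and noting that OU-ness of $f$ and \eqref{regularity_condition_of_density} are exactly the standing hypotheses under which $C_s,\bar C_s,g$ are defined in \eqref{DRO_OU_optimization_MD}, the two programs have the same feasible densities $f$ and equal objective on each, so \eqref{DRO_subproblem_formulation1} and \eqref{DRO_OU_optimization_MD} are the same problem. I expect the density-constraint step to be the genuine obstacle: it is the only place one drops a dimension, must recognize the slice as a lower-dimensional OU set in order to reuse Lemma \ref{property_of_OU_set}, and must bring in the regularity condition to control the levels past the essential supremum of $f$; everything else reduces to the layer-cake identity and $\lambda_d(C_s)=\lambda_d(\bar C_s)$.
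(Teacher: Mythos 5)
Your proof is correct and follows essentially the same route as the paper: layer-cake plus Fubini--Tonelli, Lemma \ref{property_of_OU_set} applied in dimensions $d$ and $d-1$, the slice inclusion $\bar{C}_{s,i}\subseteq\overline{C_{s,i}}$, and the regularity condition \eqref{regularity_condition_of_density} to force $C_s=\emptyset$ at levels above the essential supremum (the paper packages this via $s_0=\sup\{s:\lambda_d(C_s)>0\}$). One small quibble on your final parenthetical: by your own slice-inclusion argument $\lambda_{d-1}(C_{s,i})=\lambda_{d-1}(\bar C_{s,i})$ for all $s$, so $f_{X_i}(x_{i0})$ never ``strictly exceeds'' $\int_0^\infty\lambda_{d-1}(\bar C_{s,i})\,ds$; the actual reason regularity is indispensable is that at levels where $\bar C_s$ is nonempty but Lebesgue-null the integrand $\frac{\lambda_{d-1}(\bar C_{s,i})}{\lambda_d(\bar C_s)}g(s)$ in \eqref{DRO_OU_optimization_MD} is an $\infty\cdot 0$ form that the paper's convention resolves only when $\bar C_s=\emptyset$.
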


In the above problem, both $\{\bar{C}_{s},s>0\}$ and $g(s)$ are generated by $f$ and they possess some special structures, e.g., $\{\bar{C}_{s},s>0\}$ is a sequence of non-increasing closed OU sets. These structures are not easy to fully characterize and thus impose difficulties on solving problem (\ref{DRO_subproblem_formulation1}). So the next key step is to disentangle the dependence of $\{\bar{C}_{s},s>0\}$ and $g(s)$ on the density $f$ and generalize the choices of $\{\bar{C}_{s},s>0\}$ and $g(s)$. We have the following lemma:
\begin{lemma}\label{lemma_general_OU_optimization}
Consider the optimization problem
\begin{align}
\max\text{ }  &  c\int_{0}^{\infty}\frac{\lambda_{d}(S\cap R_{s})}{\lambda_{d}(R_{s})}dG(s)\nonumber\\
\text{subject to }  &  \int_{0}^{\infty}\frac{\lambda_{d-1}(R_{s,i})}{\lambda_{d}(R_{s})}dG(s)\leq\frac{u_{X_{i}}}{c},i=1,\ldots,d\label{general_OU_optimization_MD}\\
&  a_{i}\leq\int_{0}^{\infty}\frac{\lambda_{d}(\{x\in\mathcal{D}_{0}:\underline{x}_{ji}\leq x_{j}\leq\bar{x}_{ji},j=1,\ldots,d\}\cap R_{s})}{\lambda_{d}(R_{s})}dG(s)\leq b_{i},i=1,\ldots,n\nonumber\\
&  \text{all the integrands are measurable}\nonumber
\end{align}
with the decision variables $\{R_{s}\subset\mathcal{D}_{0},s>0\}$ and $G(s)$, where $\{R_{s},s>0\}$ is a sequence of closed OU sets about $x_0$ satisfying $\lambda_d(R_{s})\in(0,\infty),\lambda_{d-1}(R_{s,i})\in(0,\infty)$ for any $i=1,\ldots,d$ and $s>0$, and $G(s)$ is a probability distribution on the index set $(0,\infty)$. Then the optimal value of problem (\ref{general_OU_optimization_MD}) is not less than the optimal value of problem (\ref{DRO_OU_optimization_MD}).
\end{lemma}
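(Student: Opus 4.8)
The plan is to exhibit, for every feasible point of problem (\ref{DRO_OU_optimization_MD}), a feasible point of problem (\ref{general_OU_optimization_MD}) with the same objective value; passing to suprema then gives the asserted inequality. (If (\ref{DRO_OU_optimization_MD}) is infeasible there is nothing to prove.) So I would fix a density $f$ that is feasible for (\ref{DRO_OU_optimization_MD}), i.e.\ a feasible density for (\ref{DRO_subproblem_formulation1}), and set $C_s=\{x\in\mathcal D_0:f(x)\ge s\}$, $\bar C_s$ its closure, and $g(s)=\lambda_d(\bar C_s)/c$ as in Lemma \ref{conversion_to_OU_set_optimization}. Since $\lambda_d(\bar C_s)=\lambda_d(C_s)$ is nonincreasing in $s$ and $\int_0^\infty\lambda_d(\bar C_s)\,ds=c>0$, the cutoff $\bar s:=\sup\{s>0:\lambda_d(\bar C_s)>0\}$ lies in $(0,\infty]$, and $g$ vanishes on $[\bar s,\infty)$.

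First I would verify that for every $s\in(0,\bar s)$ the set $\bar C_s$ meets all the structural requirements placed on the $R_s$ in (\ref{general_OU_optimization_MD}). Because $f$ is OU, $C_s$ is an OU set, so by Lemma \ref{property_of_OU_set} its closure $\bar C_s$ is a closed OU set about $x_0$ with $\lambda_d(\bar C_s)=\lambda_d(C_s)$; this quantity is $>0$ by the definition of $\bar s$ and monotonicity, and $\le c/s<\infty$ by Markov's inequality applied to $f$. For the slices $\bar C_{s,i}$, feasibility of $f$ in the $i$-th density constraint of (\ref{DRO_OU_optimization_MD}), together with $g(s)=\lambda_d(\bar C_s)/c$, yields $\int_0^\infty\lambda_{d-1}(\bar C_{s,i})\,ds\le u_{X_i}<\infty$, and since $s\mapsto\lambda_{d-1}(\bar C_{s,i})$ is nonincreasing it is therefore finite for every $s>0$. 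Positivity of $\lambda_{d-1}(\bar C_{s,i})$ is the only genuinely geometric point: since $\lambda_d(\bar C_s)>0$ and $\partial\mathcal D_0$ is Lebesgue null, $\bar C_s$ contains some $y$ with $y_i>x_{i0}$ for all $i$, and OU-ness then forces the whole box $\{z:x_0\le z\le y\}\subseteq\bar C_s$, whose slice on $\{x_i=x_{i0}\}$ is a nondegenerate $(d-1)$-box contained in $\bar C_{s,i}$.

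Next I would assemble the candidate: put $R_s:=\bar C_s$ for $s\in(0,\bar s)$ and, if $\bar s<\infty$, $R_s:=\{z:x_{i0}\le z_i\le x_{i0}+1,\ i=1,\dots,d\}$ for $s\ge\bar s$ (any fixed closed OU set whose $\lambda_d$-volume and all of whose $\lambda_{d-1}$-slices lie in $(0,\infty)$ serves), and let $G$ be the probability distribution on $(0,\infty)$ with density $g$, which is legitimate since $\int_0^\infty g=1$ and, moreover, $G$ is concentrated on $(0,\bar s)$. Measurability of the integrands on $(0,\bar s)$ is inherited from the proof of Lemma \ref{conversion_to_OU_set_optimization}, and on $[\bar s,\infty)$ the family $R_s$ is constant, so all integrands in (\ref{general_OU_optimization_MD}) are measurable. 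Since $dG$ and $g$ both assign zero mass to $[\bar s,\infty)$, each integral $\int_0^\infty(\cdot)\,dG(s)$ in (\ref{general_OU_optimization_MD}) equals the corresponding $\int_0^\infty(\cdot)\,g(s)\,ds$ in (\ref{DRO_OU_optimization_MD}); hence $(\{R_s\},G)$ satisfies exactly the constraints satisfied by $f$ and attains the same objective value. Taking suprema over feasible $f$ then shows the optimal value of (\ref{general_OU_optimization_MD}) is at least that of (\ref{DRO_OU_optimization_MD}).

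The step I expect to be the main obstacle is the mismatch between the two problems at degenerate thresholds: (\ref{DRO_OU_optimization_MD}) tolerates $\lambda_d(\bar C_s)=0$ (and slices of measure $0$ or $\infty$) through the stated conventions, whereas (\ref{general_OU_optimization_MD}) insists that $\lambda_d(R_s)$ and every $\lambda_{d-1}(R_{s,i})$ lie strictly in $(0,\infty)$. Reconciling this is precisely what forces the cutoff at $\bar s$, the dummy-box patch on $[\bar s,\infty)$, and the geometric fact that a positive-volume OU set has positive-volume axis-parallel slices; once these are in place the rest is bookkeeping to confirm that the objective and constraint integrals are unchanged.
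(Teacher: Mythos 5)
Your proof is correct, and it follows the same overall scheme as the paper's: fix a density $f$ feasible for (\ref{DRO_OU_optimization_MD}), form the super-level sets $\bar C_s$ and the induced mixing density $g$, and show that on the interval $(0,s_0)$ (your $\bar s$) the family $\bar C_s$ satisfies the structural conditions $\lambda_d(\bar C_s),\lambda_{d-1}(\bar C_{s,i})\in(0,\infty)$ required of $R_s$ in (\ref{general_OU_optimization_MD}). Your finiteness argument (integral bound plus monotonicity) is the paper's contradiction argument in different clothing, and your positivity argument (positive-measure OU set must contain a full box off $\partial\mathcal D_0$, whose axis-slices are nondegenerate) is equivalent to the paper's observation that $\bar C_s\subset\mathbb R\times\bar C_{s,1}$; both are sound.

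Where you genuinely diverge from the paper is in handling the index-set mismatch on $[s_0,\infty)$, which is the delicate point you correctly identified. The paper reparametrizes $(0,s_0)\to(0,\infty)$ via $u=\tan(\pi s/2s_0)$, transporting both the sets $\bar C_s$ and the density $g$ so that the whole family $\{R_s,\ s>0\}$ is defined with no degeneracies, and a change-of-variables identity shows the objective and constraints are preserved. You instead keep $R_s=\bar C_s$ on $(0,\bar s)$, pad the tail $[\bar s,\infty)$ with a fixed nondegenerate box, and observe that since $G$ puts no mass there, all integrals agree. Both are valid; your patch is arguably more elementary and avoids the slightly fiddly change-of-variables bookkeeping, while the paper's reparametrization has the aesthetic advantage of producing an $\{R_s\}$ family that is entirely generated from $f$ without any ad-hoc filler. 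One trivial slip: $g$ vanishes on $(\bar s,\infty)$, not necessarily on $[\bar s,\infty)$, but since $G$ is absolutely continuous the single point $\{\bar s\}$ has $G$-measure zero, so the conclusion is unaffected.
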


If we can solve problem (\ref{general_OU_optimization_MD}) and show its optimal solution is also feasible to problem (\ref{DRO_OU_optimization_MD}), then we know that the optimal values of both problems are the same and thus problem (\ref{DRO_OU_optimization_MD}) is also solved. Section \ref{sec:opt theory} shows that it is possible to solve problem (\ref{general_OU_optimization_MD}) in the bivarate case, which is our next focus.

\section{Reduction to Tractable Form for the Bivariate Case}\label{sec:opt theory}

In this section, we will show how to reduce problem (\ref{general_OU_optimization_MD}) to a finite-dimensional moment problem for the bivariate case. To simplify notations, we will use $(X,Y)$, $(x,y)$, $(x_{0},y_{0})$ instead of $(X_{1},X_{2})$, $(x_{1},x_{2})$, $(x_{10},x_{20})$. For reference, we explicitly write the DRO problem when $d=2$:%
\begin{align}
\max\text{ }  &  P((X,Y)\in S)\nonumber\\
\text{subject to }  &  \bar{F}(x_{0},y_{0})=c\nonumber\\
&  f_{X}(x_{0})\leq u_{X}\nonumber\\
&  f_{Y}(x_{0})\leq u_{Y}\label{DRO_problem_2D}\\
&  a_{i}\bar{F}(x_{0},y_{0})\leq P(x_{1i}\leq X\leq x_{2i},y_{1i}\leq Y\leq y_{2i})\leq b_{i}\bar{F}(x_{0},y_{0}),i=1,\ldots,n\nonumber\\
&  f(x^{\prime},y^{\prime})\geq f(x,y)\text{ if }x_{0}\leq x^{\prime}\leq x\text{ and }y_{0}\leq y^{\prime}\leq y,\nonumber
\end{align}
where $c\in\lbrack l_{\bar{F}},u_{\bar{F}}]$ is a fixed positive number. For any OU set $K\subset\mathcal{D}_{0}$ about $(x_{0},y_{0})$, we define $K^{X}=\sup\{x:(x,y_{0})\in K\}-x_{0}$ and $K^{Y}=\sup\{y:(x_{0},y)\in K\}-y_{0}$ as the $x$-intercept and $y$-intercept of $K$ within the domain $\mathcal{D}_{0}$ respectively. In the bivariate case, Lemma \ref{lemma_general_OU_optimization} reduces to the following corollary.
\begin{corollary}\label{cor_general_OU_optimization_2D}
The optimal value of problem (\ref{DRO_problem_2D}) is not greater than the optimal value of the following problem:
\begin{align}
\max\text{ }  &  c\int_{0}^{\infty}\frac{\lambda(S\cap R_{s})}{\lambda(R_{s})}dG(s)\nonumber\\
\text{subject to }  &  \int_{0}^{\infty}\frac{R_{s}^{Y}}{\lambda(R_{s})}dG(s)\leq\frac{u_{X}}{c}\nonumber\\
&  \int_{0}^{\infty}\frac{R_{s}^{X}}{\lambda(R_{s})}dG(s)\leq\frac{u_{Y}}{c}\label{general_OU_optimization_2D}\\
&  a_{i}\leq\int_{0}^{\infty}\frac{\lambda(\{(x,y):x_{1i}\leq x\leq x_{2i},y_{1i}\leq y\leq y_{2i}\}\cap R_{s})}{\lambda(R_{s})}dG(s)\leq b_{i},i=1,\ldots,n\nonumber\\
&  \text{all the integrands are measurable}\nonumber
\end{align}
where the decision variables are the sequence of closed OU sets $\{R_{s}\subset\mathcal{D}_{0},s>0\}$ and the distribution $G(s)$ on the index set $(0,\infty)$, and moreover $R_{s}$ satisfies $\lambda(R_{s})\in (0,\infty),R_{s}^{X}\in (0,\infty),R_{s}^{Y}\in (0,\infty)$ for any $s>0$.
\end{corollary}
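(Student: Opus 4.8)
The plan is to derive Corollary \ref{cor_general_OU_optimization_2D} by specializing Lemma \ref{conversion_to_OU_set_optimization} and Lemma \ref{lemma_general_OU_optimization} to $d=2$; the only genuine content is the translation of the $(d{-}1)$-dimensional slice quantities in the general reduction into the scalar intercepts $R_s^X,R_s^Y$. First I would note that problem (\ref{DRO_problem_2D}) is exactly problem (\ref{DRO_subproblem_formulation1}) with $d=2$ under the renaming $(X_1,X_2)\mapsto(X,Y)$ and $(\underline x_{1i},\bar x_{1i},\underline x_{2i},\bar x_{2i})\mapsto(x_{1i},x_{2i},y_{1i},y_{2i})$; hence by Lemma \ref{conversion_to_OU_set_optimization} it has the same optimal value as problem (\ref{DRO_OU_optimization_MD}) with $d=2$.

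Next I would match the constraints of (\ref{DRO_OU_optimization_MD}) at $d=2$ with those of (\ref{general_OU_optimization_2D}). For a nonempty OU set $K\subset\mathcal D_0$ about $(x_0,y_0)$, the OU property applied to the relations $(x,y_0)\ge(x',y_0)\ge(x_0,y_0)$ shows that $\{x:(x,y_0)\in K\}$ is an interval with left endpoint $x_0$, so $\lambda_1(K_2)=\sup\{x:(x,y_0)\in K\}-x_0=K^X$, and symmetrically $\lambda_1(K_1)=K^Y$ (endpoint inclusion is irrelevant for Lebesgue measure). Substituting $\lambda_1(\bar C_{s,1})=\bar C_s^Y$ into the density constraint indexed $i=1$, whose right-hand side is $u_{X_1}/c=u_X/c$, gives the first constraint of (\ref{general_OU_optimization_2D}), and substituting $\lambda_1(\bar C_{s,2})=\bar C_s^X$ into the constraint indexed $i=2$ gives the second; the objective and the moment constraints of (\ref{DRO_OU_optimization_MD}) at $d=2$ coincide with those of (\ref{general_OU_optimization_2D}) after the same renaming. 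Thus problem (\ref{DRO_OU_optimization_MD}) at $d=2$ is the instance of (\ref{general_OU_optimization_MD}) at $d=2$ in which one further requires $G$ to be absolutely continuous with density $g(s)=\lambda(\bar C_s)/c$ and the family $\{R_s\}$ to consist of the closures of the super-level sets of an admissible density; in particular the conditions $\lambda_2(R_s)\in(0,\infty)$ and $\lambda_1(R_{s,i})\in(0,\infty)$ become exactly $\lambda(R_s)\in(0,\infty)$ and $R_s^X,R_s^Y\in(0,\infty)$.

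Finally I would invoke Lemma \ref{lemma_general_OU_optimization} with $d=2$: relaxing $g(s)\,ds$ to an arbitrary probability distribution $G$ on $(0,\infty)$ and relaxing the family $\{\bar C_s\}$ to an arbitrary family of closed OU sets about $(x_0,y_0)$ with positive finite area and positive finite intercepts cannot decrease the optimal value. Chaining the three observations yields that the optimal value of (\ref{DRO_problem_2D}) equals that of (\ref{DRO_OU_optimization_MD}) at $d=2$, which is at most that of (\ref{general_OU_optimization_2D}), as claimed. I do not expect a real obstacle here; the only step deserving attention is confirming the ``crossing'' identifications $\lambda_1(\bar C_{s,1})=\bar C_s^Y$ and $\lambda_1(\bar C_{s,2})=\bar C_s^X$ — so that the bound on the $X$-marginal density is governed by the $Y$-intercept and vice versa — which is immediate once the relevant slices are recognized as intervals based at the mode.
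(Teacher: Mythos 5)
Your proposal is correct and follows the same route the paper's one-line proof implicitly takes: specialize Lemma \ref{conversion_to_OU_set_optimization} and Lemma \ref{lemma_general_OU_optimization} to $d=2$ and translate the slice measures $\lambda_1(\bar C_{s,1}),\lambda_1(\bar C_{s,2})$ into the intercepts $\bar C_s^Y,\bar C_s^X$ (the ``crossing'' you flag is real and you got it the right way around). The only wording nit is that problem (\ref{DRO_problem_2D}) as written omits the regularity constraint $f(x)=\limsup_{y\downarrow x,\,y\in\mathcal D_0^\circ}f(y)$ on $\partial\mathcal D_0$ that appears in (\ref{DRO_subproblem_formulation1}); the paper's text immediately before (\ref{DRO_subproblem_formulation1}) explains that adding this constraint does not change the optimal value, so the two problems are equivalent rather than literally identical, which is all the corollary needs.
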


We aim to solve problem (\ref{general_OU_optimization_2D}) and show that problems (\ref{DRO_problem_2D}) and (\ref{general_OU_optimization_2D}) have the same optimal value. As explained at the end of Section \ref{sec:OU}, our main idea to solve the problem (\ref{general_OU_optimization_2D}) is to eliminate suboptimal OU sets and show the remaining OU sets have a finite-dimensional parametrization. To be more specific, for any closed OU set $R_{0}$ satisfying $\lambda(R_0)\in (0,\infty),R_0^{X}\in (0,\infty),R_0^{Y}\in (0,\infty)$, we will find an alternative closed OU set $\tilde{R}$ (called the \emph{dominating OU set} of $R_{0}$) such that by replacing $R_{0}$ with $\tilde{R}$, the constraints in (\ref{general_OU_optimization_2D}) are still satisfied and moreover the objective value is at least as good. In other words, we want $\tilde{R}$ to satisfy $\lambda(\tilde{R})=\lambda(R_{0})$, $\tilde{R}^{X}\leq R_{0}^{X}$, $\tilde{R}^{Y}\leq R_{0}^{Y}$, $\lambda(\{(x,y):x_{1i}\leq x\leq x_{2i},y_{1i}\leq y\leq y_{2i}\}\cap\tilde{R})=\lambda(\{(x,y):x_{1i}\leq x\leq x_{2i},y_{1i}\leq y\leq y_{2i}\}\cap R_{0})$ for $i=1,\ldots,n$ and  $\lambda(S\cap\tilde{R})\geq\lambda(S\cap R_{0})$. Thus, instead of considering all the closed OU sets in problem (\ref{general_OU_optimization_2D}), it suffices to consider all the dominating OU sets $\tilde{R}$ obtained in this way. Besides, we will show $\tilde{R}$ has a finite-dimensional characterization, which reduces problem (\ref{general_OU_optimization_2D}) to a finite-dimensional moment problem.

Now, let us explain how to obtain $\tilde{R}$. Since $R_{0}$ is a closed OU set, it can be represented by%
\[
R_{0}=\{(x,y):y_{0}\leq y\leq h_{0}(x),x_{0}\leq x\leq x_{0}+R_{0}^{X}\}
\]
for some non-increasing left-continuous function $h_{0}:[x_{0},x_{0}+R_{0}^{X}]\mapsto\lbrack y_{0},y_{0}+R_{0}^{Y}]$ with $h_{0}(x_{0})=y_{0}+R_{0}^{Y}$. We define $h_{0}^{-1}(y)=\sup\{x:h_{0}(x)\geq y\}$ for $y$ such that $\{x:h_{0}(x)\geq y\}\neq\emptyset$. We sort $x_{0},x_{0}+R_{0}^{X},x_{1i},x_{2i},h_{0}^{-1}(y_{1i})$ and $h_{0}^{-1}(y_{2i})$ (only consider $x_{2i}<\infty$ and $h_{0}^{-1}(y_{1i}),h_{0}^{-1}(y_{2i})$ that are well-defined) and only keep one if some numbers are repeated. Suppose the sorted sequence is $x_{0}<x_{1}<\cdots<x_{n_{R_{0}}}<x_{n_{R_{0}}+1}=x_{0}+R_{0}^{X}$, where $n_{R_{0}}$ is the number of distinct values (except $x_0$ and $x_0+R_{0}^{X}$) satisfying $n_{R_{0}}\leq4n$ in general. Since we fix $R_{0}$, these $x_{i}$'s and $h_{0}(x)$ are also fixed. Now we consider the following constraints of a function $h:[x_{0},x_{0}+R_{0}^{X}]\mapsto\lbrack y_{0},y_{0}+R_{0}^{Y}]$:%
\begin{equation}
\left\{
\begin{array}
[c]{l}%
\int_{x_{i}}^{x_{i+1}}(h(x)-y_{0})dx=\lambda(\{(x,y):x_{i}\leq x\leq x_{i+1},y\geq y_{0}\}\cap R_{0}),i=0,1,\ldots,n_{R_{0}}\\
h_{0}(x_{i+1})\leq h(x)\leq h_{0}(x_{i}+),x\in(x_{i},x_{i+1}],i=0,1,\ldots,n_{R_{0}}\\
h\text{ is non-increasing and left-continuous with }h(x_{0})=h(x_{0}+),
\end{array}
\right.  \label{reduction_of_constraints}%
\end{equation}
where $h(x+)$ is the right limit of the function $h$ at $x$. Corresponding to this function $h$, we can define a closed OU set%
\begin{equation}
R:=\{(x,y):y_{0}\leq y\leq h(x),x_{0}\leq x\leq x_{0}+R_{0}^{X}\}.
\label{definition_R}%
\end{equation}
Then we have the following claim.

\begin{lemma}
\label{lemma_reduction_of_constraints} If an OU set $R$ is defined by (\ref{definition_R}) with $h(x)$ satisfying the constraints (\ref{reduction_of_constraints}), then $R$ satisfies $\lambda(R)=\lambda(R_{0}),R^{X}\leq R_{0}^{X}, R^{Y}\leq R_{0}^{Y}$ and $\lambda(\{(x,y):x_{1i}\leq x\leq x_{2i},y_{1i}\leq y\leq y_{2i}\}\cap R)=\lambda(\{(x,y):x_{1i}\leq x\leq x_{2i},y_{1i}\leq y\leq y_{2i}\}\cap R_{0})$ for $i=1,\ldots,n$.
\end{lemma}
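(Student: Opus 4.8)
The plan is to read off all four assertions from the three groups of conditions in (\ref{reduction_of_constraints}), using the area-matching equalities (first line) and the two-sided ``squeezing'' bounds (second line) as the main tools, together with the fact that $R$ and $R_0$ both lie in $\mathcal{D}_0$. Write the partition as $x_0<x_1<\cdots<x_{n_{R_0}}<x_{n_{R_0}+1}=x_0+R_0^X$. For either $\varphi=h$ or $\varphi=h_0$, since $R,R_0\subset\mathcal{D}_0$ the region under $\varphi$ over $[x_j,x_{j+1}]$ is exactly the slice of the corresponding OU set there; hence $\lambda(\{x_j\le x\le x_{j+1},\,y\ge y_0\}\cap R_0)=\int_{x_j}^{x_{j+1}}(h_0(x)-y_0)\,dx$, and the first line of (\ref{reduction_of_constraints}) is equivalent to $\int_{x_j}^{x_{j+1}}h(x)\,dx=\int_{x_j}^{x_{j+1}}h_0(x)\,dx$ for each $j$.

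\emph{Volume and intercepts.} Splitting $\lambda(R)=\sum_{j=0}^{n_{R_0}}\int_{x_j}^{x_{j+1}}(h(x)-y_0)\,dx$ and using the area-matching equality on each summand gives $\lambda(R)=\lambda(R_0)$. Since $h\ge y_0$ on $[x_0,x_0+R_0^X]$, every point $(x,y_0)$ with $x$ in that interval lies in $R$, so $R^X=R_0^X$, in particular $R^X\le R_0^X$. For the vertical intercept, $R^Y=h(x_0)-y_0$; the third line of (\ref{reduction_of_constraints}) gives $h(x_0)=h(x_0+)$, letting $x\downarrow x_0$ in the second line on $(x_0,x_1]$ gives $h(x_0+)\le h_0(x_0+)$, and monotonicity of $h_0$ gives $h_0(x_0+)\le h_0(x_0)=y_0+R_0^Y$; hence $R^Y\le R_0^Y$.

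\emph{Matching the rectangle areas.} This is the core step. Fix $i$ and let $B_i=\{x_{1i}\le x\le x_{2i},\,y_{1i}\le y\le y_{2i}\}$. Because $x_{1i}$ and (when finite) $x_{2i}$ are partition points, each subinterval $[x_j,x_{j+1}]$ is either inside the $x$-range of $B_i$ or disjoint from its interior, so it suffices to show that each subinterval inside that $x$-range contributes equally to $\lambda(B_i\cap R)$ and $\lambda(B_i\cap R_0)$, the contribution for an OU set under $\varphi$ being $\int_{x_j}^{x_{j+1}}\max\!\big(0,\min(y_{2i},\varphi(x))-y_{1i}\big)\,dx$. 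Since $h_0^{-1}(y_{1i})$ and $h_0^{-1}(y_{2i})$ (when well-defined) are also partition points and $h_0^{-1}(y_{2i})\le h_0^{-1}(y_{1i})$, the definition of $h_0^{-1}$ and the left-continuity of $h_0$ force $h_0$ on each subinterval to lie entirely in one of the three regimes $h_0\ge y_{2i}$, $y_{1i}\le h_0\le y_{2i}$, or $h_0\le y_{1i}$. The squeezing bound $h_0(x_{j+1})\le h(x)\le h_0(x_j+)$ then puts $h$ in the same regime on that subinterval. In the first regime both integrands equal the constant $y_{2i}-y_{1i}$ and in the third both vanish, so the two contributions coincide; in the middle regime the integrands reduce to $h(x)-y_{1i}$ and $h_0(x)-y_{1i}$, whose integrals agree because $\int_{x_j}^{x_{j+1}}h(x)\,dx=\int_{x_j}^{x_{j+1}}h_0(x)\,dx$. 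Summing over $j$ gives $\lambda(B_i\cap R)=\lambda(B_i\cap R_0)$ for each $i$.

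\emph{Main obstacle.} The delicate point is the regime classification: verifying, with the correct one-sided limits at the partition points, that inserting $h_0^{-1}(y_{1i})$ and $h_0^{-1}(y_{2i})$ into the partition keeps both $h_0$ and (via the squeeze) $h$ on a single side of each of the levels $y_{1i},y_{2i}$ over every subinterval. Everything after that is a routine split-and-sum. Some extra bookkeeping is needed in the degenerate cases $x_{2i}=\infty$ or $y_{2i}=\infty$ (the corresponding partition point is omitted and the right edge of $B_i$, resp.\ the truncation $\min(y_{2i},\cdot)$, disappears) and $x_{1i}=x_0$ or $y_{1i}=y_0$, but none of this changes the structure of the proof.
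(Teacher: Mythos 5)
Your proposal is correct and follows essentially the same argument as the paper's: sum the per-interval area-matching equalities to get $\lambda(R)=\lambda(R_0)$, read off the intercept bounds from the endpoint conditions, and, for the rectangle terms, show that over every subinterval both $h$ and $h_0$ fall in the same one of the three regimes (above $y_{2i}$, between $y_{1i}$ and $y_{2i}$, below $y_{1i}$), whence the two contributions coincide by the constant rectangle, by the matched integral $\int h=\int h_0$, or by both vanishing, respectively. The paper organizes the rectangle step by enumerating the possible orderings of $x_{1i},x_{2i},h_0^{-1}(y_{1i}),h_0^{-1}(y_{2i})$ and works one representative case in full; your per-subinterval regime argument is a cleaner packaging of the same computation.
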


Lemma \ref{lemma_reduction_of_constraints} tells us an OU set $R$ defined by (\ref{definition_R}) with $h(x)$ satisfying the constraints (\ref{reduction_of_constraints}) already satisfies all the requirements of the dominating OU set of $R_0$ except $\lambda(S\cap R)\geq\lambda(S\cap R_{0})$. Notice that $R_0$ and its corresponding function $h_0$ also satisfy the representation (\ref{definition_R}) and the constraints (\ref{reduction_of_constraints}). Therefore, if we optimize over all the OU sets satisfying the conditions in Lemma \ref{lemma_reduction_of_constraints} with the objective function $\lambda(S\cap R)$, the optimal solution must be a dominating OU set of $R_0$. By the representation (\ref{definition_R}) and the form of the rare-event set $S=\{(x,y)\in\mathcal{D}_{0}:y\geq g(x)\}$ for some known function $g:[x_{0},\infty)\mapsto\lbrack y_{0},\infty]$, the objective function $\lambda(S\cap R)$ can be equivalently written as
\begin{equation}
\lambda(S\cap R)=\int_{x_{0}}^{x_{0}+R_0^{X}}(h(x)-g(x))_{+}dx.\label{representation_obj}
\end{equation}
Therefore, the construction of the dominating OU set can be formulated as a variational problem as stated in the following lemma.
\begin{lemma}\label{find_dom_OU}
Suppose $h^*$ is the optimal solution of the following variational problem:
\begin{align}
\max\text{ }  &  \int_{x_{0}}^{x_{0}+R_0^{X}}(h(x)-g(x))_{+}dx\nonumber\\
\text{subject to }  &  \int_{x_{i}}^{x_{i+1}}(h(x)-y_{0})dx=\lambda(\{(x,y):x_{i}\leq x\leq x_{i+1},y\geq y_{0}\}\cap R_{0}),i=0,1,\ldots,n_{R_{0}} \label{function_optimization_problem}\\
&  h_{0}(x_{i+1})\leq h(x)\leq h_{0}(x_{i}+),x\in(x_{i},x_{i+1}],i=0,1,\ldots,n_{R_{0}}\nonumber\\
&  h\text{ is non-increasing and left-continuous with }h(x_{0})=h(x_{0}+)\nonumber
\end{align}
Then the closed OU set $\tilde{R}$ defined by the representation (\ref{definition_R}) with $h=h^*$ is a dominating OU set of $R_0$.
\end{lemma}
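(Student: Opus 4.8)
The plan is to check, one by one, the conditions that make $\tilde R$ a \emph{dominating OU set} of $R_0$: namely $\lambda(\tilde R)=\lambda(R_0)$, $\tilde R^X\le R_0^X$, $\tilde R^Y\le R_0^Y$, the rectangle-matching equalities $\lambda(\{(x,y):x_{1i}\le x\le x_{2i},\,y_{1i}\le y\le y_{2i}\}\cap\tilde R)=\lambda(\{(x,y):x_{1i}\le x\le x_{2i},\,y_{1i}\le y\le y_{2i}\}\cap R_0)$ for $i=1,\ldots,n$, and $\lambda(S\cap\tilde R)\ge\lambda(S\cap R_0)$. The first four come for free: the feasible region of the variational problem (\ref{function_optimization_problem}) is by inspection exactly the set of functions satisfying (\ref{reduction_of_constraints}), so $h^*$ satisfies (\ref{reduction_of_constraints}), and since $\tilde R$ is precisely the OU set (\ref{definition_R}) built from $h^*$, Lemma \ref{lemma_reduction_of_constraints} hands us all four at once. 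The only thing left to establish is the objective inequality $\lambda(S\cap\tilde R)\ge\lambda(S\cap R_0)$.

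For that, I would combine two facts. First, the function $h_0$ representing $R_0$ is itself feasible for (\ref{function_optimization_problem}): its equality constraints hold tautologically, the two-sided bounds $h_0(x_{i+1})\le h_0(x)\le h_0(x_i+)$ on $(x_i,x_{i+1}]$ are just monotonicity of $h_0$, and $h_0$ is non-increasing and left-continuous by the representation of the closed OU set $R_0$. The one point needing a moment's care is $h_0(x_0)=h_0(x_0+)$; here I would note that removing from $R_0$ the Lebesgue-null vertical overhang $\{x_0\}\times(h_0(x_0+),h_0(x_0)]$ at the left edge (if any) changes none of $\lambda(R_0)$, $R_0^X$, the rectangle intersections, or $\lambda(S\cap R_0)$ and can only shrink $R_0^Y$, so this may be assumed without loss of generality — and it is in any case consistent with the regularity convention (\ref{regularity_condition_of_density}) built into the reduction. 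Second, by (\ref{representation_obj}) the objective $\int_{x_0}^{x_0+R_0^X}(h(x)-g(x))_+\,dx$ of (\ref{function_optimization_problem}) equals $\lambda(S\cap R)$ for the OU set $R$ associated with any feasible $h$ via (\ref{definition_R}); in particular it equals $\lambda(S\cap R_0)$ at $h=h_0$. Optimality of $h^*$ then gives $\lambda(S\cap\tilde R)=\int_{x_0}^{x_0+R_0^X}(h^*(x)-g(x))_+\,dx\ge\int_{x_0}^{x_0+R_0^X}(h_0(x)-g(x))_+\,dx=\lambda(S\cap R_0)$, which closes the argument. For completeness I would also record that $\tilde R$ is an admissible decision variable in (\ref{general_OU_optimization_2D}): it is closed and OU because $h^*$ is non-increasing and left-continuous, $\lambda(\tilde R)=\lambda(R_0)\in(0,\infty)$, $\tilde R^X=R_0^X\in(0,\infty)$ since $h^*\ge y_0$ throughout $[x_0,x_0+R_0^X]$, and $\tilde R^Y=h^*(x_0)-y_0>0$ since otherwise monotonicity would force $h^*\equiv y_0$ and hence $\lambda(\tilde R)=0$.

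Essentially all of the analytic content has already been absorbed into Lemma \ref{lemma_reduction_of_constraints} and identity (\ref{representation_obj}), so the proof is an assembly step with no genuinely hard part. If I had to single out the subtle point, it is the left-endpoint feasibility of $h_0$, i.e., the equality $h_0(x_0)=h_0(x_0+)$, which is why I isolate the overhang-removal observation above. (The existence of an optimal $h^*$ is a hypothesis of the lemma, so nothing need be said about it here; its attainment is secured later when the variational problem is reduced to finite dimension.)
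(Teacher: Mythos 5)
Your argument matches the paper's proof exactly: invoke Lemma~\ref{lemma_reduction_of_constraints} for the measure, intercept, and rectangle conditions, then use identity~(\ref{representation_obj}) together with the feasibility of $h_0$ and the optimality of $h^*$ to obtain $\lambda(S\cap\tilde R)\ge\lambda(S\cap R_0)$. Your extra observation about the left-endpoint constraint $h_0(x_0)=h_0(x_0+)$ — resolved by discarding the measure-zero vertical overhang at $x=x_0$ — is a legitimate and correct refinement of a step the paper passes over silently, and the closing admissibility checks are sound but not required by the statement.
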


Next we explain how to characterize the optimal solution $h^*$ to problem (\ref{function_optimization_problem}). Notice that (\ref{function_optimization_problem}) is separable, i.e., it can be divided into $n_{R_{0}}+1$ subproblems on different intervals $(x_{i},x_{i+1}]$ for $i=0,1,\ldots,n_{R_{0}}$:
\begin{align}
\max\text{ }  &  \int_{x_{i}}^{x_{i+1}}(h(x)-g(x))_{+}dx\nonumber\\
\text{subject to }  &  \int_{x_{i}}^{x_{i+1}}h(x)dx=y_{0}(x_{i+1}-x_{i})+\lambda(\{(x,y):x_{i}\leq x\leq x_{i+1},y\geq y_{0}\}\cap R_{0})\label{subproblem_of_function_optimization_problem}\\
&  h_{0}(x_{i+1})\leq h(x)\leq h_{0}(x_{i}+),x\in(x_{i},x_{i+1}]\nonumber\\
&  h\text{ is non-increasing and left-continuous}\nonumber
\end{align}
Suppose $h_{i}^{\ast}(x)$ is the optimal solution to the $i$th subproblem. If we define $h^{\ast}:[x_{0},x_{0}+r_{X}]\mapsto\lbrack y_{0},y_{0}+r_{Y}]$ by%
\begin{equation}
h^{\ast}(x)=h_{i}^{\ast}(x),x\in(x_{i},x_{i+1}] \label{construction_h^*}
\end{equation}
and $h^{\ast}(x_{0})=h^{\ast}(x_{0}+)$, we can see $h^{\ast}$ satisfies all the constraints in (\ref{function_optimization_problem}) and it is optimal in each interval $(x_{i},x_{i+1}]$, which implies it is the optimal solution to (\ref{function_optimization_problem}). Therefore, in order to characterize $h^*$, it suffices to characterize each $h_i^*$.  Notice that all the subproblems (\ref{subproblem_of_function_optimization_problem}) have the following form%
\begin{align}
\max\text{ }  &  \int_{\underline{x}}^{\bar{x}}(h(x)-g(x))_{+}dx\nonumber\\
\text{subject to }  &  \int_{\underline{x}}^{\bar{x}}h(x)dx=C\label{geometric_problem}\\
&  \underline{b}\leq h(x)\leq\bar{b},x\in(\underline{x},\bar{x}]\nonumber\\
&  h\text{ is non-increasing and left-continuous}\nonumber
\end{align}
for $\underline{b}(\bar{x}-\underline{x})\leq C\leq\bar{b}(\bar{x}-\underline{x})$. The following lemma characterizes the optimal solution to problem (\ref{geometric_problem}).

\begin{lemma}\label{opt_geometric_problem}
Given a function $g:(\underline{x},\bar{x}]\mapsto(-\infty,\infty]$, the optimal solution $h^{\star}\left(  x\right)  $ to problem (\ref{geometric_problem}) exists and has the following form%
\[
h^{\star}\left(  x\right)  =\left\{
\begin{array}
[c]{c}%
y_{1}^{\star},\\
y_{2}^{\star},\\
y_{3}^{\star},
\end{array}
\left.
\begin{array}
[c]{l}%
\underline{x}<x\leq x_{1}^{\star}\\
x_{1}^{\star}<x\leq x_{2}^{\star}\\
x_{2}^{\star}<x\leq\bar{x}%
\end{array}
\right.  \right.
\]
for some $\underline{b}\leq y_{3}^{\star}\leq y_{2}^{\star}\leq y_{1}^{\star}\leq\bar{b}$ and $\underline{x}\leq x_{1}^{\star}\leq x_{2}^{\star}\leq \bar{x}$.
\end{lemma}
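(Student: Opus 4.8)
The plan is to recognize problem~(\ref{geometric_problem}) as the maximization of a \emph{convex} functional over a compact convex set, so that an optimum is attained at an extreme point, and then to show that the extreme points are exactly the step functions of the claimed three-piece shape. First I would reduce to $\underline b\le g\le\bar b$: replacing $g$ by $(g\vee\underline b)\wedge\bar b$ leaves $(h(x)-g(x))_+$ unchanged where $g(x)\ge\bar b$ and changes it by the $h$-independent amount $\underline b-g(x)$ where $g(x)<\underline b$ (there $h\ge\underline b>g$), so the maximizer set is unaffected, and if the resulting additive constant $\int_{\{g<\underline b\}}(\underline b-g)$ is infinite then every feasible $h$ is optimal and any constant $h$ already has the asserted form. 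With $\underline b\le g\le\bar b$ the integrand satisfies $0\le(h-g)_+\le\bar b-\underline b$. I would then coordinatize: every non-increasing left-continuous $h:(\underline x,\bar x]\to[\underline b,\bar b]$ corresponds bijectively to a pair $(a,\nu)$ with $a:=h(\bar x)\in[\underline b,\bar b]$ and $\nu:=-dh$ a finite nonnegative Borel measure on $[\underline x,\bar x]$, via $h(x)=a+\nu((x,\bar x])$; the constraints of~(\ref{geometric_problem}) become $a\ge\underline b$, $a+\nu((\underline x,\bar x])\le\bar b$, and the single linear equality $a(\bar x-\underline x)+\int_{[\underline x,\bar x]}(t-\underline x)\,d\nu(t)=C$ (i.e.\ $\int_{\underline x}^{\bar x}h=C$ after interchanging integrals). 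The admissible set $\mathcal A$ of such pairs is convex, nonempty (it contains $(C/(\bar x-\underline x),0)$, by $\underline b(\bar x-\underline x)\le C\le\bar b(\bar x-\underline x)$), and compact in the product of the weak$^*$ topology on measures with the usual topology on $a$: the measures have total mass at most $\bar b-\underline b$ on a compact interval (Banach--Alaoglu), and every constraint is closed, the equality because $t\mapsto t-\underline x$ is continuous.

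\emph{Reduction to extreme points.} The objective $J(h)=\int_{\underline x}^{\bar x}(h-g)_+\,dx$ is convex in $h$ (an integral of the convex maps $t\mapsto(t-g(x))_+$), hence convex in $(a,\nu)$ since $(a,\nu)\mapsto h$ is affine; and it is continuous on $\mathcal A$, because $\nu_n\to\nu$ weak$^*$ with $a_n\to a$ forces $h_n(x)\to h(x)$ for all but countably many $x$, and dominated convergence (using $0\le(h-g)_+\le\bar b-\underline b$) gives $J(h_n)\to J(h)$. By Bauer's maximum principle---a convex upper-semicontinuous function on a nonempty compact convex subset of a locally convex Hausdorff space attains its maximum at an extreme point---$J$ is maximized over $\mathcal A$ at an extreme point $(a^\star,\nu^\star)$, and in particular an optimal $h^\star$ exists.

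\emph{Extreme points are three-step.} I would show $\nu^\star$ is supported on at most two points. If $\mathrm{supp}\,\nu^\star$ contained three points $t_1<t_2<t_3$, choose $(c_1,c_2,c_3)\neq 0$ solving the two homogeneous equations $\sum_ic_i=0$ and $\sum_i(t_i-\underline x)c_i=0$; then $\sigma:=\sum_ic_i\delta_{t_i}$ has $\sigma([\underline x,\bar x])=0$ and $\int(t-\underline x)\,d\sigma=0$, so for small $\epsilon>0$ both $(a^\star,\nu^\star\pm\epsilon\sigma)$ lie in $\mathcal A$ (nonnegativity since $\nu^\star$ charges a neighborhood of each $t_i$; the mass and moment constraints preserved by the two identities; and $a^\star+\nu^\star((\underline x,\bar x])\le\bar b$ preserved since the total mass is unchanged), and $(a^\star,\nu^\star)$ is their midpoint---contradicting extremality. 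Hence $\nu^\star=w_1\delta_{x_1^\star}+w_2\delta_{x_2^\star}$ with $w_1,w_2\ge0$ and $\underline x\le x_1^\star\le x_2^\star\le\bar x$, so $h^\star$ equals $y_1^\star:=a^\star+w_1+w_2$ on $(\underline x,x_1^\star]$, $y_2^\star:=a^\star+w_2$ on $(x_1^\star,x_2^\star]$, and $y_3^\star:=a^\star$ on $(x_2^\star,\bar x]$; the chain $\underline b\le y_3^\star\le y_2^\star\le y_1^\star\le\bar b$ is precisely $a^\star\ge\underline b$, $w_1,w_2\ge 0$, and $a^\star+\nu^\star((\underline x,\bar x])\le\bar b$. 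This is the claimed form.

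\emph{Main obstacle.} The crux is the extreme-point analysis together with choosing the topology in which $\mathcal A$ is compact and $J$ is upper semicontinuous; the measure parametrization is what makes this tractable, since perturbing $h$ directly while keeping it non-increasing gets stuck at the transition points between its flat pieces, whereas shifting point masses of $\nu^\star$ preserves monotonicity for free. One must only be careful that $t\mapsto t-\underline x$ is genuinely weak$^*$-continuous, so the linear equality survives weak$^*$ limits, and that each active inequality constraint is preserved (not violated) by the perturbation with $a^\star$ held fixed.
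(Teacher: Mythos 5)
Your proposal takes a genuinely different route from the paper. The paper first shows, via an approximation argument, that it suffices to optimize over left-continuous non-increasing step functions, then proves by \emph{induction on the number of steps} (using a separate convexity lemma about an auxiliary one-variable function) that a step function with $\le 3$ steps dominates, and finally obtains existence by compactness of the resulting five-dimensional parametrization. You instead coordinatize $h$ by $(a,\nu)$, invoke Bauer's maximum principle on a weak${}^*$-compact convex set, and characterize extreme points. Conceptually this is elegant and buys a cleaner existence argument, while the paper's route is more elementary and self-contained. Both ultimately rest on a local perturbation, but yours is at the level of measures while the paper's is at the level of consecutive step heights.

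However, there is a genuine gap in your extreme-point step. You assert that if $\mathrm{supp}\,\nu^\star$ contains three points $t_1<t_2<t_3$, then $\nu^\star\pm\epsilon\,\sum_ic_i\delta_{t_i}$ is nonnegative for small $\epsilon$ ``since $\nu^\star$ charges a neighborhood of each $t_i$.'' That is false in general: a support point need not be an atom. If, say, $\nu^\star$ were Lebesgue measure on a subinterval, then $\nu^\star(\{t_i\})=0$ and subtracting even a tiny Dirac mass at $t_i$ yields a signed measure. The Dirac-perturbation argument only works when $\nu^\star$ is already purely atomic, which you have not established. The standard repair is to perturb by sub-measures rather than Dirac masses: choose pairwise-disjoint Borel sets $A_1,A_2,A_3$ each with $\nu^\star(A_j)>0$, set $\mu_j:=\nu^\star|_{A_j}$, and pick $(c_1,c_2,c_3)\neq 0$ with $\sum_jc_j\,\mu_j([\underline x,\bar x])=0$ and $\sum_jc_j\int(t-\underline x)\,d\mu_j=0$ (a $2\times3$ homogeneous system, hence solvable nontrivially). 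Then $\nu^\star\pm\epsilon\sum_jc_j\mu_j$ is nonnegative whenever $\epsilon\max_j|c_j|\le 1$, has the same total mass and the same first moment, and $(a^\star,\nu^\star)$ is the midpoint of the two perturbed points---contradicting extremality and yielding $|\mathrm{supp}\,\nu^\star|\le 2$ as you intend. With that substitution (and a small cleanup of the left- versus right-continuity convention, i.e.\ using $h(x)=a+\nu([x,\bar x])$ rather than $\nu((x,\bar x])$ to get left-continuity), your argument goes through.
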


By (\ref{construction_h^*}), $h^*$ can be constructed by ``combining" $n_{R_{0}}+1$ optimal solution $h_i^*$'s, each of which is a step function with at most three steps by Lemma \ref{opt_geometric_problem}. In general, we have an upper bound on $n_{R_{0}}$: $n_{R_{0}}\le 4n$. This gives us the structure of $h^*$ and also a finite-dimensional parametrization of all the dominating OU sets.
\begin{corollary}\label{characterization_dom_OU}
Given a function $g:[x_0,\infty)\mapsto [y_0,\infty]$, the optimal solution $h^*(x)$ to the problem (\ref{function_optimization_problem}) exists and can be represented by the following step function:
\begin{equation}
h^*(x)\equiv h^*(x;z,w)=\left\{
\begin{array}[c]{l}%
y_{0}+\sum_{i=1}^{12n+3}w_{i},\\
y_{0}+\sum_{i=1}^{12n+2}w_{i},\\
y_{0}+\sum_{i=1}^{12n+1}w_{i},\\
\cdots\\
y_{0}+w_{1},
\end{array}
\left.
\begin{array}[c]{l}%
x_{0}\leq x\leq x_{0}+z_{1}\\
x_{0}+z_{1}<x\leq x_{0}+z_{1}+z_{2}\\
x_{0}+z_{1}+z_{2}<x\leq x_{0}+z_{1}+z_{2}+z_{3}\\
\\
x_{0}+\sum_{i=1}^{12n+2}z_{i}<x\leq x_{0}+\sum_{i=1}^{12n+3}z_{i}%
\end{array}
\right.  \right.  \label{12n+3_step_function}%
\end{equation}
for some $(z,w)\equiv(z_{1},\ldots,z_{12n+3},w_{1},\ldots,w_{12n+3})\in(0,\infty)^{12n+4}\times[0,\infty)^{12n+2}$. Moreover, all the dominating OU sets are contained in the class $\mathcal{R}^{\ast}=\{R_{z,w}:R_{z,w}=\{(x,y)\in\mathcal{D}_{0}:y_{0}\leq y\leq h^*(x;z,w)\}$ with $h^*$ defined in (\ref{12n+3_step_function})$\}$ and thus are fully parametrized by $(z,w)$.
\end{corollary}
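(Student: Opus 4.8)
The plan is to obtain Corollary~\ref{characterization_dom_OU} by packaging the scalar result Lemma~\ref{opt_geometric_problem} together with the separable structure of the variational problem (\ref{function_optimization_problem}) that was set up just above it. Fix an admissible closed OU set $R_0$ (so $\lambda(R_0),R_0^X,R_0^Y\in(0,\infty)$) with associated non-increasing left-continuous profile $h_0$, and recall the break points $x_0<x_1<\cdots<x_{n_{R_0}}<x_{n_{R_0}+1}=x_0+R_0^X$. First I would observe that on each interval $(x_i,x_{i+1}]$ the subproblem (\ref{subproblem_of_function_optimization_problem}) is an instance of (\ref{geometric_problem}) with $\underline{x}=x_i$, $\bar{x}=x_{i+1}$, $\underline{b}=h_0(x_{i+1})$, $\bar{b}=h_0(x_i+)$ and $C=y_0(x_{i+1}-x_i)+\lambda(\{(x,y):x_i\le x\le x_{i+1},\,y\ge y_0\}\cap R_0)=\int_{x_i}^{x_{i+1}}h_0(x)\,dx$, and that the admissibility requirement $\underline{b}(\bar{x}-\underline{x})\le C\le\bar{b}(\bar{x}-\underline{x})$ of Lemma~\ref{opt_geometric_problem} holds automatically because $h_0$ restricted to $(x_i,x_{i+1}]$ is itself feasible (it is non-increasing, left-continuous, has integral $C$, and is sandwiched between $h_0(x_{i+1})$ and $h_0(x_i+)$ there). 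Lemma~\ref{opt_geometric_problem} then supplies, for each $i=0,\ldots,n_{R_0}$, an optimal $h_i^*$ that is a step function with at most three steps.

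Next I would glue these pieces as in (\ref{construction_h^*}): set $h^*(x)=h_i^*(x)$ for $x\in(x_i,x_{i+1}]$ and $h^*(x_0)=h^*(x_0+)$. Two things need checking. Feasibility for (\ref{function_optimization_problem}): the integral constraints and the box constraints hold interval by interval; left-continuity is inherited from each $h_i^*$; and $h^*$ is non-increasing across a break point $x_i$ because near $x_i$ one has $h_{i-1}^*\ge h_0(x_i)\ge h_0(x_i+)\ge h_i^*$, using that $h_0$ is non-increasing and left-continuous. Optimality: both the objective $\int_{x_0}^{x_0+R_0^X}(h(x)-g(x))_+\,dx$ and the constraint set of (\ref{function_optimization_problem}) split as a sum, resp.\ product, over the intervals $(x_i,x_{i+1}]$, so a function optimal on every piece is globally optimal; in particular an optimal solution exists. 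By Lemma~\ref{find_dom_OU}, the closed OU set $\tilde R$ built from this $h^*$ via (\ref{definition_R}) is then a dominating OU set of $R_0$.

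Finally I would count steps and recast $h^*$ in the stated template. The glued profile has at most $3(n_{R_0}+1)$ steps, and since $n_{R_0}\le 4n$ (there are at most the $4n$ abscissae $x_{1i},x_{2i},h_0^{-1}(y_{1i}),h_0^{-1}(y_{2i})$, $i=1,\ldots,n$, strictly between $x_0$ and $x_0+R_0^X$) this is at most $3(4n+1)=12n+3$. A non-increasing left-continuous step function with at most $12n+3$ pieces on $[x_0,x_0+R_0^X]$ can always be put in the form (\ref{12n+3_step_function}): choose positive widths $z_1,\ldots,z_{12n+3}>0$ summing to $R_0^X$ (subdividing pieces when there are fewer than $12n+3$ of them) and nonnegative increments $w_1,\ldots,w_{12n+3}\ge 0$, assigning $w_i=0$ at any artificial subdivision so no new value is introduced. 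This yields the required $(z,w)$, and since the argument applies to every admissible $R_0$, each dominating OU set produced by the construction equals $R_{z,w}$ for some such $(z,w)$, i.e.\ lies in $\mathcal{R}^*$.

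The genuinely delicate content — the three-step characterization of the one-dimensional variational problem — is already discharged by Lemma~\ref{opt_geometric_problem}, so the main obstacle here is only bookkeeping: verifying that concatenating the per-interval optima preserves monotonicity and left-continuity at the break points (the left/right limit comparison above) and that the per-interval admissibility of Lemma~\ref{opt_geometric_problem} is inherited from $h_0$, and then the somewhat fiddly but routine task of embedding a $(\le 12n+3)$-step profile into the rigid template (\ref{12n+3_step_function}) with strictly positive widths.
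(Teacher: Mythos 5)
Your proposal follows exactly the paper's proof: invoke Lemma~\ref{opt_geometric_problem} per interval to get three-step pieces, glue them via (\ref{construction_h^*}), bound $n_{R_0}\le 4n$ so the glued profile has at most $3(4n+1)=12n+3$ steps, and then recast into the template (\ref{12n+3_step_function}) by subdividing with zero-height increments where needed. The only difference is that you spell out the monotonicity and feasibility checks at the break points, which the paper's text handles in the surrounding discussion of (\ref{construction_h^*}) and leaves implicit in the corollary's proof.
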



From the view of constructing dominating OU sets, it suffices to consider $R_{s}\in\mathcal{R}^{\ast}$ instead of all the closed OU sets in problem (\ref{general_OU_optimization_2D}). Corollary \ref{characterization_dom_OU} gives us a finite-dimensional parametrization of the OU sets in $\mathcal{R}^{\ast}$. This helps us reduce problem (\ref{general_OU_optimization_2D}) to an equivalent finite-dimensional moment problem.
\begin{proposition}\label{equiv_moment_problem}
Problem (\ref{general_OU_optimization_2D}) is equivalent to the following moment problem:
\begin{align}
\max\text{ }  &  cE_{Q}\left[  \frac{\sum_{i=1}^{12n+3}\int\left(  y_{0}+\sum_{j=1}^{12n+4-i}W_{j}-g(x)\right)  _{+}I\left(  x_{0}+\sum_{j=1}^{i-1}Z_{j}<x\leq x_{0}+\sum_{j=1}^{i}Z_{j}\right)  dx}{\sum_{i=1}^{12n+3}\sum_{j=1}^{12n+4-i}Z_{i}W_{j}}\right] \nonumber\\
\text{s.t. }  &  E_{Q}\left[  \frac{\sum_{i=1}^{12n+3}W_{i}}{\sum_{i=1}^{12n+3}\sum_{j=1}^{12n+4-i}Z_{i}W_{j}}\right]  \leq\frac{u_{X}}{c}\nonumber\\
&  E_{Q}\left[  \frac{\sum_{i=1}^{12n+3}Z_{i}}{\sum_{i=1}^{12n+3}\sum_{j=1}^{12n+4-i}Z_{i}W_{j}}\right]  \leq\frac{u_{Y}}{c} \label{finite_dimensional_moment_problem}\\
&  a_{k}\leq E_{Q}\left[  \frac{1}{\sum_{i=1}^{12n+3}\sum_{j=1}^{12n+4-i}Z_{i}W_{j}}\left\{  \sum_{i=1}^{12n+3}\int I\left(  \sum_{j=1}^{i-1}Z_{j}<x-x_0\leq \sum_{j=1}^{i}Z_{j},x_{1k}\leq x\leq x_{2k}\right)\right.  \right. \nonumber\\
&  \left.  \left.  \times\left(  \min\left(  y_{0}+\sum_{j=1}^{12n+4-i}W_{j},y_{2k}\right)  -\min\left(  y_{0}+\sum_{j=1}^{12n+4-i}W_{j},y_{1k}\right)  \right)  dx\right\}  \right]  \leq b_{k},k=1,\ldots,n\nonumber
\end{align}
where the decision variable $Q$ is the probability distribution of $(Z,W)\in(0,\infty)^{12n+4}\times[0,\infty)^{12n+2}$.
\end{proposition}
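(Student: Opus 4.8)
The plan is to translate the set-valued optimization problem (\ref{general_OU_optimization_2D}) into the moment problem (\ref{finite_dimensional_moment_problem}) via the finite-dimensional parametrization $\mathcal{R}^\ast$ established in Corollary \ref{characterization_dom_OU}. The argument has two directions. For the inequality ``optimal value of (\ref{general_OU_optimization_2D}) $\leq$ optimal value of (\ref{finite_dimensional_moment_problem})'', I would start from an arbitrary feasible pair $(\{R_s\}_{s>0}, G)$ for (\ref{general_OU_optimization_2D}). By Lemma \ref{find_dom_OU}, each $R_s$ admits a dominating OU set $\tilde R_s$ which, by Corollary \ref{characterization_dom_OU}, lies in $\mathcal{R}^\ast$ and hence equals $R_{z(s),w(s)}$ for some $(z(s),w(s))\in(0,\infty)^{12n+4}\times[0,\infty)^{12n+2}$. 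Replacing each $R_s$ by $\tilde R_s$ preserves feasibility of all constraints (the $\lambda(\cdot\cap R_s)$ terms and the $R_s^X,R_s^Y$ terms are unchanged or improved, by the defining properties of the dominating set) and weakly increases the objective, since $\lambda(S\cap \tilde R_s)\geq\lambda(S\cap R_s)$. Now define $Q$ as the pushforward of $G$ under $s\mapsto(z(s),w(s))$; one checks that each integrand in (\ref{general_OU_optimization_2D}), when $R_s=R_{z(s),w(s)}$, is exactly the corresponding integrand in (\ref{finite_dimensional_moment_problem}) written in terms of $(Z,W)=(z(s),w(s))$ — this is a direct computation using the explicit step-function form (\ref{12n+3_step_function}): $\lambda(R_{z,w})=\sum_{i}\sum_j Z_iW_j$ (rectangular blocks), $R_{z,w}^X=\sum_i Z_i$, $R_{z,w}^Y=\sum_i W_i$, $\lambda(S\cap R_{z,w})=\sum_i\int(y_0+\sum_{j\le 12n+4-i}W_j-g(x))_+ I(\cdot)\,dx$ from (\ref{representation_obj}), and similarly for the box-intersection terms via elementary slicing. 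Hence $Q$ is feasible for (\ref{finite_dimensional_moment_problem}) with objective value at least that of $(\{R_s\},G)$, giving the inequality.

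For the reverse direction ``optimal value of (\ref{finite_dimensional_moment_problem}) $\leq$ optimal value of (\ref{general_OU_optimization_2D})'', I would take a feasible $Q$ for (\ref{finite_dimensional_moment_problem}), introduce a parametrization $s\mapsto(z(s),w(s))$, $s\in(0,\infty)$, whose pushforward of Lebesgue-type measure realizes $Q$ (e.g.\ via a measurable selection / quantile-transform construction so that $G$ on $(0,\infty)$ maps to $Q$), and set $R_s=R_{z(s),w(s)}\in\mathcal{R}^\ast$. Because $R_{z,w}$ is always a closed OU set about $(x_0,y_0)$ with $\lambda(R_{z,w})\in(0,\infty)$ and $R_{z,w}^X,R_{z,w}^Y\in(0,\infty)$ (guaranteed by $z\in(0,\infty)^{12n+4}$, i.e.\ strictly positive widths, and by at least one positive $w_i$ — a point I'd need to verify is forced by feasibility, or else handle by a limiting argument), the pair $(\{R_s\},G)$ is feasible for (\ref{general_OU_optimization_2D}), and by the same identity-of-integrands computation it attains the same objective value. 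Combining both directions yields equality of optimal values, i.e.\ the claimed equivalence.

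The main obstacle I anticipate is the measurability bookkeeping in the forward direction: Corollary \ref{characterization_dom_OU} produces, for each $s$, \emph{some} $(z(s),w(s))$, but to push $G$ forward one needs the map $s\mapsto(z(s),w(s))$ (or at least a version of it) to be measurable, and $\mathcal{R}^\ast$ being an uncountable family the selection must be done carefully — presumably via a measurable-selection theorem applied to the graph of the (set-valued) ``dominating OU set'' correspondence, using that the variational subproblems (\ref{geometric_problem}) depend on the data $R_0$ in a jointly measurable way. A secondary subtlety is confirming that the explicit closed-form expressions $\lambda(R_{z,w})$, $\lambda(S\cap R_{z,w})$, $R_{z,w}^{X,Y}$, and the box-intersection volumes match \emph{exactly} the integrand expressions printed in (\ref{finite_dimensional_moment_problem}), including the $\min(y_0+\sum W_j, y_{2k})-\min(y_0+\sum W_j,y_{1k})$ form, which requires a short but careful slicing computation of how a horizontal strip $\{y_{1k}\le y\le y_{2k}\}$ meets each rectangular block of the step graph. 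Both are routine in spirit but need to be spelled out; everything else follows mechanically from the preceding lemmas.
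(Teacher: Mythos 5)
Your proposal is correct and takes essentially the same route as the paper: restrict to dominating sets $R_{z,w}\in\mathcal{R}^\ast$, re-index the mixing measure from $s$ to $(z,w)$, and verify that each integrand in (\ref{general_OU_optimization_2D}) becomes the corresponding closed-form expression in (\ref{finite_dimensional_moment_problem}). Two small remarks: the ``at least one positive $w_i$'' worry is already handled by the parametrization itself, since the ranges $(0,\infty)^{12n+4}\times[0,\infty)^{12n+2}$ force $w_1>0$ (together with all $z_i>0$ this gives $\lambda(R_{z,w})\geq z_{12n+3}w_1>0$ and $R_{z,w}^X,R_{z,w}^Y>0$); and your measurable-selection concern in the forward direction is a legitimate subtlety, but the paper sidesteps it just as informally, invoking only that $(z,w)$-space and $s\in(0,\infty)$ have the same cardinality and that the rewritten integrands are Borel in $(z,w)$, so your instinct to fill that in via a measurable selection for the dominating-set correspondence is sound rather than a flaw in your approach.
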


Note that the upper bound on $n_{R_{0}}$ determines the number of steps of $h^*$ in (\ref{12n+3_step_function}) and further determines the dimension of $(Z,W)$ in the moment problem (\ref{finite_dimensional_moment_problem}). If in some cases we can get a sharper upper bound on $n_{R_{0}}$ instead of $n_{R_{0}}\le 4n$, then we are able to reduce the dimension of the moment problem (\ref{finite_dimensional_moment_problem}). We will discuss this point in more detail at the end of this section.

By Corollary \ref{cor_general_OU_optimization_2D} and Proposition \ref{equiv_moment_problem}, we know the optimal value of the OU-DRO problem (\ref{DRO_problem_2D}) is not greater than the optimal value of the moment problem (\ref{finite_dimensional_moment_problem}). Now let us show the other direction, i.e., the optimal value of the OU-DRO problem (\ref{DRO_problem_2D}) is not less than the optimal value of the moment problem (\ref{finite_dimensional_moment_problem}). Consider any feasible solution $Q$ to the problem (\ref{finite_dimensional_moment_problem}). We define an absolutely continuous OU distribution $P$ with total mass $c$ by
\begin{equation}
P=c\int W_{step}(z,w)dQ(z,w),\label{construct_feasible_solution}
\end{equation}
where $W_{step}(z,w)$ is the uniform distribution on the closed OU set $R_{z,w}=\{(x,y)\in\mathcal{D}_{0}:y_{0}\leq y\leq h^*(x;z,w)\}$ with $h^*$ defined in (\ref{12n+3_step_function}). Then we can see the objective value and the constraints for the density $f_P$ in problem (\ref{DRO_problem_2D}) are just the translation of those for $Q$ in problem (\ref{finite_dimensional_moment_problem}). Since $Q$ is feasible to (\ref{finite_dimensional_moment_problem}), $f_P$ must be feasible to (\ref{DRO_problem_2D}) with the same objective value, which means the optimal value of (\ref{DRO_problem_2D}) is not less than the optimal value of (\ref{finite_dimensional_moment_problem}). Combining the above two directions, we see that the optimal value of problem (\ref{DRO_problem_2D}) is equal to that of problem (\ref{finite_dimensional_moment_problem}). Finally, according to Theorem 3.2 in \cite{winkler1988extreme}, to find an optimal solution to (\ref{finite_dimensional_moment_problem}), it suffices to consider discrete probability measures with at most $n+3$ points in the support. So (\ref{finite_dimensional_moment_problem}) is equivalent to the following non-linear optimization:%
\begin{align}
\max\text{ }  &  c\sum_{l=1}^{n+3}p_{l}\frac{\sum_{i=1}^{12n+3}\int\left(y_{0}+\sum_{j=1}^{12n+4-i}w_{lj}-g(x)\right)  _{+}I\left(  x_{0}+\sum_{j=1}^{i-1}z_{lj}<x\leq x_{0}+\sum_{j=1}^{i}z_{lj}\right)  dx}{\sum_{i=1}^{12n+3}\sum_{j=1}^{12n+4-i}z_{li}w_{lj}}\nonumber\\
\text{s.t. }  &  \sum_{l=1}^{n+3}p_{l}\frac{\sum_{i=1}^{12n+3}w_{li}}{\sum_{i=1}^{12n+3}\sum_{j=1}^{12n+4-i}z_{li}w_{lj}}\leq\frac{u_{X}}{c}\nonumber\\
&  \sum_{l=1}^{n+3}p_{l}\frac{\sum_{i=1}^{12n+3}z_{li}}{\sum_{i=1}^{12n+3}\sum_{j=1}^{12n+4-i}z_{li}w_{lj}}\leq\frac{u_{Y}}{c} \label{nonlinear_problem}\\
&  a_{k}\leq\sum_{l=1}^{n+3}\frac{p_{l}}{\sum_{i=1}^{12n+3}\sum_{j=1}^{12n+4-i}z_{li}w_{lj}}\left\{  \sum_{i=1}^{12n+3}\int I\left(  \sum_{j=1}^{i-1}z_{lj}<x-x_0\leq \sum_{j=1}^{i}z_{lj},x_{1k}\leq x\leq x_{2k}\right)  \right. \nonumber\\
&  \left.  \times\left(  \min\left(  y_{0}+\sum_{j=1}^{12n+4-i}w_{lj},y_{2k}\right)  -\min\left(  y_{0}+\sum_{j=1}^{12n+4-i}w_{lj},y_{1k}\right)\right)  dx\right\}  \leq b_{k},k=1,\ldots,n\nonumber\\
&  \sum_{l=1}^{n+3}p_{l}=1,p_{l}\geq0,l=1,\ldots,n+3\nonumber\\
&  w_{l1}>0,w_{li}\geq0,l=1,\ldots,n+3,i=2,\ldots,12n+3,\nonumber\\
&  z_{li}>0,l=1,\ldots,n+3,i=1,\ldots,12n+3,\nonumber
\end{align}
where $p_{l},w_{li},z_{li}$ are the decision variables.

The results in this section are summarized in the following main theorem.

\begin{theorem}\label{main_theorem}
We have the following:
\begin{description}
\item[(1)]The OU-DRO problem (\ref{DRO_problem_2D}), the moment problem (\ref{finite_dimensional_moment_problem}) and the non-linear optimization problem (\ref{nonlinear_problem}) have the same optimal value. If $Q^*$ is the optimal solution to the moment problem (\ref{finite_dimensional_moment_problem}), then the density of $P^*$ defined in (\ref{construct_feasible_solution}) with $Q$ replaced by $Q^*$ is the optimal solution to the OU-DRO problem (\ref{DRO_problem_2D}).
\item[(2)]Consider the OU-DRO problem (\ref{DRO_problem_2D}) with $\bar{F}(x_{0},y_{0})=c$ replaced by $l_{\bar{F}}\leq\bar{F}(x_{0},y_{0})\leq u_{\bar{F}}$. Its optimal value is equal to the optimal value of the non-linear optimization problem (\ref{nonlinear_problem}) with an additional decision variable $c$ and an additional constraint $l_{\bar{F}}\leq c\leq u_{\bar{F}}$. If $c^*,p_{l}^*,w_{li}^*,z_{li}^*$ is the optimal solution to this non-linear optimization, then the optimal solution to the DRO problem is the density of $P^*$ defined in (\ref{construct_feasible_solution}) where $c=c^*$ and $Q$ is the discrete distribution on $(z_{l1}^*,\ldots,z_{l,12n+3}^*,w_{l1}^*,\ldots,w_{l,12n+3}^*),l=1,\ldots,n+3$ with probability mass $p_{l}^*,l=1,\ldots,n+3$.
\end{description}
\end{theorem}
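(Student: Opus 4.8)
The plan is to prove part (1) by sandwiching the optimal value of the OU-DRO problem (\ref{DRO_problem_2D}) between two copies of the optimal value of the moment problem (\ref{finite_dimensional_moment_problem}), then to pass from (\ref{finite_dimensional_moment_problem}) to the nonlinear program (\ref{nonlinear_problem}) via the extreme-point theory of moment sets; part (2) will then follow by a one-dimensional decomposition over the free mass parameter $c$. For the ``$\le$'' direction I would simply chain the two reductions already in hand: Corollary \ref{cor_general_OU_optimization_2D} bounds the optimal value of (\ref{DRO_problem_2D}) by that of the set-valued relaxation (\ref{general_OU_optimization_2D}), and Proposition \ref{equiv_moment_problem} identifies the optimal value of (\ref{general_OU_optimization_2D}) with that of (\ref{finite_dimensional_moment_problem}); concatenating gives that the optimal value of (\ref{DRO_problem_2D}) is at most that of (\ref{finite_dimensional_moment_problem}).

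For the ``$\ge$'' direction, given any feasible $Q$ for (\ref{finite_dimensional_moment_problem}) I would form the mixture $P=c\int W_{step}(z,w)\,dQ(z,w)$ of (\ref{construct_feasible_solution}) and verify it is a legitimate competitor in (\ref{DRO_problem_2D}). It is absolutely continuous, with density $f_P(x,y)=c\int\lambda(R_{z,w})^{-1}\mathbf{1}_{R_{z,w}}(x,y)\,dQ(z,w)$, which is coordinate-wise non-increasing on $\mathcal{D}_0$ because every $R_{z,w}$ is an OU set, so the OU constraint holds; it has total mass $c$, so $\bar F_P(x_0,y_0)=c$; and the truncated marginal densities $f_{P,X}(x_0)$, $f_{P,Y}(y_0)$, the box probabilities $P(x_{1k}\le X\le x_{2k},\,y_{1k}\le Y\le y_{2k})$, and the target probability $P((X,Y)\in S)$ are each obtained by integrating against $Q$ the corresponding quantity for the uniform law on the explicit step-OU set $R_{z,w}$ of (\ref{12n+3_step_function}). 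Using $\lambda(R_{z,w})=\sum_{i=1}^{12n+3}\sum_{j=1}^{12n+4-i}z_iw_j$, $R_{z,w}^X=\sum_{i=1}^{12n+3}z_i$, $R_{z,w}^Y=\sum_{i=1}^{12n+3}w_i$, and the area identity (\ref{representation_obj}), these quantities coincide term-for-term with the integrands in (\ref{finite_dimensional_moment_problem}); hence $f_P$ is feasible for (\ref{DRO_problem_2D}) with objective equal to the objective of $Q$, so the optimal value of (\ref{DRO_problem_2D}) is at least that of (\ref{finite_dimensional_moment_problem}). Equality follows, and applying this construction to an optimal $Q^*$ produces a feasible $P^*$ attaining the common value, so the density of $P^*$ is optimal for (\ref{DRO_problem_2D}). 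Finally, (\ref{finite_dimensional_moment_problem}) is a generalized moment problem with $n+2$ moment functionals (the two marginal-density functionals and the $n$ box functionals, the latter each bounded two-sidedly), so Theorem 3.2 of \cite{winkler1988extreme} lets us restrict to discrete measures with at most $n+3$ atoms; parametrizing these by $(p_l,z_{li},w_{li})$ turns (\ref{finite_dimensional_moment_problem}) into (\ref{nonlinear_problem}), so all three problems share the optimal value.

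For part (2), the feasible set of (\ref{DRO_problem_2D}) with $\bar F(x_0,y_0)=c$ replaced by $l_{\bar F}\le\bar F(x_0,y_0)\le u_{\bar F}$ is exactly the union over $c\in[l_{\bar F},u_{\bar F}]$ of the feasible sets at fixed mass $c$; hence its optimal value is $\sup_{c\in[l_{\bar F},u_{\bar F}]}v(c)$, where $v(c)$ is the optimal value of (\ref{DRO_problem_2D}) at mass $c$. By part (1), $v(c)$ equals the optimal value of (\ref{nonlinear_problem}) at that $c$, so this supremum is precisely the optimal value of (\ref{nonlinear_problem}) with $c$ adjoined as a decision variable subject to $l_{\bar F}\le c\le u_{\bar F}$; an optimizer $(c^*,p_l^*,z_{li}^*,w_{li}^*)$ of the latter then yields, via (\ref{construct_feasible_solution}) with mass $c^*$ and mixing measure $\sum_l p_l^*\delta_{(z_l^*,w_l^*)}$, a density that is feasible and optimal for the relaxed DRO.

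The main obstacle is the exact bookkeeping in the ``$\ge$'' step: confirming that a mixture of uniform laws on the step-OU sets $R_{z,w}$ is a genuine OU density (the boundary-regularity convention, where it matters, being restored by modifying $f_P$ on the Lebesgue-null set $\partial\mathcal{D}_0$) and that every functional of (\ref{DRO_problem_2D}) — objective, marginal-density bounds, box probabilities — translates term-by-term into the matching functional of $Q$ in (\ref{finite_dimensional_moment_problem}). The genuinely hard analytic content that drives the ``$\le$'' direction, namely the dominating-OU-set reduction, has already been absorbed into Corollary \ref{cor_general_OU_optimization_2D} and Proposition \ref{equiv_moment_problem} and is only invoked here.
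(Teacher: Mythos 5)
Your proposal is correct and follows essentially the same route as the paper's own proof: the ``$\le$'' direction via Corollary~\ref{cor_general_OU_optimization_2D} and Proposition~\ref{equiv_moment_problem}, the ``$\ge$'' direction by building the mixture~(\ref{construct_feasible_solution}) and checking term-by-term feasibility, the reduction to~(\ref{nonlinear_problem}) via Winkler's Theorem~3.2 with the $n+2$ moment functionals giving at most $n+3$ atoms, and part~(2) by optimizing over $c\in[l_{\bar F},u_{\bar F}]$. The added detail you supply (the explicit formula for $f_P$, the identities for $\lambda(R_{z,w})$, $R_{z,w}^X$, $R_{z,w}^Y$, and the remark about restoring the boundary-regularity convention on the null set $\partial\mathcal{D}_0$) is consistent with and implicit in the paper's argument.
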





Theorem \ref{main_theorem} suggests two approaches to solve the OU-DRO problem (\ref{DRO_problem_2D}). One is to solve the moment problem (\ref{finite_dimensional_moment_problem}) by methods such as GLP described in, e.g., Section 3 of \cite{birge1991bounding}. Another is to solve the non-linear program (\ref{nonlinear_problem}). Similarly, in order to solve the DRO problem (\ref{DRO_problem_2D}) with $\bar{F}(x_{0},y_{0})=c$ replaced by $l_{\bar{F}}\leq\bar{F}(x_{0},y_{0})\leq u_{\bar{F}}$, one way is to solve the non-linear program described in Theorem \ref{main_theorem}. An alternative way is to discretize $c\in[l_{\bar{F}},u_{\bar{F}}]$, solve a collection of moment problems (\ref{finite_dimensional_moment_problem}) each with a discretized value of $c$ via GLP, and search for the best discretized $c$, which would lead to an approximation of the optimal value of the DRO problem. In practice, we observe that solving the moment problem (\ref{finite_dimensional_moment_problem}) by GLP gives a solution with better quality so we will use this method in numerical experiments.

Now we discuss some variants of Theorem \ref{main_theorem}. First, we explain how to handle the unconditional moment constraint (\ref{uncon_moment_constraints}). Suppose in the DRO problem (\ref{DRO_problem_2D}), some of the moment constraints
\begin{equation}
a_{i}\bar{F}(x_{0},y_{0})\leq P(x_{1i}\leq X\leq x_{2i},y_{1i}\leq Y\leq y_{2i})\leq b_{i}\bar{F}(x_{0},y_{0}).\label{moment_constraint_2D}
\end{equation}
are replaced by the unconditional version:
\begin{equation}
a_{i}\leq P(x_{1i}\leq X\leq x_{2i},y_{1i}\leq Y\leq y_{2i})\leq b_{i}.\label{uncon_moment_constraint_2D}
\end{equation}
Since the constraint $\bar{F}(x_{0},y_{0})=c$ is included in the DRO problem (\ref{DRO_problem_2D}), (\ref{uncon_moment_constraint_2D}) is equivalent to
\begin{equation*}
\frac{a_{i}}{c}\bar{F}(x_{0},y_{0})\leq P(x_{1i}\leq X\leq x_{2i},y_{1i}\leq Y\leq y_{2i})\leq \frac{b_{i}}{c}\bar{F}(x_{0},y_{0}),i=1,\ldots,n.
\end{equation*}
Therefore, to handle the constraint (\ref{uncon_moment_constraint_2D}), we simply replace $a_{k}$ and $b_{k}$ with $a_{k}/c$ and $b_{k}/c$ in the third constraint of the problem (\ref{finite_dimensional_moment_problem}) or (\ref{nonlinear_problem}) and Theorem \ref{main_theorem} still holds.

Second, the dimension of the moment problem (\ref{finite_dimensional_moment_problem}) can be reduced in some cases. Suppose we have $n$ moment constraints in total which can be in the form of (\ref{moment_constraint_2D}) or (\ref{uncon_moment_constraint_2D}). We define $n^{\prime}$ as the total number of $x_{1i},x_{2i},y_{1i},y_{2i}$ that are equal to $x_0,\infty,y_0,\infty$ respectively, i.e.,
\[
n^{\prime}=\sum_{i=1}^{n}(I(x_{1i}=x_{0})+I(x_{2i}=\infty)+I(y_{1i}=y_{0})+I(y_{2i}=\infty)).
\]
Then in (\ref{reduction_of_constraints}) we have a sharper bound on $n_{R_0}$ given by  $n_{R_0}\le4n-n^{\prime}$. It follows that the function $h^*$ in (\ref{12n+3_step_function}) can be changed into a step function with at most $3(4n-n^{\prime}+1)$ steps. So the dimension of $Z$ and $W$ can be reduced to $3(4n-n^{\prime}+1)$ instead of $12n+3$.

Lastly, we explain why we cannot directly apply our above reduction procedure to $d\ge3$. As we have seen, the reduction procedure for $d=2$ mainly relies on finding and parametrizing the dominating OU sets, but both aspects face difficulties when $d\geq3$. For the first aspect, when $d=2$, the quantity $\lambda_{d-1}(R_{s,i})$ in (\ref{general_OU_optimization_MD}) has a simple geometric meaning, i.e., $x$-intercept or $y$-intercept within $\mathcal{D}_0$, which allows us to carefully design a solvable variational problem (\ref{function_optimization_problem}) to find the dominating OU set. However, for $d\ge3$, the geometric meaning of $\lambda_{d-1}(R_{s,i})$ is more opaque and not easy to handle, making it challenging to solve the analog of problem (\ref{function_optimization_problem}) in higher dimensions. For the second aspect, the dominating OU sets for $d=2$ are characterized by univariate non-increasing left-continuous step functions with bounded numbers of steps. Such step functions are finite-dimensionally parametrizable as it suffices to parametize the length and height of each step. However, when $d\ge3$, even if we can show the dominating OU sets can be characterized by multivariate step functions, the multivariate steps might bear ``shapes" that are not encoded in finite dimension. The generalization to $d\geq3$ thus appears to require new techniques and is worth a separate future work.


\section{Generalization to Partially Orthounimodal Distributionally Robust Optimization}\label{sec:POU-DRO problem}

In Sections \ref{sec:OU-DRO problem} and \ref{sec:opt theory}, we have discussed the formulation and reduction of the OU-DRO problem motivated by extremal estimation where all the dimensions of the random vector are in their tails. Nevertheless, for an event to be in the extreme, it is possible that only some but not all of the dimensions are in their tails. For example, the sets $\{(x,y):x\ge 4,y\ge -1\}$ and $\{(x,y):x\ge 4,x+y\ge 5\}$ are seen as rare events for the standard bivariate normal distribution $N(0,I_{2\times2})$. However, the density is only non-increasing in $x$ but not in $y$ in these regions since only $x$ is in its tail. OU cannot capture the feature of the probability density in such regions as it requires monotonicity in all dimensions. To remedy this, we design another new notion of multivariate unimodality called \emph{partial orthounimodality (POU)} which can be seen as the generalization of OU that allows for monotonicity in only part of all dimensions. From this, we formulate a POU-DRO problem in parallel to Section \ref{sec:OU-DRO problem} and, like in Section \ref{sec:opt theory}, we demonstrate how to solve it in a special case where only one dimension is in its tail. Due to paper length, the details of this investigation are delegated to Appendix \ref{sec:POU appendix}.

\section{Numerical Experiments}\label{sec:numerics}

In this section, we present some numerical experiments for OU-DRO problems in the bivariate case. In Section \ref{sec:true num exp}, we consider obtaining the upper bounds of rare-event probabilities when the constraints in the DRO problem (\ref{DRO_problem_2D}) is calibrated by the true distribution. In Section \ref{sec:data driven num exp}, we consider estimating rare-event probabilities in data-driven scenarios. In the experiments, we solve the moment problem (\ref{finite_dimensional_moment_problem}) by GLP.

\subsection{OU-DRO Problems Calibrated by True Distributions}\label{sec:true num exp}

We consider the DRO problem (\ref{DRO_problem_2D}) calibrated by the true distribution. We choose the true distribution of $(X,Y)$ to be a bivariate normal distribution $N(0,16I_{2\times2})$, where $I_{2\times2}$ is the identity matrix in $\mathbb{R}^{2\times2}$. The tail part is chosen as $[8,\infty)^2$. We aim to obtain upper bounds of the rare-event probabilities $P(X\geq8,Y\geq1.5X-2)$ and $P(X\geq8,Y\geq X+5)$. The OU-DRO problem is formulated as follows:
\begin{align*}
\max\text{ }  &  P\left(  (X,Y)\in S\right) \\
\text{s.t. }  &  \bar{F}\left(  8,8\right)  =5.176\times10^{-4}\\
&  f_{X}\left(  8\right)  \leq3.071\times10^{-4}\\
&  f_{Y}\left(  8\right)  \leq3.071\times10^{-4}\\
&  P\left(  8\leq X\leq8+i,Y\geq8\right)  =a_{i},i=1,2,3,4,5\\
&  P\left(  X\geq8,8\leq Y\leq8+\frac{3}{5}i\right)  =b_{i},i=1,2,3,4,5,\\
&  f(x^{\prime},y^{\prime})\geq f(x,y)\text{ if }8\leq x^{\prime}\leq x\text{ and }8\leq y^{\prime}\leq y,
\end{align*}
where $a=10^{-4}\times\lbrack2.395,3.763,4.498,4.869,5.044]$ and $b=10^{-4}\times\lbrack1.586,2.736,3.551,4.115,4.498]$. We choose the set $S$ as $S_{1}=\{(x,y):x\geq8,y\geq1.5x-2\}$ and $S_{2}=\{(x,y):x\geq 8,y\geq x+5\}$ corresponding to two target rare-event probabilities.

We solve each DRO problem 50 times (To solve the DRO problem easily, we perturb the moment constraint $P\left(  8\leq X\leq8+i,Y\geq8\right)=a_{i}$ a little, i.e., we replace them with $0.995a_{i}\leq P\left(  8\leq X\leq8+i,Y\geq8\right)  \leq 1.005a_{i}$; the constraint $P\left(  X\geq8,8\leq Y\leq8+3i/5\right)  =b_{i}$ is similarly perturbed). As a benchmark, the ground truths are $P(X\geq8,Y\geq1.5X-2)=5.028\times10^{-5}$ and $P(X\geq 8,Y\geq X+5)=5.341\times10^{-6}$. Figure \ref{num_exp_true_binormal} shows the optimal values of the DRO problems. For $P(X\geq8,Y\geq 1.5X-2)$, the upper bounds are valid and within twice the ground truth. For $P(X\geq8,Y\geq X+5)$, most upper bounds are valid and they are within one order ($10$ times) of the ground truth.

\begin{figure}[ptbh]
\centering
\subfigure[Estimated upper bounds of $P(X\geq8,Y\geq1.5X-2)$]{
\includegraphics[width=0.45\textwidth]{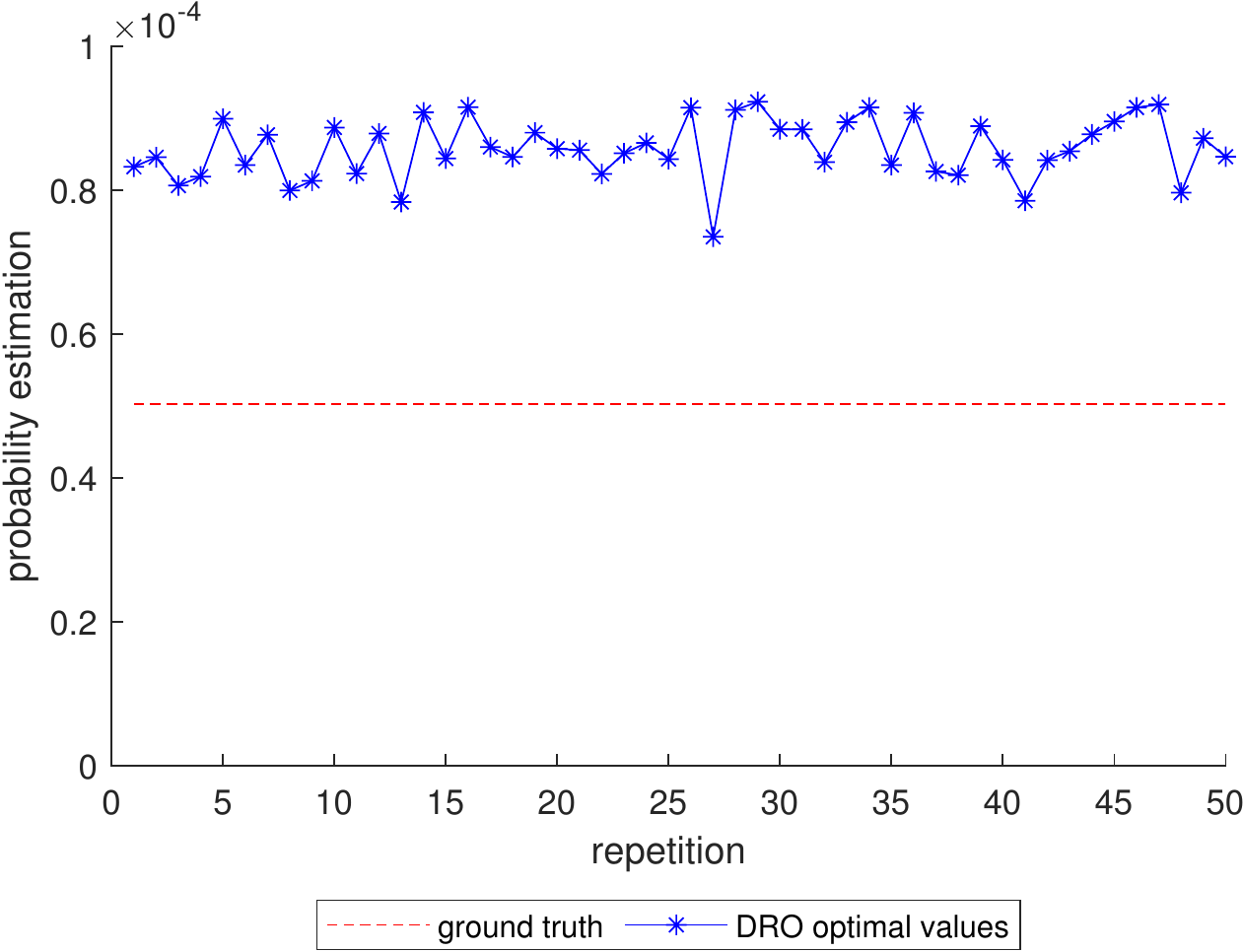}}
\subfigure[Estimated upper bounds of $P(X\geq8,Y\geq X+5)$]{
\includegraphics[width=0.45\textwidth]{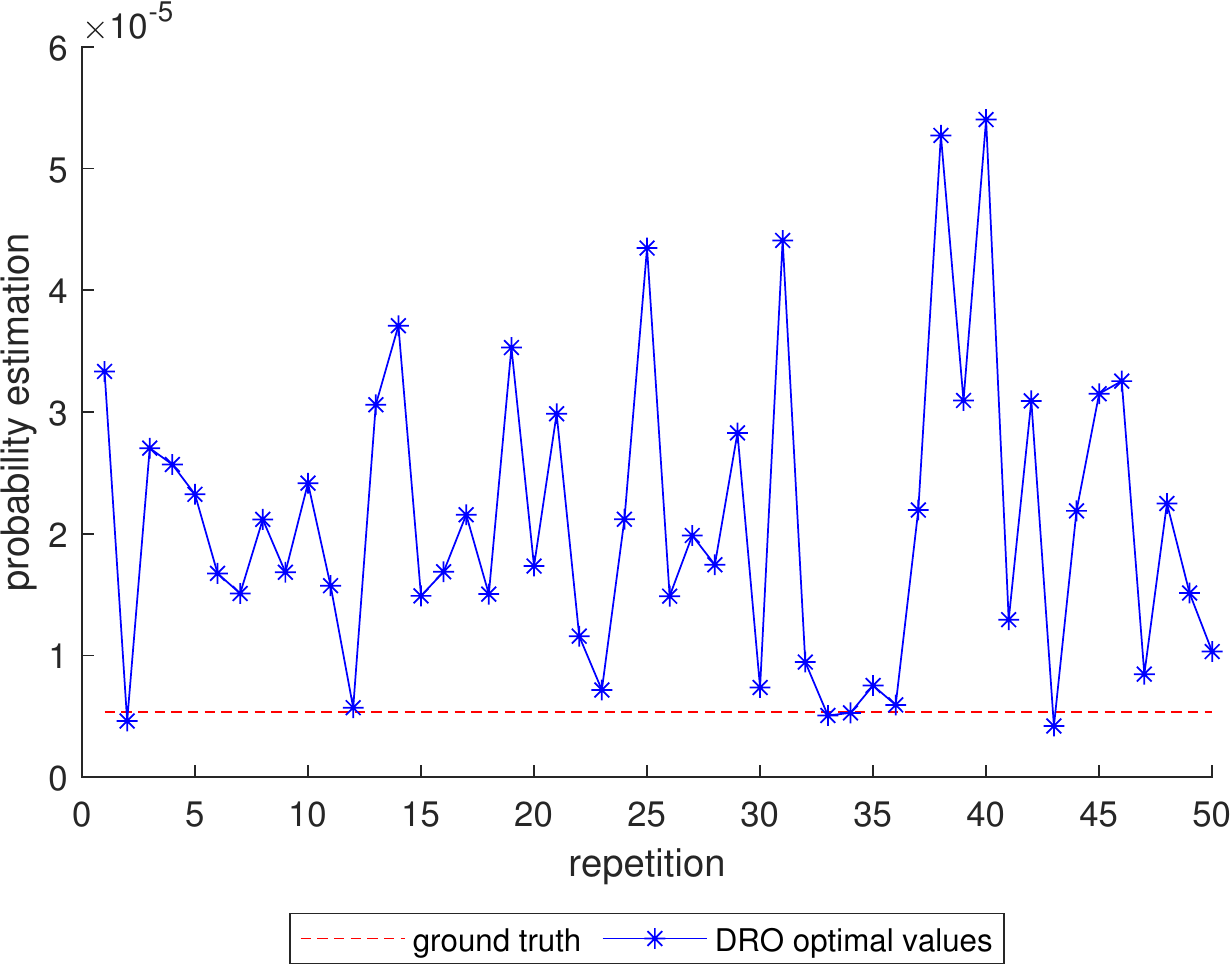}}
\caption{DRO problems with a true bivariate normal distribution. Each problem is solved $50$ times.}%
\label{num_exp_true_binormal}%
\end{figure}


\subsection{Data-Driven OU-DRO Problems}\label{sec:data driven num exp}
We consider data-driven OU-DRO problems. In other words, we can only get access to the data $(X_{1},Y_{1}),\ldots,(X_{m},Y_{m})$ generated from the unknown true distribution. Throughout this subsection, we consider the DRO problem (\ref{DRO_problem_formulation2}) where the moment constraints are specified as follows
\begin{align*}
&  a_{X,i}\leq P(x_{0}\leq X\leq x_{i},Y\geq y_{0}|X\geq x_{0},Y\geq y_{0})\leq b_{X,i},i=1,\ldots,n_{X}\\
&  a_{Y,j}\leq P(X\geq x_{0},y_{0}\leq Y\leq y_{j}|X\geq x_{0},Y\geq y_{0})\leq b_{Y,j},j=1,\ldots,n_{Y}
\end{align*}
All the constraints in the DRO can be calibrated by the methods described in Appendix \ref{sec:calibration}. We still choose the true distribution to be the bivariate normal distribution $N(0,16I_{2\times2})$. We consider three data sizes: $m=10^4,10^5,10^6$. We choose the number of moment constraints as $n_{X}=n_{Y}=5$. For $x_{0},x_{1},x_{2},x_{3},x_{4},x_{5}$, we consider three different choices: the top $20,16.5,13,9.5,6,2.5$ percentiles of $X_{i}$, the top $10,8,6,4,2,1$ percentiles of $X_{i}$, and the top $5,4,3,2,1,0.5$ percentiles of $X_{i}$. For $y_{0},y_{1},y_{2},y_{3},y_{4},y_{5}$, we choose them to be the same percentiles of $Y_{i}$ as in the choices of $x_i$'s. We refer to these three settings as ``DRO truncated at 80\% percentile", ``DRO truncated at 90\% percentile" and ``DRO truncated at 95\% percentile" respectively. We aim at obtaining the upper bounds of three rare-event probabilities $P(X\geq7,Y\geq X+1)$, $P(X\geq8,Y\geq1.5X-2)$ and $P(X\geq8,Y\geq X+5)$ with ground truths $4.35\times10^{-4}$, $5.0275\times10^{-5}$ and $5.3408\times10^{-6}$ respectively. To solve the DRO problem via the moment problem (\ref{finite_dimensional_moment_problem}), we discretize the constraint $l_{\bar{F}}\leq\bar{F}(x_{0},y_{0})\leq u_{\bar{F}}$ as $\bar{F}(x_{0},y_{0})=c$ with $c=l_{\bar{F}},(l_{\bar{F}}+u_{\bar{F}})/2,u_{\bar{F}}$.

Figures \ref{num_exp_dd_10^4}-\ref{num_exp_dd_10^6} show the results. We observe that the results are better when we use higher levels to define $x_i$'s and $y_i$'s, which is reasonable since larger $x_i$'s and $y_i$'s provide more information about the distribution in the rare-event sets. Regarding the quality of the upper bounds, it becomes better with larger data sizes as we expect. Besides, we see that the upper bounds for $P(X\geq7,Y\geq X+1)$ appear accurate, especially for DRO truncated at 90\% percentile and 95\% percentile, most of which are within $2$ or $3$ times of the ground truth. As for the results of $P(X\geq8,Y\geq1.5X-2)$, DRO truncated at 95\% percentile performs reasonably well, whose optimal values are within one order ($10$ times) of the ground truth. But the results for $P(X\geq8,Y\geq X+5)$ seem more conservative, which can be attributed to the very small magnitude of this probability which makes the estimation more challenging. Nonetheless, the upper bounds provided by DRO across all settings correctly bound the ground truths, thus validating the statistical correctness of our approach.
\begin{figure}[ptbh]
\centering
\subfigure[Estimated upper bounds of $P(X\geq7,Y\geq X+1)$]
{\includegraphics[width=0.45\textwidth]{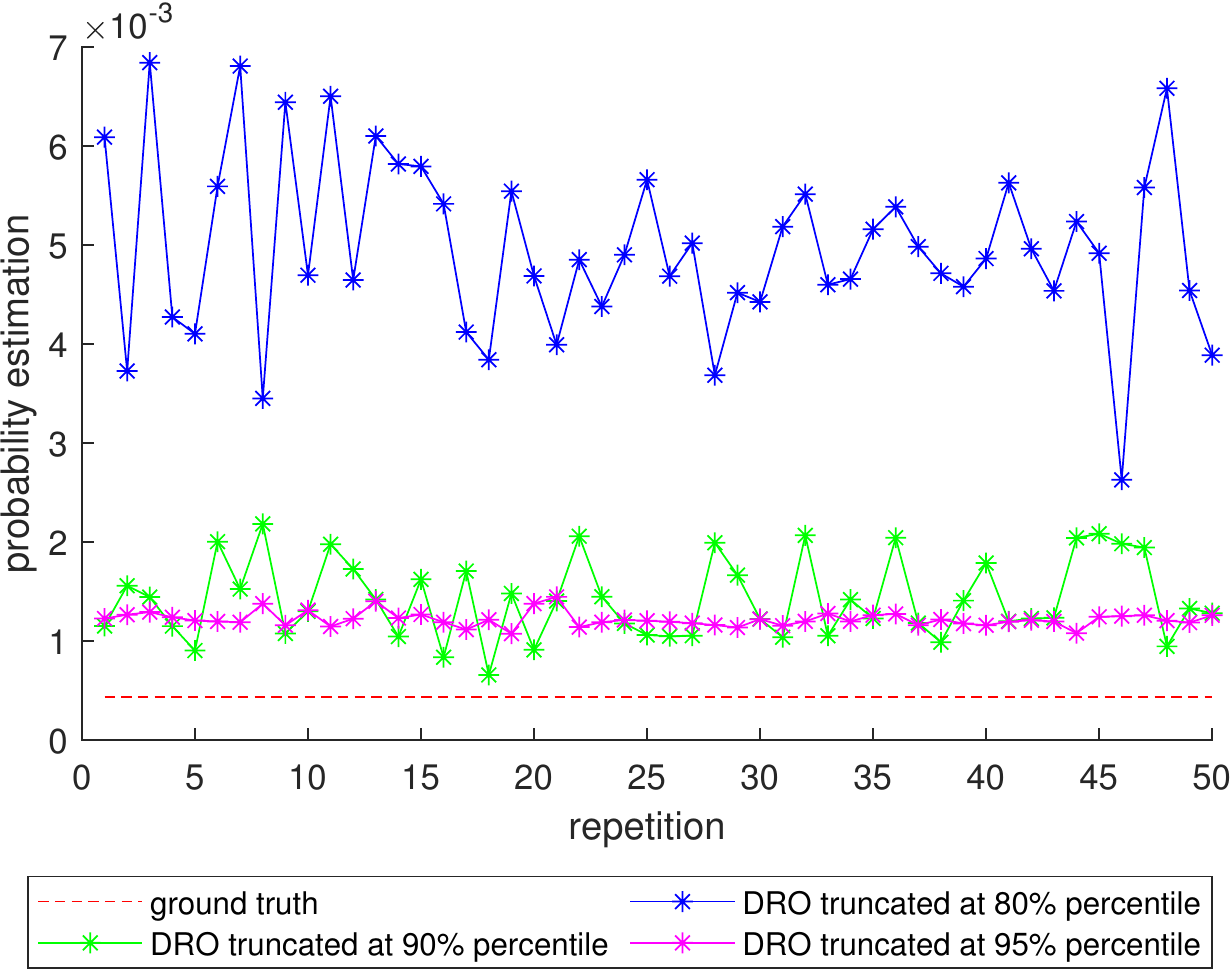}}
\subfigure[Estimated upper bounds of $P(X\geq8,Y\geq1.5X-2)$]
{\includegraphics[width=0.45\textwidth]{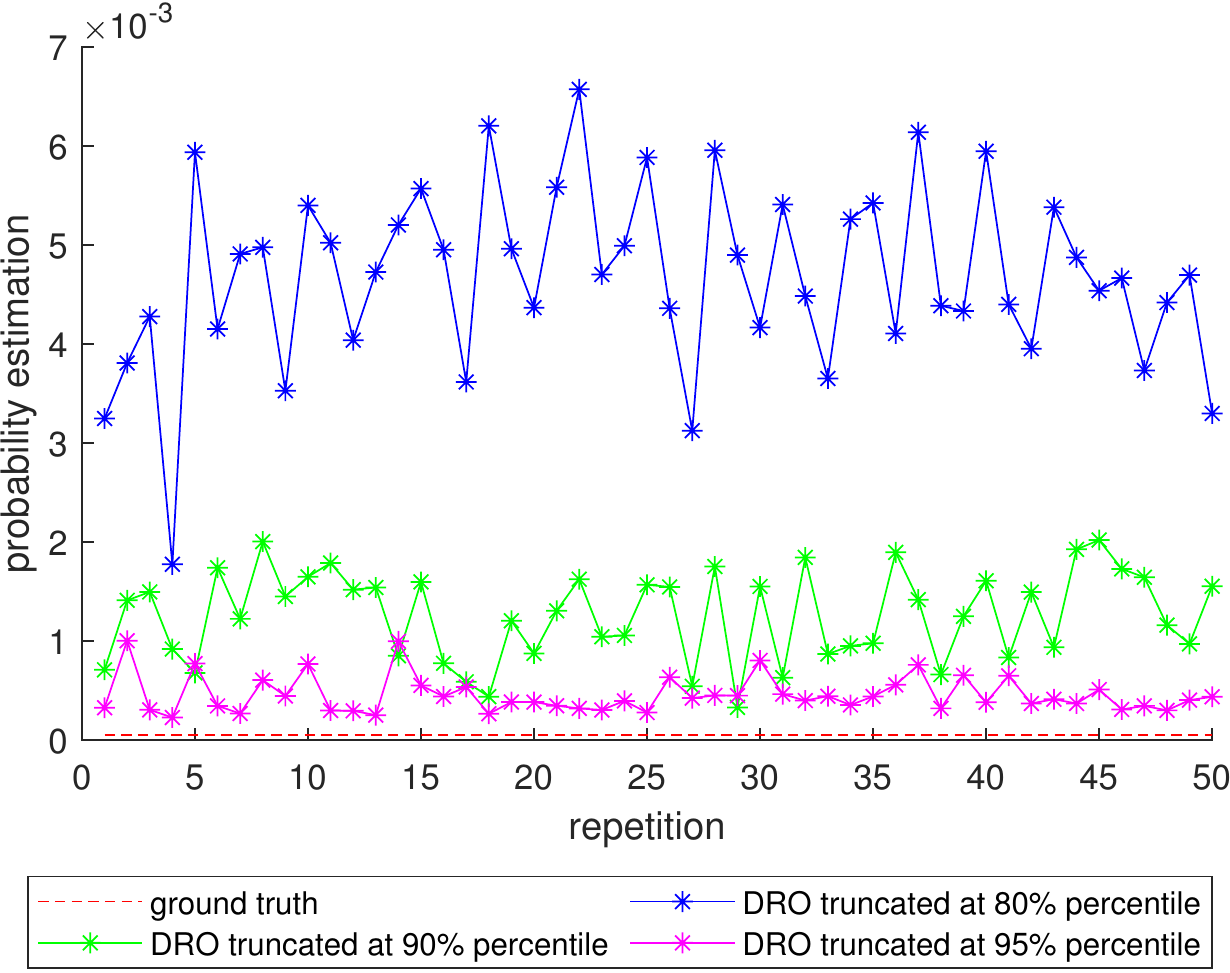}}
\subfigure[Estimated upper bounds of $P(X\geq8,Y\geq X+5)$]
{\includegraphics[width=0.45\textwidth]{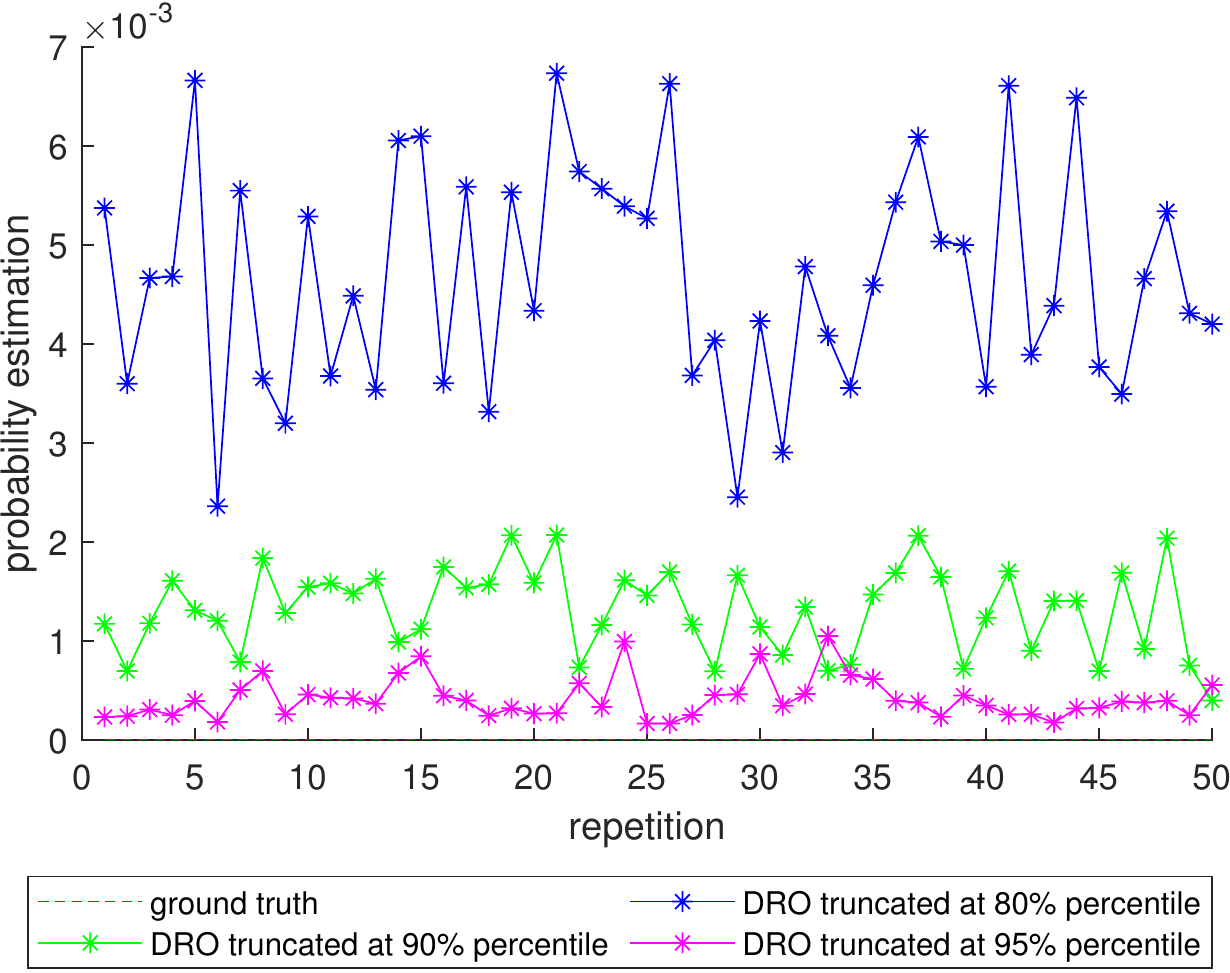}}
\caption{Data-driven DRO problems with $10^4$ samples from a bivariate normal distribution. Each problem is solved $50$ times.}
\label{num_exp_dd_10^4}
\end{figure}

\begin{figure}[ptbh]
\centering
\subfigure[Estimated upper bounds of $P(X\geq7,Y\geq X+1)$]
{\includegraphics[width=0.45\textwidth]{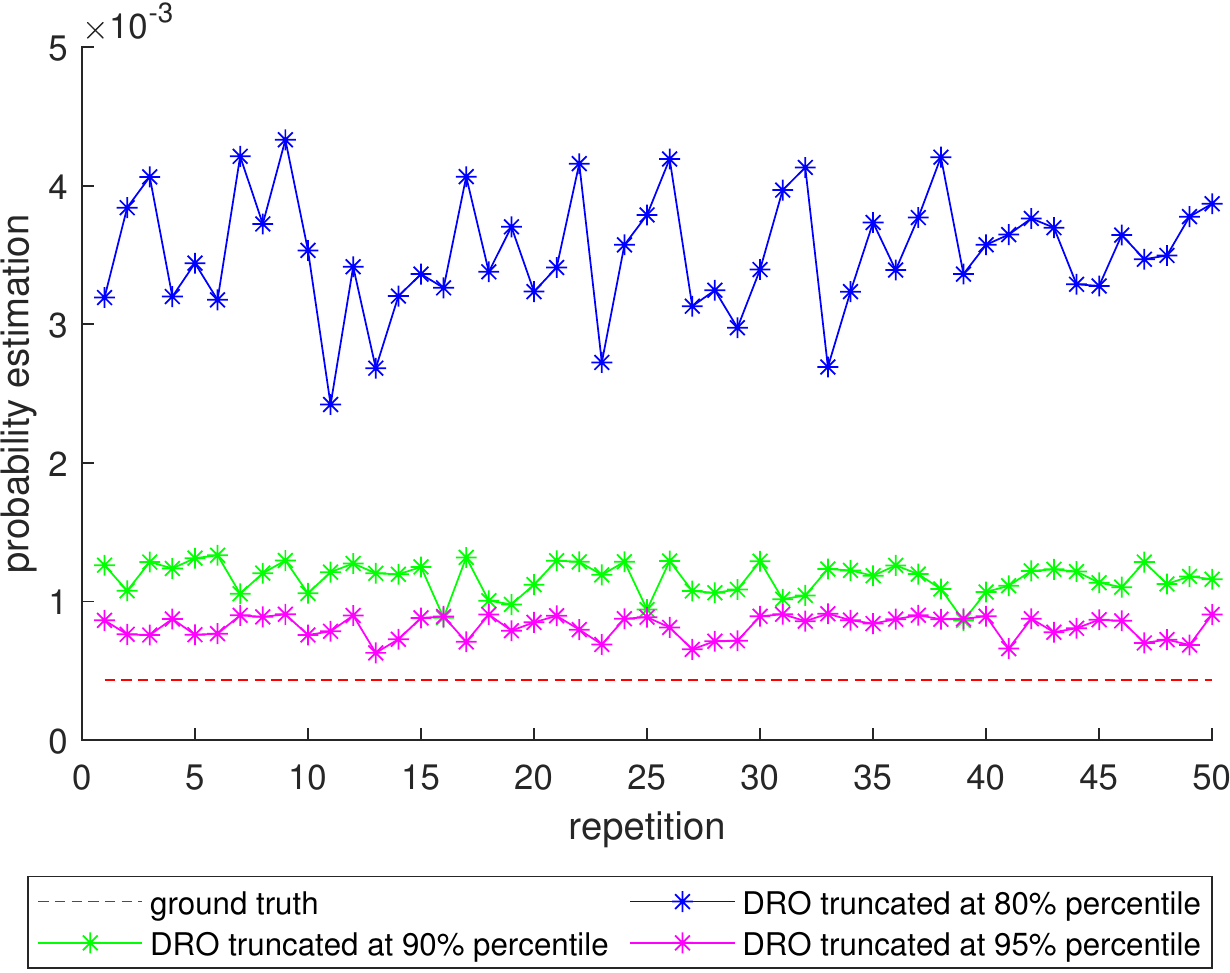}}
\subfigure[Estimated upper bounds of $P(X\geq8,Y\geq1.5X-2)$]
{\includegraphics[width=0.45\textwidth]{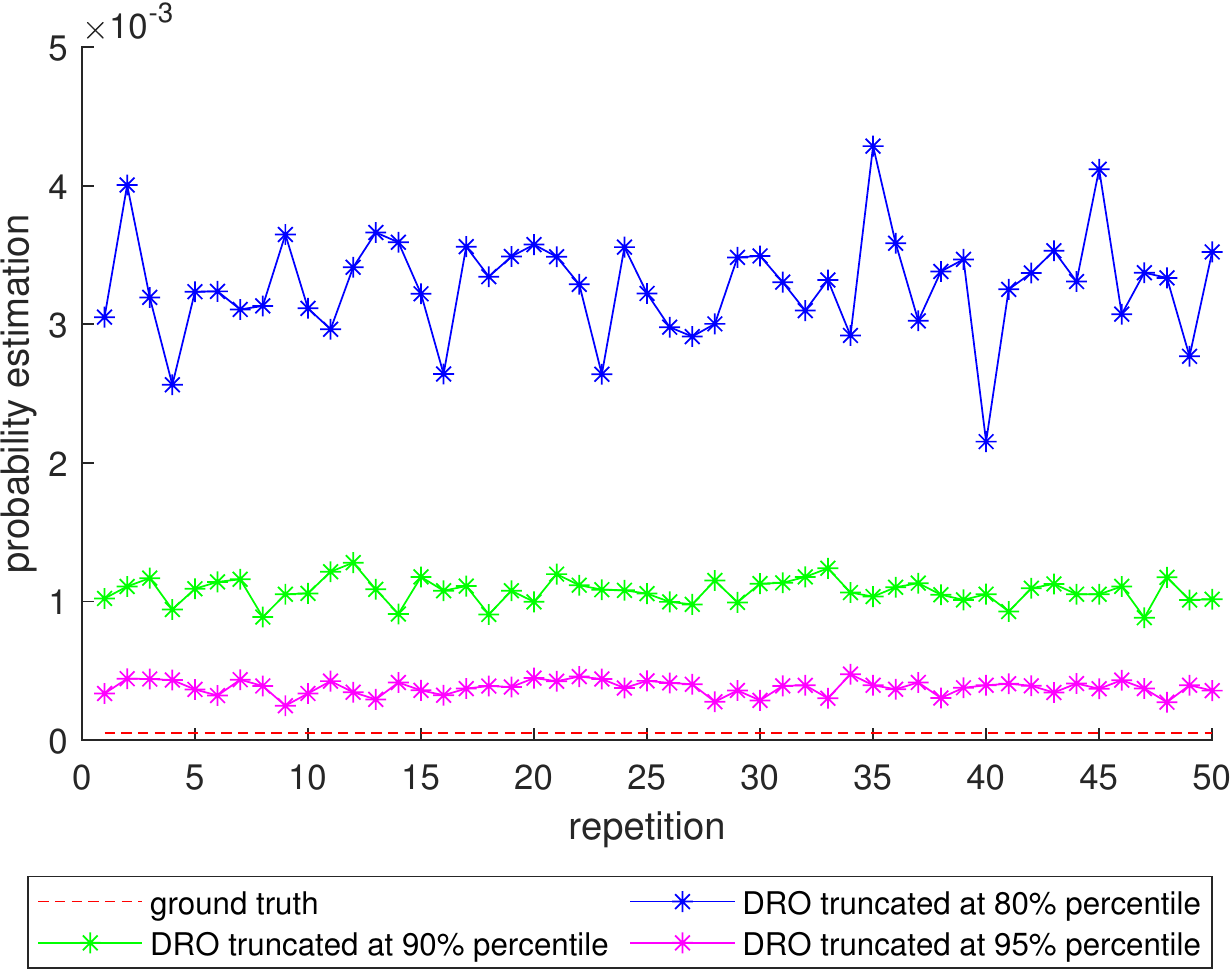}}
\subfigure[Estimated upper bounds of $P(X\geq8,Y\geq X+5)$]
{\includegraphics[width=0.45\textwidth]{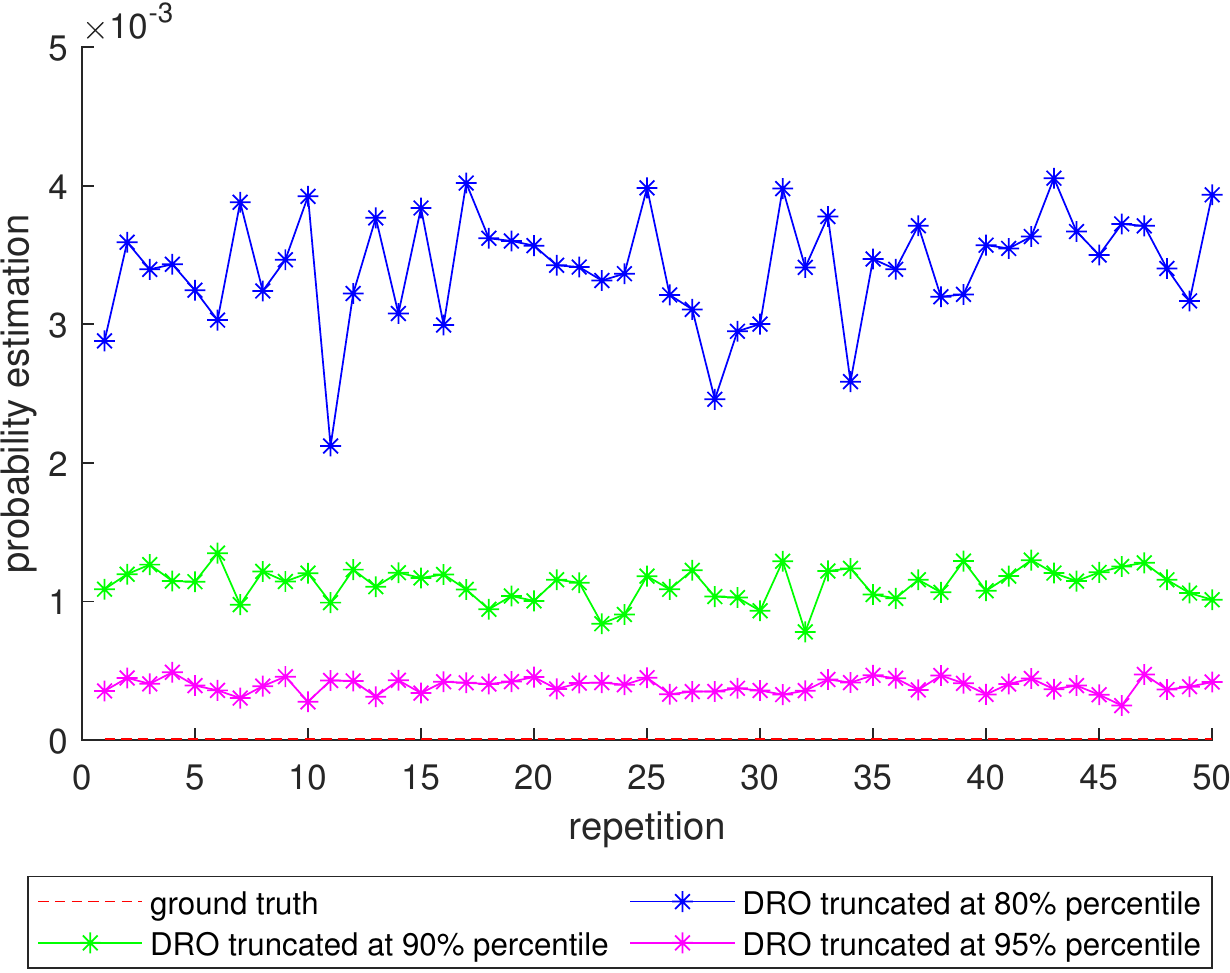}}
\caption{Data-driven DRO problems with $10^5$ samples from a bivariate normal distribution. Each problem is solved $50$ times.}
\label{num_exp_dd_10^5}
\end{figure}

\begin{figure}[ptbh]
\centering
\subfigure[Estimated upper bounds of $P(X\geq7,Y\geq X+1)$]
{\includegraphics[width=0.45\textwidth]{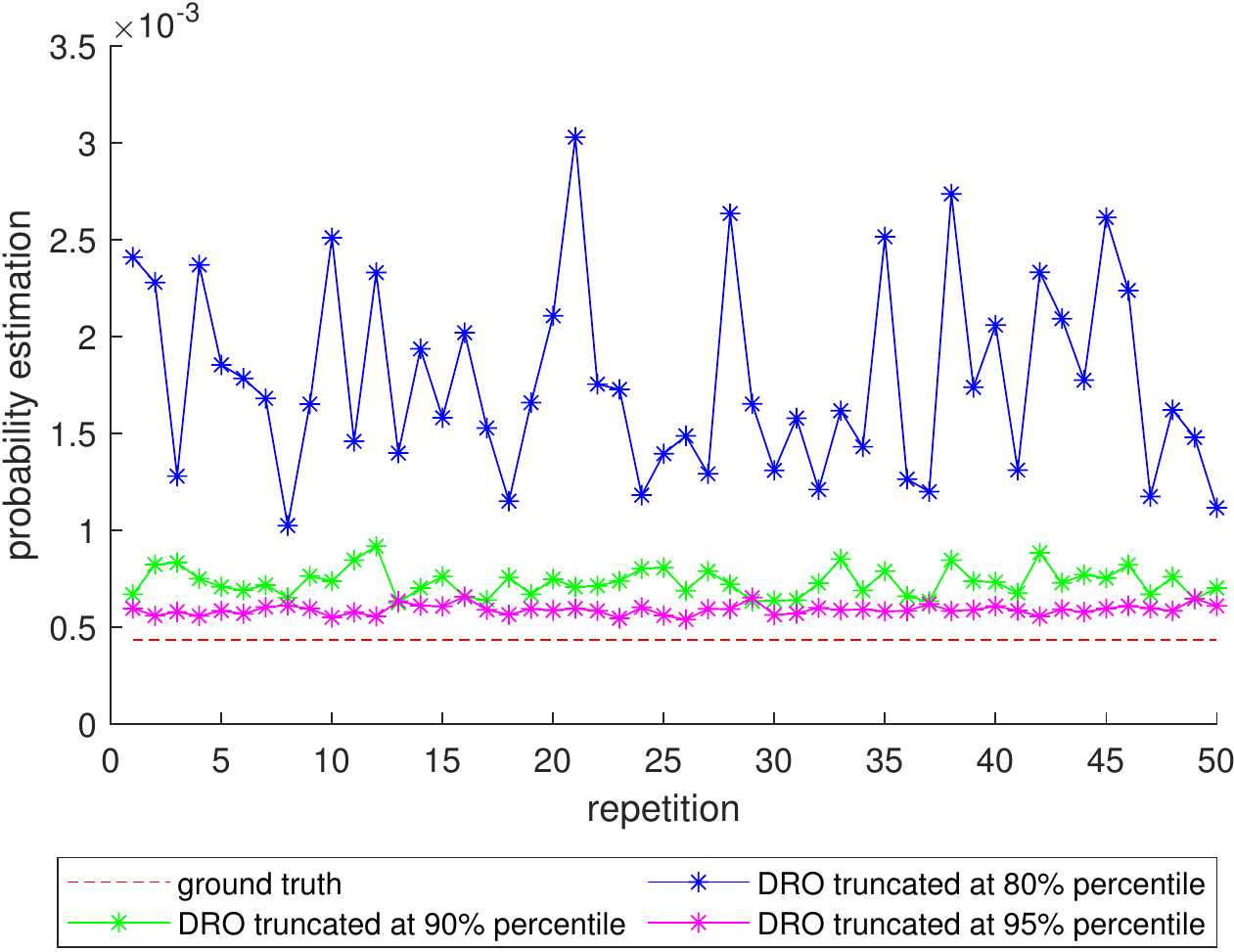}}
\subfigure[Estimated upper bounds of $P(X\geq8,Y\geq1.5X-2)$]
{\includegraphics[width=0.45\textwidth]{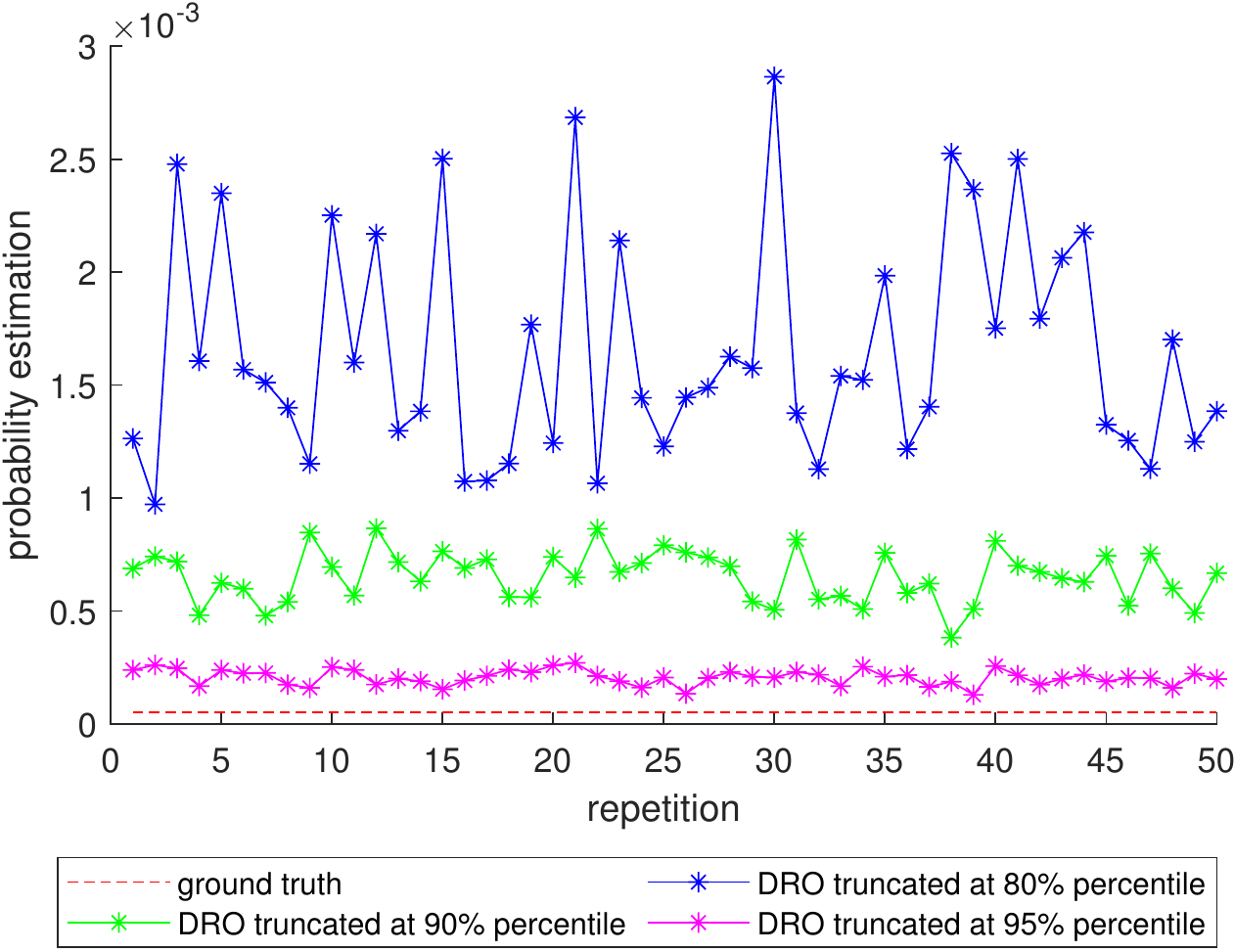}}
\subfigure[Estimated upper bounds of $P(X\geq8,Y\geq X+5)$]
{\includegraphics[width=0.45\textwidth]{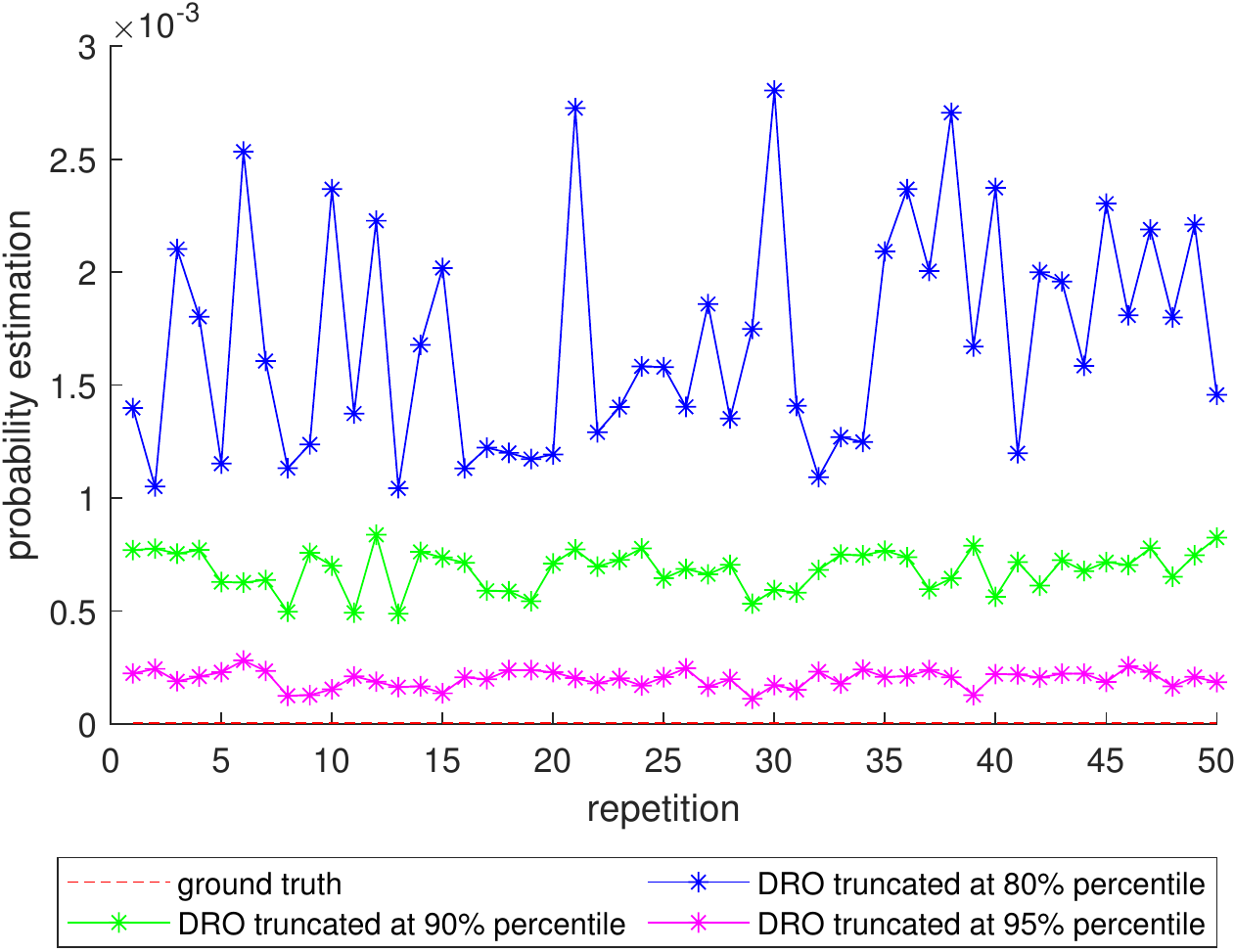}}
\caption{Data-driven DRO problems with $10^6$ samples from a bivariate normal distribution. Each problem is solved $50$ times.}
\label{num_exp_dd_10^6}
\end{figure}

\section{Conclusion}

This paper studied OU-DRO as a nonparametric alternative to existing methods for multivariate extreme event analysis. Our approach bypassed the bias-variance tradeoff and other technical complications faced by conventional multivariate extreme value theory, via the computation of worst-case upper bounds subject to shape constraints. We explained why OU is a suitable and natural shape constraint choice compared to other well-known multivariate unimodality notions such as star unimodality, block unimodality and $\alpha$-unimodality. We formulated the OU-DRO problem and presented how it can be reduced to a moment problem in the bivariate case by solving a specially designed variational problem to rule out suboptimal solutions. To extend our analysis to rare-event sets where only some of the dimensions are in their tails, we also proposed the use of POU-DRO and investigated its reduction to a tractable moment problem when only one dimension is in the tail. We demonstrated numerical results to show the statistical correctness and performance of our approach.

We suggest two directions for future work. One is the reduction of OU-DRO for general dimensions. This may require a deeper understanding on the geometry of OU sets in order to design a suitable variational problem to eliminate suboptimal solutions. Related to this is the reduction of POU-DRO for any number of dimensions being in the tail. Another direction is the alleviation of conservativeness in using DRO for extremal estimation, which involves further study on finding appropriate auxiliary constraints that can be calibrated from data while informative on tail behaviors.

\section*{Acknowledgments}
We gratefully acknowledge support from the National Science Foundation under grants CAREER CMMI-1834710 and IIS-1849280.

\bibliographystyle{elsevier}
\bibliography{arxiv_OU_DRO}

\newpage

\begin{appendix}

\section*{Appendix}

The appendix is organized as follows. Appendix \ref{sec:calibration} details the calibration methods for all parameters using data in OU-DRO. Appendix \ref{sec:general OU} discusses OU distributions defined on the whole space $\mathbb{R}^d$ as the generalization of OU distributions defined on $\mathcal{D}_0$. Appendix \ref{sec:POU appendix} provides the details of POU distributions and POU-DRO discussed in Section \ref{sec:POU-DRO problem}. Appendix \ref{sec:proofs} contains the proofs of all our results.


\section{Statistical Calibration of Constraints}\label{sec:calibration}
To illustrate how to calibrate the constraints of formulation \eqref{DRO_problem_formulation1} with data, we consider the following example used in Section \ref{sec:data driven num exp}:
\begin{align}
\max\text{ }  &  P((X,Y)\in S)\nonumber\\
\text{subject to }  &  l_{\bar{F}}\leq\bar{F}(x_{0},y_{0})\leq u_{\bar{F}}\nonumber\\
&  f_{X}(x_{0})\leq u_{X}\nonumber\\
&  f_{Y}(y_{0})\leq u_{Y}\label{data-driven_DRO_formulation}\\
&  a_{X,i}\leq P(x_{0}\leq X\leq x_{i},Y\geq y_{0}|X\geq x_{0},Y\geq y_{0})\leq b_{X,i},i=1,\ldots,n_{X},\nonumber\\
&  a_{Y,j}\leq P(X\geq x_{0},y_{0}\leq Y\leq y_{j}|X\geq x_{0},Y\geq y_{0})\leq b_{Y,j},j=1,\ldots,n_{Y},\nonumber\\
&  f(x^{\prime},y^{\prime})\geq f(x,y)\text{ if }x_{0}\leq x^{\prime}\leq x\text{ and }y_{0}\leq y^{\prime}\leq y\nonumber
\end{align}
Here we do not include the lower bound density constraints since we see in Section \ref{sec:general reduction} that they are redundant. Suppose we have $m$ samples $(X_i,Y_i),i=1,\ldots,m$ and the confidence level is set to be $\alpha$. Suppose also that $x_{0}$ and $y_{0}$ are set as some coordinate-wise top percentiles of the data (e.g., top 10 percentiles) in order to locate the tail region. Then the constants in problem (\ref{data-driven_DRO_formulation}) can be calibrated as
\begin{align*}
& l_{\bar{F}}=\hat{\bar{F}}(x_{0},y_{0})-\frac{\Phi^{-1}(1-\alpha/14)}{\sqrt{m}}\hat{\sigma}_{\bar{F}(x_{0},y_{0})},\quad u_{\bar{F}}=\hat{\bar{F}}(x_{0},y_{0})+\frac{\Phi^{-1}(1-\alpha/14)}{\sqrt{m}}\hat{\sigma}_{\bar{F}(x_{0},y_{0})},\\
& u_{X} =\hat{f}_{1-\alpha/7,X|Y\geq y_{0}}(x_{0})\left(  \hat{P}(Y\geq y_{0})+\frac{\Phi^{-1}(1-\alpha/7)}{\sqrt{m}}\hat{\sigma}_{P(Y\geq y_{0})}\right)  ,\\
& u_{Y} =\hat{f}_{1-\alpha/7,Y|X\geq x_{0}}(y_{0})\left(  \hat{P}(X\geq x_{0})+\frac{\Phi^{-1}(1-\alpha/7)}{\sqrt{m}}\hat{\sigma}_{P(X\geq x_{0})}\right)  ,\\
& a_{X,i} =\hat{F}_{X}(x_{i})-\frac{q}{\sqrt{m_{1}}},\quad b_{X,i}=\hat{F}_{X}(x_{i}-)+\frac{q}{\sqrt{m_{1}}},\\
& a_{Y,j} =\hat{F}_{Y}(y_{j})-\frac{q}{\sqrt{m_{1}}},\quad b_{Y,j}=\hat{F}_{Y}(y_{j}-)+\frac{q}{\sqrt{m_{1}}}.
\end{align*}
We will explain each notation and provide justifications. First, we have
\[
\hat{\bar{F}}(x_{0},y_{0})=\frac{1}{m}\sum_{i=1}^{m}I(X_{i}\geq x_{0},Y_{i}\geq y_{0}),\quad\hat{\sigma}_{\bar{F}(x_{0},y_{0})}=\sqrt{\frac{1}{m-1}\sum_{i=1}^{m}(I(X_{i}\geq x_{0},Y_{i}\geq y_{0})-\hat{\bar{F}}(x_{0},y_{0}))^{2}}
\]
as the point estimate of $\bar{F}(x_{0},y_{0})$ given by the sample mean of $I(X_i\geq x_0,Y_i\geq y_0)$'s, and the sample standard deviation of $I(X_i\geq x_0,Y_i\geq y_0)$'s respectively. We also denote $\Phi$ as the cumulative distribution function of the standard normal distribution. Thus, the constants $l_{\bar{F}}$ and $u_{\bar{F}}$ are set as the asymptotically exact lower and upper confidence bounds of $\bar F(x_0,y_0)$ induced by the standard central limit theorem (CLT) at the confidence level $1-\alpha/7$. Here, the confidence level $1-\alpha/7$ is due to a Bonferroni correction which we will explain later.

Next, the constant $u_{X}$ is calibrated as the $(1-\alpha/7)$-level upper confidence bound on $f_X(x_0)$ by the bootstrap as follows. Note that the truncated marginal density $f_{X}(x_{0})$ can be written as $f_{X}(x_{0})=P(Y\geq y_{0})f_{X|Y\geq y_{0}}(x_{0})$, where $f_{X|Y\geq y_{0}}$ is the density of $X$ conditional on $\{Y\geq y_{0}\}$. We need to find the asymptotically exact ($1-\alpha/7$)-level upper bound for both factors. For $f_{X|Y\geq y_{0}}(x_{0})$, we select the data $(X_{i},Y_{i})$ with $Y_{i}\geq y_{0}$. Suppose the number of such data is $m_{0}$. We resample the $x$-coordinates of the selected data with replacement to the size $m_{0}$, and use the resample to construct a kernel density estimate for $X$ at $x_{0}$. Repeat the procedure many times and the $(1-\alpha/7)$-th quantile $\hat{f}_{1-\alpha/7,X|Y\geq y_{0}}(x_0)$ of these estimates is an asymptotically exact ($1-\alpha/7$)-level upper bound of $f_{X|Y\geq y_{0}}(x_0)$ (\cite{chen2017tutorial}). On the other hand, an asymptotically exact ($1-\alpha/7$)-level upper confidence bound of $P(Y\geq y_{0})$ is obtained by the CLT like for $\bar F(x_0,y_0)$ discussed earlier, where
\[
\hat{P}(Y\geq y_{0})=\frac{1}{m}\sum_{i=1}^{m}I(Y_{i}\geq y_{0}),\quad\hat{\sigma}_{P(Y\geq y_{0})}=\sqrt{\frac{1}{m-1}\sum_{i=1}^{m}(I(Y_{i}\geq y_{0})-\hat{P}(Y\geq y_{0}))^{2}}.
\]
The calibration of $u_{Y}$ is justified similarly.

Furthermore, to calibrate the constants $a_{X,i},b_{X,i}$, we use the Kolmogorov–Smirnov statistic. We collect all data such that $X\geq x_{0}$ and $Y\geq y_{0}$. Suppose these data have size $m_{1}$. The Kolmogorov–Smirnov test gives us
\begin{equation}
\sqrt{m_1}\sup_{x\ge x_{0}}|F_{X}(x)-\hat{F}_{X}(x)|\leq q\label{KS_test}
\end{equation}
an asymptotically exact $(1-\alpha/7)$-level confidence region on $F_X(x)$, where $F_{X}(x)=P(X\leq x|X\geq x_{0},Y\geq y_{0})$ and $\hat{F}_{X}$ is the empirical distribution of $X$ restricted to $X\geq x_{0},Y\geq y_{0}$, and $q$ is the $(1-\alpha/7)$-th quantile of $\sup_{t\in\lbrack0,1]}|BB(t)|$ where $BB$ is the standard Brownian bridge. By considering the left and right limits at $x=x_i$ in (\ref{KS_test}), we can rewrite \eqref{KS_test} to obtain the lower and upper bounds of $F_{X}(x_i)$ as $a_{x,i}$ and $b_{X,i}$ defined earlier, where $\hat{F}_{X}(x_{i}-)$ is the left limit of $\hat{F}_{X}$ at $x_{i}$. The constants $a_{Y,j},b_{Y,j}$ can be calibrated similarly.

Finally, the Bonferroni correction is applied since there are in total 7 sets of parameters that we need to calibrate, and we want to control the familywise Type I error to be at most $\alpha$, thus giving each individual confidence level discussed above to be $1-\alpha/7$. If the constraints are calibrated via the methods above and the true density is OU about $(x_0,y_0)$ on $[x_0,\infty)\times[y_0,\infty)$, then the constraints in (\ref{data-driven_DRO_formulation}) will hold simultaneously with confidence level $1-\alpha$ asymptotically. By Corollary \ref{OU-DRO_validity}, the optimal value will then be an asymptotically valid upper confidence bound on the true probability with level $1-\alpha$.

\section{OU Distribution on $\mathbb{R}^d$}\label{sec:general OU}


In this section, we consider a generalization of the OU distribution defined in Definition \ref{OU mix}. We will enlarge the support from $\mathcal{D}_0$ to $\mathbb{R}^d$ and keep the monotonicity property for each variable. Like the OU distribution on $\mathcal{D}_0$, we will provide two definitions for the OU distribution on $\mathbb{R}^d$. One is defined in terms of the density and the other is defined via mixture representation.
\begin{definition}[Orthounimodal density on $\mathbb{R}^d$]
A probability distribution on $\mathbb{R}^d$ with density $f$ (with respect to the Lebesgue measure) is OU about mode $x_0=(x_{10},\ldots,x_{d0})$ if for each $i=1,\ldots,d$ and any fixed $x_j\in\mathbb{R},j\neq i$, the function $x_i\mapsto f(x)$ is non-decreasing on $(-\infty,x_{i0}]$ and non-increasing on $[x_{i0},\infty)$.
\label{general OU def}
\end{definition}
\begin{definition}[Mixture representation of OU distribution on $\mathbb{R}^d$]
We have:
\begin{description}
\item[(1)] A set $K\subset\mathbb{R}^d$ is said to be orthounimodal about $x_{0}$ if for every $x\in K$, the closed rectangle with edges parallel to the axes and with opposite vertices $x_0$ and $x$ is in $K$, i.e., $(\eta_1 x_1+(1-\eta_1)x_{10},\ldots,\eta_d x_d+(1-\eta_d)x_{d0})\in K$ for any $\eta_i\in[0,1],i=1,\ldots,d$.

\item[(2)] A distribution on $\mathbb{R}^d$ is called OU about $x_{0}$ if it belongs to the closed convex hull of the set of all uniform distributions on subsets of $\mathbb{R}^d$ which are OU about $x_{0}$.
\end{description}
\label{general OU mix}
\end{definition}
From Definition \ref{general OU def} and Definition \ref{general OU mix}, we can see OU distribution has a linear transformation property: if $X$ is OU about $x_0$, then $X+x_0^{\prime}$ is OU about $x_0+x_0^{\prime}$. Besides, analogies of Lemma \ref{property_of_OU_set}, Theorem \ref{Choquet_OU} and Lemma \ref{extreme_point_OU} hold for OU distributions on $\mathbb{R}^d$.
\begin{lemma}
\label{property_of_general_OU_set}Suppose that $K\subset\mathbb{R}^d$ is an OU set about $x_{0}$. Then $K$ is Lebesgue measurable. Besides, $\bar{K}$ and $\bar{K^{\circ}}$ (closure of $K^{\circ}$) are also OU sets about $x_{0}$ satisfying $\lambda(K^{\circ})=\lambda(K)=\lambda(\bar{K})$.
\end{lemma}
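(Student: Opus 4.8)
The plan is to reduce every assertion to the already-established Lemma \ref{property_of_OU_set} by an orthant decomposition. Since $K$ is OU about $x_{0}$ if and only if $K-x_{0}$ is OU about $0$ (immediate from Definition \ref{general OU mix}(1)) and translations preserve measurability, Lebesgue measure, closure and interior, I may assume $x_{0}=0$. For $\sigma\in\{+1,-1\}^{d}$ let $Q_{\sigma}=\{x:\sigma_{i}x_{i}\ge 0\text{ for all }i\}$ be the closed orthants about $0$, and let $R_{\sigma}$ be the linear involution $x\mapsto(\sigma_{i}x_{i})_{i}$, which maps $Q_{\sigma}$ onto $\mathcal{D}_{0}=[0,\infty)^{d}$. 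First I would check that $K_{\sigma}:=R_{\sigma}(K\cap Q_{\sigma})$ is an OU set in $\mathcal{D}_{0}$ about $0$: if $y\in K\cap Q_{\sigma}$ then each point of the box with opposite vertices $0$ and $y$ has every coordinate of the same sign as the corresponding coordinate of $y$, hence lies in $Q_{\sigma}$, and lies in $K$ by orthounimodality; thus the box lies in $K\cap Q_{\sigma}$, and pushing forward by $R_{\sigma}$ this says exactly that $\{z':0\le z'\le z\}\subseteq K_{\sigma}$ for every $z\in K_{\sigma}$, i.e.\ $K_{\sigma}$ is OU in the sense of Definition \ref{OU mix}(1). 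Measurability of $K$ is then immediate, since $K=\bigcup_{\sigma}(K\cap Q_{\sigma})=\bigcup_{\sigma}R_{\sigma}(K_{\sigma})$ is a finite union of Lebesgue measurable sets by Lemma \ref{property_of_OU_set}.

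For the measure identities it suffices to prove $\lambda(\partial K)=0$, because $K^{\circ}\subseteq K\subseteq\bar{K}$ and $\bar{K}\setminus K^{\circ}=\partial K$. On each open orthant $Q_{\sigma}^{\circ}$, a point belongs to $\partial K$ if and only if it belongs to the boundary of $K\cap Q_{\sigma}^{\circ}$ taken inside the open set $Q_{\sigma}^{\circ}$; transported by $R_{\sigma}$ this is a boundary point of $K_{\sigma}$ lying in $\mathcal{D}_{0}^{\circ}$, hence a point of $\partial K_{\sigma}$, and $\lambda(\partial K_{\sigma})=\lambda(\bar{K}_{\sigma})-\lambda(K_{\sigma}^{\circ})=0$ by Lemma \ref{property_of_OU_set}. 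Since $R_{\sigma}$ preserves Lebesgue measure, $\lambda(\partial K\cap Q_{\sigma}^{\circ})=0$, and since $\mathbb{R}^{d}$ is the union of the $Q_{\sigma}^{\circ}$ together with the Lebesgue-null coordinate hyperplanes $\{x_{i}=0\}$, we obtain $\lambda(\partial K)=\sum_{\sigma}\lambda(\partial K\cap Q_{\sigma}^{\circ})=0$, and therefore $\lambda(K^{\circ})=\lambda(K)=\lambda(\bar{K})$.

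That $\bar{K}$ is an OU set follows from a direct limiting argument: if $x=\lim_{n}x_{n}$ with $x_{n}\in K$ and $\eta\in[0,1]^{d}$, then the rescaled point with $i$th coordinate $\eta_{i}x_{n,i}$ lies in $K$ by orthounimodality and converges to the rescaled point with $i$th coordinate $\eta_{i}x_{i}$, which therefore lies in $\bar{K}$. The same limiting reduction shows that $\overline{K^{\circ}}$ is OU provided one proves the sub-claim: if $w\in K^{\circ}$ then for every $\eta\in[0,1]^{d}$ the rescaled point $v$ with $i$th coordinate $\eta_{i}w_{i}$ lies in $\overline{K^{\circ}}$. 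To prove this I would argue by explicit perturbation. Fix a ball $B(w,2\varepsilon)\subseteq K$. For small $t>0$ define $z(t)$ by setting its $i$th coordinate to $\theta_{i}(t)w_{i}$, where $\theta_{i}(t)$ is the clamp of $\eta_{i}$ into $[t,1-t]$, on coordinates $i$ with $w_{i}\neq 0$, and to $t$ on the remaining coordinates (here one uses that $B(w,2\varepsilon)$ contains points of $K$ on both sides of those coordinate hyperplanes, since $w_{i}=0$ there). A direct check shows that for $t$ small enough every point sufficiently close to $z(t)$ can be written as a rescaling with $i$th coordinate $\theta''_{i}w''_{i}$ for some $w''\in B(w,2\varepsilon)\subseteq K$ and $\theta''\in[0,1]^{d}$, hence lies in $K$, so $z(t)\in K^{\circ}$; and $z(t)\to v$ as $t\to 0$, so $v\in\overline{K^{\circ}}$, which gives the sub-claim.

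I expect the sub-claim in the last paragraph to be the only genuinely delicate point, specifically the bookkeeping on the degenerate coordinates $i$ with $w_{i}=0$ — equivalently, the coordinates along which $v$ lands on an axis hyperplane through $x_{0}$ — where the elementary ``a single nearby point of $B(w,2\varepsilon)$ realizes $z(t)+\delta$'' argument must be set up with care; everything else is a routine transfer of Lemma \ref{property_of_OU_set} along the reflections $R_{\sigma}$.
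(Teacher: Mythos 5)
Your proof is correct and follows essentially the same route as the paper: decompose $\mathbb{R}^d$ into orthants about $x_0$, transfer each piece to the first orthant by the sign-flip involutions $R_\sigma$, and conclude $\lambda(\partial K)=0$ from (the proof of) Lemma \ref{property_of_OU_set}. One small caution: the identity $\lambda(\partial K_\sigma)=\lambda(\bar K_\sigma)-\lambda(K_\sigma^\circ)$ is vacuous when the measures are infinite (e.g.\ $K_\sigma=\mathcal D_0$), so you should quote $\lambda(\partial K_\sigma)=0$ directly — that is what the proof of Lemma \ref{property_of_OU_set}, via the cited Lavri\v{c} result, actually supplies, not the statement of the lemma. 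Also, your $\overline{K^{\circ}}$ sub-claim needs none of the clamping or degenerate-coordinate bookkeeping you flag as delicate: for $\eta\in(0,1)^d$, mapping a point $y$ near $v$ to $y'$ with $y'_i=y_i/\eta_i$ sends a small neighborhood of $v$ into $B(w,2\varepsilon)\subseteq K$ once the radius is below $2\varepsilon\min_i\eta_i/\sqrt d$, and $y_i=\eta_i y'_i$ shows $y\in K$, so $v\in K^{\circ}$; then send $\eta\to\eta^{*}\in[0,1]^d$ and use that $\overline{K^{\circ}}$ is closed (incidentally, $v$ can land on an axis hyperplane when $w_i=0$ \emph{or} $\eta_i=0$, not only the former, so your ``equivalently'' was also off).
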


The following theorem justifies the equivalence of Definition \ref{general OU mix} and Definition \ref{general OU def} and establishes the Choquet representation for OU distributions on $\mathbb{R}^d$ in the presence of density. To simplify the notations, we assume without loss of generality that the mode is the origin. If $X$ is OU about $x_0$, we can see $X-x_0$ is OU about the origin and Theorem \ref{Choquet_general_OU} applies. Recall that $W_K$ denotes the uniform distribution on $K$.

\begin{theorem}
\label{Choquet_general_OU}Suppose a distribution $P$ on $\mathbb{R}^d$ is absolutely continuous with respect to Lebesgue measure. Then $P$ is OU about the origin if and only if there is a density $f(x)$ of $P$ such that for every $s>0$, the set%
\[
C_{s}=\{x\in\mathcal{D}_{0}:f(x)\geq s\}
\]
is OU about the origin, or equivalently, if and only if $f$ is an OU density about the origin on $\mathbb{R}^d$. Besides, let $O_1,\ldots,O_{2^d}$ be the $2^d$ (closed) orthants of $\mathbb{R}^d$. Then $P$ has the following Choquet representation:%
\begin{equation}
P(B)=\sum_{1\leq i\leq2^{d},P(O_{i})>0}P(O_{i})\int_{0}^{\infty}W_{\bar{C}_{s}^{i}}(B)g_{i}(s)ds \label{equ_Choquet_general_OU}%
\end{equation}
for any Lebesgue measurable set $B$, where $C_{s}^{i}=\{x\in O_i:f_i(x)\geq s\}$, $f_i(x)=f(x)I(x\in O_i)/P(O_i)$ is the density of the conditional distribution $P(\cdot|O_i)$, $\bar{C}_{s}^{i}$ is the closure of $C_{s}^{i}$ and $g_i(s)=\lambda(\bar{C}_{s}^{i})$ is a probability density on $(0,\infty)$.
\end{theorem}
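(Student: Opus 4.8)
The plan is to bootstrap from Theorem \ref{Choquet_OU} by splitting $\mathbb{R}^d$ into its $2^d$ orthants. By the linear transformation property we may take the mode to be the origin. For a sign vector $\epsilon\in\{-1,1\}^d$ let $T_\epsilon(x)=(\epsilon_1x_1,\dots,\epsilon_dx_d)$; then $T_\epsilon$ is a volume-preserving linear bijection carrying the orthant $O_i$ with that sign pattern onto $\mathcal{D}_0$, and it carries the closed rectangle between $0$ and $x$ onto the closed rectangle between $0$ and $T_\epsilon(x)$. Consequently a set $K\subset O_i$ is OU about $0$ in the sense of Definition \ref{general OU mix}(1) if and only if $T_\epsilon(K)\subset\mathcal{D}_0$ is OU about $0$ in the sense of Definition \ref{OU mix}(1), and, unwinding the coordinatewise monotonicity in Definition \ref{general OU def} one orthant at a time, a measurable $f$ is an OU density on $\mathbb{R}^d$ about $0$ if and only if $f\circ T_\epsilon^{-1}$ restricted to $\mathcal{D}_0$ is an OU density on $\mathcal{D}_0$ for every $\epsilon$; the same bookkeeping shows $C_s\cap O_i$ reflects to the superlevel set of the reflected density.

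With this dictionary in place, I would first dispatch the equivalence ``$f$ OU density $\iff$ every $C_s$ OU'': if $f$ is an OU density and $f(x)\ge s$, then any $y$ in the rectangle between $0$ and $x$ is reached from $x$ by coordinatewise moves toward $0$, along each of which $f$ does not decrease, so $f(y)\ge f(x)\ge s$ and $C_s$ is OU; conversely, writing $f(x)=\int_0^\infty \mathbf{1}\{x\in C_s\}\,ds$ via the layer-cake formula and using that $C_s$ is OU gives $f(y)\ge f(x)$ whenever $y$ lies between $0$ and $x$, which is exactly Definition \ref{general OU def}. Next, for an absolutely continuous $P$ that is OU about $0$, I would decompose $P=\sum_{i:\,P(O_i)>0}P(O_i)\,P(\cdot\mid O_i)$, note that each conditional $P(\cdot\mid O_i)$ is absolutely continuous with density $f\mathbf{1}_{O_i}/P(O_i)$, push it forward by $T_\epsilon$ onto $\mathcal{D}_0$, and apply Theorem \ref{Choquet_OU} there. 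Since (as one must check) $P(\cdot\mid O_i)$ inherits orthounimodality, Theorem \ref{Choquet_OU} supplies a density version whose superlevel sets $C_s^i$ are OU and yields $P(\cdot\mid O_i)(B)=\int_0^\infty W_{\bar C_s^i}(B)g_i(s)\,ds$ with $g_i(s)=\lambda(\bar C_s^i)$ a probability density; substituting back produces (\ref{equ_Choquet_general_OU}). Conversely, since each $\bar C_s^i$ is an OU set in $\mathbb{R}^d$, (\ref{equ_Choquet_general_OU}) exhibits $P$ as a mixture of uniform distributions on OU sets, closing the loop with Definition \ref{general OU mix}.

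To certify that (\ref{equ_Choquet_general_OU}) is an honest \emph{Choquet} representation rather than merely a mixture one, I would record the analog of Lemma \ref{extreme_point_OU}: if $K$ is an OU set with $0<\lambda(K)<\infty$ contained in a single orthant $O_i$, then $W_K$ is an extreme point of the class of OU distributions on $\mathbb{R}^d$. This holds because any convex splitting of $W_K$ is supported in $O_i$, and after reflection Lemma \ref{extreme_point_OU} applies. The confinement to one orthant is essential: a $W_K$ for $K$ straddling a coordinate hyperplane decomposes into its orthant pieces and is \emph{not} extreme (e.g.\ the uniform distribution on a centered cube equals the average of the uniforms on its $2^d$ sub-cubes, one per orthant). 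This is precisely why the representation must be taken orthant by orthant rather than as a single integral $\int W_{\bar C_s}g(s)\,ds$ over superlevel sets of the joint density.

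The step I expect to be the main obstacle is showing that an absolutely continuous $P$ that is OU merely in the mixture sense of Definition \ref{general OU mix} actually admits an OU density — equivalently, that conditioning $P$ on an orthant preserves orthounimodality, which is what licenses invoking Theorem \ref{Choquet_OU} orthant-wise. A finite convex combination $\sum_j\alpha_jW_{K_j}$ conditioned on $O_i$ is $\sum_j\tilde\alpha_jW_{K_j\cap O_i}$, again a mixture of uniforms on OU sets, but passing this through the closed convex hull requires continuity of $Q\mapsto Q(\cdot\mid O_i)$ along the approximating sequence; this is available since absolute continuity gives $P(\partial O_i)=0$ and $Q_n(O_i)\to P(O_i)>0$, so the conditional measures converge weakly. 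Alternatively one can bypass conditioning entirely by re-running the argument behind Theorem \ref{Choquet_OU} with the orthant decomposition built in. A secondary, more routine nuisance is gluing the orthant-wise density versions across the coordinate hyperplanes through $0$: these have Lebesgue measure zero, so $P$ is unaffected, but to land on a genuinely OU version one should take the appropriate one-sided limits there, in the spirit of the normalization (\ref{regularity_condition_of_density}).
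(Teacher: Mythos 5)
Your proposal is correct and takes essentially the same route as the paper: decompose $P$ by orthants via the law of total probability, reduce each conditional $P(\cdot\mid O_i)$ to the $\mathcal{D}_0$ case (the paper via Theorem \ref{Choquet_OU}, you via the explicit reflections $T_\epsilon$), and certify extremality with the orthant-confined analog of Lemma \ref{extreme_point_OU}, which is exactly the paper's Lemma \ref{extreme_point_general_OU}. The concern you flag about conditioning preserving orthounimodality is dispatched once the density characterization is in hand, since $f\mathbf{1}_{O_i}/P(O_i)$ has superlevel sets $C_{sP(O_i)}\cap O_i$, which inherit the OU property.
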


The following Lemma tells us about the extreme point in the class of OU distributions about the origin on $\mathbb{R}^d$.

\begin{lemma}
If $K\subset O_i$ is an OU set about the origin with $\lambda(K)>0$ for some orthant $O_i$, then $W_{K}$ is an extreme point in the class of OU distributions about the origin on $\mathbb{R}^d$.
\label{extreme_point_general_OU}
\end{lemma}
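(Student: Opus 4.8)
The plan is to reduce the statement to Lemma~\ref{extreme_point_OU}, which already establishes the analogous fact for OU sets contained in $\mathcal{D}_0$. The starting observation is that being OU about the origin on $\mathbb{R}^d$ is invariant under the coordinate sign-flip maps $\sigma$ sending $x_k\mapsto -x_k$ for $k$ ranging over an arbitrary index set: this is immediate from Definition~\ref{general OU def} for densities (a reflection in the $k$th coordinate merely interchanges the requirements ``non-decreasing on $(-\infty,x_{k0}]$'' and ``non-increasing on $[x_{k0},\infty)$''), and from Definition~\ref{general OU mix}(1) for OU sets, while each such $\sigma$ is a Lebesgue-measure-preserving linear isometry carrying uniform distributions to uniform distributions. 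Composing a suitable $\sigma$, we may therefore assume without loss of generality that $O_i=\mathcal{D}_0=\{x\ge 0\}$ and $x_0=0$, so that $K\subset\mathcal{D}_0$ is an OU set about $0$ in the sense of Definition~\ref{OU mix}(1).

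Now suppose $W_K=\tfrac12(P_1+P_2)$ with $P_1,P_2$ probability distributions that are OU about the origin on $\mathbb{R}^d$; the goal is to conclude $P_1=P_2=W_K$. Since $P_2\ge 0$, we have $P_1\le 2W_K$ as measures, so $P_1$ (and likewise $P_2$) is absolutely continuous with respect to Lebesgue measure and carried, up to a null set, by $K\subset\mathcal{D}_0$; hence each $P_j$ is a distribution on $\mathcal{D}_0$. By Theorem~\ref{Choquet_general_OU}, $P_j$ admits a density that is OU about the origin on $\mathbb{R}^d$, and the restriction of that density to $\mathcal{D}_0$ is non-increasing in every coordinate, i.e.\ satisfies $f_j(x')\ge f_j(x)$ whenever $x\ge x'\ge 0$; by Theorem~\ref{Choquet_OU} this says exactly that $P_j$ is an OU distribution on $\mathcal{D}_0$ about $0$. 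Thus $W_K=\tfrac12(P_1+P_2)$ is a convex combination inside the class of OU distributions on $\mathcal{D}_0$, and since $\lambda(K)>0$, Lemma~\ref{extreme_point_OU} forces $P_1=P_2=W_K$. Undoing the reflection $\sigma$ yields the claim.

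The only point requiring genuine care is the measure-theoretic bookkeeping in the middle paragraph: one must check that the Radon--Nikodym derivatives of $P_1$ and $P_2$ can be taken to be the OU versions supplied by Theorem~\ref{Choquet_general_OU} while still obeying $f_1+f_2=2/\lambda(K)$ a.e.\ on $K$ and $f_1=f_2=0$ a.e.\ off $K$, and that ``carried by $K$ up to a null set'' legitimately lets one regard $P_j$ as a distribution on $\mathcal{D}_0$ for the purpose of Theorem~\ref{Choquet_OU}. This is handled exactly as in the proofs of Lemma~\ref{extreme_point_OU} and Theorem~\ref{Choquet_general_OU}, so no new difficulty arises. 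As an alternative that bypasses citing Lemma~\ref{extreme_point_OU} altogether, one can argue directly in the reduced setting: along any segment parallel to a coordinate axis and contained in $K$ (such a segment necessarily occupies an interval of the form $[0,\beta]$ in that coordinate because $K$ is OU about $0$), the OU density $f_1$ is non-increasing while $f_2=2/\lambda(K)-f_1$ is also non-increasing, which forces $f_1$ to be constant on the segment; since any point of an OU set is joined to the origin by a chain of such axis-parallel segments inside $K$, $f_1$ is constant on $K$, giving $P_1=W_K$, and symmetrically $P_2=W_K$.
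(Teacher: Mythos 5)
Your proposal is correct, and it takes a genuinely different route from the one the paper indicates. The paper simply says that the proof is ``similar to the proof of Lemma~\ref{extreme_point_OU} and thus is omitted,'' which means: repeat the argument of Lemma~\ref{extreme_point_OU} verbatim with the cosmetic change that the ambient class is now OU distributions about the origin on $\mathbb{R}^d$, using Theorem~\ref{Choquet_general_OU} in place of Theorem~\ref{Choquet_OU} to supply coordinate-wise monotone versions of the densities $f_1,f_2$, and then concluding constancy on $\bar{K}^\circ$ exactly as before. Your main argument instead reduces to Lemma~\ref{extreme_point_OU} as a black box: after a sign-flip reduction placing $O_i=\mathcal{D}_0$, you observe that any $P_j$ appearing in a convex decomposition of $W_K$ is dominated by a multiple of $W_K$, hence absolutely continuous and carried by $K\subset\mathcal{D}_0$; combining Theorem~\ref{Choquet_general_OU} (giving an $\mathbb{R}^d$-OU density) with Theorem~\ref{Choquet_OU} (recognizing the restricted density as OU on $\mathcal{D}_0$) then shows the decomposition already lives inside the class of OU distributions on $\mathcal{D}_0$, where Lemma~\ref{extreme_point_OU} applies. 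The key bridge—that an $\mathbb{R}^d$-OU distribution concentrated on one orthant is, after restriction, OU on that orthant—is correct and is precisely what makes the reduction legitimate. What the reduction buys is modularity and economy: no reconstruction of the argument, no repeated discussion of supports, densities, and Lebesgue measures of OU sets. What the paper's approach buys is self-containment without needing the bridge observation. One small stylistic point: you work with the midpoint convex combination $\eta=1/2$ (which suffices since the class is convex), whereas the paper's Lemma~\ref{extreme_point_OU} works with general $\eta\in(0,1)$; stating this explicitly would tighten the write-up. Your closing alternative, arguing constancy of $f_1$ directly along axis-parallel chains from the mode, is in fact closer in spirit to the paper's own (implied) repetition of Lemma~\ref{extreme_point_OU}, and also works.
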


The condition $K\subset O_i$ is essential for Lemma \ref{extreme_point_general_OU}. Consider an example in the bivariate case. Define $K=\{(x,y):-1\le x\le1,-1\le y\le1\}$, $K_1=\{(x,y):0\le x\le1,0\le y\le1\}$, $K_2=\{(x,y):-1\le x\le0,0\le y\le1\}$, $K_3=\{(x,y):-1\le x\le0,-1\le y\le0\}$ and $K_4=\{(x,y):0\le x\le1,-1\le y\le0\}$. We can see all the sets are OU about the origin. However, $W_K$ is not an extreme point in the class of OU distributions about the origin on $\mathbb{R}^2$ since it can be written as
\[
W_K=\frac{1}{4}W_{K_1}+\frac{1}{4}W_{K_2}+\frac{1}{4}W_{K_3}+\frac{1}{4}W_{K_4}.
\]
This explains why the Choquet representation in Theorem \ref{Choquet_general_OU} is represented as (\ref{equ_Choquet_general_OU}) instead of
\begin{equation}
P(B)=\int_{0}^{\infty}W_{\bar{C}_{s}}(B)g(s)ds \label{wrong_Choquet_general_OU}
\end{equation}
although (\ref{wrong_Choquet_general_OU}) still holds in the setting of Theorem \ref{Choquet_general_OU}.

\section{Theory of Partial Orthounimodality and Partially Orthounimodal Distributionally Robust Optimization}\label{sec:POU appendix}
In this section, we provide the details on the theory of POU distributions and the POU-DRO problem discussed in Section \ref{sec:POU-DRO problem}.

\subsection{POU Distribution}\label{sec:POU}

In this section, we introduce POU and establish its Choquet representation by following the same flow as in the OU case. As we have discussed, POU models a density that is non-increasing in only some of the dimensions. For simplicity, we suppose the density is non-increasing in only the first $d^{\prime}$ components. In other words, only the first $d^{\prime}$ components of the random vector are in their tails. For other cases, we can simply permute the components to transform them into this case. Let $\mathcal{D}_0=\{x\ge x_0\}$ where $x_0=(x_{10},\ldots,x_{d0})\in \mathbb{R}^{d^{\prime}}\times[-\infty,\infty)^{d-d^{\prime}}$ for some $d^{\prime}<d$. When $x_{j0}=-\infty$, $x_j\ge x_{j0}$ is interpreted as $x_j\in\mathbb{R}$. Similar to OU distributions, POU distributions can be defined in terms of monotonicity of the probability density or mixture representation.

\begin{definition}[$d^{\prime}$-partially orthounimodal density]
A probability distribution on $\mathcal{D}_{0}$ with density $f$ (with respect to the Lebesgue measure) is $d^{\prime}$-partially orthounimodal ($d^{\prime}$-POU) about mode $x_0$ if $f$ is non-increasing with respect to $x_1,\ldots,x_{d^{\prime}}$ on $\mathcal{D}_{0}$ for any fixed $(x_{d^{\prime}+1},\ldots,x_d)$ with $(x_{d^{\prime}+1},\ldots,x_d)\ge(x_{d^{\prime}+1,0},\ldots,x_{d0})$.
\label{POU def}
\end{definition}

\begin{definition}[Mixture representation of $d^{\prime}$-POU distribution]
We have:
\begin{description}
\item[(1)] A measurable set $K\subset\mathcal{D}_{0}$ is said to be $d^{\prime}$-POU about $x_{0}$ if for every $x\in K$, we have $x^{\prime}\in K$ if $x_i\ge x_i^{\prime}\ge x_{i0}$ for $i=1,\ldots,d^{\prime}$ and $x_i=x_i^{\prime}\ge x_{i0}$ for $i=d^{\prime}+1,\ldots,d$.

\item[(2)] A distribution on $\mathcal{D}_{0}$ is called $d^{\prime}$-POU about $x_{0}$ if it belongs to the closed convex hull of the set of all uniform distributions on subsets of $\mathcal{D}_{0}$ which are $d^{\prime}$-POU about $x_{0}$.
\end{description}
\label{POU mix}
\end{definition}

The following theorem justifies the equivalence of Definitions \ref{POU def} and \ref{POU mix} in the presence of a density.

\begin{theorem}
\label{equivalence_POU}Suppose a distribution $P$ on $\mathcal{D}_{0}$ is absolutely continuous with respect to Lebesgue measure. Then $P$ is $d^{\prime}$-POU about $x_{0}$ if and only if there is a density $f(x)$ of $P$ such that for every $s>0$, the set%
\[
C_{s}=\{x\in\mathcal{D}_{0}:f(x)\geq s\}
\]
is $d^{\prime}$-POU about $x_{0}$, or equivalently, if and only if $f$ is a $d^{\prime}$-POU density about $x_{0}$ on $\mathcal{D}_{0}$.
\end{theorem}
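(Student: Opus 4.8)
The plan is to prove the chain of equivalences by following the same scheme as Theorem~\ref{Choquet_OU}, with every comparison ``$x\ge x'$'' replaced by a comparison that is coordinatewise in the first $d'$ variables and an equality in the last $d-d'$ variables. I would split the argument into three parts.

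\emph{Level sets versus monotonicity (for a fixed density).} First I would show that, for a given density $f$ of $P$, the property ``$C_s=\{f\ge s\}$ is $d'$-POU about $x_0$ for every $s>0$'' holds if and only if $f$ is a $d'$-POU density. If $f$ is non-increasing in $x_1,\dots,x_{d'}$, then $f(x)\ge s$ together with $x_i\ge x'_i\ge x_{i0}$ for $i\le d'$ and $x_i=x'_i\ge x_{i0}$ for $i>d'$ gives $f(x')\ge f(x)\ge s$, so $C_s$ has the $d'$-POU-set property, its measurability being automatic from that of $f$. Conversely, if each $C_s$ is a $d'$-POU set, then for such a pair $(x,x')$ every $s\le f(x)$ satisfies $x\in C_s$ and hence $x'\in C_s$, so $f(x')\ge s$; letting $s\uparrow f(x)$ gives $f(x')\ge f(x)$. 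This part is routine.

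\emph{Density $\Rightarrow$ mixture.} Next, assuming $f$ is a $d'$-POU density, I would invoke the $d'$-POU analogue of Lemma~\ref{property_of_OU_set} (each $\bar C_s$ is again $d'$-POU with $\lambda(\bar C_s)=\lambda(C_s)$) together with the layer-cake identity
\[
P(B)=\int_B f(x)\,dx=\int_0^\infty \lambda(B\cap\bar C_s)\,ds=\int_0^\infty \lambda(\bar C_s)\,W_{\bar C_s}(B)\,ds,
\]
noting that $g(s):=\lambda(\bar C_s)$ integrates to $1$, so $\lambda(\bar C_s)\in(0,\infty)$ for a.e.\ $s$ on the support of $g$. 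Since $P$ is thus a barycenter of uniform distributions on $d'$-POU sets, and a barycenter of a Borel family of probability measures lies in the closed convex hull of that family, $P$ is $d'$-POU in the sense of Definition~\ref{POU mix}; this step also yields, as a by-product, a Choquet-type representation parallel to~\eqref{equ_Choquet_OU}.

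\emph{Mixture $\Rightarrow$ density.} The remaining implication is the one I expect to be the crux, since it requires passing the POU property through the closure. Here I would use the characterization of $d'$-POU-ness of an absolutely continuous law by ``shifted box'' inequalities: a law $P$ with density admits a $d'$-POU version if and only if $P(R)\ge P(R+t)$ for every closed box $R\subset\mathcal{D}_0$ and every $t=(t_1,\dots,t_{d'},0,\dots,0)$ with $t_i\ge0$ and $R+t\subset\mathcal{D}_0$; the ``only if'' is the change of variables $P(R+t)=\int_R f(x+t)\,dx\le\int_R f(x)\,dx=P(R)$, while the ``if'' forces $f(x+t)\le f(x)$ for a.e.\ $(x,t)$, after which a standard choice of representative, in the spirit of the regularity normalization~\eqref{regularity_condition_of_density}, produces a genuinely $d'$-POU density. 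The point is then that these inequalities propagate along the closed convex hull: each generator $W_K$ satisfies them because $(K-t)\cap R\subseteq K$ whenever $K$ is a $d'$-POU set --- apply the defining implication of Definition~\ref{POU mix}(1) with $x=y+t$ and $x'=y$ for any $y\in(K-t)\cap R$ --- so that $W_K(R+t)=\lambda(K\cap(R+t))/\lambda(K)\le\lambda(K\cap R)/\lambda(K)=W_K(R)$; finite convex combinations preserve the inequalities; and since the limiting $P$ is absolutely continuous, all boxes $R$ and $R+t$ are $P$-continuity sets, so the inequalities survive weak limits. Equivalently, one phrases this as: the class of absolutely continuous laws on $\mathcal{D}_0$ with a $d'$-POU density is convex and weakly closed and contains every $W_K$, hence contains the relevant part of the closed convex hull. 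Combining the three parts gives the theorem.
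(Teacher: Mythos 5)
Your proposal is correct and follows essentially the same route as the paper's proof: both decompose into the routine ``level sets $\Leftrightarrow$ monotone density'' observation, the layer-cake/mixture implication, and the hard converse via a translation-invariance estimate on the generators $W_K$, propagated through convex combinations and weak limits, with absolute continuity of the limit guaranteeing the relevant sets are continuity sets. The only meaningful difference is organizational: you package the paper's ``shrinking cube $N_\delta(x')$ versus $N_\delta(x)$ plus Lebesgue differentiation'' argument as a standalone shifted-box characterization lemma for absolutely continuous laws, whereas the paper works with the cubes directly; when you fill in the ``if'' direction of that lemma you will end up doing the paper's Lebesgue-differentiation construction of a monotone representative, and the analogy you draw to the normalization~\eqref{regularity_condition_of_density} is only tangential (that normalization concerns values on $\partial\mathcal{D}_0$, not the interior redefinition you need), so that sentence should be tightened if you write it out.
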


The Choquet representation of $d^{\prime}$-POU distributions is slightly more complicated than that of the OU distributions. We first introduce the notations. For a $d^{\prime}$-POU set $K$ about $x_0$ on $\mathcal{D}_0$, we define its slice with respect to the subspace $\{y\in\mathbb{R}^d:y_{d^{\prime}+1}=x_{d^{\prime}+1},\ldots,y_d=x_d\}$ as $K_{x_{d^{\prime}+1},\ldots,x_d}=\{(y_1,\ldots,y_{d^{\prime}}):(y_1,\ldots,y_{d^{\prime}},x_{d^{\prime}+1},\ldots,x_d)\in K\}\subset \mathbb{R}^{d^{\prime}}$. In other words, $K\cap\{y\in\mathbb{R}^d:y_{d^{\prime}+1}=x_{d^{\prime}+1},\ldots,y_d=x_d\}=K_{x_{d^{\prime}+1},\ldots,x_d}\times\{(x_{d^{\prime}+1},\ldots,x_d)\}$. Since $K$ is $d^{\prime}$-POU, $K_{x_{d^{\prime}+1},\ldots,x_d}$ is either empty or a nonempty OU set about $(x_{10},\ldots,x_{d^{\prime}0})$ on $\{y\in\mathbb{R}^{d^{\prime}}:y_i\ge x_{i0},i=1,\ldots,d^{\prime}\}$. Recall that $W_K$ denotes the uniform distribution on $K$ and $K$ could be in a lower-dimensional subspace of $\mathbb{R}^d$. In the presence of a density, we have the following Choquet representation theorem:
\begin{theorem}\label{Choquet_POU}
Suppose a distribution $P$ on $\mathcal{D}_0$ is absolutely continuous with respect to Lebesgue measure and $P$ is $d^{\prime}$-POU about $x_0$. Let $f$ be a $d^{\prime}$-POU density of $P$ and $C_{s}=\{x\in\mathcal{D}_{0}:f(x)\geq s\}$. Then the Choquet representation of $P$ is given by:
\begin{equation*}
P(B)=\int_{0}^{\infty}\int_{x_{d^{\prime}+1,0}}^{\infty}\cdots\int_{x_{d0}}^{\infty}W_{C_{s}\cap\{y\in \mathbb{R}^{d}:y_{d^{\prime}+1}=x_{d^{\prime}+1},\ldots,y_{d}=x_{d}\}}(B)g(s,x_{d^{\prime}+1},\ldots,x_{d})dx_{d^{\prime}+1}\cdots dx_{d}ds
\end{equation*}
for any Lebesgue measurable set $B$, where $g(s,x_{d^{\prime}+1},\ldots,x_{d})=\lambda_{d^{\prime}}(C_{s,x_{d^{\prime}+1},\ldots,x_{d}})$ is a probability density on $\{(s,x_{d^{\prime}+1},\ldots,x_{d}):s>0,x_i\ge x_{i0},i=d^{\prime}+1,\ldots,d\}$, $C_{s,x_{d^{\prime}+1},\ldots,x_{d}}$ is the slice of $C_s$ with respect to the subspace $\{y\in\mathbb{R}^d:y_{d^{\prime}+1}=x_{d^{\prime}+1},\ldots,y_d=x_d$\} and $\lambda_{d^{\prime}}(\cdot)$ is the Lebesgue measure on $\mathbb{R}^{d^{\prime}}$.
\end{theorem}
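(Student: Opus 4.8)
The idea is to run the argument behind Theorem~\ref{Choquet_OU} on two levels at once: first peel the density into superlevel sets, and then slice each superlevel set along the non-tail coordinates $x_{d'+1},\dots,x_d$. Throughout, write $x=(x',x'')$ with $x'=(x_1,\dots,x_{d'})\in\mathbb{R}^{d'}$ and $x''=(x_{d'+1},\dots,x_d)$, and for $A\subset\mathbb{R}^d$ let $A_{x''}=\{x'\in\mathbb{R}^{d'}:(x',x'')\in A\}$ be the slice. By Theorem~\ref{equivalence_POU} (or directly from Definition~\ref{POU def}) we may fix a $d'$-POU density $f$ of $P$ whose superlevel set $C_s=\{x\in\mathcal{D}_0:f(x)\ge s\}$ is, for every $s>0$, a $d'$-POU set; in particular $C_s$ is Lebesgue measurable (being a superlevel set of a measurable function), and directly from Definition~\ref{POU mix}(1) each slice $C_{s,x''}$ is either empty or an OU set about $(x_{10},\dots,x_{d'0})$ in $\{y'\in\mathbb{R}^{d'}:y'_i\ge x_{i0},\ i\le d'\}$, hence measurable.

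The core of the proof is the following chain. First, the layer-cake identity together with Tonelli gives, for any Lebesgue measurable $B$,
\[
P(B)=\int_B f(x)\,dx=\int_B\int_0^\infty I(x\in C_s)\,ds\,dx=\int_0^\infty\lambda_d(B\cap C_s)\,ds .
\]
Next, applying Fubini--Tonelli to the product $\lambda_d=\lambda_{d'}\otimes\lambda_{d-d'}$ on $\mathbb{R}^{d'}\times\mathbb{R}^{d-d'}$,
\[
\lambda_d(B\cap C_s)=\int_{x_{d'+1,0}}^{\infty}\!\!\cdots\int_{x_{d0}}^{\infty}\lambda_{d'}\big(B_{x''}\cap C_{s,x''}\big)\,dx_{d'+1}\cdots dx_d .
\]
It remains to recognise the inner integrand as $W_{C_s\cap\{y:\,y''=x''\}}(B)\,g(s,x'')$ with $g(s,x''):=\lambda_{d'}(C_{s,x''})$. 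Indeed, when $g(s,x'')>0$ the set $C_s\cap\{y:y''=x''\}=C_{s,x''}\times\{x''\}$ is a $d'$-dimensional OU set and the uniform distribution on it assigns $B$ the mass $\lambda_{d'}(B_{x''}\cap C_{s,x''})/\lambda_{d'}(C_{s,x''})$, so the product equals $\lambda_{d'}(B_{x''}\cap C_{s,x''})$ exactly; when $g(s,x'')=0$ we have $\lambda_{d'}(B_{x''}\cap C_{s,x''})\le\lambda_{d'}(C_{s,x''})=0$ as well, and under the paper's convention that a uniform distribution on a null set contributes nothing to a mixture the identity persists. Substituting back through the two displays yields exactly the claimed representation.

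Two auxiliary facts close the argument. First, $g$ is a probability density on $\{(s,x''):s>0,\ x''\ge(x_{d'+1,0},\dots,x_{d0})\}$: it is nonnegative; it is measurable because $(s,x)\mapsto I(f(x)\ge s)$ is jointly measurable and $g$ is obtained from it by integrating out $x'$ (Tonelli); and taking $B=\mathcal{D}_0$ in the computation above gives $\int_0^\infty\!\int g(s,x'')\,dx''\,ds=P(\mathcal{D}_0)=1$. Second, the mixing kernel $(s,x'')\mapsto W_{C_s\cap\{y:\,y''=x''\}}(B)$ is measurable: on $\{g>0\}$ it is the ratio of the (Tonelli-)measurable map $(s,x'')\mapsto\lambda_{d'}(B_{x''}\cap C_{s,x''})$ to $g$, and it is set to $0$ on $\{g=0\}$, which legitimizes the iterated integral. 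Finally, to justify calling this a \emph{Choquet} representation, one checks that each $W_{C_s\cap\{y:\,y''=x''\}}$ is an extreme point of the $d'$-POU class: it lies in the closed convex hull of uniform distributions on $d'$-POU sets because it is the weak limit of uniform distributions on the thin $d'$-POU slabs $C_{s,x''}\times\prod_{i>d'}(x_i-\varepsilon,x_i+\varepsilon)$ as $\varepsilon\downarrow 0$, and its extremality follows exactly as in Lemma~\ref{extreme_point_general_OU} applied within the affine slice $\{y:y''=x''\}$.

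The main obstacle here is not any single computation --- each step above is Tonelli or a slice-by-slice application of Theorem~\ref{Choquet_OU} --- but the bookkeeping: one must handle the degenerate slices $\{\lambda_{d'}(C_{s,x''})=0\}$ with the right degenerate-uniform convention, and establish the joint measurability (in $(s,x'')$) of the slice-indexed family of uniform distributions so that the triple integral in the statement is well defined. Once the conventions of Theorem~\ref{Choquet_OU} are adopted slice-by-slice and the observation ``a slice of a $d'$-POU set is an OU set'' is in hand, the rest is routine.
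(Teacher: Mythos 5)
Your proposal is correct and follows essentially the same route as the paper's proof: layer-cake the density into superlevel sets, then Fubini-slice along the non-tail coordinates, identify the integrand with the uniform-on-slice kernel times $g$, and verify $g$ integrates to $1$ by taking $B=\mathcal{D}_0$. One small bookkeeping note: for the extremality of the slice uniforms, the paper invokes its Lemma~\ref{extreme_point_POU}, whose proof reduces to Lemma~\ref{extreme_point_OU} (OU on $\mathcal{D}_0$) on the affine slice; you cite Lemma~\ref{extreme_point_general_OU} (OU on $\mathbb{R}^d$) instead, but the intended reduction-to-slice argument is the same, and your thin-slab approximation (to place the degenerate uniforms in the closed convex hull) is a harmless extra step not needed once one notes the paper explicitly allows $W_K$ for lower-dimensional $d'$-POU sets $K$.
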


In the proof of Theorem \ref{Choquet_POU}, we need the following lemma to ensure the representation of $P$ is indeed a Choquet representation.

\begin{lemma}
\label{extreme_point_POU}Suppose $K\subset\mathcal{D}_{0}$ is a $d^{\prime}$-POU set about $x_{0}$. If $K$ is fully contained in the subspace $\{y\in\mathbb{R}^{d}:y_{d^{\prime}+1}=x_{d^{\prime}+1},\ldots,y_{d}=x_{d}\}$ for $x_{i}\geq x_{i0},i=d^{\prime}+1,\ldots,d$ with $\lambda_{d^{\prime}}(K_{x_{d^{\prime}+1},\ldots,x_{d}})>0$, then $W_{K}$ is an extreme point in the class of $d^{\prime}$-POU distributions about $x_{0}$ on $\mathcal{D}_{0}$.
\end{lemma}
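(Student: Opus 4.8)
The plan is to follow the same strategy that underlies Lemma \ref{extreme_point_OU} (the extreme-point result for OU distributions), adapting it to the slice structure. Fix an index vector $(x_{d^{\prime}+1},\ldots,x_d)$ with $x_i\ge x_{i0}$, and let $K\subset\{y\in\mathbb{R}^d:y_{d^{\prime}+1}=x_{d^{\prime}+1},\ldots,y_d=x_d\}$ be a $d^{\prime}$-POU set with $\lambda_{d^{\prime}}(K_{x_{d^{\prime}+1},\ldots,x_d})>0$. Suppose for contradiction that $W_K = \frac12 P_1 + \frac12 P_2$ where $P_1,P_2$ are $d^{\prime}$-POU distributions on $\mathcal{D}_0$ with $P_1\neq P_2$. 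First I would observe that since $W_K$ is supported entirely on the affine slice $H:=\{y_{d^{\prime}+1}=x_{d^{\prime}+1},\ldots,y_d=x_d\}$, which is Lebesgue-null in $\mathbb{R}^d$, both $P_1$ and $P_2$ must also be supported on $H$; this already shows $P_1,P_2$ cannot be absolutely continuous on $\mathbb{R}^d$, but they can still be $d^{\prime}$-POU in the sense of Definition \ref{POU mix}(2) because a uniform distribution on a $d^{\prime}$-POU set lying in a single slice is a legitimate element of the defining convex hull. So I would work with the identification of $P_1,P_2$ as distributions on $\mathbb{R}^{d^{\prime}}$ (via the slice $H\cong\mathbb{R}^{d^{\prime}}$) and note that, restricted to a fixed slice, the $d^{\prime}$-POU property reduces exactly to the OU property on $\{y\in\mathbb{R}^{d^{\prime}}:y_i\ge x_{i0}\}$.

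The key step is then to reduce to Lemma \ref{extreme_point_OU}. Concretely, I would argue that the pushforwards of $P_1,P_2$ onto the first $d^{\prime}$ coordinates (call them $\tilde P_1,\tilde P_2$) are OU distributions about $(x_{10},\ldots,x_{d^{\prime}0})$ on the $d^{\prime}$-dimensional domain $\mathcal{D}_0^{(d^{\prime})}$, that $W_{K_{x_{d^{\prime}+1},\ldots,x_d}} = \frac12\tilde P_1 + \frac12\tilde P_2$, and that $\tilde P_1\neq\tilde P_2$ (since $P_i$ is determined by its pushforward once we know it lives on $H$). The claimed positivity $\lambda_{d^{\prime}}(K_{x_{d^{\prime}+1},\ldots,x_d})>0$ guarantees $W_{K_{x_{d^{\prime}+1},\ldots,x_d}}$ is a genuine uniform distribution on an OU set of positive $d^{\prime}$-dimensional measure, so Lemma \ref{extreme_point_OU} applies and yields $\tilde P_1 = \tilde P_2$, a contradiction. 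One subtlety I would handle carefully: I must check that $\tilde P_i$ is OU in the \emph{mixture-representation} sense (Definition \ref{OU mix}(2)), not merely as a density — but this is immediate since $P_i$ being in the closed convex hull of uniforms on $d^{\prime}$-POU sets, all forced to lie in $H$, pushes forward to $\tilde P_i$ being in the closed convex hull of uniforms on OU sets in $\mathbb{R}^{d^{\prime}}$ (slices of $d^{\prime}$-POU sets contained in $H$ are OU). Continuity/closedness of the pushforward map under weak convergence needs a line of justification.

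The main obstacle, I expect, is not the reduction itself but verifying the support argument rigorously: showing that any $d^{\prime}$-POU distribution $P_i$ appearing in a convex decomposition of $W_K$ is forced to concentrate on the single slice $H$. The cleanest route is measure-theoretic — if $\mu=\frac12(P_1+P_2)$ is supported on the $\lambda_d$-null set $H$, then each $P_i\ll\mu$ forces $P_i(H^c)=0$ — so this should go through without difficulty, but it is the step where one must be precise about what ``$d^{\prime}$-POU distribution'' means when no $\mathbb{R}^d$-density exists. Once the support claim is in hand, everything else is a direct transcription of the OU extreme-point proof to the $d^{\prime}$-dimensional slice, so I would keep that part brief and cite Lemma \ref{extreme_point_OU} rather than re-deriving it.
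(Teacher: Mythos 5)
Your proposal follows exactly the paper's strategy: observe that in any convex decomposition $W_K=\eta P_1+(1-\eta)P_2$ the components $P_1,P_2$ must concentrate on the slice $H=\{y_{d'+1}=x_{d'+1},\ldots,y_d=x_d\}$, identify the restrictions to $H$ with OU distributions on $\mathbb{R}^{d'}$, and then invoke Lemma \ref{extreme_point_OU} to conclude the restricted uniform $W_{K_{x_{d'+1},\ldots,x_d}}$ is extreme, hence $P_1=P_2=W_K$. The paper's own proof is a more compressed version of the same reduction; your extra commentary about pushforwards, the measure-theoretic support argument, and checking that the pushforward is OU in the mixture sense (not just the density sense) is precisely the care that the paper leaves implicit, and is consistent with what the paper does.
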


When we consider $d^{\prime}=1$, i.e., 1-POU distributions, we can see the slice $K_{x_{2},\ldots,x_d}$ is either $[x_{10},x_1)$ or $[x_{10},x_1]$ for some $x_1\ge x_{10}$. Such a simple form of $K_{x_{2},\ldots,x_d}$ gives us a cleaner Choquet representation even without the presence of a density.

\begin{theorem}\label{Choquet_1POU}
Suppose a random vector $X=(X_1,\ldots,X_d)$ takes values on $\mathcal{D}_{0}$. Then the probability measure $P$ of $X$ is 1-POU about $x_0$ if and only if $X$ is distributed as $(x_{10}+U(Z_1-x_{10}),Z_2,\ldots,Z_d)$ where $U$ is the uniform distribution on $(0,1)$ and independent of $(Z_1,\ldots,Z_d)\in\mathcal{D}_{0}$. Consequently, $P$ is 1-POU about $x_0$ if and only if it has the following Choquet representation
\begin{equation}
P=\int_{\mathcal{D}_{0}}W_{\text{1-POU}}(z)dQ(z),\label{equation_Choquet_1POU}
\end{equation}
where $W_{\text{1-POU}}(z)$ is the distribution of $(x_{10}+U(z_{1}-x_{10}),z_{2},\ldots,z_{d})$, $U$ is the uniform distribution on $(0,1)$ and $Q$ is the distribution of $(Z_1,\ldots,Z_d)$ uniquely determined by $P$. Moreover, if $P(X_1=x_{10})=0$, then $X_1$ is an absolutely continuous random variable with the probability density $E_Q[I(Z_1\ge x)/(Z_1-x_{10})],x\ge x_{10}$ which is continuous at $x=x_{10}$.
\end{theorem}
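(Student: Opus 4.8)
The plan is to identify the class of 1-POU distributions about $x_0$ with the family
\[
\mathcal{M}=\Big\{\textstyle\int_{\mathcal{D}_0}W_{\text{1-POU}}(z)\,dQ(z):\ Q\ \text{a probability measure on }\mathcal{D}_0\Big\},
\]
since an element of $\mathcal{M}$ is precisely the law of $(x_{10}+U(Z_1-x_{10}),Z_2,\ldots,Z_d)$ with $(Z_1,\ldots,Z_d)\sim Q$ independent of $U\sim\mathrm{Unif}(0,1)$, and the ``consequently'' clause is then just a restatement of this identification. So the theorem reduces to the two inclusions $\mathcal{M}\subseteq\{\text{1-POU distributions}\}$ and $\{\text{1-POU distributions}\}\subseteq\mathcal{M}$, together with uniqueness of $Q$ and the density formula. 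Recall that by Definition~\ref{POU mix}(2) the 1-POU class is exactly the smallest weakly closed convex set containing all uniform distributions on 1-POU sets.

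For $\mathcal{M}\subseteq\{\text{1-POU}\}$ I would first note that, for $z_1>x_{10}$, $W_{\text{1-POU}}(z)$ is the one-dimensional uniform distribution on the segment $[x_{10},z_1]\times\{(z_2,\ldots,z_d)\}$, which is a 1-POU set in the sense of Definition~\ref{POU mix}(1); it is moreover the weak limit, as $\varepsilon\downarrow0$, of the genuinely $d$-dimensional uniform distributions on the 1-POU slabs $[x_{10},z_1]\times\prod_{i\ge2}[z_i-\varepsilon,z_i]$, hence it lies in the closed convex hull of Definition~\ref{POU mix}(2); the degenerate case $z_1=x_{10}$ gives a point mass, again a weak limit of such. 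Thus every $W_{\text{1-POU}}(z)$ is 1-POU, and $\int W_{\text{1-POU}}(z)\,dQ(z)$, being a barycenter of these (a weak limit of finite convex combinations, e.g.\ via empirical approximations $\tfrac1n\sum_{i=1}^n\delta_{\Theta_i}$ of $Q$), again lies in the 1-POU class.

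For the reverse inclusion I would show $\mathcal{M}$ is convex, weakly closed, and contains $W_K$ for every measurable 1-POU set $K$ with a well-defined uniform distribution; this forces $\{\text{1-POU}\}\subseteq\mathcal{M}$ by the characterization above. Convexity is immediate from linearity in $Q$. That $W_K\in\mathcal{M}$ follows by disintegrating $W_K$ over the last $d-1$ coordinates: because $K$ is 1-POU, each nonempty slice $\{x_1:(x_1,v)\in K\}$ is an interval $[x_{10},b_K(v)]$, so conditionally on the last $d-1$ coordinates equalling $v$ the first coordinate of $W_K$ is uniform on $[x_{10},b_K(v)]$, i.e.\ $W_K=\int W_{\text{1-POU}}(b_K(v),v)\,d\nu_K(v)$ with $\nu_K$ the marginal of the last $d-1$ coordinates and $v\mapsto b_K(v)$ measurable; lower-dimensional $K$'s are handled identically or are already of the form $W_{\text{1-POU}}(z)$. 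The crux is weak closedness: if $P_n=\int W_{\text{1-POU}}(z)\,dQ_n(z)\to P$ weakly, then since $U$ is always $\mathrm{Unif}(0,1)$ one has $P_n(X_1>x_{10}+M)\ge\tfrac12 Q_n(Z_1>x_{10}+2M)$, so tightness of $\{P_n\}$ forces tightness of the first coordinate of $\{Q_n\}$, while the last $d-1$ coordinates of $Q_n$ coincide with those of $P_n$ and are therefore tight too; passing to a subsequence with $Q_{n_k}\rightharpoonup Q$ and using weak continuity of $z\mapsto W_{\text{1-POU}}(z)$, one gets $P=\int W_{\text{1-POU}}(z)\,dQ(z)\in\mathcal{M}$, with $Q$ supported on the closed set $\mathcal{D}_0$.

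Finally, uniqueness of $Q$ reduces to the univariate Khinchine inversion: conditioning on $(X_2,\ldots,X_d)=v$, the conditional law of $X_1-x_{10}$ is that of $U\,T_v$ with $T_v$ independent of $U$, and writing $\bar G_v(x)=P(X_1-x_{10}>x\mid v)$ a short computation (using that $\bar G_v$ is convex, hence a.e.\ differentiable, with $\bar G_v'(x)=-E[I(T_v>x)/T_v]$) gives $P(T_v>x)=\bar G_v(x)-x\bar G_v'(x)$ for a.e.\ $x$; thus the conditional law of $X_1$ determines that of $T_v$, and together with the marginal of $(X_2,\ldots,X_d)$ this pins down $Q$. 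For the ``moreover'' part, when $Q(Z_1=x_{10})=P(X_1=x_{10})=0$ we have $P(X_1\le x)=E_Q[\min(\tfrac{x-x_{10}}{Z_1-x_{10}},1)]$ for $x\ge x_{10}$; differentiating under the expectation (justified by Tonelli, the resulting integrand being dominated on compact $x$-intervals) yields the density $E_Q[I(Z_1\ge x)/(Z_1-x_{10})]$, which integrates to $1$ by Tonelli, is non-increasing in $x$, and tends to $E_Q[1/(Z_1-x_{10})]$ as $x\downarrow x_{10}$ by monotone convergence, i.e.\ is continuous (possibly with value $+\infty$) at $x_{10}$. I expect the weak-closedness step—extracting the limiting mixing measure $Q$ via the tightness estimate—to be the main obstacle, along with the bookkeeping required to reconcile the lower-dimensional measures $W_{\text{1-POU}}(z)$ with Definition~\ref{POU mix}.
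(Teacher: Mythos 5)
Your proof is correct, and it takes a genuinely different route from the paper's on two fronts. For the ``only if'' direction the paper first verifies the representation when $P=W_K$ (for a $1$-POU set $K$ with a nice defining function $g$) and then appeals to ``taking the convex mixture and weak limit,'' tacitly assuming that the class $\mathcal{M}=\{\int W_{\text{1-POU}}(z)\,dQ(z)\}$ is weakly closed. You make exactly this closedness explicit: the tightness transfer from $\{P_n\}$ to $\{Q_n\}$ (last $d-1$ coordinates identical, first coordinate via $P_n(X_1>x_{10}+M)\ge \tfrac12 Q_n(Z_1>x_{10}+2M)$), followed by a subsequence extraction and the weak continuity of $z\mapsto W_{\text{1-POU}}(z)$, is the missing lemma that makes the paper's ``weak limit'' step rigorous. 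Your disintegration of $W_K$ over the last $d-1$ coordinates also handles the general measurable $1$-POU set directly, whereas the paper's computation implicitly assumes $K$ is the hypograph of a continuous $g$ (and hence $d$-dimensional). For uniqueness of $Q$ the paper uses a characteristic-function identity $e^{it_1 x_{10}}\frac{d}{dy}[y e^{-iyt_1x_{10}}\varphi_X(yt_1,t_2,\ldots,t_d)]=\varphi_Z(yt_1,\ldots,t_d)$, while you instead invert the one-dimensional Khinchine mixture conditionally on $(X_2,\ldots,X_d)=v$ via $P(T_v>x)=\bar G_v(x)-x\bar G_v'(x)$; both are standard dual forms of the same inversion, and either suffices. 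Your ``if'' direction and density computation mirror the paper's (uniform on segment, then convex combinations and weak limits; differentiation of the conditional cdf and monotone convergence for continuity at $x_{10}$). The main thing you buy is that no step is left implicit; the main thing the paper's route buys is brevity by leaning on well-known Khinchine-type closedness and unimodality machinery that it does not reprove.
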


\subsection{POU-DRO Problem: Formulation and Reduction}

In this section, we discuss the formulation of the $d^{\prime}$-POU-DRO problem and the reduction of the 1-POU-DRO problem. With these, for the bivariate case, theories in this paper would cover all the extreme cases: if two dimensions are both in the tail, we can handle it by the OU-DRO problem; if only one dimension is in the tail, we can handle it by the 1-POU-DRO problem. Since the general $d^{\prime}$-POU-DRO problem involves the OU-DRO problem (when $d^{\prime}=d$) and the latter hasn't been solved completely for any dimension, we will leave the reduction of the general $d^{\prime}$-POU-DRO problem as the future work.

Recall that for $d^{\prime}$-POU, only the first $d^{\prime}$ dimensions are in the tail and the tail region is given by $\mathcal{D}_0=\{x\ge x_0\}$ with $x_0=(x_{10},\ldots,x_{d0})\in \mathbb{R}^{d^{\prime}}\times[-\infty,\infty)^{d-d^{\prime}}$. Imitating the OU-DRO problem, we formulate the $d^{\prime}$-POU as follows:
\begin{align}
\max\text{ }  &  P((X_{1},\ldots,X_{d})\in S)\nonumber\\
\text{subject to }  &  l_{\bar{F}}\leq\bar{F}(x_{0})\leq u_{\bar{F}}\nonumber\\
&f_{X_{i}}(x_{i0})\leq u_{X_{i}},i=1,\ldots,d^{\prime}\label{general_POU_DRO}\\
&  a_{i}\bar{F}(x_{0})\leq P(\underline{x}_{ji}\leq X_{j}\leq\bar{x}_{ji},j=1,\ldots,d)\leq b_{i}\bar{F}(x_{0}),i=1,\ldots,n\nonumber\\
&  f(x^{\prime})\ge f(x) \text{ if } x_i\ge x_i^{\prime}\ge x_{i0}, i\le d^{\prime} \text{ and } x_i=x_i^{\prime}\ge x_{i0}, i>d^{\prime}\nonumber
\end{align}
The interpretation and calibration of the constraints in (\ref{general_POU_DRO}) are the same as the OU-DRO problem (\ref{DRO_problem_formulation1}). However, we do not restrict the truncated marginal densities for the last $d-d^{\prime}$ components because these components are not in the tail in the motivating problems.

Now we focus on the reduction of the 1-POU-DRO problem. Here we make a small modification that we will use Definition \ref{POU mix} instead of Definition \ref{POU def} as the 1-POU shape constraint. In other words, we will not assume the existence of the density of $X$. As we will see later in Theorem \ref{reduction_1POU-DRO} and Corollary \ref{cor_reduction_1POU-DRO}, this is because we may not be able to recover an absolutely continuous $X$ for the 1-POU-DRO problem from the optimal solution to the reduced moment problem. As in the OU-DRO problem, we will replace $l_{\bar{F}}\leq\bar{F}(x_{0})\leq u_{\bar{F}}$ by $\bar{F}(x_{0})=c$ in the reduction procedure. For reference, the 1-POU-DRO problem is formulated as
\begin{align}
\max\text{ }  &  P((X_{1},\ldots,X_{d})\in S)\nonumber\\
\text{subject to }  &  \bar{F}(x_{0})=c \nonumber\\
&f_{X_{1}}(x_{10})\leq u_{X_{1}}\label{1POU_DRO}\\
&  a_{i}\bar{F}(x_{0})\leq P(\underline{x}_{ji}\leq X_{j}\leq\bar{x}_{ji},j=1,\ldots,d)\leq b_{i}\bar{F}(x_{0}),i=1,\ldots,n\nonumber\\
& (X_{1},\ldots,X_{d})\text{ is 1-POU about }x_{0}\text{ when restricted to }\mathcal{D}_{0}\text{ and }P(X_{1}=x_{10})=0\nonumber
\end{align}
Here $f_{X_{1}}(x_1)$ denotes the marginal density of $X_1$ within $\mathcal{D}_{0}$, i.e., for any measurable $A\subset[x_{10},\infty)$, $\int_{A}f_{X_{1}}(x_{1})dx_{1}=P(X_{1}\in A,X_{i}\geq x_{i0},i\geq2)$. The existence of $f_{X_{1}}(x_1)$ is ensured by Theorem \ref{Choquet_1POU}. We require $f_{X_{1}}(x_1)$ to be continuous at $x_1=x_{10}$ in order to avoid the arbitrariness of the density at this point. When $X$ has a continuous density $f$ satisfying Definition \ref{POU def} on $\mathcal{D}_{0}$, $f_{X_{1}}$ is just given by
\[
f_{X_{1}}(x_{1})=\int_{x_{20}}^{\infty}\cdots\int_{x_{d0}}^{\infty}f(x_{1},\ldots,x_{d})dx_{2}\cdots dx_{d}
\]
as in the formulation of OU-DRO problem. For the rare-event set $S$, we assume it is in the form
\begin{equation}
S=\{x\in\mathcal{D}_{0}:g_{1}(x_{2},\ldots,x_{d})\leq x_{1}\leq g_{2}(x_{2},\ldots,x_{d})\}\label{form_S_1POU}%
\end{equation}
where $g_{i}:[x_{20},\infty)\times\cdots\times\lbrack x_{d0},\infty)\mapsto\lbrack x_{10},\infty]$ is a measurable function for $i=1,2$ and $g_{1}(x_{2},\ldots,x_{d})\leq g_{2}(x_{2},\ldots,x_{d})$ for any $(x_{2},\ldots,x_{d})$. We can see the formulation (\ref{1POU_DRO}) only depends on the behavior in $\mathcal{D}_{0}$ so we can restrict our attention to the truncated distribution on $\mathcal{D}_{0}$. The idea for solving (\ref{1POU_DRO}) is to use the Choquet representation (\ref{equation_Choquet_1POU}) to rewrite it as a moment problem with respect to the probability measure $Q$.

\begin{theorem}\label{reduction_1POU-DRO}
The 1-POU-DRO problem (\ref{1POU_DRO}) is equivalent to the following moment problem:%
\begin{align}
\max\text{ } &  cE_{Q}\left[  \frac{\min(g_{2}(Z_{2},\ldots,Z_{d}),Z_{1})-\min(g_{1}(Z_{2},\ldots,Z_{d}),Z_{1})}{Z_{1}-x_{10}}\right]  \nonumber\\
\text{subject to } &  E_{Q}\left[  \frac{1}{Z_{1}-x_{10}}\right]  \leq\frac{u_{X_{1}}}{c}\label{moment_problem_1POU-DRO}\\
&  a_{i}\leq E_{Q}\left[  \frac{\min(Z_{1},\bar{x}_{1i})-\min(Z_{1},\underline{x}_{1i})}{Z_{1}-x_{10}}I(\underline{x}_{ji}\leq Z_{j}\leq\bar{x}_{ji},j\geq2)\right]  \leq b_{i},i=1,\ldots,n\nonumber
\end{align}
where the decision variable $Q$ is the probability distribution of $(Z_1,\ldots,Z_d)\in\mathcal{D}_{0}$. If $(Z_{1}^{\ast},\ldots,Z_{d}^{\ast})\sim Q^{\ast}$ is the optimal solution to the moment problem (\ref{moment_problem_1POU-DRO}), then $cP^{\ast}$ is the optimal solution to the 1-POU-DRO problem (\ref{1POU_DRO}) where $P^{\ast}$ is the distribution of $(x_{10}+U(Z_{1}^{\ast}-x_{10}),Z_{2}^{\ast},\ldots,Z_{d}^{\ast})$ with $U$ being the uniform distribution on $(0,1)$ and independent of $(Z_{1}^{\ast},\ldots,Z_{d}^{\ast})$.
\end{theorem}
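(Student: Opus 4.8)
The plan is to establish the equivalence via the Choquet representation (\ref{equation_Choquet_1POU}) of Theorem \ref{Choquet_1POU}, exhibiting a correspondence between feasible solutions of the two problems that preserves the objective value. Since the formulation (\ref{1POU_DRO}) depends only on the behaviour of the distribution on $\mathcal{D}_{0}$, I will work throughout with the truncated (sub-probability) measure of total mass $c$ on $\mathcal{D}_{0}$, whose normalization is a 1-POU distribution about $x_{0}$ with no atom at $\{X_{1}=x_{10}\}$. By Theorem \ref{Choquet_1POU}, this normalized distribution is exactly the law of $(x_{10}+U(Z_{1}-x_{10}),Z_{2},\ldots,Z_{d})$ for a probability measure $Q$ on $\mathcal{D}_{0}$, uniquely determined, and the no-atom condition forces $Z_{1}>x_{10}$ $Q$-a.s.\ (if $Q(Z_{1}=x_{10})=p>0$ then $X_{1}=x_{10}$ with probability at least $p$). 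Conversely, given any $Q$ on $\mathcal{D}_{0}$ with $Z_{1}>x_{10}$ $Q$-a.s., the measure $cP^{\ast}$ with $P^{\ast}$ the law of $(x_{10}+U(Z_{1}-x_{10}),Z_{2},\ldots,Z_{d})$ is, by Theorem \ref{Choquet_1POU}, a valid truncated 1-POU candidate with $P^{\ast}(X_{1}=x_{10})=0$. Thus it suffices to check that under this correspondence the objective and every constraint of (\ref{1POU_DRO}) translate term-by-term into those of (\ref{moment_problem_1POU-DRO}).

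The translation is a conditioning computation on $U$ given $Z=z$, using that under $W_{\text{1-POU}}(z)$ the first coordinate is uniform on $[x_{10},z_{1}]$ while the remaining coordinates equal $(z_{2},\ldots,z_{d})$ deterministically. For the objective, with $S$ of the form (\ref{form_S_1POU}), the event $\{g_{1}(z_{2},\ldots,z_{d})\leq X_{1}\leq g_{2}(z_{2},\ldots,z_{d})\}$ has conditional probability $\bigl(\min(g_{2}(z_{2},\ldots,z_{d}),z_{1})-\min(g_{1}(z_{2},\ldots,z_{d}),z_{1})\bigr)/(z_{1}-x_{10})$ --- nonnegative because $g_{1}\leq g_{2}$ and both exceed $x_{10}$ --- so integrating against $Q$ and multiplying by $c$ gives the objective of (\ref{moment_problem_1POU-DRO}). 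For the box-probability in the moment constraints, the event $\{\underline{x}_{1i}\leq X_{1}\leq\bar{x}_{1i},\ \underline{x}_{ji}\leq X_{j}\leq\bar{x}_{ji},\ j\geq2\}$ has conditional probability $\frac{\min(z_{1},\bar{x}_{1i})-\min(z_{1},\underline{x}_{1i})}{z_{1}-x_{10}}\,I(\underline{x}_{ji}\leq z_{j}\leq\bar{x}_{ji},\ j\geq2)$, and dividing the original constraint by $\bar{F}(x_{0})=c$ yields exactly the stated bounds. For the density constraint I will invoke the last assertion of Theorem \ref{Choquet_1POU}: the marginal density of $X_{1}$ within $\mathcal{D}_{0}$ of the normalized distribution is $E_{Q}[I(Z_{1}\geq x)/(Z_{1}-x_{10})]$ and is continuous at $x_{10}$, so $f_{X_{1}}(x_{10})=c\,E_{Q}[1/(Z_{1}-x_{10})]$ and the constraint $f_{X_{1}}(x_{10})\leq u_{X_{1}}$ becomes $E_{Q}[1/(Z_{1}-x_{10})]\leq u_{X_{1}}/c$. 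Along the way I will note that all integrands are measurable (compositions and algebraic combinations of the measurable maps $g_{1},g_{2}$ and the coordinate projections), which is the remaining hypothesis needed for (\ref{moment_problem_1POU-DRO}) to make sense.

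Having matched objective and constraints, I will conclude: each feasible (truncated) solution of (\ref{1POU_DRO}) maps to a feasible $Q$ of (\ref{moment_problem_1POU-DRO}) with equal objective, and vice versa, so the optimal values coincide; and if $Q^{\ast}$ is optimal for (\ref{moment_problem_1POU-DRO}), the reconstructed $cP^{\ast}$ is feasible for (\ref{1POU_DRO}) and attains the common optimum, hence is optimal. The point requiring the most care --- rather than a genuine obstacle --- is the bookkeeping around the total mass $c$ and the truncation (so that a ``solution'' of the DRO is consistently its restriction to $\mathcal{D}_{0}$), together with the correct handling of $P(X_{1}=x_{10})=0$: one must show both that feasibility of the DRO forces $Z_{1}>x_{10}$ $Q$-a.s.\ and that, conversely, this condition plus Theorem \ref{Choquet_1POU} guarantees the reconstructed distribution meets the continuity-of-$f_{X_{1}}$ and no-atom requirements. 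The use of the explicit density formula from Theorem \ref{Choquet_1POU} to translate $f_{X_{1}}(x_{10})\leq u_{X_{1}}$ is the one step where a nontrivial earlier result, rather than a direct computation, is essential.
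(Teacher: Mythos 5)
Your plan is correct and matches the paper's proof in every essential step: normalize the truncated mass-$c$ measure, apply the Choquet representation of Theorem \ref{Choquet_1POU}, translate the objective and each constraint by conditioning on $Z=z$ and computing the uniform-on-$[x_{10},z_1]$ probability, and use the explicit density formula $f_{X_1}(x_{10})=c\,E_Q[1/(Z_1-x_{10})]$ for the density constraint. The one small item left implicit in your ``vice versa'' direction, which the paper states explicitly, is that feasibility of the moment problem itself forces $Q(Z_1=x_{10})=0$ (since $E_Q[1/(Z_1-x_{10})]\le u_{X_1}/c<\infty$ is incompatible with an atom at $Z_1=x_{10}$); this supplies the hypothesis $Z_1>x_{10}$ $Q$-a.s.\ that you assume when reconstructing $P^{\ast}$, and is a one-line observation well within the spirit of your outline.
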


According to Theorem 3.2 in \cite{winkler1988extreme}, to solve the moment problem (\ref{moment_problem_1POU-DRO}), it suffices to consider discrete probability measures with at most $n+2$ points in the support. So (\ref{moment_problem_1POU-DRO}) is equivalent to the following non-linear optimization:
\begin{align}
\max\text{ } & c\sum_{l=1}^{n+2}p_{l}\frac{\min(g_{2}(z_{l2},\ldots,z_{ld}),z_{l1})-\min(g_{1}(z_{l2},\ldots,z_{ld}),z_{l1})}{z_{l1}-x_{10}}\nonumber \\
\text{subject to } & \sum_{l=1}^{n+2}\frac{p_{l}}{z_{l1}-x_{10}} \leq\frac{u_{X_{1}}}{c} \nonumber\\
 & a_{i} \leq\sum_{l=1}^{n+2}p_{l}\frac{\min(z_{l1},\bar{x}_{1i})-\min(z_{l1},\underline{x}_{1i})}{z_{l1}-x_{10}}I(\underline{x}_{ji}\leq z_{lj}\leq\bar{x}_{ji},j\geq2)\leq b_{i},i=1,\ldots,n \label{nonlinear_problem_1POU}\\
 & \sum_{l=1}^{n+2}p_{l} =1,p_{l}\geq0,l=1,\ldots,n+2 \nonumber\\
 & z_{li} \geq x_{i0},,l=1,\ldots,n+2,i=1,\ldots,d \nonumber
\end{align}
where $p_l,z_{li}$ are the decision variables. The non-linear optimization (\ref{nonlinear_problem_1POU}) is capable to handle the 1-POU-DRO problem (\ref{1POU_DRO}) when the constraint $\bar{F}(x_{0})=c$ is replaced by $l_{\bar{F}}\leq\bar{F}(x_{0})\leq u_{\bar{F}}$. In this case, we simply make $c$ an additional decision variable varying in $[l_{\bar{F}},u_{\bar{F}}]$. This is stated in the following corollary.

\begin{corollary}\label{cor_reduction_1POU-DRO}
We have the following:
\begin{description}
\item[(1)] The 1-POU-DRO problem (\ref{1POU_DRO}), the moment problem (\ref{moment_problem_1POU-DRO}) and the non-linear optimization problem (\ref{nonlinear_problem_1POU}) are equivalent.

\item[(2)] Consider the 1-POU-DRO problem (\ref{1POU_DRO}) with $\bar{F}(x_{0})=c$ replaced by $l_{\bar{F}}\leq\bar{F}(x_{0})\leq u_{\bar{F}}$. Then this 1-POU-DRO problem is equivalent to the non-linear optimization problem (\ref{nonlinear_problem_1POU}) with an additional decision variable $c$ and an additional constraint $l_{\bar{F}}\leq c\leq u_{\bar{F}}$. If $c^{\ast},p_{l}^{\ast},z_{li}^{\ast}$ is the optimal solution to this non-linear optimization problem and we write $(Z_{1}^{\ast},\ldots,Z_{d}^{\ast})\sim Q^{\ast}$ as the discrete distribution on $(z_{l1}^{\ast},\ldots,z_{ld}^{\ast}),l=1,\ldots,n+2$ with corresponding probability $p_{l}^{\ast}$, then $c^{\ast}P^{\ast}$ is the optimal solution to this 1-POU-DRO problem where $P^{\ast}$ is the distribution of $(x_{10}+U(Z_{1}^{\ast}-x_{10}),Z_{2}^{\ast},\ldots,Z_{d}^{\ast})$ with $U$ being the uniform distribution on $(0,1)$ and independent of $(Z_{1}^{\ast},\ldots,Z_{d}^{\ast})$.
\end{description}
\end{corollary}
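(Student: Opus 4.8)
The plan is to obtain both parts by chaining Theorem \ref{reduction_1POU-DRO} with the finite-support reduction for moment problems and a decomposition of the feasible set according to the value of $\bar{F}(x_{0})$.

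For part (1), the equivalence of the 1-POU-DRO problem (\ref{1POU_DRO}) and the moment problem (\ref{moment_problem_1POU-DRO}), together with the correspondence between their optimizers, is precisely Theorem \ref{reduction_1POU-DRO}. It then remains to show that (\ref{moment_problem_1POU-DRO}) and the non-linear program (\ref{nonlinear_problem_1POU}) are equivalent. Problem (\ref{moment_problem_1POU-DRO}) is a generalized moment problem in the decision measure $Q$: one maximizes a linear functional of $Q$ subject to $Q$ being a probability measure on $\mathcal{D}_{0}$ together with $n+1$ expectation-type constraints (the single bound on $E_{Q}[1/(Z_{1}-x_{10})]$ and the $n$ two-sided bounds). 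By Theorem 3.2 of \cite{winkler1988extreme}, as already invoked in the main text, the optimal value is attained at an extreme feasible measure, which is supported on at most $n+2$ points. Writing such a measure as $Q=\sum_{l=1}^{n+2}p_{l}\delta_{(z_{l1},\ldots,z_{ld})}$ and substituting into (\ref{moment_problem_1POU-DRO}) turns each $E_{Q}[\cdot]$ into the corresponding finite sum, which reproduces (\ref{nonlinear_problem_1POU}) verbatim; conversely, any feasible $(p_{l},z_{li})$ of (\ref{nonlinear_problem_1POU}) defines a feasible discrete $Q$ with the same objective value. Hence the two problems have the same optimal value, and from an optimizer of (\ref{nonlinear_problem_1POU}) one recovers, via $Q^{\ast}=\sum_{l}p_{l}^{\ast}\delta_{z_{l}^{\ast}}$ and Theorem \ref{reduction_1POU-DRO}, an optimizer $cP^{\ast}$ of (\ref{1POU_DRO}).

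For part (2), consider the interval version of (\ref{1POU_DRO}) obtained by replacing $\bar{F}(x_{0})=c$ with $l_{\bar{F}}\leq\bar{F}(x_{0})\leq u_{\bar{F}}$. The key observation is that its feasible set is the union over $c\in[l_{\bar{F}},u_{\bar{F}}]$ of the feasible sets of (\ref{1POU_DRO}) with parameter $c$: any feasible $P$ of the interval problem has $\bar{F}(x_{0})=c$ for some such $c$, and then the remaining requirements of the interval problem---the density bound, the shape constraint, and the moment constraints $a_{i}\bar{F}(x_{0})\leq P(\cdot)\leq b_{i}\bar{F}(x_{0})$, which at this $P$ read $a_{i}c\leq P(\cdot)\leq b_{i}c$---are exactly the constraints of (\ref{1POU_DRO}) at that $c$; the reverse inclusion is immediate (for $c=0$, if it is admissible, the problem degenerates to the trivial value $0$, and for $c>0$ part (1) applies). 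Therefore the optimal value of the interval problem equals $\sup_{c\in[l_{\bar{F}},u_{\bar{F}}]}Z^{\ast}(c)$, where $Z^{\ast}(c)$ is the optimal value of (\ref{1POU_DRO}), and by part (1) each $Z^{\ast}(c)$ equals the optimal value of (\ref{nonlinear_problem_1POU}) with that $c$. Pushing the supremum over $c$ inside, the iterated maximization $\sup_{c}\max_{p_{l},z_{li}}$ collapses to a single maximization over $(c,p_{l},z_{li})$ with the extra constraint $l_{\bar{F}}\leq c\leq u_{\bar{F}}$, which is precisely the non-linear program in the statement. Finally, given an optimal $(c^{\ast},p_{l}^{\ast},z_{li}^{\ast})$ of that program, $Q^{\ast}=\sum_{l}p_{l}^{\ast}\delta_{z_{l}^{\ast}}$ is optimal for (\ref{moment_problem_1POU-DRO}) at $c=c^{\ast}$, so by Theorem \ref{reduction_1POU-DRO} the measure $c^{\ast}P^{\ast}$ (with $P^{\ast}$ the distribution of $(x_{10}+U(Z_{1}^{\ast}-x_{10}),Z_{2}^{\ast},\ldots,Z_{d}^{\ast})$) is optimal for (\ref{1POU_DRO}) at $c=c^{\ast}$; since $\bar{F}(x_{0})$ under $c^{\ast}P^{\ast}$ equals $c^{\ast}\in[l_{\bar{F}},u_{\bar{F}}]$, it is feasible for the interval problem, and since $c^{\ast}$ attains $\sup_{c}Z^{\ast}(c)$, it is also optimal there.

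The step I expect to require the most care is the union decomposition in part (2): because $c$ plays the double role of $\bar{F}(x_{0})$ and of the scaling factor on both sides of the moment constraints, one must verify that slicing the interval problem at a fixed value of $\bar{F}(x_{0})$ reproduces \emph{exactly} problem (\ref{1POU_DRO}) at that value, and that the interchange of $\sup_{c}$ with the inner maximization (and the attainment needed for the optimizer-recovery claim) is legitimate. A secondary point to check is that the hypotheses of Winkler's theorem genuinely apply to (\ref{moment_problem_1POU-DRO})---nonemptiness of the feasible set, measurability of the integrands (assumed in the problem), and that the $n+1$ moment functions yield the stated bound $n+2$ on the number of atoms of an extreme feasible measure.
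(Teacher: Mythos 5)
Your proposal is correct and matches the paper's intent; the paper's own proof is a one-line invocation of Theorem 3.2 in Winkler (1988), whereas you spell out the two ingredients explicitly (Theorem \ref{reduction_1POU-DRO} plus the finite-support reduction for (1), and the slice-by-$c$ decomposition for (2)), which is the same route with the intermediate steps made visible. The one caveat you raise about verifying Winkler's hypotheses is also addressed implicitly by the paper: in the proof of Theorem \ref{main_theorem} the authors note that the conditions of Winkler's Theorem 3.2 are verified via Theorem 2.1 and Proposition 3.1 of the same reference, and the same verification applies here.
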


Corollary \ref{cor_reduction_1POU-DRO} reveals why we do not assume the existence of the density in the formulation of the 1-POU-DRO problem (\ref{1POU_DRO}). Suppose $p_{l}^{\ast},z_{li}^{\ast}$ is the optimal solution to the non-linear optimization problem (\ref{nonlinear_problem_1POU}), i.e., the optimal solution to the moment problem (\ref{moment_problem_1POU-DRO}) can be taken as the discrete distribution $(Z_{1}^{\ast},\ldots,Z_{d}^{\ast})\sim Q^{\ast}$ on $(z_{l1}^{\ast},\ldots,z_{ld}^{\ast}),l=1,\ldots,n+2$ with corresponding probability $p_{l}^{\ast}$. Then if we recover the optimal solution $cP^*$ in the way described in Theorem \ref{reduction_1POU-DRO}, we will find $cP^*$ is not absolutely continuous on $\mathcal{D}_0$ as its support has Lebesgue measure 0. Due to this reason, we do not assume the existence of the density in (\ref{1POU_DRO}).

We close this section by discussing a variant of formulation (\ref{1POU_DRO}). As in the discussion of OU-DRO problem, we can replace the moment constraint in (\ref{1POU_DRO}) by the unconditional version:
\begin{equation}
a_{i}\leq P(\underline{x}_{ji}\leq X_{j}\leq\bar{x}_{ji},j=1,\ldots,d)\leq b_{i}.\label{uncon_moment_constraint_1POU}
\end{equation}
In this case, we just need to replace $a_i$ and $b_i$ in the problem (\ref{moment_problem_1POU-DRO}) or (\ref{nonlinear_problem_1POU}) by $a_i/c$ or $b_i/c$ and Theorem \ref{reduction_1POU-DRO} and Corollary \ref{cor_reduction_1POU-DRO} still hold since (\ref{uncon_moment_constraint_1POU}) is equivalent to the conditional version constraint:
\begin{equation*}
\frac{a_{i}}{c}\bar{F}(x_{0})\leq P(\underline{x}_{ji}\leq X_{j}\leq\bar{x}_{ji},j=1,\ldots,d)\leq \frac{b_{i}}{c}\bar{F}(x_{0})
\end{equation*}
given $\bar{F}(x_0)=c$.

\section{Proofs}\label{sec:proofs}

\begin{proof}[Proof of Lemma \ref{lemma:basic}.]
The conclusion follows since
$$\left\{\begin{array}{c}
\text{geometric property of the distribution $F$ holds for $X\ge u$},\\
\text{auxiliary constraints on $F$ holds}
\end{array}\right\}$$
implies $\{Z^*\geq Z\}$.
\end{proof}

\begin{proof}[Proof of Lemma \ref{property_of_OU_set}.]
Let us first show $\bar{K}$ and $\bar{K^{\circ}}$ are OU sets about $x_{0}$. In order to show $\bar{K}$ is OU, it suffices to show that if $x$ is a limit point of $K$, then $x^{\prime}\in\bar{K}$ if $x_{0}\leq x^{\prime}\leq x$. Suppose that the sequence $\{x_j,j\geq1\}\subset K$ converges to $x$. Consider the sequence $\{\min(x^{\prime},x_{j}),j\geq1\}$ where the minimum is taken component-wise. By the definition of OU sets, this sequence must be in $K$. Besides, we notice that the limit point of this sequence is $\min(x^{\prime},x)=x^{\prime}$. Thus $x^{\prime}\in\bar{K}$, i.e., $\bar{K}$ is an OU set about $x_{0}$. Now let's show $\bar{K^{\circ}}$ is also OU. Suppose $x\in\bar{K^{\circ}}$, i.e., there exists a sequence $\{x_j\}\subset K^{\circ}$ s.t. $x_j\rightarrow x$. We need to show $x^{\prime}\in \bar{K^{\circ}}$ if $x_0 \le x^{\prime}\le x$. By the definition of OU, $x_j\in K^{\circ}$ implies $x_0<x_j$ and $x^{\prime\prime}\in K^{\circ}$ for $x_0<x^{\prime\prime}\le x_j$. Therefore, if $x_0<x^{\prime}\le x$, we have $\min(x_j,x^{\prime})\in K^{\circ}$. Since $\min(x_j,x^{\prime})\rightarrow x^{\prime}$, we know $x^{\prime}\in \bar{K^{\circ}}$ for all $x^{\prime}$ satisfying $x_0<x^{\prime}\le x$. Moreover, since $\bar{K^{\circ}}$ is a closed set, we can get $x^{\prime}\in \bar{K^{\circ}}$ for $x_0 \le x^{\prime}\le x$, which means $\bar{K^{\circ}}$ is OU about $x_0$.

Then let us consider other claims in the lemma. Since Lebesgue measure is complete, in order to show $K$ is Lebesgue measurable and $\lambda(K^{\circ})=\lambda(K)=\lambda(\bar{K})$, it suffices to show that $\lambda(\partial K)=0$. This is already proven in \cite{lavrivc1993continuity}, which concludes our proof.
\end{proof}

\begin{proof}[Proof of Theorem \ref{Choquet_OU}.]
Suppose there is a density $f(x)$ of $P$ such that for every $s>0$, the set%
\[
C_{s}=\{x\in\mathcal{D}_{0}:f(x)\geq s\}
\]
is OU about $x_{0}$. Write $W_{\bar{C}_{s}}$ as the uniform distribution on $\bar{C}_{s}$ and let $g(s)=\lambda(\bar{C}_{s})\geq 0$. Notice that
\[
f(x)=\int_{0}^{f(x)}1ds=\int_{0}^{\infty}I(x\in C_{s})ds.
\]
Thus for any measurable set $B$,
\[
P(B)=\int_{B}f(x)dx=\int_{B}\int_{0}^{\infty}I(x\in C_{s})dsdx=\int_{0}^{\infty}\lambda(B\cap C_{s})ds.
\]
By Lemma \ref{property_of_OU_set}, we know that $\lambda(\partial C_{s})=0$. Thus, $\lambda(B\cap C_{s})=\lambda(B\cap\bar{C}_{s})$, which means%
\[
P(B)=\int_{0}^{\infty}\lambda(B\cap\bar{C}_{s})ds=\int_{0}^{\infty}W_{\bar{C}_{s}}(B)g(s)ds.
\]
Setting $B=\mathcal{D}_{0}$, we can see $P(B)=1=\int_{0}^{\infty}g(s)ds$ and thus $g(s)$ is a probability density on $(0,\infty)$. This proves the correctness of (\ref{equ_Choquet_OU}). By Lemma \ref{property_of_OU_set}, we know that $\bar{C}_{s}$ is an OU set. So $W_{\bar{C}_{s}}$ is the uniform distribution on the OU set $\bar{C}_{s}$. Since the probability measure with the density $g(s)$ must be the limit of a sequence of discrete probability measures with finite support, we can see $P$ is in the closed convex hull of the set of all uniform distributions on OU subsets of $\mathcal{D}_{0}$, i.e., $P$ is an OU distribution.

To see the converse, we assume $P$ is OU about $x_0$. Suppose that $Q$ is the uniform distribution on an OU set $K\subset\mathcal{D}_{0}$ and $0<\lambda(K)<\infty$. For a point $x\in\mathbb{R}^{d}$ and $\delta>0$, we define the neighborhood $N_{\delta}(x)=\{y\in\mathbb{R}^{d}:\max_{1\leq i\leq d}|x_{i}-y_{i}|<\delta\}$. For ease of illustration, we assume $x_{0}$ is the origin. Otherwise, we can perform a translation to achieve it. Then for $x,x^{\prime}\in\mathcal{D}_{0}^{\circ}$ with $x\ge x^{\prime}$ and $0<\delta<\min_{1\leq i\leq d}x_{i}^{\prime}$, we have%
\[
y\in K\cap N_{\delta}(x)\Rightarrow y+x^{\prime}-x\in K\cap N_{\delta}(x^{\prime}).
\]
Since Lebesgue measure is translation invariant, we can get%
\[
\lambda(K\cap N_{\delta}(x^{\prime}))\geq\lambda(K\cap N_{\delta}(x)).
\]
Dividing by $\lambda(K)$, we have%
\begin{equation}
Q(N_{\delta}(x^{\prime}))\geq Q(N_{\delta}(x)).\label{inequality_of_Q}%
\end{equation}
Clearly, this relation also holds under the convex combinations of the uniform distributions on OU sets. Since $P$ is OU about $x_{0}$, by definition, there is a sequence $\{Q_{m},m\geq1\}$ such that $Q_{m}$ converges weakly to $P$, where $Q_{m}$'s are the convex combinations of the uniform distributions on OU sets. Therefore, these $Q_{m}$'s satisfy (\ref{inequality_of_Q}). Since $P$ has a density, say $f_{0}$, we have
\[
P(\partial N_{\delta}(x^{\prime}))=P(\partial N_{\delta}(x))=0.
\]
Weak convergence and (\ref{inequality_of_Q}) imply that
\begin{equation}
P(N_{\delta}(x^{\prime}))\geq P(N_{\delta}(x)).\label{inequality_of_P}%
\end{equation}
For $x\in\mathcal{D}_{0}^{\circ}$, we define
\[
f(x)=\limsup_{\delta\downarrow0}\frac{P(N_{\delta}(x))}{\lambda(N_{\delta}(x))}=\limsup_{\delta\downarrow0}\frac{P(N_{\delta}(x))}{(2\delta)^{d}}.
\]
By the Lebesgue differentiation theorem, $f(x)=f_{0}(x)$ a.e., which means $f(x)$ is also a density for $P$. Besides, (\ref{inequality_of_P}) implies $f(x^{\prime})\geq f(x)$ for $x,x^{\prime}\in\mathcal{D}_{0}^{\circ}$ with $x\geq x^{\prime}$. For $x\in\partial\mathcal{D}_{0}$, we simply define%
\[
f(x)=\sup_{y\in\mathcal{D}_{0}^{\circ}}f(y).
\]
Then the density $f(x)$ is what we want.

Finally, note that only $W_{\bar{C}_s}$ with $\lambda(\bar{C}_s)>0$ contributes to (\ref{equ_Choquet_OU}). Thus, to show (\ref{equ_Choquet_OU}) is indeed a Choquet representation, it suffices to show $W_{K}$ is an extreme point in the class of OU distributions on $\mathcal{D}_0$ if $K$ is an OU set about $x_0$ with $\lambda(K)>0$. This is proved in Lemma \ref{extreme_point_OU}.
\end{proof}

\begin{proof}[Proof of Lemma \ref{extreme_point_OU}.]
Suppose $W_{K}$ is not an extreme point, i.e., there exist two different OU distributions $P_{1},P_{2}$ and a number $\eta\in(0,1)$ s.t.%
\begin{equation}
W_{K}=\eta P_{1}+(1-\eta)P_{2}.\label{convex_comb_dist}%
\end{equation}
By Lemma \ref{property_of_OU_set}, we know $W_{K}$ and $W_{K^{\circ}}$ are the same distribution. Therefore, the support of $W_{K}$ is the OU set $\bar{K}^{\circ}$ (closure of $K^{\circ}$). Let $K_{1}$ and $K_{2}$ be the support of $P_{1}$ and $P_{2}$ respectively. We have $\bar{K}^{\circ}=K_{1}\cup K_{2}$. Since $W_{K}$ is absolutely continuous with respect to Lebesgue measure, $P_{1}$ and $P_{2}$ are also absolutely continuous with respect to Lebesgue measure. Let $f_{W_{K}}$, $f_{1}$ and $f_{2}$ be densities of $W_{K}$, $P_{1}$ and $P_{2}$ respectively. By Theorem \ref{Choquet_OU}, we can without loss of generality assume that $f_{1}(x)$ and $f_{2}(x)$ are non-increasing in each component of $x$ on $\mathcal{D}_{0}$, which also implies $K_{1}$ and $K_{2}$ are OU sets about $x_{0}$. Then (\ref{convex_comb_dist}) can be written as%
\[
f_{W_{K}}=\eta f_{1}+(1-\eta)f_{2},\text{ a.e..}%
\]
However, since $f_{W_{K}}$ is constant on the OU set $\bar{K}^{\circ}$ and $f_{1}(x)$, $f_{2}(x)$ are non-increasing in each component of $x$, $f_{1}(x)$ and $f_{2}(x)$ must also be constant a.e. on $\bar{K}^{\circ}$. Thus, $P_{1}$ and $P_{2}$ are uniform distributions on $K_{1}$ and $K_{2}$ respectively. Since $\bar{K}^{\circ}$, $K_{1}$ and $K_{2}$ are all OU sets with positive Lebesgue measure, there exists an open set (close to $x_0$) $K^{\prime}\subset\bar{K}^{\circ}\cap K_{1}\cap K_{2}$. By (\ref{convex_comb_dist}), we have%
\[
\frac{\lambda(K^{\prime})}{\lambda(\bar{K}^{\circ})}=\eta\frac{\lambda(K^{\prime})}{\lambda(K_{1})}+(1-\eta)\frac{\lambda(K^{\prime})}{\lambda(K_{2})},
\]
which implies $\lambda(\bar{K}^{\circ})=\lambda(K_{1})=\lambda(K_{2})$. So we know $W_{K}$, $P_{1}$ and $P_{2}$ are the same distribution, which contradicts our assumption that $P_{1}$ and $P_{2}$ are different distributions. Therefore, $W_{K}$ is an extreme point in the class of OU distributions about $x_{0}$.
\end{proof}

\begin{proof}[Proof of Proposition \ref{inclusion_unimodalities}.]
Note that a closed rectangle in $\mathcal{D}_{0}$ which contains $x_{0}$ and have edges parallel to the coordinate axes is OU about $x_{0}$. Moreover, an OU set about $x_{0}$ is also star-shaped about $x_{0}$. Therefore, by the definitions of these three unimodalities, we have%
\begin{align*}
&  \{\text{distributions on }\mathcal{D}_0\text{ which is block unimodal about }x_{0}\}\\
&  \subset\{\text{distributions on }\mathcal{D}_0\text{ which is OU about }x_{0}\}\\
&  \subset\{\text{distributions on }\mathcal{D}_0\text{ which is star unimodal about }x_{0}\}.
\end{align*}
It remains to prove that the inclusions above are proper. For simplicity, we only show it in the bivariate case. Higher dimensional cases give us no additional insight but only the complication of notations. Without loss of generality, we assume $x_{0}$ is the origin.

To show the first inclusion is proper, consider a probability distribution $P$ with the following density:%
\[
f(x,y)=\left\{
\begin{array}
[c]{c}%
1/7\\
2/7\\
0
\end{array}
\left.
\begin{array}
[c]{l}%
\text{if }(x,y)\in(1,2]\times(1,2]\\
\text{if }(x,y)\in\{[0,2]\times\lbrack0,2]\}\backslash\{(1,2]\times(1,2]\}\\
\text{otherwise.}%
\end{array}
\right.  \right.
\]
Let $P_{1}$ be the uniform distribution on $[0,2]\times\lbrack0,2]$ and $P_{2}$ be the uniform distribution on $\{[0,2]\times\lbrack0,2]\}\backslash\{(1,2]\times(1,2]\}$. Then $P$ is a convex combination of $P_{1}$ and $P_{2}$ ($P=4P_{1}/7+3P_{2}/7$). Note that both sets are OU about the origin. Therefore, $P$ is OU about the origin by definition. However, $P$ is not block unimodal about the origin. To see this, consider four points $(1/2,1/2)$, $(3/2,3/2)$, $(3/2,1/2)$ and $(1/2,3/2)$ at which $f(x,y)$ is continuous. If $P$ is block unimodal, we must have%
\[
f\left(  \frac{1}{2},\frac{1}{2}\right)  +f\left(  \frac{3}{2},\frac{3}{2}\right)  \geq f\left(  \frac{3}{2},\frac{1}{2}\right)  +f\left(  \frac{1}{2},\frac{3}{2}\right)
\]
according to Section 2.2 in \cite{dharmadhikari1988unimodality} (or we can show this by the similar proof of the ``only if'' part of Theorem \ref{Choquet_OU}). However,
\[
f\left(  \frac{1}{2},\frac{1}{2}\right)  +f\left(  \frac{3}{2},\frac{3}{2}\right)  =\frac{3}{7}<\frac{4}{7}=f\left(  \frac{3}{2},\frac{1}{2}\right)+f\left(  \frac{1}{2},\frac{3}{2}\right)  ,
\]
which means $P$ is not block unimodal about the origin. This proves the first inclusion is proper.

To show the second inclusion is also proper, we consider the density mentioned in Section \ref{sec:challenges existing MU}:
\[
f(x,y)=C\exp\left(  -\max\left(  \arctan\left(  \frac{y}{x}\right),\arctan\left(  \frac{x}{y}\right)  \right)  (x+y)\right)  ,x\geq0,y\geq0,
\]
where $C$ is the normalizing constant. For any ray $\{(r\cos\theta,r\sin\theta):r\geq0\}$, $\theta\in\lbrack0,\pi/2]$, we can see $f(x,y)$ is non-increasing along it. So according to Section 2.2 in \cite{dharmadhikari1988unimodality}, $f(x,y)$ is star unimodal about the origin. However, it's not OU about the origin because by direct calculation we have
\[
f(1,2)\approx0.036C<0.043C\approx f(2,2),
\]
which violates Theorem \ref{Choquet_OU}. Thus the second inclusion is also proper.
\end{proof}

\begin{proof}[Proof of Corollary \ref{OU-DRO_validity}.]
This follows directly from Lemma \ref{lemma:basic}.
\end{proof}

\begin{proof}[Proof of Proposition \ref{triviality_without_moment_constraints}.]
The proof is constructive. Suppose we remove the moment constraint (\ref{moment_constraints}) from (\ref{DRO_problem_formulation2}) and get the following problem%
\begin{align}
\max\text{ }  &  P((X_{1},\ldots,X_{d})\in S)\nonumber\\
\text{subject to }  &  l_{\bar{F}}\leq\bar{F}(x_{0})\leq u_{\bar{F}}\nonumber\\
&  f_{X_{i}}(x_{i0})\leq u_{X_{i}},i=1,\ldots,d\label{trivial_DRO_problem}\\
&  f(x^{\prime})\ge f(x) \text{ for } x\ge x^{\prime}\ge x_0\nonumber
\end{align}
We can see there is a trivial bound for the optimal value of the problem (\ref{trivial_DRO_problem}), i.e.,%
\begin{equation}
P((X_{1},\ldots,X_{d})\in S)\leq\bar{F}(x_{0})\leq u_{\bar{F}}.
\label{trival_bound}%
\end{equation}
We will construct a sequence of feasible distributions of $(X_{1},\ldots,X_{d})$ s.t.
\begin{equation}
P((X_{1},\ldots,X_{d})\in S)\rightarrow u_{\bar{F}},
\label{target_in_trivial_problem}%
\end{equation}
which implies the optimal value of (\ref{trivial_DRO_problem}) is exactly $u_{\bar{F}}$.

Consider a sequence of rectangles $\{R_{i},i\geq1\}$ in $\mathcal{D}_{0}$ defined as%
\[
R_{i}=\{(x_{1},\ldots,x_{d}):x_{j0}\leq x_{j}\leq x_{j0}+m,j=1,\ldots,d-1\text{ and }x_{d0}\leq x_{d}\leq x_{d0}+i\},
\]
where $1/m<\min(u_{X_{1}},\ldots,u_{X_{d}})$ is a fixed number. Correspondingly, we define $F_{i}$ as the uniform distribution on $R_{i}$ with total mass $u_{\bar{F}}$. The density $f_{i}$ of the distribution $F_{i}$ is given by%
\[
f_{i}(x_{1},\ldots,x_{d})=\frac{u_{\bar{F}}}{m^{d-1}i}I(x_{j0}\leq x_{j}\leq x_{j0}+m,j=1,\ldots,d-1)I(x_{d0}\leq x_{d}\leq x_{d0}+i).
\]
Thus, the constraint that $f(x^{\prime})\ge f(x) \text{ for } x\ge x^{\prime}\ge x_0$ is satisfied. Besides, the truncated marginal density $f_{X_{j}}$ at point $x_{j0}$ satisfies%
\[
f_{X_{j}}(x_{j0})=\frac{u_{\bar{F}}}{m}\leq\frac{1}{m}<u_{X_{j}}\text{ for }j=1,\ldots,d-1,
\]
and%
\[
f_{X_{d}}(x_{d0})=\frac{u_{\bar{F}}}{i}\leq\frac{1}{i}<u_{X_{d}}\text{ when }i\text{ is large enough.}%
\]
So when $i$ is large enough, the distribution $F_{i}$ is feasible to the problem (\ref{trivial_DRO_problem}).

Now let us consider the objective value of $F_{i}$. Recall that $S=\{(x_{1},\ldots,x_{d})\in\mathcal{D}_{0}:x_{d}\geq g(x_{1},\ldots,x_{d-1})\}$ and $g(x_{1},\ldots,x_{d-1})\geq x_{d0}$.
\begin{align*}
&  P_{F_{i}}((X_{1},\ldots,X_{d})\in S)\\
&  =\int\frac{u_{\bar{F}}}{m^{d-1}i}I(x_{j}\in\lbrack x_{j0},x_{j0}+m],j=1,\ldots,d-1,x_{d}\in\lbrack g(x_{1},\ldots,x_{d-1}),x_{d0}+i])dx_{1}\cdots dx_{d}.
\end{align*}
According to our assumption, $g$ is bounded on compact sets. So $\exists$ $M\geq x_{d0}$ s.t. $g(x_{1},\ldots,x_{d-1})\leq M$ when $x_{j0}\leq x_{j}\leq x_{j0}+m,j=1,\ldots,d-1$. Then when $i>M-x_{d0}$, we have%
\begin{align*}
&  P_{F_{i}}((X_{1},\ldots,X_{d})\in S)\\
&  \geq\int\frac{u_{\bar{F}}}{m^{d-1}i}I(x_{j0}\leq x_{j}\leq x_{j0}+m,j=1,\ldots,d-1)I(M\leq x_{d}\leq x_{d0}+i)dx_{1}\cdots dx_{d}\\
&  =u_{\bar{F}}\frac{i+x_{d0}-M}{i}\rightarrow u_{\bar{F}},
\end{align*}
i.e., a sequence of feasible distributions satisfying (\ref{target_in_trivial_problem}) exists.

Finally, it follows by the trivial bound (\ref{trival_bound}) that the optimal value of the problem (\ref{trivial_DRO_problem}) is exactly $u_{\bar{F}}$.
\end{proof}

\begin{proof}[Proof of Lemma \ref{conversion_to_OU_set_optimization}.]
Suppose $f$ is a feasible solution to the problem (\ref{DRO_subproblem_formulation1}) and $C_{s}=\{x\in\mathcal{D}_{0}:f(x)\geq s\}$. We will show the objective function and the constraints in the problem (\ref{DRO_OU_optimization_MD}) are equivalent to those in the problem (\ref{DRO_subproblem_formulation1}).

Notice that $\{C_{s},s>0\}$ is a non-increasing sequence of OU sets by Theorem \ref{Choquet_OU}. First, we show two properties of $\{C_{s},s>0\}$: $\lambda_{d}(C_{s})<\infty$ for any $s>0$, and $\lambda_{d}(C_{s})=0$ implies $C_{t}=\emptyset$ for $t>s$. The first one is obviously true otherwise $\bar{F}(x_{0})\geq s\lambda_{d}(C_{s})=\infty$. Now let us show the second property. Suppose that $\lambda_{d}(C_{s})=0$ and $t>s$. If $C_{t}\neq\emptyset$, then $x_{0}\in C_{t}$ since $x_{0}$ is in any nonempty OU set about $x_{0}$. By the definition $C_{t}=\{x\in\mathcal{D}_{0}:f(x)\geq t\}$, we know that $f(x_{0})\geq t>s$. Since the regularity constraint on $f$ says%
\[
f(x_{0})=\limsup_{y\downarrow x_{0},y\in\mathcal{D}_{0}^{\circ}}f(y),
\]
there exists $y_{0}\in\mathcal{D}_{0}^{\circ}$ s.t. $f(y_{0})>s$. It follows from the OU property that $f(x)>s$ for $x_{0}\leq x\leq y_{0}$. Therefore, $\lambda_{d}(C_{s})>0$, which contradicts our assumption. So we must have $C_{t}=\emptyset$.

Define $s_{0}=\sup\{s>0:\lambda(C_{s})>0\}$. From the last paragraph, we know that $0<\lambda_{d}(C_{s})<\infty$ for $0<s<s_{0}$ and $C_{s}=\emptyset$ for $s>s_{0}$. For the probability measure $P$ induced by the density $f$, from the proof of Theorem \ref{Choquet_OU}, we know that for any measurable set $B$, the following representation holds
\[
P(B)=\int_{0}^{\infty}\lambda_{d}(B\cap\bar{C}_{s})ds=\int_{0}^{s_{0}}\lambda_{d}(B\cap\bar{C}_{s})ds=\int_{0}^{s_{0}}\frac{\lambda_{d}(B\cap\bar{C}_{s})}{\lambda_{d}(\bar{C}_{s})}\lambda_{d}(\bar{C}_{s})ds,
\]
where the last equality holds because $0<\lambda_{d}(C_{s})<\infty$ for $0<s<s_{0}$ and $\lambda_{d}(C_{s})=\lambda_{d}(\bar{C}_{s})$ by Lemma \ref{property_of_OU_set}. When $s>s_{0}$, $C_{s}=\bar{C}_{s}=\emptyset$ and thus $\lambda_{d}(B\cap\bar{C}_{s})/\lambda_{d}(\bar{C}_{s})=0$ by our definition. Then we can extend the integral interval from $(0,s_{0})$ to $(0,\infty)$ and get%
\begin{equation}
P(B)=c\int_{0}^{\infty}\frac{\lambda_{d}(B\cap\bar{C}_{s})}{\lambda_{d}(\bar{C}_{s})}g(s)ds, \label{representation_of_P}%
\end{equation}
where $g(s)=\lambda_{d}(\bar{C}_{s})/c$. In fact, $g(s)$ is a probability density on $(0,\infty)$, which can be verified by setting $B=\mathcal{D}_{0}$ in (\ref{representation_of_P}) and $P(\mathcal{D}_{0})=c$. The representation (\ref{representation_of_P}) ensures the correctness of the representations of the objective function and the last constraint in the problem (\ref{DRO_OU_optimization_MD}).

Now let us show the correctness of the representation of the first constraint in the problem (\ref{DRO_OU_optimization_MD}). Take $i=1$ as an example. Notice that
\[
f(x_{10},x_{2},\ldots,x_{d})=\int_{0}^{f(x_{10},x_{2},\ldots,x_{d})}1ds=\int_{0}^{\infty}I((x_{10},x_{2},\ldots,x_{d})\in C_{s})ds.
\]
Since $C_{s}=\emptyset$ for $s>s_{0}$, it can also be written as%
\[
f(x_{10},x_{2},\ldots,x_{d})=\int_{0}^{s_{0}}I((x_{10},x_{2},\ldots,x_{d})\in C_{s})ds.
\]
Thus, the marginal density $f_{X_{1}}(x_{10})$ is given by%
\begin{align*}
f_{X_{1}}(x_{10})  &  =\int_{x_{20}}^{\infty}\cdots\int_{x_{d0}}^{\infty}f(x_{10},x_{2},\ldots,x_{d})dx_{2}\cdots dx_{d}\\
&  =\int_{x_{20}}^{\infty}\cdots\int_{x_{d0}}^{\infty}\int_{0}^{s_{0}}I((x_{10},x_{2},\ldots,x_{d})\in C_{s})dsdx_{2}\cdots dx_{d}\\
&  =\int_{0}^{s_{0}}\int_{x_{20}}^{\infty}\cdots\int_{x_{d0}}^{\infty}I((x_{10},x_{2},\ldots,x_{d})\in C_{s})dx_{2}\cdots dx_{d}ds\\
&  =\int_{0}^{s_{0}}\lambda_{d-1}(C_{s,1})ds,
\end{align*}
where $C_{s,1}=\{(x_{2},\ldots,x_{d}):(x_{10},x_{2},\ldots,x_{d})\in C_{s}\}$ is slice of $C_{s}$ on the plane $x=x_{10}$. Next, let's show $\lambda_{d-1}(C_{s,1})=\lambda_{d-1}(\bar{C}_{s,1})$, where $\bar{C}_{s,1}$ is the slice of $\bar{C}_{s}$ on the plane $x=x_{10}$. We will show $\bar{C}_{s,1}\subset$ $\overline{C_{s,1}}$, where $\overline{C_{s,1}}$ is the closure of $C_{s,1}$. Suppose $(x_{2},\ldots,x_{d})\in\bar{C}_{s,1}$, i.e., $(x_{10},x_{2},\ldots,x_{d})\in\bar{C}_{s}$. Then there is a sequence $\{(x_{1k},x_{2k},\ldots,x_{dk}),k\geq1\}\subset C_{s}$ such that
\[
(x_{1k},x_{2k},\ldots,x_{dk})\rightarrow(x_{10},x_{2},\ldots,x_{d}).
\]
By the OU property, $\{(x_{10},x_{2k},\ldots,x_{dk}),k\geq1\}$ are also in $C_{s}$, which implies $\{(x_{2k},\ldots,x_{dk}),k\geq1\}\subset C_{s,1}$. Thus, its limiting point $(x_{2},\ldots,x_{d})$ is in $\overline{C_{s,1}}$. This means $\bar{C}_{s,1}\subset$ $\overline{C_{s,1}}$. Clearly, $C_{s,1}\subset\bar{C}_{s,1}$ and $C_{s,1}$ is OU about $(x_{20},\ldots,x_{d0})$ in $\mathbb{R}^{d-1}$. By Lemma \ref{property_of_OU_set}, we can get
\[
\lambda_{d-1}(C_{s,1})\leq\lambda_{d-1}(\bar{C}_{s,1})\leq\lambda_{d-1}(\overline{C_{s,1}})=\lambda_{d-1}(C_{s,1})\Rightarrow\lambda_{d-1}(C_{s,1})=\lambda_{d-1}(\bar{C}_{s,1}).
\]
Thus, we can rewrite $f_{X_{1}}(x_{10})$ as%
\[
f_{X_{1}}(x_{10})=\int_{0}^{s_{0}}\lambda_{d-1}(\bar{C}_{s,1})ds=\int_{0}^{s_{0}}\frac{\lambda_{d-1}(\bar{C}_{s,1})}{\lambda_{d}(\bar{C}_{s})}\lambda_{d}(\bar{C}_{s})ds=c\int_{0}^{s_{0}}\frac{\lambda_{d-1}(\bar{C}_{s,1})}{\lambda_{d}(\bar{C}_{s})}g(s)ds,
\]
where $g(s)=\lambda_{d}(\bar{C}_{s})/c$ is defined above. When $s>s_{0}$, $C_{s}=\bar{C}_{s}=\emptyset$ and thus $\lambda_{d-1}(\bar{C}_{s,1})/\lambda_{d}(\bar{C}_{s})=0$ by our definition. So we can extend the integral interval from $(0,s_{0})$ to $(0,\infty)$ and get%
\[
f_{X_{1}}(x_{10})=c\int_{0}^{\infty}\frac{\lambda_{d-1}(\bar{C}_{s,1})}{\lambda_{d}(\bar{C}_{s})}g(s)ds.
\]
Thus, the representation of the first constraint of the problem (\ref{DRO_OU_optimization_MD}) is correct. This concludes our proof of this lemma.
\end{proof}

\begin{proof}[Proof of Lemma \ref{lemma_general_OU_optimization}.]
It suffices to show for any feasible solution to the problem (\ref{DRO_OU_optimization_MD}), there exists a feasible solution to the problem (\ref{general_OU_optimization_MD}) with the same objective value. Consider a density $f(x)$ on $\mathcal{D}_{0}$ with total mass $c$ and satisfying the last two constraints in (\ref{DRO_subproblem_formulation1}). Let $\{C_{s},s>0\}$ and $g(s)$ be the sequence of OU sets and the probability density defined in Lemma \ref{conversion_to_OU_set_optimization}. In the proof of Lemma \ref{conversion_to_OU_set_optimization}, we know there exists $s_{0}\in(0,\infty)$ s.t. $0<\lambda_{d}(C_{s})=\lambda_{d}(\bar{C}_{s})<\infty$ for $0<s<s_{0}$ and $C_{s}=\emptyset$ for $s>s_{0}$. Therefore, $g(s)=0$ for $s>s_{0}$ by its definition. Consider the slice of $\bar{C}_{s}$, e.g., $\bar{C}_{s,1}$. By the property of OU sets, we know $\bar{C}_{s}\subset\mathbb{R}\times\bar{C}_{s,1}$. Therefore, $\lambda_{d}(\bar{C}_{s})>0$ implies $\lambda_{d-1}(\bar{C}_{s,1})>0$ for $0<s<s_{0}$. Similarly, we can also get $\lambda_{d-1}(\bar{C}_{s,i})>0$ for $0<s<s_{0}$ and $i=1,\ldots,d$. Moreover, we must have $\lambda_{d-1}(\bar{C}_{s,i})<\infty$ for any $s>0$ and $i=1,\ldots,d$. If not, suppose without loss of generality that $\lambda_{d-1}(\bar{C}_{s_{1},1})=\infty$ for some $s_{1}>0$. Since $\bar{C}_{s,1}$ is non-increasing in $s$, we have $\lambda_{d-1}(\bar{C}_{s,1})=\infty$ for $0<s\leq s_{1}$. Now we consider the first constraint in (\ref{DRO_OU_optimization_MD}) with $i=1$:%
\[
\frac{u_{X_{1}}}{c}\geq\int_{0}^{\infty}\frac{\lambda_{d-1}(\bar{C}_{s,1})}{\lambda_{d}(\bar{C}_{s})}g(s)ds\geq\int_{0}^{\min(s_{0},s_{1})}\frac{\lambda_{d-1}(\bar{C}_{s,1})}{\lambda_{d}(\bar{C}_{s})}g(s)ds=\frac{1}{c}\int_{0}^{\min(s_{0},s_{1})}\lambda_{d-1}(\bar{C}_{s,1})ds=\infty,
\]
which contradicts to the assumption that $f$ is a feasible solution to the problem (\ref{DRO_subproblem_formulation1}). In summary, we know that $\lambda_{d}(\bar{C}_{s})\in (0,\infty),\lambda_{d-1}(\bar{C}_{s,i})\in(0,\infty)$ for any $s\in(0,s_{0})$ and $i=1,\ldots,d$, and $g(s)=0$ for $s>s_{0}$.

Now we construct a feasible solution to the problem (\ref{general_OU_optimization_MD}) with the same objective value as $f(x)$. Notice that $s\mapsto\tan(\pi s/2s_{0})$ is a bijection from $(0,s_{0})$ into $(0,\infty)$ and%
\begin{equation}
\int_{0}^{s_{0}}\varphi(s)g(s)ds=\int_{0}^{\infty}\varphi\left(  \frac{2s_{0}\arctan(u)}{\pi}\right)  g\left(  \frac{2s_{0}\arctan(u)}{\pi}\right)\frac{2s_{0}}{\pi(1+u^{2})}du\label{change_of_variable}
\end{equation}
for any non-negative measurable function $\varphi$ by the change of variable $u=\tan(\pi s/2s_{0})$. Therefore, if we define $R_{s}$ by%
\[
R_{s}=\bar{C}_{\frac{2s_{0}\arctan(u)}{\pi}},s\in(0,\infty)
\]
and define $G(s)$ as the probability distribution with density
\[
G^{\prime}(s)=g\left(  \frac{2s_{0}\arctan(s)}{\pi}\right)  \frac{2s_{0}}{\pi(1+s^{2})},s\in(0,\infty),
\]
it follows by (\ref{change_of_variable}) that the objective value and the constraints in (\ref{DRO_OU_optimization_MD}) and (\ref{general_OU_optimization_MD}) are exactly the same. Further, $R_{s}$ satisfies $\lambda_d(R_{s})\in(0,\infty),\lambda_{d-1}(R_{s,i})\in(0,\infty)$ for any $i=1,\ldots,d$ and $s>0$, which means $\{R_{s},s>0\}$ and $G(s)$ are feasible to the problem (\ref{general_OU_optimization_MD}). This concludes our proof.
\end{proof}

\begin{proof}[Proof of Corollary \ref{cor_general_OU_optimization_2D}.]
This follows directly from Lemma \ref{lemma_general_OU_optimization} when $d=2$.
\end{proof}

\begin{proof}[Proof of Lemma \ref{lemma_reduction_of_constraints}.]
By the definition of $R$ in (\ref{definition_R}) and the constraints of $h$ in (\ref{reduction_of_constraints}), we can easily see that $R$ is a closed OU set with $R^{X}\leq R_0^{X}$ and $R^{Y}=h(x_{0})\leq h_{0}(x_{0}+)\leq R_0^{Y}$. By the first constraint in (\ref{reduction_of_constraints}), we have
\begin{align*}
\lambda(R) &  =\int_{x_{0}}^{x_{0}+r_{X}}(h(x)-y_{0})dx=\sum_{i=0}^{n_{R_{0}}}\int_{x_{i}}^{x_{i+1}}(h(x)-y_{0})dx\\
&  =\sum_{i=0}^{n_{R_{0}}}\lambda(\{(x,y):x_{i}\leq x\leq x_{i+1},y\geq y_{0}\}\cap R_{0})\\
&  =\lambda(\{(x,y):x_{0}\leq x\leq x_{0}+r_{X},y\geq y_{0}\}\cap R_{0})=\lambda(R_{0}).
\end{align*}
So it remains to verify $\lambda(\{(x,y):x_{1i}\leq x\leq x_{2i},y_{1i}\leq y\leq y_{2i}\}\cap R)=\lambda(\{(x,y):x_{1i}\leq x\leq x_{2i},y_{1i}\leq y\leq y_{2i}\}\cap R_{0}),i=1,\ldots,n$. There are several cases for these equalities, considering whether $h_{0}^{-1}(y_{1i})$ and $h_{0}^{-1}(y_{2i})$ are well defined and all possible orders of $x_{1i},x_{2i},h_{0}^{-1}(y_{1i})$ and $h_{0}^{-1}(y_{2i})$. Here we only consider one case and other cases can be proved similarly. Suppose that both $h_{0}^{-1}(y_{1i})$ and $h_{0}^{-1}(y_{2i})$ are well defined and $x_{1i}<h_{0}^{-1}(y_{2i})<h_{0}^{-1}(y_{1i})<x_{2i}$. In this case, $x_{1i},h_{0}^{-1}(y_{2i}),h_{0}^{-1}(y_{1i})$ and $x_{2i}$ are distinct in the ordered sequence $x_{0}<x_{1}<\cdots<x_{n_{R_{0}}+1}$. By the definition $h_{0}^{-1}(y)=\sup\{x:h_{0}(x)\geq y\}$ and the left-continuity of $h_{0}$, we must have $h_{0}(h_{0}^{-1}(y))\geq y$ and $h_{0}(h_{0}^{-1}(y)+)\leq y$. Notice that we have
\begin{align*}
&  \lambda(\{(x,y):x_{1i}\leq x\leq x_{2i},y_{1i}\leq y\leq y_{2i}\}\cap R_{0})\\
&  =(y_{2i}-y_{1i})(h_{0}^{-1}(y_{2i})-x_{1i})+\int_{h_{0}^{-1}(y_{2i})}^{h_{0}^{-1}(y_{1i})}(h_{0}(x)-y_{1i})dx\\
&  =(y_{2i}-y_{1i})(h_{0}^{-1}(y_{2i})-x_{1i})+\lambda(\{(x,y):h_{0}^{-1}(y_{2i})\leq x\leq h_{0}^{-1}(y_{1i}),y\geq y_{0}\}\cap R_{0})\\
&  -(y_{1i}-y_{0})(h_{0}^{-1}(y_{1i})-h_{0}^{-1}(y_{2i})).
\end{align*}
Now let us calculate $\lambda(\{(x,y):x_{1i}\leq x\leq x_{2i},y_{1i}\leq y\leq y_{2i}\}\cap R)$. By the second and third constraints in (\ref{reduction_of_constraints}), we can see: $h(x)\geq h_{0}(h_{0}^{-1}(y_{2i}))\geq y_{2i}$ if $x\in(x_{1i},h_{0}^{-1}(y_{2i})]$, $y_{1i}\leq h_{0}(h_{0}^{-1}(y_{1i}))\leq h(x)\leq h_{0}(h_{0}^{-1}(y_{2i})+)\leq y_{2i}$ if $x\in(h_{0}^{-1}(y_{2i}),h_{0}^{-1}(y_{1i})]$ and $h(x)\leq h_{0}(h_{0}^{-1}(y_{1i})+)\leq y_{1i}$ if $x\in(h_{0}^{-1}(y_{1i}),x_{2i}]$. So we have%
\[
\lambda(\{(x,y):x_{1i}\leq x\leq x_{2i},y_{1i}\leq y\leq y_{2i}\}\cap R)=(y_{2i}-y_{1i})(h_{0}^{-1}(y_{2i})-x_{1i})+\int_{h_{0}^{-1}(y_{2i})}^{h_{0}^{-1}(y_{1i})}(h(x)-y_{1i})dx.
\]
Note that%
\begin{align*}
&  \int_{h_{0}^{-1}(y_{2i})}^{h_{0}^{-1}(y_{1i})}(h(x)-y_{1i})dx\\
&  =\int_{h_{0}^{-1}(y_{2i})}^{h_{0}^{-1}(y_{1i})}(h(x)-y_{0})dx-(y_{1i}-y_{0})(h_{0}^{-1}(y_{1i})-h_{0}^{-1}(y_{2i}))\\
&  =\lambda(\{(x,y):h_{0}^{-1}(y_{2i})\leq x\leq h_{0}^{-1}(y_{1i}),y\geq y_{0}\}\cap R_{0})-(y_{1i}-y_{0})(h_{0}^{-1}(y_{1i})-h_{0}^{-1}(y_{2i})),
\end{align*}
where the second equality follows from the first constraint in (\ref{reduction_of_constraints}). It implies $\lambda(\{(x,y):x_{1i}\leq x\leq x_{2i},y_{1i}\leq y\leq y_{2i}\}\cap R)=\lambda(\{(x,y):x_{1i}\leq x\leq x_{2i},y_{1i}\leq y\leq y_{2i}\}\cap R_{0})$ holds in this case. For other cases, the proofs are similar and thus are omitted. This concludes our proof.
\end{proof}

\begin{proof}[Proof of Lemma \ref{find_dom_OU}.]
Since the function $h_0$ corresponding to $R_0$ is a feasible solution to (\ref{function_optimization_problem}), by (\ref{representation_obj}) and the optimality of $h^*$, we have $\lambda(S\cap \tilde{R}) \ge \lambda(S\cap R_0)$. Moreover, by Lemma \ref{lemma_reduction_of_constraints}, $\tilde{R}$ satisfies all the remaining requirements of the dominating OU set of $R_0$.
\end{proof}

To prove Lemma \ref{opt_geometric_problem}, we first prove the following simple lemma.

\begin{lemma}
\label{convexity_of_auxiliary_function}Suppose $a_{3}>a_{2}>a_{1}$ and $c$ are some constants in $\mathbb{R}$, and $g:(a_{1},a_{3}]\mapsto(-\infty,\infty]$. Consider the following function%
\[
H\left(  y\right)  =\int_{a_{1}}^{a_{2}}\left(  y-g\left(  x\right)  \right)_{+}dx+\int_{a_{2}}^{a_{3}}\left(  \frac{c-\left(  a_{2}-a_{1}\right) y}{a_{3}-a_{2}}-g\left(  x\right)  \right)  _{+}dx
\]
whose domain is $y\in\lbrack\underline{y},\bar{y}].$ Then the maximum value of $H\left(  y\right)  $ is attained at the endpoints of the domain.
\end{lemma}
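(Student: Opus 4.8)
The plan is to show that $H$ is a convex function of $y$ on $[\underline{y},\bar{y}]$; once this is established, the conclusion is immediate, since a convex function on a compact interval attains its maximum at one of the two endpoints. Indeed, any $y\in[\underline{y},\bar{y}]$ can be written as a convex combination $y=\lambda\underline{y}+(1-\lambda)\bar{y}$, so convexity gives $H(y)\le\lambda H(\underline{y})+(1-\lambda)H(\bar{y})\le\max\{H(\underline{y}),H(\bar{y})\}$. (If $H$ happens to take the value $+\infty$ at an interior point, the same convexity inequality forces $H$ to equal $+\infty$ at $\underline{y}$ or $\bar{y}$, so the statement still holds; thus finiteness of the integrals is not an issue.)

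To prove convexity of $H$, I would argue pointwise in the integration variable $x$ and then integrate. Fix $x$. The map $y\mapsto(y-g(x))_{+}=\max\{0,\,y-g(x)\}$ is the pointwise maximum of two affine functions of $y$ (with the convention that the branch $y-g(x)$ is absent, equivalently equals $-\infty$, when $g(x)=+\infty$), hence convex in $y$. Similarly, $y\mapsto\frac{c-(a_{2}-a_{1})y}{a_{3}-a_{2}}$ is affine in $y$ (with nonzero slope, since $a_{2}>a_{1}$), so $y\mapsto\big(\frac{c-(a_{2}-a_{1})y}{a_{3}-a_{2}}-g(x)\big)_{+}$ is again the maximum of $0$ and an affine function of $y$, hence convex in $y$. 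Consequently, for $y_{1},y_{2}\in[\underline{y},\bar{y}]$, $\lambda\in[0,1]$, and $y_{\lambda}:=\lambda y_{1}+(1-\lambda)y_{2}$, each integrand evaluated at $y_{\lambda}$ is $\le\lambda$ times its value at $y_{1}$ plus $(1-\lambda)$ times its value at $y_{2}$, for every $x$; integrating this inequality over $x\in(a_{1},a_{2})$ and over $x\in(a_{2},a_{3})$ and summing yields $H(y_{\lambda})\le\lambda H(y_{1})+(1-\lambda)H(y_{2})$, which is the desired convexity.

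The only technical points worth noting are that the integrands are measurable in $x$ (which holds whenever $g$ is measurable, as it is in the applications, being the boundary function of the rare-event set $S$) and that the argument is insensitive to whether the integrals are finite, since both the convexity inequality and the endpoint-maximum property remain valid in $(-\infty,+\infty]$. I do not expect any genuine obstacle: the entire content is the elementary observation that $H$ is a sum of integrals of positive parts of affine functions of $y$, hence convex, and a convex function on an interval is maximized at an endpoint.
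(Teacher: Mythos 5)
Your proposal is correct and takes essentially the same approach as the paper: both prove that $H$ is convex in $y$ by using the convexity of the positive-part function applied pointwise in $x$ (you phrase it as the max of affine functions, the paper directly applies $(\eta a+(1-\eta)b)_+\le\eta a_+ +(1-\eta)b_+$), and then conclude via the endpoint-maximum property of a convex function on an interval.
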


\begin{proof}[Proof of Lemma \ref{convexity_of_auxiliary_function}.]
It suffices to show that $H\left(  y\right)  $ is a convex function. For $y_{1},y_{2}\in\lbrack\underline{y},\bar{y}]$ and $\eta\in\left[  0,1\right]$, we have%
\begin{align*}
&  H\left(  \eta y_{1}+\left(  1-\eta\right)  y_{2}\right) \\
&  =\int_{a_{1}}^{a_{2}}\left(  \eta y_{1}+\left(  1-\eta\right) y_{2}-g\left(  x\right)  \right)  _{+}dx+\int_{a_{2}}^{a_{3}}\left(\frac{c-\left(  a_{2}-a_{1}\right)  \left(  \eta y_{1}+\left(  1-\eta\right) y_{2}\right)  }{a_{3}-a_{2}}-g\left(  x\right)  \right)  _{+}dx\\
&  =\int_{a_{1}}^{a_{2}}\left(  \eta\left(  y_{1}-g\left(  x\right)  \right)+\left(  1-\eta\right)  \left(  y_{2}-g\left(  x\right)  \right)  \right)_{+}dx\\
&  +\int_{a_{2}}^{a_{3}}\left(  \eta\left(  \frac{c-\left(  a_{2}-a_{1}\right)  y_{1}}{a_{3}-a_{2}}-g\left(  x\right)  \right)  +\left(1-\eta\right)  \left(  \frac{c-\left(  a_{2}-a_{1}\right)  y_{2}}{a_{3}-a_{2}}-g\left(  x\right)  \right)  \right)  _{+}dx\\
&  \leq\int_{a_{1}}^{a_{2}}\left[  \eta\left(  y_{1}-g\left(  x\right)\right)  _{+}+\left(  1-\eta\right)  \left(  y_{2}-g\left(  x\right)  \right)_{+}\right]  dx\\
&  +\int_{a_{2}}^{a_{3}}\left[  \eta\left(  \frac{c-\left(  a_{2}-a_{1}\right)  y_{1}}{a_{3}-a_{2}}-g\left(  x\right)  \right)  _{+}+\left(1-\eta\right)  \left(  \frac{c-\left(  a_{2}-a_{1}\right)  y_{2}}{a_{3}-a_{2}}-g\left(  x\right)  \right)  _{+}\right]  dx\\
&  =\eta H\left(  y_{1}\right)  +\left(  1-\eta\right)  H\left(  y_{2}\right).
\end{align*}
Therefore, $H(y)$ is a convex function.
\end{proof}

Now, let us show Lemma \ref{opt_geometric_problem}.
\begin{proof}[Proof of Lemma \ref{opt_geometric_problem}.]
We define a subproblem of the problem (\ref{geometric_problem}) by adding an additional constraint:%
\begin{align}
\max\text{ } &  \int_{\underline{x}}^{\bar{x}}(h(x)-g(x))_{+}dx\nonumber\\
\text{subject to } &  \int_{\underline{x}}^{\bar{x}}h(x)dx=C\label{subproblem1_of_geometric_problem}\\
&  \underline{b}\leq h(x)\leq\bar{b},x\in(\underline{x},\bar{x}]\nonumber\\
&  h\text{ is a non-increasing left-continuous step function}\nonumber
\end{align}
where the step function is in the form of%
\[
h\left(  x\right)  =\left\{
\begin{array}
[c]{c}%
y_{1},\\
y_{2},\\
\cdots\\
y_{n},
\end{array}
\left.
\begin{array}
[c]{l}%
a_{0}<x\leq a_{1}\\
a_{1}<x\leq a_{2}\\
\\
a_{n-1}<x\leq a_{n}%
\end{array}
\right.  \right.  ,
\]
where $n\in\mathbb{N}$, $\underline{x}=a_{0}<a_{1}<\cdots<a_{n}=\bar{x}$ and $y_{i}$'s are some constants. To clarify, here we abuse the notations $a_{i}$ and $n$. They are not the constants in the formulation of\ our DRO problem (\ref{DRO_problem_formulation1}). We aim at showing the optimal value of the problem (\ref{subproblem1_of_geometric_problem}) is the same as that of the problem (\ref{geometric_problem}). It suffices to show that for any feasible solution $h$ to the problem (\ref{geometric_problem}), there exists a sequence of feasible solutions $\left\{  h_{n},n\geq1\right\}  $ to the problem (\ref{subproblem1_of_geometric_problem}), s.t.%
\begin{equation}
\int_{\underline{x}}^{\bar{x}}\left(  h_{n}\left(  x\right)  -g\left(x\right)  \right)  _{+}dx\rightarrow\int_{\underline{x}}^{\bar{x}}\left(h\left(  x\right)  -g\left(  x\right)  \right)  _{+}dx.\label{target_to_justify_geometric_subproblem1}%
\end{equation}

Now fix any feasible solution $h$ to the problem (\ref{geometric_problem}). For $n\geq1$, let's construct $h_{n}$. For simplicity, we extend the definition of $h$ to $[\underline{x},\bar{x}]$ by defining $h(\underline{x})=h(\underline{x}+)$. For $i\in\left\{  0,1,\ldots,n-1\right\}  $, since $h$ is non-increasing, we have%
\[
h\left(  \underline{x}+\frac{(i+1)(\bar{x}-\underline{x})}{n}\right)\frac{(\bar{x}-\underline{x})}{n}\leq\int_{\underline{x}+\frac{i(\bar{x}-\underline{x})}{n}}^{\underline{x}+\frac{(i+1)(\bar{x}-\underline{x})}{n}}h\left(  x\right)  dx\leq h\left(  \underline{x}+\frac{i(\bar{x}-\underline{x})}{n}\right)  \frac{(\bar{x}-\underline{x})}{n}.
\]
Therefore, there is a real number $h_{i,n}$ satisfying
\[
h_{i,n}\in\left[  h\left(  \underline{x}+\frac{(i+1)(\bar{x}-\underline{x})}{n}\right)  ,h\left(  \underline{x}+\frac{i(\bar{x}-\underline{x})}{n}\right)  \right]
\]
s.t.%
\[
\int_{\underline{x}+\frac{i(\bar{x}-\underline{x})}{n}}^{\underline{x}+\frac{(i+1)(\bar{x}-\underline{x})}{n}}h\left(  x\right)  dx=h_{i,n}\frac{(\bar{x}-\underline{x})}{n}.
\]
Then our $h_{n}$ is defined as
\[
h_{n}\left(  x\right)  =h_{i,n},\forall x\in\left(  \underline{x}+\frac{i(\bar{x}-\underline{x})}{n},\underline{x}+\frac{(i+1)(\bar{x}-\underline{x})}{n}\right]  .
\]
We can see $h_{n}\ $is a feasible solution to the problem (\ref{subproblem1_of_geometric_problem}) by our construction. Let us verify (\ref{target_to_justify_geometric_subproblem1}).%
\begin{align*}
&  \left\vert \int_{\underline{x}}^{\bar{x}}\left(  h_{n}\left(  x\right)-g\left(  x\right)  \right)  _{+}dx-\int_{\underline{x}}^{\bar{x}}\left(h\left(  x\right)  -g\left(  x\right)  \right)  _{+}dx\right\vert \\
&  \leq\int_{\underline{x}}^{\bar{x}}|\left(  h_{n}\left(  x\right)  -g\left(x\right)  \right)  _{+}-\left(  h\left(  x\right)  -g\left(  x\right)\right)  _{+}|dx\\
&  \leq\int_{\underline{x}}^{\bar{x}}|\left(  h_{n}\left(  x\right)  -g\left(x\right)  \right)  -\left(  h\left(  x\right)  -g\left(  x\right)  \right)|dx\\
&  =\sum_{i=0}^{n-1}\int_{\underline{x}+\frac{i(\bar{x}-\underline{x})}{n}}^{\underline{x}+\frac{(i+1)(\bar{x}-\underline{x})}{n}}\left\vert h_{n}\left(  x\right)  -h\left(  x\right)  \right\vert dx\\
&  \leq\sum_{i=0}^{n-1}\int_{\underline{x}+\frac{i(\bar{x}-\underline{x})}{n}}^{\underline{x}+\frac{(i+1)(\bar{x}-\underline{x})}{n}}\left(  h\left(\underline{x}+\frac{i(\bar{x}-\underline{x})}{n}\right)  -h\left(\underline{x}+\frac{(i+1)(\bar{x}-\underline{x})}{n}\right)  \right)  dx\\
&  =\frac{(\bar{x}-\underline{x})}{n}(h(\underline{x})-h(\bar{x}))\rightarrow0
\end{align*}
as $n\rightarrow\infty$. Thus, the optimal value of the problem (\ref{subproblem1_of_geometric_problem}) is the same as that of the problem (\ref{geometric_problem}).

Next, we define a subproblem of the problem (\ref{subproblem1_of_geometric_problem}) by adding an additional constraint again:%
\begin{align}
\max\text{ } &  \int_{\underline{x}}^{\bar{x}}(h(x)-g(x))_{+}dx\nonumber\\
\text{subject to } &  \int_{\underline{x}}^{\bar{x}}h(x)dx=C\label{subproblem2_of_geometric_problem}\\
&  \underline{b}\leq h(x)\leq\bar{b},x\in(\underline{x},\bar{x}]\nonumber\\
&  h\text{ is a non-increasing left-continuous step function with at most three steps}\nonumber
\end{align}
Again, we aim at showing the optimal value of the problem (\ref{subproblem2_of_geometric_problem}) is the same as that of the problem (\ref{subproblem1_of_geometric_problem}). It suffices to show that for any feasible solution $h_{1}$ to the problem (\ref{subproblem1_of_geometric_problem}), there is a feasible solution $h_{2}$ to the problem (\ref{subproblem2_of_geometric_problem}) s.t.
\begin{equation}
\int_{\underline{x}}^{\bar{x}}\left(  h_{1}\left(  x\right)  -g\left(x\right)  \right)  _{+}dx\leq\int_{\underline{x}}^{\bar{x}}\left(h_{2}\left(  x\right)  -g\left(  x\right)  \right)  _{+}dx.\label{target_to_justify_geometric_subproblem2}%
\end{equation}

Notice that for any feasible solution $h_{1}$ of the problem (\ref{subproblem1_of_geometric_problem}), it can be uniquely written as%
\[
h_{1}\left(  x\right)  =\left\{
\begin{array}
[c]{c}%
y_{1},\\
y_{2},\\
\cdots\\
y_{n},
\end{array}
\left.
\begin{array}
[c]{l}%
a_{0}<x\leq a_{1}\\
a_{1}<x\leq a_{2}\\
\\
a_{n-1}<x\leq a_{n}%
\end{array}
\right.  \right.  ,
\]
where $\underline{b}\leq y_{n}<y_{n-1}<\cdots<y_{1}\leq\bar{b}$ and $\underline{x}=a_{0}<a_{1}<\cdots<a_{n}=\bar{x}$. Now let's prove (\ref{target_to_justify_geometric_subproblem2}) by induction with respect to $n$ (hereafter we will call $n$ the number of steps of $h_{1}$). When $n\leq 3$, $h_{1}\left(  x\right)  $ is also a feasible solution to the problem (\ref{subproblem2_of_geometric_problem}). So we can choose $h_{2}=h_{1}$ and then (\ref{target_to_justify_geometric_subproblem2}) holds. Assume that when $n\leq k$ ($k\geq3$), the conclusion is true. Now suppose $n=k+1\geq4$.

Let's consider the steps $y_{k-1}$ and $y_{k}$, i.e., consider the interval $(a_{k-2},a_{k}]$. So the objective value in this interval is given by%
\[
\int_{a_{k-2}}^{a_{k-1}}\left(  y_{k-1}-g\left(  x\right)  \right)_{+}dx+\int_{a_{k-1}}^{a_{k}}\left(  y_{k}-g\left(  x\right)  \right)  _{+}dx.
\]
We want to change the function values in $(a_{k-2},a_{k-1}]$ and $(a_{k-1},a_{k}]$ to get a larger objective value while keeping the function feasible for the problem (\ref{subproblem2_of_geometric_problem}). Suppose $y_{k-1}$ is changed into $y$. To make $\int_{\underline{x}}^{\bar{x}}h(x)dx$ fixed, we must change $y_{k}$ into%
\[
\frac{y_{k-1}\left(  a_{k-1}-a_{k-2}\right)  +y_{k}\left(  a_{k}-a_{k-1}\right)  -\left(  a_{k-1}-a_{k-2}\right)  y}{\left(  a_{k}-a_{k-1}\right)  }.
\]
To make the function still non-increasing, $y$ should satisfy%
\[
y_{k+1}\leq\frac{y_{k-1}\left(  a_{k-1}-a_{k-2}\right)  +y_{k}\left(a_{k}-a_{k-1}\right)  -\left(  a_{k-1}-a_{k-2}\right)  y}{\left(a_{k}-a_{k-1}\right)  }\leq y\leq y_{k-2},
\]
i.e.,%
\[
y\in\left[  y_{k-1}\frac{a_{k-1}-a_{k-2}}{a_{k}-a_{k-2}}+y_{k}\frac{a_{k}-a_{k-1}}{a_{k}-a_{k-2}},\min\left(  y_{k-2},y_{k-1}+(y_{k}-y_{k+1})\frac{a_{k}-a_{k-1}}{a_{k-1}-a_{k-2}}\right)  \right]  .
\]
Now the objective value in $(a_{k-2},a_{k}]$ becomes%
\begin{align*}
&  H(y)=\int_{a_{k-2}}^{a_{k-1}}\left(  y-g\left(  x\right)  \right)  _{+}dx\\
&  +\int_{a_{k-1}}^{a_{k}}\left(  \frac{y_{k-1}\left(  a_{k-1}-a_{k-2}\right)+y_{k}\left(  a_{k}-a_{k-1}\right)  -\left(  a_{k-1}-a_{k-2}\right) y}{\left(  a_{k}-a_{k-1}\right)  }-g\left(  x\right)  \right)  _{+}dx.
\end{align*}
Therefore, by Lemma \ref{convexity_of_auxiliary_function}, we have that%
\begin{align*}
&  \int_{a_{k-2}}^{a_{k-1}}\left(  y_{k-1}-g\left(  x\right)  \right)_{+}dx+\int_{a_{k-1}}^{a_{k}}\left(  y_{k}-g\left(  x\right)  \right)_{+}dx=H\left(  y_{k-1}\right)  \\
&  \leq\max\left(  H\left(  y_{k-1}\frac{a_{k-1}-a_{k-2}}{a_{k}-a_{k-2}}+y_{k}\frac{a_{k}-a_{k-1}}{a_{k}-a_{k-2}}\right)  ,H\left(  \min\left(y_{k-2},y_{k-1}+(y_{k}-y_{k+1})\frac{a_{k}-a_{k-1}}{a_{k-1}-a_{k-2}}\right)\right)  \right)  .
\end{align*}
By considering which one attains the maximal value, there are three cases.

Case 1:
\[
H\left(  y_{k-1}\frac{a_{k-1}-a_{k-2}}{a_{k}-a_{k-2}}+y_{k}\frac{a_{k}-a_{k-1}}{a_{k}-a_{k-2}}\right)  >H\left(  \min\left(  y_{k-2},y_{k-1}+(y_{k}-y_{k+1})\frac{a_{k}-a_{k-1}}{a_{k-1}-a_{k-2}}\right)  \right)  .
\]
In this case,%
\[
H\left(  y_{k-1}\right)  \leq H\left(  y_{k-1}\frac{a_{k-1}-a_{k-2}}{a_{k}-a_{k-2}}+y_{k}\frac{a_{k}-a_{k-1}}{a_{k}-a_{k-2}}\right)  ,
\]
which means we can get a better objective value by changing both $y_{k-1}$ and $y_{k}$ into
\[
y_{k-1}\frac{a_{k-1}-a_{k-2}}{a_{k}-a_{k-2}}+y_{k}\frac{a_{k}-a_{k-1}}{a_{k}-a_{k-2}}.
\]
In other words, the new function%
\[
h\left(  x\right)  =\left\{
\begin{array}
[c]{l}%
y_{1},\\
\cdots\\
y_{k-2},\\
y_{k-1}\frac{a_{k-1}-a_{k-2}}{a_{k}-a_{k-2}}+y_{k}\frac{a_{k}-a_{k-1}}{a_{k}-a_{k-2}},\\
y_{k+1},
\end{array}
\left.
\begin{array}
[c]{l}%
a_{0}<x\leq a_{1}\\
\\
a_{k-3}<x\leq a_{k-2}\\
a_{k-2}<x\leq a_{k}\\
a_{k}<x\leq a_{k+1}%
\end{array}
\right.  \right.
\]
has a better objective value than the original function $h_{1}(x)$. Since the number of steps of $h\left(  x\right)  $ is $k$, by the induction hypothesis, there is a feasible solution $h_{2}$ to the problem (\ref{subproblem2_of_geometric_problem}) s.t.%
\[
\int_{\underline{x}}^{\bar{x}}\left(  h\left(  x\right)  -g\left(  x\right)\right)  _{+}dx\leq\int_{\underline{x}}^{\bar{x}}\left(  h_{2}\left(x\right)  -g\left(  x\right)  \right)  _{+}dx,
\]
which implies%
\[
\int_{\underline{x}}^{\bar{x}}\left(  h_{1}\left(  x\right)  -g\left(x\right)  \right)  _{+}dx\leq\int_{\underline{x}}^{\bar{x}}\left(h_{2}\left(  x\right)  -g\left(  x\right)  \right)  _{+}dx.
\]
So (\ref{target_to_justify_geometric_subproblem2}) holds in this case.

Case 2:%
\[
H\left(  y_{k-1}\frac{a_{k-1}-a_{k-2}}{a_{k}-a_{k-2}}+y_{k}\frac{a_{k}-a_{k-1}}{a_{k}-a_{k-2}}\right)  \leq H\left(  \min\left(  y_{k-2},y_{k-1}+(y_{k}-y_{k+1})\frac{a_{k}-a_{k-1}}{a_{k-1}-a_{k-2}}\right)  \right),
\]
and%
\[
\min\left(  y_{k-2},y_{k-1}+(y_{k}-y_{k+1})\frac{a_{k}-a_{k-1}}{a_{k-1}-a_{k-2}}\right)  =y_{k-2}.
\]
This is to say%
\[
H\left(  y_{k-1}\right)  \leq H\left(  y_{k-2}\right)  ,
\]
and%
\[
y_{k}+(y_{k-1}-y_{k-2})\frac{a_{k-1}-a_{k-2}}{a_{k}-a_{k-1}}\geq y_{k+1},
\]
which means we can get a better objective value by changing $y_{k-1}$ into $y_{k-2}$. In other words, the new function%
\[
h\left(  x\right)  =\left\{
\begin{array}
[c]{l}%
y_{1},\\
\cdots\\
y_{k-2},\\
y_{k}+(y_{k-1}-y_{k-2})\frac{a_{k-1}-a_{k-2}}{a_{k}-a_{k-1}},\\
y_{k+1},
\end{array}
\left.
\begin{array}
[c]{l}%
a_{0}<x\leq a_{1}\\
\\
a_{k-3}<x\leq a_{k-1}\\
a_{k-1}<x\leq a_{k}\\
a_{k}<x\leq a_{k+1}%
\end{array}
\right.  \right.
\]
has a better objective value than the original function $h_{1}(x)$. We can see the number of steps of $h\left(  x\right)  $ is $k$. By the same argument in case 1, (\ref{target_to_justify_geometric_subproblem2}) also holds in this case.

Case 3:%
\[
H\left(  y_{k-1}\frac{a_{k-1}-a_{k-2}}{a_{k}-a_{k-2}}+y_{k}\frac{a_{k}-a_{k-1}}{a_{k}-a_{k-2}}\right)  \leq H\left(  \min\left(  y_{k-2},y_{k-1}+(y_{k}-y_{k+1})\frac{a_{k}-a_{k-1}}{a_{k-1}-a_{k-2}}\right)  \right),
\]
and%
\[
\min\left(  y_{k-2},y_{k-1}+(y_{k}-y_{k+1})\frac{a_{k}-a_{k-1}}{a_{k-1}-a_{k-2}}\right)  =y_{k-1}+(y_{k}-y_{k+1})\frac{a_{k}-a_{k-1}}{a_{k-1}-a_{k-2}}.
\]
This is to say%
\[
H\left(  y_{k-1}\right)  \leq H\left(  y_{k-1}+(y_{k}-y_{k+1})\frac{a_{k}-a_{k-1}}{a_{k-1}-a_{k-2}}\right)  ,
\]
and%
\[
y_{k-2}\geq y_{k-1}+(y_{k}-y_{k+1})\frac{a_{k}-a_{k-1}}{a_{k-1}-a_{k-2}},
\]
which means we can get a better objective value by changing $y_{k-1}$ into $y_{k-1}+(y_{k}-y_{k+1})(a_{k}-a_{k-1})/(a_{k-1}-a_{k-2})$ and changing $y_{k}$ into $y_{k+1}$. In other words, the new function%
\[
h\left(  x\right)  =\left\{
\begin{array}
[c]{l}%
y_{1},\\
\cdots\\
y_{k-2},\\
y_{k-1}+(y_{k}-y_{k+1})\frac{a_{k}-a_{k-1}}{a_{k-1}-a_{k-2}},\\
y_{k+1},
\end{array}
\left.
\begin{array}
[c]{l}%
a_{0}<x\leq a_{1}\\
\\
a_{k-3}<x\leq a_{k-2}\\
a_{k-2}<x\leq a_{k-1}\\
a_{k-1}<x\leq a_{k+1}%
\end{array}
\right.  \right.
\]
has a better objective value than the original function $h_{1}(x)$. We can see the number of steps of $h\left(  x\right)  $ is $k$. By the same argument in case 1, (\ref{target_to_justify_geometric_subproblem2}) also holds in this case.

Combining all the cases, we can see (\ref{target_to_justify_geometric_subproblem2}) holds when $n=k+1$. By induction, we can see (\ref{target_to_justify_geometric_subproblem2}) holds for any feasible solution $h_{1}$ to the problem (\ref{subproblem1_of_geometric_problem}), which implies the optimal value of the problem (\ref{subproblem2_of_geometric_problem}) is the same as that of the problem (\ref{subproblem1_of_geometric_problem}).

The last thing we need to show is the optimal solution to the problem (\ref{subproblem2_of_geometric_problem}) exists. Suppose we have a sequence of feasible solutions $\{h_{n},n\geq1\}$ to the problem (\ref{subproblem2_of_geometric_problem}) whose objective values converge to the optimal value. Since each $h_{n}$ is characterized by five bounded real numbers, we can find a subsequence along which the five sequences of real numbers converge. This means we can find an a.e. convergent subsequence of $\{h_{n},n\geq1\}$. By the dominated converge theorem, we can see the limit function is also feasible to the problem (\ref{subproblem2_of_geometric_problem}) (we may need to change its values at the endpoints of the steps to make it left-continuous) and its objective value is exactly the optimal value. Thus, the limit function is the optimal solution to the problem (\ref{subproblem2_of_geometric_problem}).
\end{proof}

\begin{proof}[Proof of Corollary \ref{characterization_dom_OU}.]
By Lemma \ref{opt_geometric_problem}, we know that each $h_{i}^{\ast}$ is a step function with at most three steps. In total we have $n_{R_{0}}+1\leq4n+1$ subproblems and $h^{\ast}$ is just a ``combination" of these $h_i^*$'s. Therefore, $h^{\ast}$ is a non-increasing left-continuous step function with at most $3(4n+1)$ steps and can be represented by (\ref{12n+3_step_function}) for some $(z,w)$. Consequently, $\tilde{R}$ defined by
\begin{equation}
\tilde{R}=\{(x,y)\in\mathcal{D}_{0}:y_{0}\leq y\leq h^*(x;z,w)\} \label{equ_characterization_dom_OU}
\end{equation}
is a dominating OU set of $R_0$ by Lemma \ref{find_dom_OU}. Here the requirement $(z,w)\in(0,\infty)^{12n+4}\times[0,\infty)^{12n+2}$ is due to $\lambda(\tilde{R})=\lambda(R_0)>0$. Note that the number of steps $12n+3$ doesn't rely on the choice of $R_0$ so the dominating OU set of any closed OU set $R_0$ with $\lambda(R_0)\in (0,\infty),R_0^{X}\in (0,\infty),R_0^{Y}\in (0,\infty)$ can be represented by (\ref{equ_characterization_dom_OU}) with possibly different $(z,w)$. In other words, all dominating OU sets are contained in the class $\mathcal{R}^{\ast}=\{R_{z,w}:R_{z,w}=\{(x,y)\in\mathcal{D}_{0}:y_{0}\leq y\leq h^*(x;z,w)\}$ with $h^*$ defined in (\ref{12n+3_step_function})$\}$.
\end{proof}

\begin{proof}[Proof of Proposition \ref{equiv_moment_problem}.]
By the construction of dominating OU sets, in problem (\ref{general_OU_optimization_2D}), we can restrict the choice of $R_{s}$ to $\mathcal{R}^{\ast}$ which is fully parameterized by $(z,w)$. Further, the cardinality of $(z,w)\in(0,\infty)^{12n+4}\times[0,\infty)^{12n+2}$ and the cardinality of $s\in(0,\infty)$ are both continuum. So the distribution $G(s)$ on the index $s$ in problem (\ref{general_OU_optimization_2D}) is equivalent to a distribution $Q$ on $(z,w)$. Thus, problem (\ref{general_OU_optimization_2D}) can be rewritten as the following problem:
\begin{align}
\max\text{ } &  c\int\frac{\lambda(S\cap R_{z,w})}{\lambda(R_{z,w})}dQ(z,w)\nonumber\\
\text{subject to } &  \int\frac{R_{z,w}^{Y}}{\lambda(R_{z,w})}dQ(z,w)\leq\frac{u_{X}}{c}\nonumber\\
&  \int\frac{R_{z,w}^{X}}{\lambda(R_{z,w})}dQ(z,w)\leq\frac{u_{Y}}{c}\label{change_index}\\
&  a_{i}\leq\int\frac{\lambda(\{(x,y):x_{1i}\leq x\leq x_{2i},y_{1i}\leq y\leq y_{2i}\}\cap R_{z,w})}{\lambda(R_{z,w})}dQ(z,w)\leq b_{i},i=1,\ldots,n\nonumber\\
&  \text{all the integrands are measurable}\nonumber
\end{align}
where $R_{z,w}$ is the closed OU set represented by $R_{z,w}=\{(x,y)\in\mathcal{D}_{0}:y_{0}\leq y\leq h^*(x;z,w)\}$ with $h^*$ defined in (\ref{12n+3_step_function}). In view of the form of $h^*$ and the representation of $S=\{(x,y)\in\mathcal{D}_{0}:y\geq g(x)\}$ for some known function $g:[x_{0},\infty)\mapsto\lbrack y_{0},\infty]$, we can obtain
\[
\lambda(R_{z,w})=\sum_{i=1}^{12n+3}\sum_{j=1}^{12n+4-i}z_{i}w_{j},\quad R_{z,w}^{Y}=\sum_{i=1}^{12n+3}w_{i},\quad R_{z,w}^{X}=\sum_{i=1}^{12n+3}z_{i},
\]
\[
\lambda(S\cap R_{z,w})=\sum_{i=1}^{12n+3}\int\left(  y_{0}+\sum_{j=1}^{12n+4-i}w_{j}-g(x)\right)  _{+}I\left(  x_{0}+\sum_{j=1}^{i-1}z_{j}<x\leq x_{0}+\sum_{j=1}^{i}z_{j}\right)  dx,
\]
and
\begin{align*}
& \lambda(\{(x,y):x_{1i}\leq x\leq x_{2i},y_{1i}\leq y\leq y_{2i}\}\cap R_{z,w})\\
& =\sum_{i=1}^{12n+3}\int I\left(  \sum_{j=1}^{i-1}z_{j}<x-x_{0}\leq\sum_{j=1}^{i}z_{j},x_{1k}\leq x\leq x_{2k}\right)  \\
& \times\left(  \min\left(  y_{0}+\sum_{j=1}^{12n+4-i}w_{j},y_{2k}\right)-\min\left(  y_{0}+\sum_{j=1}^{12n+4-i}w_{j},y_{1k}\right)  \right)  dx.
\end{align*}
Plugging them into problem (\ref{change_index}) and noticing that ``all the integrands are measurable" automatically holds, we get the moment problem (\ref{finite_dimensional_moment_problem}).
\end{proof}

\begin{proof}[Proof of Theorem \ref{main_theorem}.]
(1): By Corollary \ref{cor_general_OU_optimization_2D} and Proposition \ref{equiv_moment_problem}, we know the optimal value of the OU-DRO problem (\ref{DRO_problem_2D}) is not greater than the optimal value of the moment problem (\ref{finite_dimensional_moment_problem}). On the other hand, any feasible solution to the moment problem (\ref{finite_dimensional_moment_problem}) can be transformed into a feasible solution to the OU-DRO problem (\ref{DRO_problem_2D}) with the same objective value by means of the OU distribution $Q$ defined in (\ref{construct_feasible_solution}). Combining two directions together, we know that optimal values of the OU-DRO problem (\ref{DRO_problem_2D}) and the moment problem (\ref{finite_dimensional_moment_problem}) are the same. The equivalence of the moment problem (\ref{finite_dimensional_moment_problem}) and the non-linear optimization (\ref{nonlinear_problem}) is ensured by Theorem 3.2 in \cite{winkler1988extreme} and the conditions in Theorem 3.2 can be verified by Theorem 2.1 and Proposition 3.1 in the same paper. The optimality of $P^*$ is clear since it has the same objective value as the optimal solution $Q^*$.

(2): By part (1), the OU-DRO problem (\ref{DRO_problem_2D}) has the same optimal value as the non-linear optimization (\ref{nonlinear_problem}). Thus, when the constraint $\bar{F}(x_{0},y_{0})=c$ is replaced by $l_{\bar{F}}\leq\bar{F}(x_{0},y_{0})\leq u_{\bar{F}}$, it suffices to make $c$ vary in the interval $[l_{\bar{F}},u_{\bar{F}}]$ when solving the non-linear optimization (\ref{nonlinear_problem}). Thus, these two problems have the same optimal value. Finally, $P^*$ is optimal since its objective value is exactly the optimal value of both the non-linear optimization and the DRO problem.
\end{proof}

\begin{proof}[Proof of Lemma \ref{property_of_general_OU_set}.]
The proof of $\bar{K}$ and $\bar{K^{\circ}}$ being OU is similar to the proof of Lemma \ref{property_of_OU_set} and thus is omitted. To show $K$ is Lebesgue measurable and satisfies $\lambda(K^{\circ})=\lambda(K)=\lambda(\bar{K})$, it suffices to show $\lambda(\partial K)=0$. Without loss of generality, assume $x_0$ is the origin. Let $O_1,\ldots,O_{2^d}$ be the $2^d$ (closed) orthants of $\mathbb{R}^d$. We have
\begin{equation*}
K=\bigcup_{i=1}^{2^{d}}(K\cap O_{i})
\end{equation*}
and thus
\begin{equation*}
\partial K\subset\bigcup_{i=1}^{2^{d}}\partial(K\cap O_{i}).
\end{equation*}
Note that by suitable reflection, each $K\cap O_{i}$ can be transformed into an OU set about the origin on the first orthant $[0,\infty)^d$. Then by \cite{lavrivc1993continuity}, we have $\lambda(\partial(K\cap O_{i}))=0$ for each $i=1,\ldots,2^d$, which implies $\lambda(\partial K)=0$.
\end{proof}

\begin{proof}[Proof of Theorem \ref{Choquet_general_OU}.]
The first ``if and only if" claim can be proved by the similar arguments in the proof of Theorem \ref{Choquet_OU}. The second ``if and only if" claim is obvious. To prove the Choquet representation, we first note that
\[
P(B)=\sum_{1\leq i\leq2^{d},P(O_{i})>0}P(O_{i})P(B|O_{i})
\]
by the law of total probability. By Theorem \ref{Choquet_OU}, $P(B|O_{i})$ can be written as
\[
P(B|O_{i})=\int_{0}^{\infty}W_{\bar{C}_{s}^{i}}(B)g_{i}(s)ds,
\]
which justifies (\ref{equ_Choquet_general_OU}). Note that $\bar{C}_s^i$ is an OU set which is fully contained in $O_i$. To show (\ref{equ_Choquet_general_OU}) is indeed a Choquet representation, it suffices to show $W_{\bar{C}_{s}^{i}}$ is an extreme point in the class of OU distributions about the origin if $\lambda(\bar{C}_{s}^{i})>0$. This is proved in Lemma \ref{extreme_point_general_OU}.
\end{proof}

\begin{proof}[Proof of Lemma \ref{extreme_point_general_OU}.]
The proof is similar to the proof of Lemma \ref{extreme_point_OU} and thus is omitted.
\end{proof}

\begin{proof}[Proof of Theorem \ref{equivalence_POU}.]
Suppose there is a density $f(x)$ of $P$ such that for every $s>0$, the set%
\[
C_{s}=\{x\in\mathcal{D}_{0}:f(x)\geq s\}
\]
is $d^{\prime}$-POU about $x_{0}$. Write $W_{C_{s}}$ as the uniform distribution on $C_{s}$ and let $g(s)=\lambda(C_{s})\geq 0$. Notice that
\[
f(x)=\int_{0}^{f(x)}1ds=\int_{0}^{\infty}I(x\in C_{s})ds.
\]
Thus for any measurable set $B$,
\[
P(B)=\int_{B}f(x)dx=\int_{B}\int_{0}^{\infty}I(x\in C_{s})dsdx=\int_{0}^{\infty}\lambda(B\cap C_{s})ds=\int_{0}^{\infty}W_{C_{s}}(B)g(s)ds.
\]
Setting $B=\mathcal{D}_{0}$, we can see $P(B)=1=\int_{0}^{\infty}g(s)ds$ and thus $g(s)$ is a probability density on $(0,\infty)$. Since the probability measure with the density $g(s)$ must be the limit of a sequence of discrete probability measures with finite support, we can see $P$ is in the closed convex hull of the set of all uniform distributions on $d^{\prime}$-POU subsets of $\mathcal{D}_{0}$, i.e., $P$ is a $d^{\prime}$-POU distribution.

To prove the ``only if" part, we assume $P$ is $d^{\prime}$-POU about $x_0$. Suppose that $Q$ is the uniform distribution on a $d^{\prime}$-POU set $K\subset\mathcal{D}_{0}$ and $0<\lambda(K)<\infty$. For a point $x\in\mathbb{R}^{d}$ and $\delta>0$, we define the neighborhood $N_{\delta}(x)=\{y\in\mathbb{R}^{d}:\max_{1\leq i\leq d}|x_{i}-y_{i}|<\delta\}$ as in the proof of Theorem \ref{Choquet_OU}. For $x,x^{\prime}\in\mathcal{D}_{0}^{\circ}$ with $x_i\ge x_i^{\prime}$ for $i=1,\ldots,d^{\prime}$, $x_i=x_i^{\prime}$ for $i=d^{\prime}+1,\ldots,d$ and $0<\delta<\min_{1\leq i\leq d^{\prime}}(x_{i}^{\prime}-x_{i0})$, we have%
\[
y\in K\cap N_{\delta}(x)\Rightarrow y+x^{\prime}-x\in K\cap N_{\delta}(x^{\prime}).
\]
Since Lebesgue measure is translation invariant, we can get%
\[
\lambda(K\cap N_{\delta}(x^{\prime}))\geq\lambda(K\cap N_{\delta}(x)).
\]
Dividing by $\lambda(K)$, we have%
\begin{equation}
Q(N_{\delta}(x^{\prime}))\geq Q(N_{\delta}(x)).\label{inequality_of_Q_POU}%
\end{equation}
Clearly, this relation also holds under the convex combinations of the uniform distributions on $d^{\prime}$-POU sets. Since $P$ is $d^{\prime}$-POU about $x_{0}$, by definition, there is a sequence $\{Q_{m},m\geq1\}$ such that $Q_{m}$ converges weakly to $P$, where $Q_{m}$'s are the convex combinations of the uniform distributions on $d^{\prime}$-POU sets. Therefore, these $Q_{m}$'s satisfy (\ref{inequality_of_Q_POU}). Since $P$ has a density, say $f_{0}$, we have
\[
P(\partial N_{\delta}(x^{\prime}))=P(\partial N_{\delta}(x))=0.
\]
Weak convergence and (\ref{inequality_of_Q_POU}) imply that
\begin{equation}
P(N_{\delta}(x^{\prime}))\geq P(N_{\delta}(x)).\label{inequality_of_P_POU}%
\end{equation}
For $x\in\mathcal{D}_{0}^{\circ}$, we define
\[
f(x)=\limsup_{\delta\downarrow0}\frac{P(N_{\delta}(x))}{\lambda(N_{\delta}(x))}=\limsup_{\delta\downarrow0}\frac{P(N_{\delta}(x))}{(2\delta)^{d}}.
\]
By the Lebesgue differentiation theorem, $f(x)=f_{0}(x)$ a.e., which means $f(x)$ is also a density for $P$. Besides, (\ref{inequality_of_P_POU}) implies $f(x^{\prime})\geq f(x)$ for $x,x^{\prime}\in\mathcal{D}_{0}^{\circ}$ with $x_i\ge x_i^{\prime}$ for $i=1,\ldots,d^{\prime}$, $x_i=x_i^{\prime}$ for $i=d^{\prime}+1,\ldots,d$. For $x\in\partial\mathcal{D}_{0}$, we simply define%
\[
f(x)=\sup_{y\in\mathcal{D}_{0}^{\circ}}f(y).
\]
Then the density $f(x)$ is $d^{\prime}$-POU about $x_{0}$ on $\mathcal{D}_{0}$.

The equivalence of $C_{s}$ being $d^{\prime}$-POU about $x_{0}$ and $f$ being a $d^{\prime}$-POU density about $x_{0}$ on $\mathcal{D}_{0}$ is easy to see by definition.
\end{proof}

\begin{proof}[Proof of Theorem \ref{Choquet_POU}.]
From the proof of Theorem \ref{equivalence_POU}, we have
\begin{equation}
P(B)=\int_{0}^{\infty}\lambda(C_{s}\cap B)ds.\label{POU_representation}
\end{equation}
By Fubini's theorem, we have
\begin{align*}
\lambda(C_{s}\cap B)  & =\int_{\mathcal{D}_{0}}I(x\in C_{s}\cap B)dx_{1}\cdots dx_{d}\\
& =\int_{x_{d^{\prime}+1,0}}^{\infty}\cdots\int_{x_{d0}}^{\infty}\left(\int_{x_{10}}^{\infty}\cdots\int_{x_{d^{\prime}0}}^{\infty}I(x\in C_{s}\cap B)dx_{1}\cdots dx_{d^{\prime}}\right)  dx_{d^{\prime}+1}\cdots dx_{d}\\
& =\int_{x_{d^{\prime}+1,0}}^{\infty}\cdots\int_{x_{d0}}^{\infty}\lambda_{d^{\prime}}((C_{s}\cap B)_{x_{d^{\prime}+1},\ldots,x_{d}})dx_{d^{\prime}+1}\cdots dx_{d}\\
& =\int_{x_{d^{\prime}+1,0}}^{\infty}\cdots\int_{x_{d0}}^{\infty}\lambda_{d^{\prime}}(C_{s,x_{d^{\prime}+1},\ldots,x_{d}}\cap B_{x_{d^{\prime}+1},\ldots,x_{d}})dx_{d^{\prime}+1}\cdots dx_{d}\\
& =\int_{x_{d^{\prime}+1,0}}^{\infty}\cdots\int_{x_{d0}}^{\infty}W_{C_{s,x_{d^{\prime}+1},\ldots,x_{d}}}(B_{x_{d^{\prime}+1},\ldots,x_{d}})\lambda_{d^{\prime}}(C_{s,x_{d^{\prime}+1},\ldots,x_{d}})dx_{d^{\prime}+1}\cdots dx_{d}.
\end{align*}
Notice that for any set $K$ we have%
\[
K_{x_{d^{\prime}+1},\ldots,x_{d}}\times\{(x_{d^{\prime}+1},\ldots,x_{d})\}=K\cap\{y\in\mathbb{R}^{d}:y_{d^{\prime}+1}=x_{d^{\prime}+1},\ldots,y_{d}=x_{d}\}.
\]
Therefore%
\begin{align*}
W_{C_{s,x_{d^{\prime}+1},\ldots,x_{d}}}(B_{x_{d^{\prime}+1},\ldots,x_{d}})  & =W_{C_{s}\cap\{y\in \mathbb{R}^{d}:y_{d^{\prime}+1}=x_{d^{\prime}+1},\ldots,y_{d}=x_{d}\}}(B\cap\{y\in\mathbb{R}^{d}:y_{d^{\prime}+1}=x_{d^{\prime}+1},\ldots,y_{d}=x_{d}\})\\
& =W_{C_{s}\cap\{y\in \mathbb{R}^{d}:y_{d^{\prime}+1}=x_{d^{\prime}+1},\ldots,y_{d}=x_{d}\}}(B).
\end{align*}
It follows that $\lambda(C_{s}\cap B)$ can be represented as%
\begin{equation}
\lambda(C_{s}\cap B)=\int_{x_{d^{\prime}+1,0}}^{\infty}\cdots\int_{x_{d0}}^{\infty}W_{C_{s}\cap\{y\in \mathbb{R}^{d}:y_{d^{\prime}+1}=x_{d^{\prime}+1},\ldots,y_{d}=x_{d}\}}(B)\lambda_{d^{\prime}}(C_{s,x_{d^{\prime}+1},\ldots,x_{d}})dx_{d^{\prime}+1}\cdots dx_{d}.\label{representation_Cs_B}
\end{equation}
Plugging (\ref{representation_Cs_B}) into (\ref{POU_representation}), we get
\begin{equation}
P(B)=\int_{0}^{\infty}\int_{x_{d^{\prime}+1,0}}^{\infty}\cdots\int_{x_{d0}}^{\infty}W_{C_{s}\cap\{y\in \mathbb{R}^{d}:y_{d^{\prime}+1}=x_{d^{\prime}+1},\ldots,y_{d}=x_{d}\}}(B)g(s,x_{d^{\prime}+1},\ldots,x_{d})dx_{d^{\prime}+1}\cdots dx_{d}ds,\label{equ_Choquet_POU}%
\end{equation}
where $g(s,x_{d^{\prime}+1},\ldots,x_{d})=\lambda_{d^{\prime}}(C_{s,x_{d^{\prime}+1},\ldots,x_{d}})$. Letting $B=\mathcal{D}_{0}$, we can see $g(s,x_{d^{\prime}+1},\ldots,x_{d})$ is a probability density. Since only $W_{C_{s}\cap\{y\in \mathbb{R}^{d}:y_{d^{\prime}+1}=x_{d^{\prime}+1},\ldots,y_{d}=x_{d}\}}$ with $\lambda_{d^{\prime}}(C_{s,x_{d^{\prime}+1},\ldots,x_{d}})>0$ contributes to (\ref{equ_Choquet_POU}), in order to prove (\ref{equ_Choquet_POU}) is indeed a Choquet representation, it suffices to show $W_{C_{s}\cap\{y\in \mathbb{R}^{d}:y_{d^{\prime}+1}=x_{d^{\prime}+1},\ldots,y_{d}=x_{d}\}}$ with $\lambda_{d^{\prime}}(C_{s,x_{d^{\prime}+1},\ldots,x_{d}})>0$ is an extreme point in the class of $d^{\prime}$-POU distributions. This is proved in Lemma \ref{extreme_point_POU}.
\end{proof}

\begin{proof}[Proof of Lemma \ref{extreme_point_POU}.]
Suppose there exist two $d^{\prime}$-POU distributions $P_1$, $P_{2}$ and $\eta\in(0,1)$ s.t.%
\[
W_{K}=\eta P_{1}+(1-\eta)P_{2}.
\]
Since the support of $W_{K}$ is contained in $\{y\in\mathbb{R}^{d}:y_{d^{\prime}+1}=x_{d^{\prime}+1},\ldots,y_{d}=x_{d}\}$, the supports of $P_{1}$ and $P_{2}$ are also contained in $\{y\in\mathbb{R}^{d}:y_{d^{\prime}+1}=x_{d^{\prime}+1},\ldots,y_{d}=x_{d}\}$. Then we can reduce $W_{K},P_{1}$ and $P_{2}$ to OU distributions on the subspace $\{y\in\mathbb{R}^{d}:y_{d^{\prime}+1}=x_{d^{\prime}+1},\ldots,y_{d}=x_{d}\}$. Note that $W_{K}$ is reduce to $W_{K_{x_{d^{\prime}+1},\ldots,x_{d}}}$ on $\{x\in\mathbb{R}^{d^{\prime}}:x_{i}\geq x_{i0},i=1,\ldots,d^{\prime}\}$\ and $K_{x_{d^{\prime}+1},\ldots,x_{d}}$ is an OU set about $(x_{10},\ldots,x_{d^{\prime}0})$ on $\{x\in\mathbb{R}^{d^{\prime}}:x_{i}\geq x_{i0},i=1,\ldots,d^{\prime}\}$ with $\lambda_{d^{\prime}}(K_{x_{d^{\prime}+1},\ldots,x_{d}})>0$. By Lemma \ref{extreme_point_OU}, $W_{K_{x_{d^{\prime}+1},\ldots,x_{d}}}$ is an extreme point in the class of OU distributions about $(x_{10},\ldots,x_{d^{\prime}0})$ on $\{x\in\mathbb{R}^{d^{\prime}}:x_{i}\geq x_{i0},i=1,\ldots,d^{\prime}\}$. Therefore, the distributions of $P_{1}$ and $P_{2}$ on $\{y\in\mathbb{R}^{d}:y_{d^{\prime}+1}=x_{d^{\prime}+1},\ldots,y_{d}=x_{d}\}$ are the same as $W_{K_{x_{d^{\prime}+1},\ldots,x_{d}}}$, which means $P_{1}$ and $P_{2}$ are equal to $W_{k}$. Therefore, $W_{K}$ is an extreme point in the class of $d^{\prime}$-POU distributions about $x_{0}$ on $\mathcal{D}_{0}$.
\end{proof}

\begin{proof}[Proof of Theorem \ref{Choquet_1POU}.]
We use the similar arguments for star unimodality in \cite{dharmadhikari1988unimodality}. We first prove the ``if'' part. Suppose $X\overset{d}{=}(x_{10}+U(Z_{1}-x_{10}),Z_{2},\ldots,Z_{d})$. We need to prove $X$ is 1-POU. In fact, we only need to prove it when $(Z_{1},\ldots,Z_{d})$ is degenerate, i.e., $(Z_{1},\ldots,Z_{d})$ put mass 1 at a point $(z_{1},\ldots,z_{d})\in\mathcal{D}_{0}$. Then by taking the convex mixture and weak limit and noticing that 1-POU distribution is closed under these operations, we can see it holds for any $(Z_{1},\ldots,Z_{d})$. When $(Z_{1},\ldots,Z_{d})$ is degenerate at $(z_{1},\ldots,z_{d})\in\mathcal{D}_{0}$, $X\overset{d}{=}(x_{10}+U(z_{1}-x_{10}),z_{2},\ldots,z_{d})$ whose distribution is uniform on $[x_{10},z_{1}]\times\{(z_{2},\ldots,z_{d})\}$. Since $[x_{10},z_{1}]\times\{(z_{2},\ldots,z_{d})\}$ is a 1-POU set on $\mathcal{D}_{0}$, $X$ is clearly 1-POU. This proves the ``if'' part.

Now we consider the ``only if'' part. Suppose $X\equiv(X_{1},\ldots,X_{d})$ is the uniform distribution on a 1-POU set $K$ which can be written as
\[
K=\{x\equiv(x_{1},\ldots,x_{d})\in\mathcal{D}_{0}:x_{10}\leq x_{1}\leq g(x_{2},\ldots,x_{d})\},
\]
where $g(x_{2},\ldots,x_{d})>x_{10}$ is continuous with domain $Dom(g)=[x_{20},\infty)\times\cdots\times\lbrack x_{d0},\infty)$ and $\lambda(K)\in(0,\infty)$. Then the density of $X$ is%
\[
f_{X}(x)=\frac{1}{\int_{Dom(g)}(g(x_{2},\ldots,x_{d})-x_{10})dx_{2}\cdots dx_{d}}I(x_{10}\leq x_{1}\leq g(x_{2},\ldots,x_{d})).
\]
Therefore, the conditional density of $X_{1}$ given $(X_{2},\ldots,X_{d})=(x_{2},\ldots,x_{d})$ is%
\[
f_{X_{1}|(X_{2},\ldots,X_{d})}(x_{1}|x_{2},\ldots,x_{d})=\frac{1}{g(x_{2},\ldots,x_{d})-x_{10}}I(x_{10}\leq x_{1}\leq g(x_{2},\ldots,x_{d})).
\]
Let $U=(X_{1}-x_{10})/(g(X_{2},\ldots,X_{d})-x_{10})$. Then we can see the conditional distribution of $U$ given $(X_{2},\ldots,X_{d})=(x_{2},\ldots,x_{d})$ is the uniform distribution on $(0,1)$ and thus is independent of $(X_{2},\ldots,X_{d})$. So $X$ can be represented by
\begin{align*}
X  &  \equiv(X_{1},\ldots,X_{d})=(x_{10}+U(g(X_{2},\ldots,X_{d})-x_{10}),X_{2},\ldots,X_{d})\\
&  =(x_{10}+U(Z_{1}-x_{10}),Z_{2},\ldots,Z_{d}),
\end{align*}
where $(Z_{1},Z_{2},\ldots,Z_{d})=(g(X_{2},\ldots,X_{d}),X_{2},\ldots,X_{d})$ and $U$ is independent of $(Z_{1},Z_{2},\ldots,Z_{d})$. By taking the convex mixture and weak limit, we can see ``only if'' part holds for any $X$.

The representation (\ref{equation_Choquet_1POU}) is just a reformulation of $X=(x_{10}+U(Z_{1}-x_{10}),Z_{2},\ldots,Z_{d})$ by conditioning on the values of $(Z_{1},\ldots,Z_{d})$. By Lemma \ref{extreme_point_POU}, we know $W_{\text{1-POU}}(z)$ is an extreme point in the class of 1-POU distributions about $x_{0}$ on $\mathcal{D}_{0}$. So (\ref{equation_Choquet_1POU}) is indeed a Choquet representation. Next we will show $Q$ is uniquely determined by $P$, i.e., the Choquet representation (\ref{equation_Choquet_1POU}) is unique. We write $\varphi_{X}$ and $\varphi_{Z}$ as the characteristic function of $X$ and $Z$ respectively. For $t\in\mathbb{R}^{n}$, we have%
\begin{align*}
\varphi_{X}(t) &  =E[e^{it^{\top}X}]=E[e^{it^{\top}(x_{10}+U(Z_{1}-x_{10}),Z_{2},\ldots,Z_{d})}]=\int_{0}^{1}E[e^{it^{\top}(x_{10}+u(Z_{1}-x_{10}),Z_{2},\ldots,Z_{d})}]du\\
&  =\int_{0}^{1}e^{it_{1}x_{10}(1-u)}\varphi_{Z}(ut_{1},t_{2},\ldots,t_{d})du=e^{it_{1}x_{10}}\int_{0}^{1}e^{-it_{1}ux_{10}}\varphi_{Z}(ut_{1},t_{2},\ldots,t_{d})du.
\end{align*}
So for $y>0$, we have%
\[
\varphi_{X}(yt_{1},t_{2},\ldots,t_{d})=e^{iyt_{1}x_{10}}\int_{0}^{1}e^{-it_{1}yux_{10}}\varphi_{Z}(yut_{1},t_{2},\ldots,t_{d})du
\]
By change of variable $v=uy$, we have%
\begin{align}
&  \varphi_{X}(yt_{1},t_{2},\ldots,t_{d})=\frac{e^{iyt_{1}x_{10}}}{y}\int_{0}^{y}e^{-it_{1}vx_{10}}\varphi_{Z}(vt_{1},t_{2},\ldots,t_{d})dv\nonumber\\
&  \Leftrightarrow\frac{y}{e^{iyt_{1}x_{10}}}\varphi_{X}(yt_{1},t_{2},\ldots,t_{d})=\int_{0}^{y}e^{-it_{1}vx_{10}}\varphi_{Z}(vt_{1},t_{2},\ldots,t_{d})dv.\label{relation_CF}%
\end{align}
Since the right hand side of (\ref{relation_CF}) is differentiable with respect to $y$, so is the left hand side. Then we have%
\begin{align*}
& \frac{d}{dy}\left[  \frac{y}{e^{iyt_{1}x_{10}}}\varphi_{X}(yt_{1},t_{2},\ldots,t_{d})\right]  =e^{-it_{1}yx_{10}}\varphi_{Z}(yt_{1},t_{2},\ldots,t_{d})\\
& \Leftrightarrow e^{it_{1}yx_{10}}\frac{d}{dy}\left[  \frac{y}{e^{iyt_{1}x_{10}}}\varphi_{X}(yt_{1},t_{2},\ldots,t_{d})\right] =\varphi_{Z}(yt_{1},t_{2},\ldots,t_{d}).
\end{align*}
Letting $y=1$, we can see $\varphi_{X}$ determines $\varphi_{Z}$ and hence $P$ determines $Q$.

Finally, suppose $P(X_{1}=x_{10})=0$, we need to prove $X_{1}$ is an absolutely continuous random variable. Note that $X_{1}\overset{d}{=}x_{10}+U(Z_{1}-x_{10})$ so we have $P(Z_{1}=x_{10})=0$. Consider any measurable set $A$ with $\lambda(A)=0$ and write $Q_{Z_{1}}$ as the probability distribution of $Z_{1}$. We have%
\begin{align*}
P(X_{1}  & \in A)=P(x_{10}+U(Z_{1}-x_{10})\in A)=\int_{(x_{10},\infty)}P(x_{10}+U(z_{1}-x_{10})\in A)dQ_{Z_{1}}(z_{1})\\
& =\int_{(x_{10},\infty)}0dQ_{Z_{1}}(z_{1})=0,
\end{align*}
which means $X_{1}$ is absolutely continuous. Now we prove $E_Q[I(Z_{1}\geq x)/(Z_{1}-x_{10})],x\geq x_{10}$ is a probability density of $X_{1}$. For any $x_{1}>x_{10}$, we have%
\begin{align*}
& P(X_{1}\geq x_{1})=P(x_{10}+U(Z_{1}-x_{10})\geq x_{1})=\int_{[x_{1},\infty)}P(x_{10}+U(z_{1}-x_{10})\geq x_{1})dQ_{Z_{1}}(z_{1})\\
& =\int_{[x_{1},\infty)}\frac{z_{1}-x_{1}}{z_{1}-x_{10}}dQ_{Z_{1}}(z_{1})=\int_{[x_{1},\infty)}\int_{[x_{1},z_{1}]}\frac{1}{z_{1}-x_{10}}dxdQ_{Z_{1}}(z_{1})\\
& =\int_{[x_{1},\infty)}\int_{[x,\infty)}\frac{1}{z_{1}-x_{10}}dQ_{Z_{1}}(z_{1})dx=\int_{[x_{1},\infty)}E_Q\left[  \frac{I(Z_{1}\geq x)}{Z_{1}-x_{10}}\right]  dx.
\end{align*}
Since this holds for any $x_{1}>x_{10}$, we know $E_Q[I(Z_{1}\geq x)/(Z_{1}-x_{10})],x\geq x_{10}$ is indeed a probability density of $X_{1}$. Moreover, by Monotone Convergence Theorem and $P(Z_{1}=x_{10})=0$,%
\[
\lim_{x\downarrow x_{10}}E_Q\left[  \frac{I(Z_{1}\geq x)}{Z_{1}-x_{10}}\right]=E_Q\left[  \lim_{x\downarrow x_{10}}\frac{I(Z_{1}\geq x)}{Z_{1}-x_{10}}\right]=E_Q\left[  \frac{I(Z_{1}>x_{10})}{Z_{1}-x_{10}}\right]  =E_Q\left[  \frac{I(Z_{1}\geq x_{10})}{Z_{1}-x_{10}}\right]  ,
\]
which means $E_Q[I(Z_{1}\geq x)/(Z_{1}-x_{10})]$ is continuous at $x=x_{10}$.
\end{proof}

\begin{proof}[Proof of Theorem \ref{reduction_1POU-DRO}.]
Without loss of generality suppose $P$ is the probability measure induced by $X$. Notice that we have $\bar{F}(x_{0})=c$ in (\ref{1POU_DRO}). Therefore, $P/c$ is a 1-POU probability measure on $\mathcal{D}_{0}$. By Theorem \ref{Choquet_1POU}, we have the representation
\begin{equation}
P\equiv c\frac{P}{c}=c\int_{\mathcal{D}_{0}}W_{\text{1-POU}}(z)dQ(z) \label{scaled_Choquet_1POU}%
\end{equation}
for some probability measure $Q$ on $\mathcal{D}_{0}$. We write $Z=(Z_{1},\ldots,Z_{d})$ as a random vector with the distribution $Q$ and write $U$ as the uniform distribution on $(0,1)$ independent of $Z$. Then $(x_{10}+U(Z_{1}-x_{10}),Z_{2},\ldots,Z_{d})$ has the probability distribution $P/c$ on $\mathcal{D}_{0}$.

We first consider the reduction of the objective function. Since $S$ is in the form of (\ref{form_S_1POU}), by (\ref{scaled_Choquet_1POU}), we have%
\begin{align*}
&  P((X_{1},\ldots,X_{d})\in S)\equiv P(S)=c\int_{\mathcal{D}_{0}}\frac{\min(g_{2}(z_{2},\ldots,z_{d}),z_{1})-\min(g_{1}(z_{2},\ldots,z_{d}),z_{1})}{z_{1}-x_{10}}dQ(z)\\
&  =cE_{Q}\left[  \frac{\min(g_{2}(Z_{2},\ldots,Z_{d}),Z_{1})-\min(g_{1}(Z_{2},\ldots,Z_{d}),Z_{1})}{Z_{1}-x_{10}}\right]  .
\end{align*}
So the objective function is expressed by the expectation under $Q$.

Next we consider the density constraint $f_{X_{1}}(x_{10})\leq u_{X_{1}}$. Since $f_{X_{1}}$ is the marginal density of $X_{1}$ within $\mathcal{D}_{0}$, we can see $f_{X_{1}}/c$ is the probability density of $x_{10}+U(Z_{1}-x_{10})$. Recall that we require $f_{X_{1}}$ to be continuous at $x=x_{10}$. So the value $f_{X_{1}}(x_{10})$ is uniquely determined and is actually given by Theorem \ref{Choquet_1POU}, i.e.,%
\[
f_{X_{1}}(x_{10})=c\frac{f_{X_{1}}(x_{10})}{c}=cE_{Q}\left[  \frac{I(Z_{1}\geq x_{10})}{Z_{1}-x_{10}}\right]  =cE_{Q}\left[  \frac{1}{Z_{1}-x_{10}}\right]  ,
\]
where the last inequality follows from the fact that $Z_{1}$ takes values in $[x_{10},\infty)$. Therefore, the density constraint can be rewritten as%
\[
E_{Q}\left[  \frac{1}{Z_{1}-x_{10}}\right]  \leq\frac{u_{X_{1}}}{c}.
\]
Moreover, this constraint also implies that $Q(Z_{1}=x_{10})=0$ for any feasible probability measure $Q$. Therefore, $P(X_{1}=x_{10})=P(x_{10}+U(Z_{1}-x_{10})=x_{10})=0$. In other words, the constraint that $P(X_{1}=x_{10})=0$ is already involved in the density constraint.

Then we consider the moment constraint%
\[
a_{i}\bar{F}(x_{0})\leq P(\underline{x}_{ji}\leq X_{j}\leq\bar{x}_{ji},j=1,\ldots,d)\leq b_{i}\bar{F}(x_{0}).
\]
By (\ref{scaled_Choquet_1POU}) and $\bar{F}(x_{0})=c$, it can be represented by%
\[
a_{i}\leq\int_{\mathcal{D}_{0}}\frac{\min(z_{1},\bar{x}_{1i})-\min(z_{1},\underline{x}_{1i})}{z_{1}-x_{10}}I(\underline{x}_{ji}\leq z_{j}\leq\bar{x}_{ji},j\geq2)dQ(z)\leq b_{i},
\]
i.e.,%
\[
a_{i}\leq E_{Q}\left[  \frac{\min(Z_{1},\bar{x}_{1i})-\min(Z_{1},\underline{x}_{1i})}{Z_{1}-x_{10}}I(\underline{x}_{ji}\leq Z_{j}\leq\bar{x}_{ji},j\geq2)\right]  \leq b_{i}.
\]

Finally, by Theorem \ref{Choquet_1POU}, the representation (\ref{scaled_Choquet_1POU}) is an ``if and only if'' condition for $P$ being 1-POU about $x_{0}$ on $\mathcal{D}_{0}$. Hence, the moment problem (\ref{moment_problem_1POU-DRO}) is equivalent to the 1-POU-DRO problem (\ref{1POU_DRO}). Moreover, the optimal solution of the two problems are also related by the representation given in Theorem \ref{Choquet_1POU}. This completes our proof.
\end{proof}

\begin{proof}[Proof of Corollary \ref{cor_reduction_1POU-DRO}.]
This follows directly from Theorem 3.2 in \cite{winkler1988extreme}.
\end{proof}
\end{appendix}
\end{document}